\numberwithin{equation}{section}
\theoremstyle{plain}
\newtheorem{thm}{Theorem}[section]
\newtheorem*{thm*}{Theorem}
\newtheorem{prop}[thm]{Proposition}
\newtheorem{lem}[thm]{Lemma}
\newtheorem{cor}[thm]{Corollary}
\newtheorem{conj}[thm]{Conjecture}
\newtheorem{lem-defn}[thm]{Lemma-Definition}
\newtheorem*{claim*}{Claim}
\theoremstyle{definition}
\newtheorem{defn}[thm]{Definition}
\newtheorem{example}[thm]{Example}
\newtheorem{construction}[thm]{Construction}
\newtheorem{set-up}[thm]{Set-up}
\theoremstyle{remark}
\newtheorem{rem}[thm]{Remark}
\DeclareMathOperator{\Frac}{Frac}
\DeclareMathOperator{\Ker}{Ker}
\DeclareMathOperator{\Cok}{Coker}
\DeclareMathOperator{\Image}{Im}
\DeclareMathOperator{\Hom}{Hom}
\DeclareMathOperator{\End}{End}
\DeclareMathOperator{\id}{id}
\DeclareMathOperator{\tr}{tr}
\DeclareMathOperator{\rank}{rank}
\DeclareMathOperator{\Fil}{Fil}
\DeclareMathOperator{\gr}{gr}
\DeclareMathOperator{\ad}{ad}
\DeclareMathOperator{\Gal}{Gal}
\DeclareMathOperator{\Spec}{Spec}
\DeclareMathOperator{\Spa}{Spa}
\DeclareMathOperator{\Spf}{Spf}
\newcommand{\ra}{\rightarrow}
\newcommand{\lra}{\longrightarrow}
\newcommand{\hra}{\hookrightarrow}
\newcommand{\pf}{\mathrm{pf}}
\newcommand{\can}{\mathrm{can}}
\newcommand{\cont}{\mathrm{cont}}
\newcommand{\ur}{\mathrm{ur}}
\newcommand{\cris}{\mathrm{cris}}
\newcommand{\st}{\mathrm{st}}
\newcommand{\pst}{\mathrm{pst}}
\newcommand{\dR}{\mathrm{dR}}
\newcommand{\logcris}{\mathrm{log}\, \mathrm{cris}}
\newcommand{\et}{\mathrm{\acute{e}t}}
\newcommand{\proet}{\mathrm{pro\acute{e}t}}
\newcommand{\MF}{\mathrm{MF}}
\newcommand{\Rep}{\mathrm{Rep}}
\newcommand{\OB}{\mathcal{O}\mathbb{B}}
\newcommand{\N}{\mathbb{N}}
\newcommand{\Z}{\mathbb{Z}}
\newcommand{\Q}{\mathbb{Q}}
\newcommand{\R}{\mathbb{R}}
\newcommand{\A}{\mathbb{A}}
\newcommand{\B}{\mathbb{B}}
\newcommand{\rmA}{\mathrm{A}}
\newcommand{\rmB}{\mathrm{B}}
\newcommand{\fkf}{\mathfrak{f}}
\newcommand{\fkp}{\mathfrak{p}}
\newcommand{\fkX}{{\mathfrak X}}
\newcommand{\fkY}{{\mathfrak Y}}
\newcommand{\bB}{\mathbb{B}}
\newcommand{\bL}{\mathbb{L}}
\newcommand{\bT}{\mathbb{T}}
\newcommand{\calE}{\mathcal{E}}
\newcommand{\calG}{\mathcal{G}}
\newcommand{\calH}{\mathcal{H}}
\newcommand{\calK}{\mathcal{K}}
\newcommand{\calL}{\mathcal{L}}
\newcommand{\calM}{\mathcal{M}}
\newcommand{\calO}{\mathcal{O}}
\newcommand{\calX}{\mathcal{X}}
\begin{document}
\title{
A $p$-adic monodromy theorem for de Rham local systems
}
\date{\today}
\author{Koji Shimizu}
\address{School of Mathematics, Institute for Advanced Study, Princeton, NJ, USA}
\email{kshimizu@ias.edu}

\begin{abstract}
We study horizontal semistable and horizontal de Rham representations of the absolute Galois group of a certain smooth affinoid over a $p$-adic field. In particular, we prove that a horizontal de Rham representation becomes horizontal semistable after a finite extension of the base field.
As an application, we show that every de Rham local system on a smooth rigid analytic variety becomes horizontal semistable \'etale locally around every classical point.
We also discuss potentially crystalline loci of de Rham local systems and cohomologically potentially good reduction loci of smooth proper morphisms.
\end{abstract}

\maketitle

\section{Introduction}

Let $p$ be a prime and let $K$ be a complete discrete valuation field of characteristic zero with perfect residue field of characteristic $p$.

Fontaine introduced the notions of crystalline, semistable, and de Rham representations of the absolute Galois group of $K$ to study $p$-adic \'etale cohomology of algebraic varieties over $K$.  It is now known that Galois representations arising from geometry in this way  are de Rham.
He also conjectured that every de Rham representation is potentially semistable. This reflects the semistable reduction conjecture stating that every smooth proper algebraic variety over $K$ acquires semistable reduction after a finite field extension.
Fontaine's conjecture was proved by Berger \cite{Berger} and is often referred to as $p$-adic monodromy theorem.

This article discusses a similar question for families of $p$-adic Galois representations parametrized by a variety. More precisely, let $X$ be a smooth rigid analytic variety over $K$ (regarded as an adic space) and let $\bL$ be an \'etale $\Z_p$-local system on $X$. For every classical point $x\in X$ (i.e., a point whose residue class field $k(x)$ is finite over $K$) and a geometric point $\overline{x}$ above $x$, the stalk $\bL_{\overline{x}}$ is a Galois representation of $k(x)$. In this way, we regard $\bL$ as a family of Galois representations. In particular, we can ask whether $\bL$ is de Rham, semistable, etc., at every classical point. Note that building on earlier works \cite{Faltings-p-adicHodge, Hyodo-Hodge-Tate, Faltings-almostetaleextensions, Brinon-crisdeRham, AI-cris}, Scholze \cite{Scholze-p-adicHodge} defines the notion of de Rham local systems, and Liu and Zhu \cite{Liu-Zhu} prove that $\bL$ is a de Rham local system if and only if $\bL$ is de Rham at every classical point of $X$ (or even at a single classical point on each connected component). In this context, we prove the following $p$-adic monodromy theorem:

\begin{thm}[cf.~Theorem~\ref{thm:p-adic monodromy}]\label{thm:main thm in introduction}
Let $\bL$ be a de Rham $\Z_p$-local system on a smooth rigid analytic variety $X$ over $K$.
For every $K$-rational point $x\in X$, there exist an analytic open neighborhood $U\subset X$ of $x$ and a finite extension $L$ of $K$ such that $\bL|_{U\otimes_KL}$ is semistable at every classical point.
Moreover, if $\bL_{\overline{x}}$ is potentially crystalline as a Galois representation of $k(x)$, then we can choose $U$ and $L$ so that $\bL|_{U\otimes_KL}$ is crystalline at every classical point.
\end{thm}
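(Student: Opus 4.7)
The plan is to reduce Theorem~\ref{thm:main thm in introduction} to a ``horizontal'' $p$-adic monodromy theorem for representations of the arithmetic fundamental group of a smooth affinoid---the technical heart of the paper, as the abstract advertises.

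First I would work locally around the $K$-rational point $x$. Choose a connected affinoid open neighborhood $U\subset X$ of $x$ small enough to admit a toric chart, i.e.\ an \'etale map $U\to \bT^d=\Spa K\langle T_1^{\pm 1},\ldots,T_d^{\pm 1}\rangle$, arranged so that $x$ maps to the identity section. The restriction $\bL|_U$ then corresponds to a continuous representation of an arithmetic fundamental group $G_U$ sitting in an extension of $G_K$ by the geometric fundamental group of $U$; the $K$-rationality of $x$ provides a canonical splitting of this extension, so the stalk at $\overline{x}$ can be identified with the restriction of the $G_U$-representation along this splitting.

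The next step is to promote the de Rham property at $x$ to the ``horizontal de Rham'' property for $\bL|_U$ as a $G_U$-representation, in the sense developed earlier in the paper. By Scholze and Liu--Zhu, the de Rham local system $\bL$ gives rise on $X$ to a filtered $\calO_X$-module with integrable connection $\calD_{\dR}(\bL)$. The fiber of this at the $K$-rational point $x$ is a filtered $K$-vector space; I expect that after shrinking $U$ this fiber trivializes $\calD_{\dR}(\bL)|_U$, essentially by a Taylor/exponential argument using regularity of the connection. Rephrasing the trivialization in terms of the relative period sheaf $\OB_{\dR}$, one obtains that $\bL|_U$ is horizontal de Rham as a $G_U$-representation.

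Now apply the horizontal $p$-adic monodromy theorem: every horizontal de Rham representation of $G_U$ becomes horizontal semistable after some finite extension $L/K$. Given this, for any classical point $y\in U\otimes_K L$, specializing the horizontal period comparison to the stalk at $\overline{y}$ yields the usual $\rmB_{\st}$-comparison for $\bL_{\overline{y}}$ as a representation of $G_{k(y)}$, which is semistability at every classical point. The potentially crystalline refinement is analogous: the hypothesis forces the fiber at $\overline{x}$ of the horizontal monodromy operator to vanish, and horizontality spreads this vanishing over all of $U\otimes_K L$ (possibly after enlarging $L$), so $\bL|_{U\otimes_K L}$ is in fact horizontal crystalline and specialization yields crystallinity at every classical point.

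The main obstacle will clearly be the horizontal $p$-adic monodromy theorem itself, i.e.\ the analogue of Berger's theorem in this relative horizontal setting. Proving it likely requires setting up a theory of horizontal $(\vphi,\Gamma)$-modules over a suitable relative Robba ring, adapting Berger's monodromy argument while keeping the horizontal structure intact throughout, and invoking a Kedlaya-style slope filtration in families. A secondary, but nontrivial, technical point is to ensure that a \emph{single} finite extension $L/K$ works \emph{uniformly} at every classical point of $U$: this is exactly what the horizontality notion is engineered to guarantee, but verifying it requires that every step of the argument stays global over $U$ rather than fragmenting into assertions at individual classical points.
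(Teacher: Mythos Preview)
Your reduction of Theorem~\ref{thm:main thm in introduction} to the horizontal $p$-adic monodromy theorem is essentially the paper's own argument: shrink to a polydisk around $x$, use a Taylor-series argument (Theorem~\ref{thm:pDF on polydisk}) to force $D_{\dR}(\bL)$ to admit a full set of solutions so that the associated representation of $\calG_A$ is horizontal de Rham, then shrink further to a torus, invoke Theorem~\ref{thm:p-adic monodromy for horizontal de Rham representations in intro}, and specialize. Your treatment of the crystalline refinement via vanishing of the monodromy operator likewise matches Proposition~\ref{prop:horizontal semistable plus crystalline at one point} and Theorem~\ref{thm:horizontal de Rham and one point crystalline}.

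Where your proposal diverges from the paper is in the speculation about how Theorem~\ref{thm:p-adic monodromy for horizontal de Rham representations in intro} itself is proved. You anticipate a theory of horizontal $(\varphi,\Gamma)$-modules over a relative Robba ring, a direct adaptation of Berger's argument, and a Kedlaya-style slope filtration in families. The paper takes a completely different route: it passes to the $p$-adic completion of the localization $R_{(\pi)}$, obtaining a complete discrete valuation field $\calK$ with \emph{imperfect} residue field, applies Ohkubo's horizontal monodromy theorem \cite{Ohkubo} (itself built on Berger's theorem) to the restriction of $V$ to $\Gal(\overline{\calK}/\calK)$, and then proves a purity theorem (Theorem~\ref{thm:purity for horiztaonlly semistable}) to descend horizontal semistability from $\calK$ back to $R$. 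That purity theorem is the technical core and is established by a direct analysis of period rings via Colmez's explicit description (Lemma~\ref{lem:Cartesian diagram of period rings}), not by any $(\varphi,\Gamma)$-module machinery. Your proposed route is not obviously wrong, but it would be a substantially different paper; the paper's approach buys you the ability to import an existing hard theorem (Ohkubo's) and reduce the genuinely new content to a concrete computation comparing $\B_{\st}^+$ and $\B_{\dR}^+$ over various perfectoid pairs.
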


We obtain a similar result for every classical point.
Note that $X$ can have a proper open subset containing all the $K$-rational points or even all the classical points in general. Hence the union of open sets $U$ in the theorem may not be the entire $X$.

\begin{rem}
Currently, there seems no good way to define crystalline or semistable local systems on $X$ without fixing a good formal model of $X$. This is the reason why our $p$-adic monodromy theorem concerns pointwise semistability.
See \cite{AI-cris, AI-st, Faltings-cryscohom, Faltings-F-isocrystals, Faltings-almostetaleextensions, Tan-Tong, Tsuji-cris} for definitions of crystalline or semistable local systems when starting with a nice integral or formal model.
\end{rem}

\begin{rem}
 In \cite{Kisin-LocalConstancy}, Kisin studies torsion and $\ell$-adic \'etale  local systems on $X$ for a prime $\ell\neq p$, and shows that around each $K$-rational point, such a local system is locally constant in the analytic topology of $X$.
In contrast, $p$-adic \'etale local systems on $X$ are not necessarily locally constant in the analytic topology. Our result can be regarded as a $p$-adic analogue of Kisin's result.
\end{rem}

\begin{rem}
 Note that Theorem~\ref{thm:main thm in introduction} is of local nature. We may also ask a global version of $p$-adic monodromy conjecture (cf.~\cite[Remark 1.4]{Liu-Zhu}):
 Let $X$ be a smooth rigid analytic variety over $K$, and let $\bL$ be a de Rham $\Z_p$-local system on $X$.
Does there exist a rigid analytic variety $X'$ finite \'etale over $X$ such that
$\bL|_{X'}$ is semistable at every classical point?
We know an affirmative answer in the following two cases:
\begin{enumerate}
 \item the case where $\bL$ is the $p$-adic Tate module of an abelian variety over $X$
(use Raynaud's criterion \cite[Expos\'e IX, Proposition 4.7]{SGA7-1}).
 \item the case where $\bL$ has a single Hodge--Tate weight \cite[Theorem 1.6]{const_HTwts}.
\end{enumerate}
On the other hand, Lawrence and Li \cite{Lawrence-Shizhang} show that there exist an algebraic variety $X$ over $K$ and a de Rham $\Z_p$-local system $\bL$ on $X$ such that for every finite extension $L$ of $K$, $\bL|_{X\otimes_KL}$ is not semistable at some closed point.
\end{rem} 

\begin{rem}
  It is an interesting attempt to formulate a semistable reduction conjecture in families over a field of mixed characteristic;
Theorem~\ref{thm:main thm in introduction} will be $p$-adic Hodge-theoretic evidence for such a conjecture. 
In the case of fields of characteristic zero, 
 Abramovich and Karu \cite{Abramovich-Karu} make a semistable reduction conjecture in families, and Adiprasito, Liu, and Temkin \cite{Adiprasito-Liu-Temkin} prove the conjecture of Abramovich and Karu in a larger generality. See these articles for the details.
\end{rem}

\bigskip

Let us turn to applications of Theorem~\ref{thm:main thm in introduction}.
Note that Liu and Zhu's result implies that if an \'etale $\Q_p$-local system $\bL$ on $X$ is de Rham at a classical point on each connected component, then $\bL$ is de Rham at every classical point. Such a strong result does not hold for potentially crystalline representations.
However, we prove that if $\bL$ is potentially crystalline at a classical point $x$, then it is so at every classical point in an analytic open neighborhood of $x$.
\begin{thm}[Theorem~\ref{thm:potentially crystalline loci}]
If $\bL$ is a de Rham $\Q_p$-local system on a smooth rigid analytic variety $X$ over $K$, then there exists an analytic open subset $U\subset X$ such that the following holds for every classical point $x\in X$:
\begin{enumerate}
 \item if $x\in U$, then $\bL$ is potentially crystalline at $x$.
 \item if $x\not\in U$, then $\bL$ is not potentially crystalline at $x$.
\end{enumerate}
We call such $U$ a potentially crystalline locus of $\bL$.
\end{thm}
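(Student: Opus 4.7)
The plan is to build $U$ as the union of the open neighborhoods supplied by the ``potentially crystalline'' half of Theorem~\ref{thm:main thm in introduction}, one for each classical point at which $\bL$ is potentially crystalline. Concretely, for every classical point $x\in X$ at which $\bL$ is potentially crystalline, the classical-point version of Theorem~\ref{thm:main thm in introduction} (mentioned right after its statement) yields an analytic open neighborhood $U_x\subset X$ of $x$ together with a finite extension $L_x/K$ such that $\bL|_{U_x\otimes_K L_x}$ is crystalline at every classical point. I would then define $U:=\bigcup_x U_x$, where $x$ runs over all classical points of $X$ at which $\bL$ is potentially crystalline. This $U$ is analytic open by construction.

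Next I would check the two conditions. For (ii): if $x$ is classical and $\bL_{\overline{x}}$ is potentially crystalline, then $x\in U_x\subset U$, so the contrapositive gives the desired implication. For (i): let $x\in U$ be classical, so $x\in U_y$ for some classical $y$ as above. Pick any classical point $x'$ of $U_y\otimes_K L_y$ lying above $x$; then $k(x')/k(x)$ is finite, and $\bL_{\overline{x'}}$ is crystalline as a $\Gal(\overline{K}/k(x'))$-representation by the choice of $U_y,L_y$. Since $\bL_{\overline{x'}}$ is nothing but $\bL_{\overline{x}}$ restricted to the finite-index open subgroup $\Gal(\overline{K}/k(x'))\subset \Gal(\overline{K}/k(x))$, we conclude that $\bL_{\overline{x}}$ is potentially crystalline, proving (i).

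Two minor technical points remain. First, Theorem~\ref{thm:main thm in introduction} is stated for $\Z_p$-local systems while we have a $\Q_p$-local system; this is inessential since potential crystallinity is intrinsic to the $\Q_p$-representation at each point, and any $\Q_p$-local system admits a $\Z_p$-lattice after passing to an \'etale cover, over which the argument runs before being descended (openness being \'etale local). Second, one must confirm that the extension of Theorem~\ref{thm:main thm in introduction} from $K$-rational to arbitrary classical points is indeed available. Granting these, the present theorem is essentially a formal repackaging of the main theorem, and the real obstacle lies entirely in proving Theorem~\ref{thm:main thm in introduction} itself; no further $p$-adic Hodge-theoretic input is required here.
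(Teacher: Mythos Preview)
Your proposal is correct and follows essentially the same approach as the paper: reduce to isogeny $\Z_p$-local systems via an \'etale cover, then for each classical point $x$ where $\bL$ is potentially crystalline invoke the main monodromy theorem to produce an open neighborhood on which $\bL$ is potentially crystalline everywhere, and take the union. One small correction: in the paper's classical-point version (Theorem~\ref{thm:p-adic monodromy}), the open neighborhood lives in $X_{k(x)}$ rather than in $X$ itself, so one must take its image in $X$ under the finite \'etale projection $X_{k(x)}\to X$ (which is open); your argument then goes through verbatim with $U_y\otimes_K L_y$ replaced by $(U'_y)_{L_y}$ for $U'_y\subset X_{k(y)}$.
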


Note that our definition takes only classical points into consideration and thus a potentially crystalline locus of $\bL$ is not unique.
On the other hand, the union of potentially crystalline loci is also a potentially crystalline locus. Hence we obtain the largest potentially crystalline locus of $\bL$.

The second application concerns the special case where $\bL$ comes from geometry.
For example, suppose that we have a family of elliptic curves $f\colon E\ra X$.
For a point $x\in X(\overline{K})$, the elliptic curve $E_{x}$ has good reduction if and only if the $j$-invariant of $E_x$ is a $p$-adic integer. The set of such points may well be called the (potentially) good reduction locus of $E$.
In general, if we have a smooth proper family of algebraic varieties, the \'etale cohomology of the fibers yields both $\ell$-adic and $p$-adic \'etale local systems on the base.
By combining known cases of $\ell$-adic/$p$-adic monodromy-weight conjecture and the $\ell$-independence with our result or Kisin's result, we obtain the following theorem:

\begin{thm}[Theorem~\ref{thm:potentially good reduction locus}]
Assume that the residue field of $K$ is finite. Let $f\colon Y\ra X$ be a smooth proper morphism between smooth algebraic varieties over $K$ such that the relative dimension of $f$ is at most two.
For each $m\in\N$, there exists an analytic open subset $U$ of the adic space $X^{\ad}$ associated to $X$ such that the following holds for every classical point $x$ of $X^{\ad}$:
\begin{enumerate}
 \item if $x\in U$, then $R^mf_\ast\Q_\ell$ is potentially unramified at $x$ for $\ell\neq p$ and 
$R^mf_\ast\Q_p$ is potentially crystalline at $x$.
 \item if $x\not\in U$, then $R^mf_\ast\Q_\ell$ is not potentially unramified at $x$ for $\ell\neq p$ and $R^mf_\ast\Q_p$ is not potentially crystalline at $x$. 
\end{enumerate}
We call such $U$ a cohomologically potentially good reduction locus of $f$ of degree $m$.
\end{thm}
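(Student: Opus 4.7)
The plan is to set $U$ equal to the potentially crystalline locus for the $p$-adic local system $\bL_p := R^m f_\ast \Q_p$, and then invoke $\ell$-independence to show that this single $U$ also satisfies the required properties for every $R^m f_\ast \Q_\ell$ with $\ell\neq p$. Since $\bL_p$ is a de Rham $\Q_p$-local system on $X^{\ad}$ by the relative $p$-adic de Rham comparison for smooth proper morphisms (Scholze, Liu--Zhu), Theorem~\ref{thm:potentially crystalline loci} applies and produces an analytic open $U\subset X^{\ad}$ that is a potentially crystalline locus for $\bL_p$. This is our candidate.

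For a classical point $x\in X^{\ad}$, smooth proper base change identifies $(R^m f_\ast \Q_\ell)_{\bar x}$ and $(R^m f_\ast \Q_p)_{\bar x}$ as $G_{k(x)}$-representations with $H^m(Y_{\bar x, \overline{k(x)}}, \Q_\ell)$ and $H^m(Y_{\bar x, \overline{k(x)}}, \Q_p)$ respectively, where $Y_{\bar x}$ is the fiber, a smooth proper variety of dimension at most $2$ over the $p$-adic local field $k(x)$ (which has finite residue field by hypothesis on $K$). The theorem therefore reduces to the following $\ell$-independence assertion: \emph{for a smooth proper variety $Z$ of dimension at most $2$ over a finite extension $L/\Q_p$ with finite residue field, $H^m(Z_{\bar L}, \Q_p)$ is potentially crystalline if and only if $H^m(Z_{\bar L}, \Q_\ell)$ is potentially unramified for every (equivalently, some) $\ell\neq p$.}

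To establish this, pass to a finite extension $L'/L$ over which $Z$ has semistable reduction (available in this dimension range). After such an extension, "potentially crystalline" for $H^m(Z_{\bar L}, \Q_p)$ becomes the vanishing of the monodromy operator $N_p$ on $D_{\st}$ by Fontaine's theory, while "potentially unramified" for $H^m(Z_{\bar L}, \Q_\ell)$ becomes the vanishing of $N_\ell$ by Grothendieck's monodromy theorem. The equivalence $N_p=0 \iff N_\ell=0$ follows from the compatibility of the $p$-adic and $\ell$-adic monodromy operators via the log-crystalline / nearby-cycles formalism (Hyodo--Kato, Mokrane, Tsuji), combined with the $\ell$-independence of the vanishing of $N_\ell$, which in this dimension range is guaranteed by the monodromy-weight conjecture. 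The needed cases are classical: $H^0$ and $H^{2\dim Z}$ trivially, $H^1$ and $H^{2\dim Z -1}$ by the Albanese/Picard variety and the N\'eron--Ogg--Shafarevich criterion for abelian varieties, and $H^2$ of a surface by the surface case of the monodromy-weight conjecture.

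The main obstacle is precisely assembling these $\ell$-independence inputs in the form needed; this is exactly why the theorem is restricted to relative dimension $\leq 2$, as the full monodromy-weight conjecture remains open beyond this range. Granted these inputs, conditions (i) and (ii) of the theorem are direct translations of the above equivalences through proper base change, so the same $U$ produced from the $p$-adic side serves as the common cohomologically potentially good reduction locus.
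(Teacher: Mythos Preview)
Your proposal is correct and follows essentially the same route as the paper: take $U$ to be a potentially crystalline locus for $R^mf_\ast\Q_p$ (via Theorem~\ref{thm:potentially crystalline loci}) and then invoke the $\ell$-independence of the condition ``$N=0$'' in relative dimension $\leq 2$ (the paper's Corollary~\ref{cor:potentially good reduction for ell and p}). The only refinement worth noting is that the paper's bridge between $N_p=0$ and $N_\ell=0$ is not a direct comparison of monodromy operators but rather passes through purity of Frobenius weights: the monodromy-weight conjecture (known here on both sides) makes $N=0$ equivalent to a weight condition, and Ochiai's $\ell$-independence of traces then shows this weight condition is independent of $\ell$.
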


As in the previous theorem, such a locus is not unique but we obtain the largest cohomologically potentially good reduction locus.

\begin{rem}
Imai and Mieda \cite{Imai-Mieda} introduce a notion of the potentially good reduction locus of a Shimura variety using automorphic local systems. They prove the existence of the potentially good reduction loci of Shimura varieties of preabelian type, and use these loci to study the \'etale cohomology of Shimura varieties.
\end{rem}

\medskip

We now explain ideas of the proof of Theorem~\ref{thm:main thm in introduction}.
We start with an example that motivates our proof:
Let $\fkf\colon \fkY\ra\fkX$ be a smooth proper morphism of smooth $p$-adic formal schemes over $\Spf\calO_K$, and let $f\colon Y\ra X$ denote its adic generic fiber.
Then $\bL:=R^mf_\ast\Q_p$ is a de Rham $\Q_p$-local system. In fact, we even know that it is crystalline at every classical point (hence there is nothing to prove in this case). Let $z$ be a closed point in the special fiber of $\fkX$ and let $U\subset X$ be the residue open disk of $z$.
For $K$-rational points $x_1,x_2\in U$, there is a canonical identification of $\varphi$-modules\footnote{They need not be isomorphic as \emph{filtered} $\varphi$-modules, and thus $\bL_{\overline{x_1}}$ and $\bL_{\overline{x_2}}$ may not be isomorphic as Galois representations of $K$.}
\[
 D_{\cris}(\bL_{\overline{x_1}})\cong H^m_{\cris}(\fkY_z/W(k))[p^{-1}]\cong D_{\cris}(\bL_{\overline{x_2}}).
\]
One key property of $U$ is that the Gauss--Manin connection on the vector bundle $H^m_{\dR}(Y/X)$ on $X$ has a full set of solutions when restricted to $U$ (cf.~\cite{Katz-Dwork} and \cite[Remark 2.9]{Berthelot-Ogus-isomI}).
Heuristically, this means that we can compare $\varphi$-modules associated to stalks of $\bL$ at different points on $U$ by parallel transport. 
Our proof articulates this idea despite the absence of integral models of $X$ nor motives for $\bL$.

Let us return to the general setting of Theorem~\ref{thm:main thm in introduction}.
Liu and Zhu \cite{Liu-Zhu} introduce a natural functor $D_{\dR}$ from the category of \'etale $\Q_p$-local systems on $X$ to the category of filtered vector bundles with integrable connections on $X$. When $X=\Spa(K,\calO_K)$, this agrees with Fontaine's $D_{\dR}$-functor for Galois representations of $K$. The functor commutes with arbitrary pullbacks, and an \'etale $\Q_p$-local system $\bL$ on $X$ is de Rham if and only if $\rank D_{\dR}(\bL)=\rank \bL$.
Note that in the above geometric situation, $D_{\dR}(R^mf_\ast\Q_p)$ coincides with $H_{\dR}^m(Y/X)$ equipped with the Gauss--Manin connection and the Hodge filtration.

Suppose that $\bL$ is a de Rham $\Z_p$-local system and fix a $K$-rational point $x$ of $X$.
Since $X$ is smooth, the theory of $p$-adic differential equations over a polydisk (Theorem~\ref{thm:pDF on polydisk}) tells that there is an analytic open neighborhood $U\subset X$ of $x$ such that $D_{\dR}(\bL|_U)$ has a full set of solutions.
By shrinking $U$ if necessary, we may further assume that $U$ is isomorphic to a rigid torus
$\Spa(A,A^\circ)$, where $A^\circ:=\calO_K\langle T_1^{\pm 1},\ldots,T_n^{\pm 1}\rangle$, the $p$-adic completion of the Laurent polynomial ring $\calO_K[T_1^{\pm 1},\ldots,T_n^{\pm 1}]$, and $A:=A^\circ[p^{-1}]$. 

Let $\calG_A$ denote the absolute Galois group of $A$. 
Following works of Colmez \cite{Colmez} and Brinon \cite{Brinon-crisdeRham}, we define horizontal crystalline, semistable, and de Rham period rings $\rmB_{\max}^\nabla(A^\circ), \rmB_{\st}^\nabla(A^\circ), \rmB_{\dR}^\nabla(A^\circ)$. Using these period rings, we define the notions of horizontal crystalline, semistable, and de Rham representations of $\calG_A$ (Definition~\ref{def:horizontal de Rham representations}).
In our setting, $\bL|_U$ corresponds to a $p$-adic representation $V$ of $\calG_A$. Since $\bL|_U$ is de Rham and $D_{\dR}(\bL|_U)$ has a full set of solutions, $V$ is horizontal de Rham. Now the first part of Theorem~\ref{thm:main thm in introduction} follows from a $p$-adic monodromy theorem for horizontal de Rham representations:

\begin{thm}[Theorem~\ref{thm:p-adic monodromy for horizontal de Rham representations}]\label{thm:p-adic monodromy for horizontal de Rham representations in intro}
If a $p$-adic representation $V$ of $\calG_A$ is horizontal de Rham, then there exists a finite extension $L$ of $K$ such that $V|_{\calG_{A_L}}$ is horizontal semistable, where $A_L:=A\otimes_KL$.
\end{thm}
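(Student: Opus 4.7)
The plan is to deduce Theorem~\ref{thm:p-adic monodromy for horizontal de Rham representations in intro} from Berger's classical $p$-adic monodromy theorem by specializing at a $K$-rational point of $\Spa(A,A^\circ)$. The guiding intuition is that the horizontal condition kills parallel transport along the $T_i$-directions, so the entire monodromy obstruction is concentrated in the cyclotomic direction, where it is purely arithmetic.

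The first step is to fix the $K$-rational point $x_0=(1,\ldots,1)$ together with a geometric point $\overline{x_0}$ above it, which determines a section $\iota\colon\Gal(\overline{K}/K)\hookrightarrow\calG_A$ of the natural projection. A $\Gal(\overline{K}/K)$-equivariant specialization map $\rmB_{\dR}^\nabla(A^\circ)\to\rmB_{\dR}$ at $x_0$, combined with the horizontal de Rham hypothesis on $V$, should imply that the fibral representation $V_{x_0}:=\iota^\ast V$ is de Rham in Fontaine's sense. Berger's theorem then yields a finite extension $L/K$ such that $V_{x_0}|_{\Gal(\overline{L}/L)}$ is semistable, with associated admissible filtered $(\varphi,N)$-module $D:=D_{\st}(V_{x_0}|_{\Gal(\overline{L}/L)})$.

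The second step is to promote this fibral semistability to horizontal semistability of $V|_{\calG_{A_L}}$. The horizontal de Rham hypothesis guarantees that $D_{\dR}^\nabla(V|_{\calG_{A_L}})$ has full rank and, via its full set of horizontal sections, admits a canonical identification with the scalar extension of $D$ to the appropriate coefficient ring. Transporting the Frobenius and monodromy operator from $D$ along this identification endows it with a $(\varphi,N)$-structure, and the task reduces to verifying the comparison isomorphism
\[
D\otimes \rmB_{\st}^\nabla(A_L^\circ)\;\lra\; V\otimes_{\Q_p}\rmB_{\st}^\nabla(A_L^\circ).
\]
I would check this by combining Berger's comparison isomorphism at $x_0$ with a descent argument along the specialization map $\rmB_{\st}^\nabla(A_L^\circ)\to\rmB_{\st}$.

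The main obstacle is establishing the required compatibilities of the horizontal period rings under specialization: the map $\rmB_{\st}^\nabla(A_L^\circ)\to\rmB_{\st}$ must respect the full $(\varphi,N,\Fil,\Gal)$-structure, and the resulting comparison isomorphism must be equivariant for the entire group $\calG_{A_L}$ rather than only for its arithmetic quotient via $\iota$. Making this precise requires an explicit description of $\rmB_{\st}^\nabla(A_L^\circ)$, presumably obtained from $\rmB_{\st}$ by adjoining Kummer logarithms of the coordinates $T_i$, together with a rigidity statement showing that the horizontal part of $V\otimes \rmB_{\st}^\nabla(A_L^\circ)$ is determined by its specialization at $x_0$; the latter is precisely what the full set of horizontal solutions on $D_{\dR}^\nabla(V|_{\calG_{A_L}})$ provides.
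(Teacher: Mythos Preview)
Your approach has a genuine gap at the second step, and it is precisely the step where all the difficulty lies. Promoting semistability from a single classical point $x_0$ to horizontal semistability over $A_L$ is not a formal consequence of the horizontal de Rham hypothesis; it is a purity statement that requires real work. The ``rigidity'' you invoke---that the horizontal part of $V\otimes\rmB_{\st}^\nabla(A_L^\circ)$ is determined by its specialization at $x_0$ because $D_{\dR}^\nabla(V|_{\calG_{A_L}})$ has a full set of solutions---does not follow: the de Rham data controls neither $\varphi$ nor $N$ on the semistable side, and the specialization map on period rings is very far from injective. Example~\ref{example: non-compatibility of filtrations} already shows that even for horizontal crystalline representations the horizontal comparison $\alpha_{\dR}^\nabla(V)$ fails to be strictly compatible with filtrations, so the transport of structure you describe cannot be as automatic as you suggest. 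Your description of $\rmB_{\st}^\nabla(A_L^\circ)$ as $\rmB_{\st}$ with Kummer logarithms of the $T_i$ adjoined is also not correct: the ring is $\B_{\st}(\widehat{\overline{R}})$ for $R=A_L^\circ$, built from the perfectoid algebra $\widehat{\overline{R}}$, and it does not admit such a simple presentation over the point.

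The paper takes a different route that avoids this obstacle. Instead of specializing at a classical point, it localizes $R=A^\circ$ at the height-one prime $(\pi)$ and completes, obtaining a complete discrete valuation field $\calK$ with \emph{imperfect} residue field and $\calK_{\can}=K$. The restriction $V|_{\Gal(\overline{\calK}/\calK)}$ is horizontal de Rham, and Ohkubo's horizontal monodromy theorem (Theorem~\ref{thm:horizontal p-adic monodromy}) produces a finite $L/K$ making it horizontal semistable over $L\calK$. The passage back from $\calK$ to $A$ is then handled by the purity theorem (Theorem~\ref{thm:purity for horiztaonlly semistable}), whose proof occupies Section~\ref{section:purity} and rests on a delicate analysis of the period rings (notably Lemma~\ref{lem:Cartesian diagram of period rings}) comparing $\B_{\st,K}^+$ and $\B_{\dR}^+$ for $\Lambda_R^+$, the localizations $\Lambda_\fkp^+$, and their product $\Lambda_T^+$. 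The reason this localization works where your classical point does not is that $\Lambda_R^+\to\Lambda_T^+$ is injective with $p$-torsion-free cokernel, which makes the relevant diagram of period rings Cartesian; no such control is available for the specialization map to a closed point.
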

The second part of Theorem~\ref{thm:main thm in introduction} follows from the study of the monodromy operator (cf.~Theorem~\ref{thm:horizontal de Rham and one point crystalline}).

To explain the proof of Theorem~\ref{thm:p-adic monodromy for horizontal de Rham representations in intro}, we need some more notation.
Let $\calO_{\calK}$ denote the $p$-adic completion of the localization $A^\circ_{(\pi)}$, where $\pi$ is a uniformizer of $K$. Then $\calK:=\calO_{\calK}[p^{-1}]$ is a complete discrete valuation field with imperfect residue field. We similarly have the notions of horizontal semistable and de Rham representations of the absolute Galois group of $\calK$.
In \cite{Ohkubo}, Ohkubo shows that every horizontal de Rham representation becomes horizontal semistable after restricting to the absolute Galois group of the composite $L\calK$ for some finite extension $L$ of $K$.
Hence Theorem~\ref{thm:p-adic monodromy for horizontal de Rham representations in intro} follows from Ohkubo's result and the following purity theorem for horizontal semistable representations of $\calG_A$:
\begin{thm}[Theorem~\ref{thm:purity for horiztaonlly semistable}]
 If a horizontal de Rham representation of $\calG_A$ is horizontal semistable when restricted to the absolute Galois group of $\calK$, then it is also horizontal semistable as a representation of $\calG_A$.
\end{thm}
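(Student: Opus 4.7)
The plan is to combine the horizontal de Rham hypothesis over $A^\circ$ with the horizontal semistable hypothesis over $\calO_\calK$, and to reduce the conclusion to a purity identity among the horizontal period rings. Fix $V$, a horizontal de Rham representation of $\calG_A$ whose restriction to $\calG_\calK$ is horizontal semistable; here the embedding $\calG_\calK \hookrightarrow \calG_A$ is induced by a choice of embedding of the fraction field of $A^\circ$ into $\calK$.

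First, I would record the natural, $(\varphi, N, \Fil)$- and Galois-equivariant inclusions
\[
\rmB_{\max}^\nabla(A^\circ) \hookrightarrow \rmB_{\max}^\nabla(\calO_\calK), \quad \rmB_{\st}^\nabla(A^\circ) \hookrightarrow \rmB_{\st}^\nabla(\calO_\calK), \quad \rmB_{\dR}^\nabla(A^\circ) \hookrightarrow \rmB_{\dR}^\nabla(\calO_\calK).
\]
The horizontal de Rham comparison over $A^\circ$ yields an isomorphism $\rmB_{\dR}^\nabla(A^\circ) \otimes_K D_{\dR}^\nabla(V) \xrightarrow{\sim} \rmB_{\dR}^\nabla(A^\circ) \otimes_{\Q_p} V$. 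Base-changing to $\rmB_{\dR}^\nabla(\calO_\calK)$ and taking $\calG_\calK$-invariants, then comparing with the horizontal semistable comparison over $\calO_\calK$, one obtains a canonical identification $D_{\dR}^\nabla(V) = D_{\st}^\nabla(V|_{\calG_\calK})$ as $K$-vector spaces of dimension $d := \dim_{\Q_p} V$.

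Fix a $K$-basis $e_1, \ldots, e_d$ of this common space. Via the de Rham comparison, the coordinates of each $e_i$ in $V$ lie in $\rmB_{\dR}^\nabla(A^\circ)$ and are $\calG_A$-invariant; via the semistable comparison, these same coordinates lie in $\rmB_{\st}^\nabla(\calO_\calK)$. Matching these two descriptions inside $\rmB_{\dR}^\nabla(\calO_\calK) \otimes_{\Q_p} V$, the coordinates lie in the intersection $\rmB_{\dR}^\nabla(A^\circ) \cap \rmB_{\st}^\nabla(\calO_\calK)$. Consequently, the entire statement reduces to the ring-theoretic purity identity
\[
\rmB_{\st}^\nabla(A^\circ) = \rmB_{\dR}^\nabla(A^\circ) \cap \rmB_{\st}^\nabla(\calO_\calK)
\]
taken inside $\rmB_{\dR}^\nabla(\calO_\calK)$: granting this, the $e_i$ produce $d$ linearly independent $\calG_A$-invariant elements of $\rmB_{\st}^\nabla(A^\circ) \otimes_{\Q_p} V$, witnessing horizontal semistability of $V$.

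The main obstacle will be establishing this purity identity. I would attack it in two steps. First, using the presentation $\rmB_{\st}^\nabla = \rmB_{\max}^\nabla[X]$ with $X = \log[\underline{\pi}]$, together with the monodromy derivation $N$ which strictly decreases the $X$-degree, I would reduce by induction on $X$-degree to the crystalline analogue
\[
\rmB_{\max}^\nabla(A^\circ) = \rmB_{\dR}^\nabla(A^\circ) \cap \rmB_{\max}^\nabla(\calO_\calK).
\]
Second, this crystalline identity I would attack using the convergent-series descriptions of $\rmB_{\max}^\nabla$ over the tilts of $A^\circ$ and $\calO_\calK$, combined with the $p$-adic faithful flatness of $A^\circ \to \calO_\calK$ and the compatibility of the underlying $\rmA_{\mathrm{inf}}$-constructions on both sides. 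The delicate point is controlling coefficients: an element of $\rmB_{\max}^\nabla(\calO_\calK)$ is a convergent series with coefficients in $\rmA_{\mathrm{inf}}(\calO_\calK)$, and one must show that if such a series represents an element of $\rmB_{\dR}^\nabla(A^\circ)$, then its coefficients already lie in $\rmA_{\mathrm{inf}}(A^\circ)$. I expect most of the technical work to be concentrated in this step, as it requires showing that the transition between the Tate algebra $A^\circ$ and the complete discrete valuation ring $\calO_\calK$ does not create spurious periods when measured in the $\theta$-adic topology of $\rmB_{\dR}^\nabla$.
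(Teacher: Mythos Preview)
Your reduction to a pure ring-theoretic identity
\[
\rmB_{\st}^\nabla(A^\circ) \;=\; \rmB_{\dR}^\nabla(A^\circ)\cap\rmB_{\st}^\nabla(\calO_{\calK})
\quad\text{inside }\rmB_{\dR}^\nabla(\calO_{\calK})
\]
is where the argument breaks. The period rings in question are built from the perfectoid pair $(\widehat{\overline{R}}[p^{-1}],\widehat{\overline{R}})$ for $R=A^\circ$, and the comparison with $\calO_{\calK}$ passes through the localization $\widehat{\overline{R}}\to\widehat{\overline{R}_{\fkp_0}}\to\widehat{\calO_{\overline{\calK}}}$ at the \emph{single} prime $\fkp_0$ of $\overline{R}$ above $(\pi)$ determined by your chosen embedding. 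But $\overline{R}$ has an entire (typically infinite) set $T$ of primes above $(\pi)$, and the map $\overline{R}/p\to\overline{R}_{\fkp_0}/p$ has nontrivial kernel whenever $|T|>1$: any $x\in R'$ (for $R'\subset\overline{R}$ finite normal over $R$) with $v_{\fkp_0}(x)\geq v_{\fkp_0}(p)$ but $v_{\fkp}(x)<v_{\fkp}(p)$ for some other $\fkp$ gives such an element. Hence $\widehat{\overline{R}}\hookrightarrow\widehat{\calO_{\overline{\calK}}}$ does \emph{not} have $p$-torsion-free cokernel, the Cartesian-diagram machinery (the paper's Lemma~\ref{lem:Cartesian diagram of period rings}) does not apply, and in Colmez's description one can produce $f=\sum a_nX^n\in\Lambda_R[[X]]$ with $a_n\to 0$ in $\Lambda_{\calK}$ but not in $\Lambda_R$, so that $f\in\B_{\dR}^+(\Lambda_R^+)\cap\B_{\max,K}^+(\Lambda_{\calK}^+)$ yet $f\notin\B_{\max,K}^+(\Lambda_R^+)$. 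Thus the identity you are trying to prove is false as stated, and your proposed method (faithful flatness of $A^\circ\to\calO_{\calK}$, coefficient comparison in $\rmA_{\inf}$) cannot salvage it: the difficulty is not at the level of $A^\circ$ versus $\calO_{\calK}$, but at the level of $\overline{R}$ versus its localization at one prime.

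The paper repairs exactly this gap by \emph{not} discarding the Galois action. The elements $e_i$ you produce are $\calG_A$-invariant in $\B_{\dR}^+(\Lambda_R^+)\otimes V$, and $\calG_A$ acts transitively on $T$. So once the image of $e_i$ at $\fkp_0$ lies in $\B_{\st,K}^+(\Lambda_{\fkp_0}^+)\otimes V$, conjugating by $\sigma_{\fkp}\in\calG_A$ with $\sigma_{\fkp}(\fkp_0)=\fkp$ forces the image at \emph{every} $\fkp\in T$ to lie in $\B_{\st,K}^+(\Lambda_{\fkp}^+)\otimes V$, with uniform integral bounds. One then passes to the product $\Lambda_T^+:=\prod_{\fkp\in T}\Lambda_{\fkp}^+$; the map $\widehat{\overline{R}}\to\Lambda_T^+$ \emph{does} have $p$-torsion-free cokernel (because $\overline{R}/\pi\hookrightarrow\prod_{\fkp}\overline{R}_{\fkp}/\pi$), so Lemma~\ref{lem:Cartesian diagram of period rings} gives the Cartesian square for $\Lambda_R^+\hookrightarrow\Lambda_T^+$, and this is what finally pulls the $e_i$ back into $\B_{\st,K}^+(\Lambda_R^+)\otimes V$. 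In short, the missing idea is that the single prime $\fkp_0$ must be replaced by the full $\calG_A$-orbit $T$, and the $\calG_A$-invariance of your basis is not a bookkeeping convenience but the mechanism that makes the purity go through.
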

 
This is the key technical result of our article and the proof requires detailed analysis of period rings (e.g.~Lemma~\ref{lem:Cartesian diagram of period rings}). Our purity theorem and its proof are inspired by similar results in different contexts:
 a $p$-adic monodromy theorem for de Rham representations of the absolute Galois group of $K$ with values in a reduced affinoid algebra by Berger and Colmez \cite{Berger-Colmez}, and a purity theorem for crystalline local systems in the case of good reduction by Tsuji \cite[Theorem~5.4.8]{Tsuji-cris}.

\medskip

At this point, we emphasize that the main goal of our work is to develop $p$-adic Hodge theory of horizontal crystalline, semistable, and  de Rham representations of $\calG_A$. 
These classes of representations are much more restrictive than those of crystalline and de Rham representations studied by Brinon \cite{Brinon-crisdeRham}. However, it is still worth studying them.
In fact, as we have explained, de Rham local systems come from horizontal de Rham representations (\'etale) locally around each classical point.
For another instance, Faltings \cite{Faltings-coverings} studies the admissible loci of certain $p$-adic period domains using horizontal crystalline representations (see also \cite{Hartl-Rapoport-Zink,Kedlaya-ICM, Fargues-ICM} for relevant works).

This article establishes two results in $p$-adic Hodge theory of $\calG_A$.
First we prove the $p$-adic monodromy theorem for horizontal de Rham representations as we have discussed. Second we define Fontaine's functors $D_{\pst}$ and $V_{\pst}$ in our setting (Section~\ref{section:filtered phi N modules}). We prove that these functors induce an equivalence of categories between the category of horizontal de Rham representations of $\calG_A$ and that of admissible discrete filtered $(\varphi,N,\Gal(\overline{K}/K),A)$-modules (Theorem~\ref{thm:equivalence by Dpst and Vpst}). 
What is missing in our work is to characterize the admissibility condition in the latter category in terms of more intrinsic properties like semistability (weak admissibility). In the case of $p$-adic Hodge theory for $K$, this is first established by Colmez and Fontaine \cite{Colmez-Fontaine}.
Unfortunately, results on $p$-adic period domains already suggest that the admissibility in our setting cannot be characterized by (classical) pointwise weak admissibility (cf.~\cite[Example 6.7]{Hartl-Rapoport-Zink}).

\medskip

Finally, we explain the organization of this paper.
The first part (Sections~\ref{section:period rings}-\ref{section:p-adic monodromy theorem for horizontal de Rham representations}) develops $p$-adic Hodge theory for rings (i.e., representations of $\calG_A$) and the second part (Sections~\ref{section:p-adic Hodge theory for rigid analytic varieties}-\ref{section:potentially crystalline loci and potentially good reduction loci}) studies $p$-adic Hodge theory for rigid analytic varieties.

In the first part, we need to define horizontal period rings such as $\rmB_{\dR}^\nabla(A^\circ)$. We define them in two steps. First we define horizontal period rings such as $\B_{\dR}(\Lambda,\Lambda^+)$ for every perfectoid Banach pair $(\Lambda,\Lambda^+)$ over the completed algebraic closure of $K$. For a base ring $R$ (e.g.~$A^\circ$), the $p$-adic completion $\widehat{\overline{R}}$ of the integral closure of $R$ in the algebraic closure of $R[p^{-1}]$ gives such a perfectoid Banach pair, and we set $\rmB_{\dR}^\nabla(R):=\B_{\dR}(\widehat{\overline{R}}[p^{-1}], \widehat{\overline{R}})$.

In Section~\ref{section:period rings}, we define horizontal period rings for perfectoid Banach pairs, following \cite{Colmez} and \cite{Brinon-crisdeRham}.
Most of the arguments in this section are standard in $p$-adic Hodge theory. One exception might be Lemma~\ref{lem:Cartesian diagram of period rings}, which will be used crucially in the proof of the purity theorem in Section~\ref{section:purity}.
Section~\ref{section:base rings} studies the class of base rings $R$ over which Brinon \cite{Brinon-crisdeRham} develops $p$-adic Hodge theory. In Section~\ref{section:horizontal semistable representations}, we define horizontal crystalline, semistable, and de Rham representations. Section~\ref{section:filtered phi N modules} introduces filtered $(\varphi,N,\Gal(L/K),A)$-modules, and the functors $D_{\pst}$ and $V_{\pst}$. This section is not used in later sections.
After these preparations, we state and prove the purity theorem in Section~\ref{section:purity}. We prove Theorem~\ref{thm:p-adic monodromy for horizontal de Rham representations in intro} in Section~\ref{section:p-adic monodromy theorem for horizontal de Rham representations}. 

The second part starts with Section~\ref{section:p-adic Hodge theory for rigid analytic varieties}, where we review \'etale local systems on smooth rigid analytic varieties and period sheaves on the pro-\'etale site. Our $p$-adic monodromy theorem is proved in Section~\ref{section:p-adic monodromy theorem for de Rham local systems}.
Finally, we discuss potentially crystalline loci and cohomologically potentially good reduction loci in Section~\ref{section:potentially crystalline loci and potentially good reduction loci}.

\medskip
\noindent
\textbf{Acknowledgments.}
The author would like to thank Takeshi Tsuji for sending the preprint \cite{Tsuji-cris} and Shun Ohkubo for email correspondences on the article \cite{Ohkubo}. He would also like to thank H\'el\`ene Esnault, Teruhisa Koshikawa, and Alex Youcis for valuable discussions and comments about this work.
The author was supported by the National Science Foundation under Grant No.~DMS-1638352 through membership at the Institute for Advanced Study.

\medskip
\noindent
\textbf{Notation.}

For a ring $S$ of characteristic $p$, the Frobenius map $x\mapsto x^p$ is denoted by $\varphi$. We say that $S$ is \emph{perfect} if $\varphi$ is an isomorphism. In this case, $W(S)$ denotes the ring of $p$-typical Witt vectors of $S$. By functoriality, $W(S)$ admits the Witt vector Frobenius $W(\varphi)$, which we simply denote by $\varphi$.

A rigid analytic variety over $K$ refers to a quasi-separated adic space locally of finite type over $\Spa(K,\calO_K)$ (cf.~ \cite[Proposition 4.5(iv)]{Huber-gen}).
For a rigid analytic variety $X$ over $K$, an (analytic) open subset of $X$ refers to an open subset of $X$ as an adic space.
Definitions of \'etale $\Z_p$-local systems and \'etale $\Q_p$-local systems on $X$ are explained in Subsection~\ref{subsection:local systems}.

\section{Period rings}\label{section:period rings}

In this section, we introduce horizontal period rings $\B_{\dR}, \B_{\max}, \B_{\st}$, etc., and study their basic properties. We will use these period rings to define horizontal de Rham, crystalline, and semistable representations in Section~\ref{section:horizontal semistable representations}.
Colmez define these period rings for sympathetic algebras in \cite{Colmez}.
For our later technical purpose, we define them in a more general setting. 
Most proofs in this section have essentially appeared in \cite{Fontaine-exposeII, Colmez, Brinon-crisdeRham, KL-I}.
Lemma~\ref{lem:Cartesian diagram of period rings} is a key technical result used in the proof of the purity theorem in Section~\ref{section:purity}.

\subsection{Review of Banach algebras and perfectoid pairs}
In this subsection, let $L$ be a field that is complete with respect to a non-archimedean $p$-adic norm $\lvert \;\rvert$ satisfying $\lvert p\rvert=p^{-1}$.

\begin{defn}[cf. {\cite[\S 2.2]{Colmez}}]\hfill
\begin{enumerate}
 \item  Let $\Lambda$ be an $L$-algebra.
An \emph{$L$-algebra seminorm} $\lvert\,\rvert_\Lambda$ on $\Lambda$ is a function
$\lvert\,\rvert_\Lambda\colon \Lambda\ra \R_{\geq 0}$
such that 
\begin{itemize}
 \item $\lvert 0\rvert_\Lambda=0$ and $\lvert 1\rvert_\Lambda=1$;
 \item $\lvert x+y\rvert_\Lambda\leq \max\{\lvert x\rvert_\Lambda, \lvert y\rvert_\Lambda\}$ 
and $\lvert xy\rvert_\Lambda\leq \lvert x\rvert_\Lambda\cdot\lvert y\rvert_\Lambda$ for $x,y\in\Lambda$;
 \item $\lvert ax\rvert_\Lambda=\lvert a\rvert\cdot \lvert x\rvert_\Lambda$ for $a\in L$ and $x\in\Lambda$.
\end{itemize}
We say that an $L$-algebra seminorm is an \emph{$L$-algebra norm} if 
$\lvert x\rvert_\Lambda=0$ holds only when $x=0$. 
In this case, we call the pair $(\Lambda,\lvert\,\rvert_\Lambda)$ an \emph{$L$-normed algebra}.
If there is no confusion, we simply say that $\Lambda$ is an $L$-normed algebra and denote its fixed $L$-algebra norm by $\lvert\,\rvert_\Lambda$.
 \item An \emph{$L$-Banach algebra} is an $L$-normed algebra $(\Lambda,\lvert\,\rvert_\Lambda)$
such that $\Lambda$ is complete with respect to the topology defined by $\lvert\,\rvert_\Lambda$. A morphism of $L$-Banach algebras will refer to a continuous $L$-algebra homomorphism.
 \item Let $\Lambda$ be an $L$-Banach algebra. An element $x\in\Lambda$ is called \emph{power-bounded} if the subset $S=\{x^n\mid n\in\N\}\subset \Lambda$ is bounded, i.e., for every open neighborhood $U$ of $0$, there exists an open neighborhood $V$ of $0$ such that $VS\subset U$.
The set of power-bounded elements forms a subring of $\Lambda$, which we denote by $\Lambda^\circ$.
 \item (\cite[Definition 2.8.1]{KL-I}) An $L$-Banach algebra $\Lambda$ is called \emph{uniform} if the subring $\Lambda^\circ$ of $\Lambda$ is bounded.
Note that $\Lambda$ is uniform if and only if the norm on $\Lambda$ is equivalent to its spectral (semi)norm\footnote{If $\Lambda$ is uniform, then $\Lambda$ with the spectral norm is uniform in the sense of \cite[p.~16]{Berkovich-book}.}.
 \item (\cite[Definition 2.4.1]{KL-I})
An $L$-Banach pair\footnote{It is called an adic $L$-Banach ring in \cite{KL-I}.} is a pair $(\Lambda,\Lambda^+)$ in which $\Lambda$ is an $L$-Banach algebra and $\Lambda^+$ is a subring of $\Lambda^\circ$ which is open and integrally closed in $\Lambda$. Such a $\Lambda^+$ is also called \emph{a ring of integral elements}.
A morphism of $L$-Banach pairs $(\Lambda,\Lambda^+)\ra (\Lambda',\Lambda'^+)$
is a morphism $f\colon \Lambda\ra \Lambda'$ of $L$-Banach algebras such that $f(\Lambda^+)\subset \Lambda'^+$.
\end{enumerate}
\end{defn}

\begin{rem}
 Note that in \cite{KL-I} a morphism of $L$-Banach algebras is required to be bounded.
\end{rem}

\begin{rem}
 If $(\Lambda,\Lambda^+)$ is an $L$-Banach pair and if $\Lambda$ is uniform, then $\Lambda^+$ is reduced, $p$-torsion free, and $p$-adically complete and separated.
\end{rem}

\begin{defn}
Assume further that $L$ is algebraically closed.
An $L$-Banach pair $(\Lambda,\Lambda^+)$ is called \emph{perfectoid} if $\Lambda$ is uniform and if the Frobenius morphism $x\mapsto x^p$ on $\Lambda^+/p\Lambda^+$ is surjective.
\end{defn}

\begin{rem}
Under the assumption that $L$ is algebraically closed, 
 an $L$-Banach pair $(\Lambda,\Lambda^+)$ is perfectoid in the above sense if and only if it is perfectoid in the sense of \cite[Definition 3.6.1]{KL-I} by \cite[Proposition 3.6.2 (a)]{KL-I}.
In the case where $\Lambda^+=\Lambda^\circ$, this definition is equivalent to the condition that $\Lambda$ is a perfectoid $L$-algebra in the sense of \cite[Definition 5.1 (i)]{Scholze-perfectoid}, in which case we will simply say that $\Lambda$ is perfectoid (over $L$).
By \cite[Proposition 3.6.2 (d)]{KL-I}, $(\Lambda,\Lambda^+)$ is perfectoid if and only if $\Lambda$ is perfectoid, i.e., $(\Lambda,\Lambda^\circ)$ is perfectoid.
\end{rem}

\subsection{Period rings for perfectoid pairs}

Let $k$ be a perfect field of characteristic $p$ and set $K_0:=W(k)[p^{-1}]$.
Let $K$ be a totally ramified finite extension of $K_0$. We fix a uniformizer $\pi$ of $K$ and an algebraic closure $\overline{K}$ of $K$. Let $C$ denote the $p$-adic completion of $\overline{K}$. Then $C$ is algebraically closed by Krasner's lemma. We denote by $\calO_{C}$ the ring of integers of $C$

\begin{set-up}
Let $(\Lambda,\Lambda^+)$ be a perfectoid $C$-Banach pair such that $\Lambda^+$ is an $\calO_{C}$-algebra. Note that $\Lambda=\Lambda^+[p^{-1}]$.
 \end{set-up}

\begin{rem}
When we discuss a morphism $(\Lambda,\Lambda^+)\ra (\Lambda',\Lambda'^+)$ of such objects, we consider a morphism of $K$-Banach pairs as opposed to that of $C$-Banach pairs. For example, we will consider $\sigma\colon(C,\calO_{C})\ra (C,\calO_{C})$ for $\sigma\in \Gal(\overline{K}/K)$.
\end{rem}

We set
\[
\Lambda^{+\flat}:=\varprojlim_{\varphi}\Lambda^+/p\Lambda^+.
\]
By \cite[Remark 3.4.10]{KL-I}, this is a ring of characteristic $p$ (called the \emph{tilt} of $\Lambda^+$) and there is a natural multiplicative monoid isomorphism
\[
 \varprojlim_{\lambda\mapsto \lambda^p}\Lambda^+\stackrel{\cong}{\lra}\Lambda^{+\flat}.
\]
We denote by $\Lambda^{+\flat}\ra \Lambda^+; x\mapsto x^\sharp$ the inverse of the above isomorphism followed by the first projection.
Note that $\Lambda^{+\flat}$ has a norm $\lvert \;\rvert$ defined by $\lvert x\rvert:=\lvert x^\sharp\rvert_{\Lambda}$ and it is complete with respect to this norm (\cite[Lemma 3.4.5]{KL-I}).

We set
\begin{align*}
\A_{\inf} (\Lambda^+)&=\A_{\inf}(\Lambda,\Lambda^+):=W(\Lambda^{+\flat}),\quad\text{and}\\
\A_{\inf,K}(\Lambda^+)&=\A_{\inf,K}(\Lambda,\Lambda^+):=\calO_K\otimes_{W(k)}\A_{\inf}(\Lambda^+). 
\end{align*}
By functoriality of $p$-typical Witt vectors, $\A_{\inf}(\Lambda^+)$ carries the Frobenius $\varphi$ lifting $x\mapsto x^p$.
There exists a unique surjective $W(k)$-algebra homomorphism
\[
 \theta_{\A_{\inf}(\Lambda^+)}\colon \A_{\inf}(\Lambda^+)\ra \Lambda^+
\]
characterized by
\[
 \theta_{\A_{\inf}(\Lambda^+)}\biggl(
\sum_{i=0}^\infty [x_i]p^i
\biggr)
=\sum_{i=0}^\infty x_i^\sharp p^i,
\]
where $[x_i]\in \A_{\inf}(\Lambda^+)$ is the Teichm\"uller lift of $x_i$
(cf.~\cite[Lemma 3.2.2, Definition 3.4.3]{KL-I}).
This map also extends to a surjective $\calO_K$-algebra homomorphism
\[
 \theta_{\A_{\inf,K}(\Lambda^+)}\colon \A_{\inf,K}(\Lambda^+)\ra \Lambda^+.
\]

Choose a compatible system $(p_m)_{m\in\N}$ of $p$-power roots of $p$ in $\overline{K}$, i.e., $p_m\in \overline{K}$ with $p_0=p$ and $p_{m+1}^p=p_m$.
Set $p^\flat:=(p_0\bmod{p},p_1\bmod{p},\ldots)\in \calO_{C}^\flat\subset \Lambda^{+\flat}$.
Similarly, choose a compatible system $(\pi_m)_{m\in\N}$ of $p$-power roots of $\pi$ in $\overline{K}$ and set $\pi^\flat:=(\pi_0\bmod{p},\pi_1\bmod{p},\ldots)\in \calO_{C}^\flat\subset \Lambda^{+\flat}$.

\begin{lem}\label{lem: ker theta is principal}
The ideal $\Ker \theta_{\A_{\inf}(\Lambda^+)}$ is principal.
In fact, it is generated by $[p^\flat]-p$.
Similarly, $\Ker \theta_{\A_{\inf,K}(\Lambda^+)}$ is principal with a generator $[\pi^\flat]-\pi$.
\end{lem}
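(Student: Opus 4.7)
The plan is to prove both principality statements by the three-step strategy standard in $p$-adic Hodge theory: exhibit the proposed generator as an element of the kernel, show it is a non-zero-divisor, and then factor arbitrary elements of the kernel by a successive-approximation argument exploiting $p$-adic (respectively $\pi$-adic) completeness.

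For the first statement, set $\xi := [p^\flat] - p$. That $\xi \in \Ker \theta_{\A_{\inf}(\Lambda^+)}$ is immediate from $\theta([p^\flat]) = (p^\flat)^\sharp = p$. To show $\xi$ is a non-zero-divisor, I would reduce modulo $p$ using the identification $W(\Lambda^{+\flat})/p = \Lambda^{+\flat}$, under which $\xi$ becomes $p^\flat$. The key sublemma is that $p^\flat$ is a non-zero-divisor in $\Lambda^{+\flat}$: this follows from $\Lambda^+$ being $p$-torsion-free and reduced (consequences of uniformity) together with the multiplicative identification $\Lambda^{+\flat} \cong \varprojlim_{x \mapsto x^p} \Lambda^+$. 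Combined with $p$-torsion-freeness and $p$-adic separatedness of $W(\Lambda^{+\flat})$, an iteration shows $\xi y = 0$ implies $y = 0$.

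The heart of the proof is showing $\Ker \theta_{\A_{\inf}(\Lambda^+)} \subseteq (\xi)$. The crucial input is the isomorphism $\Lambda^{+\flat}/p^\flat \xrightarrow{\sim} \Lambda^+/p$ induced by the sharp map: surjectivity comes from the surjectivity of the Frobenius on $\Lambda^+/p$ (the perfectoid hypothesis, which supplies compatible systems of $p$-power roots), and injectivity reduces to a perfectoid divisibility analysis similar to the one above. Given $x \in \Ker \theta_{\A_{\inf}(\Lambda^+)}$, I would write $x = [\bar{x}_0] + p\,x_1$ with $\bar{x}_0 = x \bmod p \in \Lambda^{+\flat}$; the condition $\theta(x) = 0$ forces $\bar{x}_0^\sharp \in p\Lambda^+$, so by the isomorphism there exists $\bar{y}_0 \in \Lambda^{+\flat}$ with $\bar{x}_0 = p^\flat\, \bar{y}_0$, giving $[\bar{x}_0] = [p^\flat][\bar{y}_0]$ by multiplicativity of the Teichmüller lift. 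Since $[p^\flat] = \xi + p$, this rearranges to
\[
x \;=\; \xi\,[\bar{y}_0] \;+\; p\bigl([\bar{y}_0] + x_1\bigr) \;=\; \xi\,[\bar{y}_0] + p\,z,
\]
where $z := [\bar{y}_0] + x_1$ satisfies $p\,\theta(z) = \theta(pz) = \theta(x) - \theta(\xi\,[\bar{y}_0]) = 0$, so $\theta(z) = 0$ by $p$-torsion-freeness of $\Lambda^+$. Iterating this construction on $z$ and assembling the resulting terms using $p$-adic completeness of $\A_{\inf}(\Lambda^+)$ produces $y$ with $x = \xi y$.

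For the second statement, I would carry out the same three steps with $\xi_K := [\pi^\flat] - \pi \in \A_{\inf,K}(\Lambda^+)$ in place of $\xi$, using $\theta_{\A_{\inf,K}(\Lambda^+)}([\pi^\flat]) = \pi$, the $\pi$-adic completeness of $\A_{\inf,K}(\Lambda^+) = \calO_K \otimes_{W(k)} W(\Lambda^{+\flat})$ (equivalent to the $p$-adic completeness since $\pi^e = u p$ for a unit $u \in \calO_K$), and the analogous isomorphism $\Lambda^{+\flat}/\pi^\flat \cong \Lambda^+/\pi$. The main obstacle throughout will be the careful handling of the perfectoid divisibility facts — the non-zero-divisor property of $p^\flat$ (resp.\ $\pi^\flat$) in $\Lambda^{+\flat}$ and the identification of the quotients above — since this is where the perfectoid hypothesis on $(\Lambda,\Lambda^+)$ enters decisively; once these are in hand, the remaining manipulations are formal consequences of Witt-vector arithmetic and $p$-adic convergence.
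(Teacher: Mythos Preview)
Your proposal is correct and follows essentially the same successive-approximation strategy as the paper, which packages the iteration more tersely by reducing to the single ideal identity $(\Ker\theta_{\A_{\inf,K}(\Lambda^+)},\pi)=([\pi^\flat]-\pi,\pi)$ and invoking $\pi$-adic completeness plus $\pi$-torsion-freeness of $\Lambda^+$. The one place to sharpen your plan is the injectivity of $\Lambda^{+\flat}/p^\flat \to \Lambda^+/p$ (equivalently, that $x_0^\sharp\in\pi\Lambda^+$ forces $\pi^\flat\mid x_0$): this is not quite ``similar to'' the non-zero-divisor argument but requires the hypothesis that $\Lambda^+$ is integrally closed in $\Lambda$, which the paper invokes explicitly at this step.
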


\begin{proof}
The first assertion is \cite[Lemma 3.6.3]{KL-I}\footnote{It is also a special case of the second assertion by considering the case where $K=K_0$.}.
We prove the second.
Note $[\pi^\flat]-\pi\in \Ker \theta_{\A_{\inf,K}(\Lambda^+)}$.
Since $\A_{\inf,K}(\Lambda^+)$ is $\pi$-adically complete and $\Lambda^+$ is $\pi$-torsion free, it suffices to prove that the ideal $(\Ker \theta_{\A_{\inf,K}(\Lambda^+)}, \pi)$
is equal to $([\pi^\flat]-\pi,\pi)$.
Take $x\in \Ker \theta_{\A_{\inf,K}(\Lambda^+)}$.
Note that $x$ is uniquely written as
$x=\sum_{i=0}^\infty[x_i]\pi^i$ for some $x_i\in \Lambda^{+\flat}$.
Since $0=\theta_{\A_{\inf,K}(\Lambda^+)}(x)=\sum_{i=0}^\infty x_i^\sharp \pi^i$,
we see that $\pi$ divides $x_0^\sharp$ in $\Lambda^+$.
Using the multiplicative monoid isomorphism $\varprojlim_{\lambda\mapsto \lambda^p}\Lambda^+\stackrel{\cong}{\lra}\Lambda^{+\flat}$ and the fact that $\Lambda^+$ is integrally closed, we see that $\pi^\flat$ divides $x_0$ in $\Lambda^{+\flat}$ and thus 
$[\pi^\flat]$ divides $[x_0]$ in $\A_{\inf,K}(\Lambda^+)$.
Write $[x_0]=y[\pi^\flat]$ for some $y\in \A_{\inf,K}(\Lambda^+)$.
Then we have
\[
 x=y([\pi^\flat]-\pi)+\pi\bigl(y+\sum_{i=1}^\infty[x_i]\pi^{i-1}\bigr).
\]
This implies
$(\Ker \theta_{\A_{\inf,K}(\Lambda^+)}, \pi)=([\pi^\flat]-\pi,\pi)$.
\end{proof}

Let us define the horizontal de Rham period rings:

\begin{defn}
 We set
\begin{align*}
\B_{\dR}^+(\Lambda^+)&=\B_{\dR}^+(\Lambda,\Lambda^+) :=\varprojlim_m \A_{\inf}(\Lambda^+)[p^{-1}]/(\Ker \theta_{\A_{\inf}(\Lambda^+)}[p^{-1}])^m, \quad\text{and} \\
\B_{\dR,K}^+(\Lambda^+)&=\B_{\dR,K}^+(\Lambda,\Lambda^+) :=\varprojlim_m \A_{\inf,K}(\Lambda^+)[p^{-1}]/(\Ker \theta_{\A_{\inf,K}(\Lambda^+)}[p^{-1}])^m.
\end{align*}
\end{defn}

We will prove that these two rings are naturally isomorphic in Lemma~\ref{lem:horizontal de Rham period rings are isomorphic}, and write simply $\B_{\dR}^+(\Lambda^+)$ for them afterward.

\begin{lem}\label{lem:BdR is complete}
 The map $\theta_{\A_{\inf}(\Lambda^+)}$ extends to
\[
 \theta_{\B_{\dR}^+(\Lambda^+)}\colon
\B_{\dR}^+(\Lambda^+)\ra \Lambda=\Lambda^+[p^{-1}],
\]
and $\B_{\dR}^+(\Lambda^+)$ is complete with respect to the ideal $\Ker \theta_{\B_{\dR}^+(\Lambda^+)}$.
Similarly, the map $\theta_{\A_{\inf,K}(\Lambda^+)}$ extends to
\[
 \theta_{\B_{\dR,K}^+(\Lambda^+)}\colon
\B_{\dR,K}^+(\Lambda^+)\ra \Lambda,
\]
and $\B_{\dR,K}^+(\Lambda^+)$ is complete with respect to the ideal $\Ker \theta_{\B_{\dR,K}^+(\Lambda^+)}$.
\end{lem}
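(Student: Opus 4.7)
The plan is to recognize $\B_{\dR}^+(\Lambda^+)$ as the $\xi$-adic completion of $R:=\A_{\inf}(\Lambda^+)[p^{-1}]$, where $\xi:=[p^\flat]-p$. By Lemma~\ref{lem: ker theta is principal}, $\Ker\theta_{\A_{\inf}(\Lambda^+)}=(\xi)$, and passing to the localization $R$ this kernel becomes the principal ideal $\xi R$. Granted this, the extension of $\theta$ comes for free: the compatible family of surjections $R/\xi^m R\twoheadrightarrow R/\xi R=\Lambda$ yields in the limit the desired map $\theta_{\B_{\dR}^+(\Lambda^+)}\colon\B_{\dR}^+(\Lambda^+)\to\Lambda$, and the inclusion $\xi\B_{\dR}^+(\Lambda^+)\subset\Ker\theta_{\B_{\dR}^+(\Lambda^+)}$ is clear from the construction. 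So everything reduces to two statements: (a) $\xi$ is a non-zero-divisor in $R$, and (b) the $\xi$-adic completion of such an $R$ is itself $\xi$-adically complete and separated, with $\Ker\theta_{\B_{\dR}^+(\Lambda^+)}=\xi\B_{\dR}^+(\Lambda^+)$.

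For (a), I would argue as follows. Since $\Lambda^{+\flat}$ is perfect, it is reduced, hence $\A_{\inf}(\Lambda^+)=W(\Lambda^{+\flat})$ is $p$-torsion free and $p$-adically separated and complete. The reduction of $\xi$ modulo $p$ is $[p^\flat]\in\Lambda^{+\flat}$; because $\Lambda^+$ is $p$-torsion free (consequence of uniformity) and $(p^\flat)^\sharp=p$, the standard tilting identification shows that $[p^\flat]$ is a non-zero-divisor in $\Lambda^{+\flat}$. A routine argument — if $\xi a=0$ for $a\ne 0$, write $a=p^n b$ with $b\notin p\A_{\inf}(\Lambda^+)$, use that $p$ is a non-zero-divisor to get $\xi b=0$, then reduce mod $p$ to contradict non-zero-divisibility of $[p^\flat]$ — then promotes this to $\xi$ being a non-zero-divisor in $\A_{\inf}(\Lambda^+)$, and hence in its localization $R$. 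For (b), the standard commutative algebra fact (for $\xi$ a non-zero-divisor in a ring $R$) is that $\widehat{R}:=\varprojlim_m R/\xi^m R$ satisfies $\widehat{R}/\xi^m\widehat{R}\stackrel{\cong}{\to}R/\xi^m R$ for every $m$ and is itself $\xi$-adically complete and separated; this identifies $\Ker\theta_{\B_{\dR}^+(\Lambda^+)}$ with $\xi\B_{\dR}^+(\Lambda^+)$ and delivers the required completeness.

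The argument for $\B_{\dR,K}^+(\Lambda^+)$ is structurally identical, with $\xi$ replaced by the generator $[\pi^\flat]-\pi$ of $\Ker\theta_{\A_{\inf,K}(\Lambda^+)}$ furnished by Lemma~\ref{lem: ker theta is principal}; note that $\A_{\inf,K}(\Lambda^+)=\calO_K\otimes_{W(k)}\A_{\inf}(\Lambda^+)$ inherits $p$-torsion freeness from $\A_{\inf}(\Lambda^+)$ (as $\calO_K$ is a finite free $W(k)$-module), so the non-zero-divisor argument transplants without change. The main technical obstacle I anticipate is verifying (a), i.e., that $\xi$ is a non-zero-divisor in $\A_{\inf}(\Lambda^+)$, since $\Lambda^+$ need not be a domain; however, the combination of $p$-torsion freeness of $\Lambda^+$ and perfectness of $\Lambda^{+\flat}$ is exactly tailored to force this, so the verification should be short. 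The rest of the lemma is then formal manipulation of $\xi$-adic inverse limits, as in \cite{Fontaine-exposeII, Brinon-crisdeRham, KL-I}.
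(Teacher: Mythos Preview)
Your proposal is correct and follows essentially the same route as the paper: both recognize $\B_{\dR}^+(\Lambda^+)$ as the completion of $\A_{\inf}(\Lambda^+)[p^{-1}]$ with respect to the principal ideal generated by $\xi=[p^\flat]-p$ (respectively $[\pi^\flat]-\pi$ in the $K$-variant) and then invoke the standard fact that completion along a finitely generated ideal yields a complete ring. The only difference is emphasis: the paper's proof is two lines and cites merely that the kernel ideal is finitely generated, whereas you additionally verify that $\xi$ is a non-zero-divisor --- this is not strictly needed for the completeness statement here (finite generation alone suffices), but it is correct and is used later in the paper anyway (to show $\B_{\dR}^+(\Lambda^+)$ is $t$-torsion free).
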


\begin{proof}
It suffices to prove the second assertion as the first is a special case of the second where $K=K_0$.
Obviously, $\theta_{\A_{\inf,K}(\Lambda^+)}$ extends to
$\theta_{\B_{\dR,K}^+(\Lambda^+)}\colon\B_{\dR,K}^+(\Lambda^+)\ra \Lambda$.
Since $\Lambda$ is $p$-torsion free, the ideal $\Ker \theta_{\A_{\inf,K}(\Lambda^+)}[p^{-1}]$ of $\A_{\inf,K}(\Lambda^+)[p^{-1}]$ is also generated by $[\pi^\flat]-\pi$, and thus finitely generated.
It follows that $\B_{\dR,K}^+(\Lambda^+)$ is complete with respect to $\Ker \theta_{\B_{\dR,K}^+(\Lambda^+)}$. 
\end{proof}

Note that  the natural homomorphisms
\[
 \A_{\inf}(\Lambda^+)[p^{-1}]/(\Ker \theta_{\A_{\inf}(\Lambda^+)}[p^{-1}])^m
\ra \A_{\inf,K}(\Lambda^+)[p^{-1}]/(\Ker \theta_{\A_{\inf,K}(\Lambda^+)}[p^{-1}])^m
\]
give rise to a ring homomorphism
\[
 \B_{\dR}^+(\Lambda^+)\ra\B_{\dR,K}^+(\Lambda^+).
\]

Choose a compatible system $(\varepsilon_m)_{m\in\N}$ of nontrivial $p$-power roots of unity in $\overline{K}$, i.e., $\varepsilon_m\in \overline{K}$ with $\varepsilon_0=1$, $\varepsilon_1\neq 1$, and $\varepsilon_{m+1}^p=\varepsilon_m$.
Set 
\[
 \varepsilon:=(\varepsilon_0\bmod{p},\varepsilon_1\bmod{p},\ldots)\in \calO_{C}^\flat\subset \Lambda^{+\flat}. 
\]
Note that $[\varepsilon]-1\in \A_{\inf}(\Lambda^+)$ satisfies $\theta_{\A_{\inf}(\Lambda^+)}([\varepsilon]-1)=0$.
In particular, we define
\[
t:=\sum_{m=1}^\infty\frac{(-1)^{m-1}}{m}([\varepsilon]-1)^m
\]
as an element of $\B_{\dR}^+(\Lambda^+)$.

\begin{lem}\label{lem: t is a generator}
 The ideal $\Ker \theta_{\B_{\dR}^+(\Lambda^+)}$ is generated by $t$.
Similarly, $\Ker \theta_{\B_{\dR,K}^+(\Lambda^+)}$ is generated by $t$.
\end{lem}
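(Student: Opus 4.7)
My plan is to first identify the ideal $(t)$ with $([\varepsilon]-1)$ by a direct series factorization, and then to identify this common ideal with $\Ker\theta$ by reducing to the classical case $\Lambda^+=\calO_C$.

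For the first step, I factor
\[
 t = ([\varepsilon]-1)\cdot v, \qquad v := \sum_{m=1}^\infty \frac{(-1)^{m-1}}{m}([\varepsilon]-1)^{m-1}.
\]
The series for $v$ converges in both $\B_{\dR}^+(\Lambda^+)$ and $\B_{\dR,K}^+(\Lambda^+)$ since $[\varepsilon]-1\in\Ker\theta$ and these rings are $\Ker\theta$-adically complete (Lemma~\ref{lem:BdR is complete}). Because $\theta(v)=1\in\Lambda^\times$ and both rings are complete with respect to a principal ideal by Lemmas~\ref{lem: ker theta is principal} and~\ref{lem:BdR is complete}, a standard geometric-series argument (in such a ring, an element is a unit if and only if its image modulo the principal ideal is) shows that $v$ is a unit. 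Hence $(t)=([\varepsilon]-1)$ as ideals in both rings.

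For the second step, I reduce to $\Lambda^+=\calO_C$ by functoriality. The $C$-algebra structure $\calO_C\hookrightarrow\Lambda^+$ induces compatible maps $\B_{\dR}^+(\calO_C)\to\B_{\dR}^+(\Lambda^+)$ and $\B_{\dR,K}^+(\calO_C)\to\B_{\dR,K}^+(\Lambda^+)$ sending $\xi$, $\xi_K$, $t$, and $[\varepsilon]-1$ to the corresponding elements in the target. A nonzero element of $C$ maps to a unit in $\Lambda$, so units are preserved by these maps. It therefore suffices to prove the lemma when $\Lambda^+=\calO_C$.

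In the case $\Lambda^+=\calO_C$, the equality $(t)=\Ker\theta$ in $\B_{\dR}^+(\calO_C)$ is Fontaine's classical computation (cf.~\cite{Fontaine-exposeII}): writing $[\varepsilon]-1 = \xi\beta$ and computing modulo $\xi^2$ via the telescoping identity $[\varepsilon^{1/p^n}]^p = [\varepsilon^{1/p^{n-1}}]$ identifies $\theta(\beta)$ as a nonzero (hence invertible) element of $C$. For the $\B_{\dR,K}^+(\calO_C)$ statement, I would then observe that $\B_{\dR}^+(\calO_C)$ already contains $K$ (by Hensel-lifting the minimal polynomial of $\pi$ over $K_0$, starting from the root $\pi \in C = \B_{\dR}^+(\calO_C)/\Ker\theta$), so the natural map $\B_{\dR}^+(\calO_C)\to\B_{\dR,K}^+(\calO_C)$ is an isomorphism and the $\B_{\dR,K}^+$ assertion follows from the $\B_{\dR}^+$ one. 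The main obstacle is the explicit calculation of $\theta(\beta)\in C^\times$, but this is a routine Witt-vector computation going back to Fontaine.
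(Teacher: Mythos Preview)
Your argument is correct. Both you and the paper reduce to the classical case $\Lambda^+=\calO_C$ via functoriality and invoke Fontaine, but the paths differ in organization. The paper observes that the first assertion is the special case $K=K_0$ of the second, then argues: since $\Ker\theta_{\B_{\dR,K}^+}=([\pi^\flat]-\pi)$ and $t\in\Ker\theta$, it suffices to show $t\mid[\pi^\flat]-\pi$; this divisibility transfers from $\calO_C$ by functoriality and is \cite[1.5.4]{Fontaine-exposeII}. Your route instead inserts the intermediate equality $(t)=([\varepsilon]-1)$ via the explicit unit $v$, then reduces $([\varepsilon]-1)=\Ker\theta$ to $\calO_C$, and finally handles the $\B_{\dR,K}^+$ case by proving $K\subset\B_{\dR}^+(\calO_C)$ via Hensel to identify $\B_{\dR}^+(\calO_C)\cong\B_{\dR,K}^+(\calO_C)$. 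This last step is precisely the content of Lemma~\ref{lem:horizontal de Rham period rings are isomorphic} specialized to $\calO_C$, which the paper deduces \emph{after} the present lemma; you prove that special case independently, so there is no circularity. The paper's path is shorter and treats both assertions uniformly; yours makes the unit structure of the logarithm series explicit and isolates exactly where the field $K$ enters, at the cost of an extra Hensel argument.
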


\begin{proof}
Since the first assertion is a special case of the second, we only show the second.
 We know that $\Ker \theta_{\B_{\dR,K}^+(\Lambda^+)}$ is generated by $([\pi^\flat]-\pi)$.
Hence it suffices to prove that $t$ divides $[\pi^\flat]-\pi$ in $\B_{\dR,K}^+(\Lambda^+)$.
By functoriality of our construction, it is enough to prove this for $(\Lambda,\Lambda^+)=(C,\calO_{C})$.
This is \cite[1.5.4]{Fontaine-exposeII}.
\end{proof}

\begin{lem}\label{lem:horizontal de Rham period rings are isomorphic}
 The natural homomorphism
\[
 \B_{\dR}^+(\Lambda^+)\ra\B_{\dR,K}^+(\Lambda^+)
\]
is an isomorphism.
\end{lem}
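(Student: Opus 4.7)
The plan is to construct an inverse to the natural map $f \colon \B_{\dR}^+(\Lambda^+) \to \B_{\dR,K}^+(\Lambda^+)$ by means of Hensel's lemma. By Lemmas~\ref{lem:BdR is complete} and~\ref{lem: t is a generator}, both rings are $(t)$-adically complete and hence Henselian along $(t)$, with quotient by $(t)$ naturally identified with $\Lambda$ via $\theta$.

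To build the inverse, I will first lift $\pi$ from the residue ring $\Lambda$ to $\B_{\dR}^+(\Lambda^+)$. Let $E(X) \in \calO_{K_0}[X]$ be the Eisenstein minimal polynomial of $\pi$ over $K_0$. Since $K \hookrightarrow C \hookrightarrow \Lambda$, the element $\pi$ is a root of $E$ in $\Lambda$, and $E'(\pi) \in K^\times \subset \Lambda^\times$ is a unit. Hensel's lemma then produces a unique $\widetilde{\pi} \in \B_{\dR}^+(\Lambda^+)$ with $E(\widetilde{\pi}) = 0$ and $\widetilde{\pi} \equiv \pi \pmod{(t)}$, so in particular $\theta(\widetilde{\pi}) = \pi$. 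Sending $\pi \mapsto \widetilde{\pi}$ gives a $K_0$-algebra map $\calO_K \to \B_{\dR}^+(\Lambda^+)$, and combined with the natural $\A_{\inf}(\Lambda^+) \to \B_{\dR}^+(\Lambda^+)$ it yields a ring map $\A_{\inf,K}(\Lambda^+) = \calO_K \otimes_{W(k)} \A_{\inf}(\Lambda^+) \to \B_{\dR}^+(\Lambda^+)$ commuting with the respective $\theta$'s. After inverting $p$, the ideal $\Ker \theta_{\A_{\inf,K}(\Lambda^+)}[p^{-1}]$ lands inside $(t) = \Ker \theta_{\B_{\dR}^+(\Lambda^+)}$, so by the $(t)$-adic completeness of the target the morphism extends uniquely to $g \colon \B_{\dR,K}^+(\Lambda^+) \to \B_{\dR}^+(\Lambda^+)$.

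Finally, I will verify that $f$ and $g$ are mutually inverse, both being continuous ring maps. The composition $g \circ f$ restricts to the identity on $\A_{\inf}(\Lambda^+)[p^{-1}]$, which is dense in $\B_{\dR}^+(\Lambda^+)$ for the $(t)$-adic topology, so $g \circ f = \id$ by continuity. For $f \circ g$, the key observation is that $f(\widetilde{\pi})$ and $\pi \in \calO_K \subset \B_{\dR,K}^+(\Lambda^+)$ are both roots of $E$ congruent to $\pi$ modulo $(t)$; uniqueness of the Hensel lift in $\B_{\dR,K}^+(\Lambda^+)$ forces $f(\widetilde{\pi}) = \pi$, after which $f \circ g$ agrees with the identity on $\A_{\inf,K}(\Lambda^+)[p^{-1}]$ and hence everywhere by continuity. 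The main subtle point is verifying the hypothesis of Hensel's lemma, namely that $E'(\pi)$ is a unit in $\Lambda$; this is immediate from $\pi$ being a simple root of its minimal polynomial over $K_0$ together with the embedding $K \hookrightarrow \Lambda$.
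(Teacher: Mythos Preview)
Your proof is correct and takes a genuinely different route from the paper. The paper's one-line argument uses Lemmas~\ref{lem:BdR is complete} and~\ref{lem: t is a generator} to observe that both rings are $(t)$-adically complete with $f$ inducing the identity on $\Lambda$ modulo $t$; since $t$ is a regular element on each side, $f$ is an isomorphism on every graded piece $t^m/t^{m+1}\cong\Lambda$, and hence an isomorphism by completeness. You instead construct the inverse explicitly: Hensel-lift $\pi$ from $\Lambda$ to $\widetilde{\pi}\in\B_{\dR}^+(\Lambda^+)$, obtain a $W(k)$-algebra map $\calO_K\to\B_{\dR}^+(\Lambda^+)$, tensor with $\A_{\inf}(\Lambda^+)$, and pass to the completion.

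Your approach has the advantage of directly producing the $K$-algebra structure on $\B_{\dR}^+(\Lambda^+)$ (a fact the paper uses immediately after this lemma without further comment), and it avoids any appeal to regularity of $t$, which in the paper is only established afterward. The paper's approach is shorter and more structural once the two cited lemmas are in hand. One small point of phrasing: when you say ``$K_0$-algebra map $\calO_K\to\B_{\dR}^+(\Lambda^+)$'' you mean a $W(k)$-algebra map, since $p$ is not yet inverted at that stage; this is harmless for the argument.
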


\begin{proof}
This follows from Lemmas~\ref{lem:BdR is complete} and \ref{lem: t is a generator}.
\end{proof}

As mentioned before, we identify these rings and denote them by $\B_{\dR}^+(\Lambda^+)$ in what follows. In particular, $\B_{\dR}^+(\Lambda^+)$ is a $K$-algebra.

\begin{lem}
 The natural map $\A_{\inf,K}(\Lambda^+)\ra \B_{\dR}^+(\Lambda^+)$ is injective.
\end{lem}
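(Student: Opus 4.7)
The plan is to factor the map as $\A_{\inf,K}(\Lambda^+)\hra \A_{\inf,K}(\Lambda^+)[p^{-1}]\to \B_{\dR}^+(\Lambda^+)$ and show both arrows are injective. Setting $\xi':=[\pi^\flat]-\pi$, a generator of $\Ker\theta_{\A_{\inf,K}(\Lambda^+)}$ by Lemma~\ref{lem: ker theta is principal}, the two statements to prove are (a) $\A_{\inf,K}(\Lambda^+)$ is $p$-torsion free, and (b) $\bigcap_m(\xi')^m\A_{\inf,K}(\Lambda^+)[p^{-1}]=0$ inside $\A_{\inf,K}(\Lambda^+)[p^{-1}]$. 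Statement (a) is immediate: $\Lambda$ being uniform makes $\Lambda^+$ $p$-torsion free, so $\Lambda^{+\flat}$ is a reduced perfect $\F_p$-algebra, hence $W(\Lambda^{+\flat})$ is $p$-torsion free, and freeness of $\calO_K$ over $W(k)$ transfers this to $\A_{\inf,K}(\Lambda^+)=\calO_K\otimes_{W(k)}W(\Lambda^{+\flat})$.

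The main work is (b), and I rely on three preliminary facts. First, $\pi^\flat\in\Lambda^{+\flat}$ is a non-zero-divisor with $\bigcap_m(\pi^\flat)^m\Lambda^{+\flat}=0$; both follow from the multiplicative identification $\Lambda^{+\flat}\cong \varprojlim_{\lambda\mapsto\lambda^p}\Lambda^+$, the fact that $\pi$ is a unit in $\Lambda$ (and hence a non-zero-divisor in $\Lambda^+$), and the norm estimate $|(\pi^\flat)^m|=|\pi|^m\to 0$. Second, $\xi'$ is a non-zero-divisor in $\A_{\inf,K}(\Lambda^+)$: using the identification $\A_{\inf,K}(\Lambda^+)/\pi\cong\Lambda^{+\flat}$ (coming from $\calO_K/\pi=k$ and $W(\Lambda^{+\flat})/p=\Lambda^{+\flat}$), under which $\bar{\xi'}=\pi^\flat$, the equation $\xi'z=0$ forces $\pi^\flat\bar z=0$, hence $\bar z=0$; iterating with the $\pi$-adic separatedness of $\A_{\inf,K}(\Lambda^+)$ (which comes from $\pi^e=p\cdot(\text{unit})$ and $p$-adic completeness) yields $z=0$. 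Third, by induction on $m$ using the short exact sequences $0\to \A_{\inf,K}(\Lambda^+)/\xi'\xrightarrow{\cdot(\xi')^{m-1}} \A_{\inf,K}(\Lambda^+)/(\xi')^m\to \A_{\inf,K}(\Lambda^+)/(\xi')^{m-1}\to 0$ and the identification $\A_{\inf,K}(\Lambda^+)/\xi'\cong\Lambda^+$ via $\theta_{\A_{\inf,K}(\Lambda^+)}$, each quotient $\A_{\inf,K}(\Lambda^+)/(\xi')^m$ is $p$-torsion free. Orchestrating these preliminaries, and in particular establishing the non-zero-divisor property of $\xi'$, is the main obstacle.

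Given these, (b) follows in two moves. First, if $y\in \A_{\inf,K}(\Lambda^+)$ lies in $(\xi')^m\A_{\inf,K}(\Lambda^+)[p^{-1}]$, then clearing denominators gives $p^a y\in (\xi')^m\A_{\inf,K}(\Lambda^+)$ for some $a\geq 0$, and the $p$-torsion freeness of $\A_{\inf,K}(\Lambda^+)/(\xi')^m$ implies $y\in (\xi')^m\A_{\inf,K}(\Lambda^+)$; so it suffices to show $\bigcap_m (\xi')^m\A_{\inf,K}(\Lambda^+)=0$. Second, for $y$ in this intersection, reducing modulo $\pi$ places $\bar y$ in $\bigcap_m(\pi^\flat)^m\Lambda^{+\flat}=0$, so $y=\pi y_1$ with $y_1\in \A_{\inf,K}(\Lambda^+)$; since $\pi$ is a non-zero-divisor on each $\A_{\inf,K}(\Lambda^+)/(\xi')^m$ (from the $p$-torsion freeness and $\pi^e=p\cdot(\text{unit})$), $y_1$ again lies in $\bigcap_m (\xi')^m\A_{\inf,K}(\Lambda^+)$. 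Iterating and appealing to $\pi$-adic separatedness forces $y=0$, completing (b) and with it the injectivity claim.
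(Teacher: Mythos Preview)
Your proof is correct and follows essentially the same route as the paper's. The paper argues, somewhat tersely, that $\A_{\inf,K}(\Lambda^+)\cap(\Ker\theta[p^{-1}])^m=(\Ker\theta)^m$ (your preliminary~3 in disguise) and that $\bigcap_m(\Ker\theta)^m=0$ via reduction modulo~$\pi$ and the norm on $\Lambda^{+\flat}$; you have simply unpacked these steps, making explicit the non-zero-divisor property of $\xi'$, the $p$-torsion freeness of the successive quotients, and the iteration needed to pass from ``$\bar y=0$'' to ``$y=0$'' via $\pi$-adic separatedness.
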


\begin{proof}
 We claim 
\begin{align*}
 \A_{\inf,K}(\Lambda^+)\cap \bigl(\Ker\theta_{\A_{\inf,K}(\Lambda^+)}[p^{-1}]\bigr)^m&=\bigl(\Ker\theta_{\A_{\inf,K}(\Lambda^+)}\bigr)^m,\quad\text{and}\\
\bigcap_m\bigl(\Ker\theta_{\A_{\inf,K}(\Lambda^+)}[p^{-1}]\bigr)^m&=0.
\end{align*}
Note that the assertion follows from the second claim.
The first claim follows from the fact that $\Ker\theta_{\A_{\inf,K}(\Lambda^+)}[p^{-1}]$ is generated by $[\pi^\flat]-\pi$ and $\Lambda^+$  is $p$-torsion free.
By the first claim, the second claim is reduced to $\bigcap_m\bigl(\Ker\theta_{\A_{\inf,K}(\Lambda^+)}\bigr)^m=0$, which follows from the fact that $\A_{\inf,K}(\Lambda^+)/(\pi)=\Lambda^{+\flat}$ has a norm $\lvert\,\rvert$ satisfying $\lvert \pi^\flat\rvert=\lvert \pi\rvert_{\Lambda}>0$.
\end{proof}

\begin{lem}
The ring $\B_{\dR}^+(\Lambda^+)$ is $t$-torsion free.
 \end{lem}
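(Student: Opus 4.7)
The plan is to reduce the $t$-torsion-freeness of $\B_{\dR}^+(\Lambda^+)$ to an analogous statement on the simpler ring $\A_{\inf,K}(\Lambda^+)$, then pass through localization at $p$ and $\xi$-adic completion. By Lemmas~\ref{lem: ker theta is principal} and~\ref{lem: t is a generator}, the elements $t$ and $\xi := [\pi^\flat]-\pi$ both generate the principal ideal $\Ker\theta_{\B_{\dR}^+(\Lambda^+)}$, and therefore differ by a unit of $\B_{\dR}^+(\Lambda^+)$. Hence it suffices to show that $\xi$ is a non-zero-divisor on $\B_{\dR}^+(\Lambda^+)$.

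The key step is to show $\xi$ is a non-zero-divisor on $\A_{\inf,K}(\Lambda^+)$, via a Nakayama-type argument along the $\pi$-adic filtration. Under the identification $\A_{\inf,K}(\Lambda^+)/(\pi) \cong \Lambda^{+\flat}$ already used in the previous proof, $\xi$ reduces to $\pi^\flat$, which is a non-zero-divisor on $\Lambda^{+\flat}$ because the latter carries a non-degenerate multiplicative norm with $\lvert \pi^\flat\rvert > 0$. So if $\xi y = 0$, then $y \in \pi\A_{\inf,K}(\Lambda^+)$; writing $y=\pi y'$ and cancelling $\pi$ (itself a non-zero-divisor, because $\A_{\inf,K}(\Lambda^+)$ is a free module of rank $e$ over the $p$-torsion free Witt ring $W(\Lambda^{+\flat})$ and $(p)=(\pi^e)$) yields $\xi y'=0$. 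Iterating places $y$ in $\bigcap_n \pi^n\A_{\inf,K}(\Lambda^+)=0$, the intersection vanishing because $W(\Lambda^{+\flat})$ is $p$-adically separated and $(p)=(\pi^e)$.

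Since localization at $p$ preserves non-zero-divisors, $\xi$ is a non-zero-divisor on $\A_{\inf,K}(\Lambda^+)[p^{-1}]$. The remaining passage to $\B_{\dR}^+(\Lambda^+)$ invokes the standard fact that if $f$ is a non-zero-divisor on a ring $R$, then $f$ remains a non-zero-divisor on the $f$-adic completion $\hat R$: given $(y_n)\in\varprojlim R/f^n R$ with $f\cdot (y_n)=0$, one has $fy_{n+1}\in f^{n+1}R$, and cancelling $f$ in $R$ forces $y_{n+1}\in f^n R$, hence $y_n=0$. Applied with $R=\A_{\inf,K}(\Lambda^+)[p^{-1}]$ and $f=\xi$, this completes the argument.

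The only delicate step is the key one: combining mod-$\pi$ injectivity with the $\pi$-adic separatedness of $\A_{\inf,K}(\Lambda^+)$ to conclude injectivity of $\xi$ on the full ring. The other two reductions are formal.
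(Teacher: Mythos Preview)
Your proof is correct and follows essentially the same route as the paper: reduce from $t$ to a generator $\xi$ of $\Ker\theta$, show $\xi$ is regular on $\A_{\inf,K}(\Lambda^+)$ by reducing modulo $\pi$ (where it becomes $\pi^\flat$ on $\Lambda^{+\flat}$) and using $\pi$-adic separatedness, then localize at $p$ and pass to the $\xi$-adic completion. The paper does the same with $\A_{\inf}(\Lambda^+)$ and $[p^\flat]-p$ in place of $\A_{\inf,K}(\Lambda^+)$ and $[\pi^\flat]-\pi$, and is much terser about the lifting and completion steps; one small wording point is that the norm on $\Lambda^{+\flat}$ is not multiplicative in general, but your argument only needs $\lvert \pi^\flat x\rvert=\lvert\pi^\flat\rvert\,\lvert x\rvert$, which holds because $(\pi^\flat)^\sharp=\pi\in C$ and $\lvert\,\cdot\,\rvert_\Lambda$ is a $C$-algebra norm.
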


\begin{proof}
It is enough to show that $\B_{\dR}^+(\Lambda^+)$ is $([p^\flat]-p)$-torsion free.
Since $\B_{\dR}^+(\Lambda^+)$ is the completion of $\A_{\inf}(\Lambda^+)[p^{-1}]$ with respect to the ideal $([p^\flat]-p)$, it remains to prove that the latter ring is $([p^\flat]-p)$-torsion free.
Since $\Lambda^+$ is $p$-torsion free, $\Lambda^{+\flat}$ is $p^\flat$-torsion free.
It follows that $\A_{\inf}(\Lambda^+)$ is $([p^\flat]-p)$-torsion free and so is $\A_{\inf}(\Lambda^+)[p^{-1}]$.
\end{proof}

\begin{defn}
 Set
\[
 \B_{\dR}(\Lambda^+)=\B_{\dR}(\Lambda,\Lambda^+):=\B_{\dR}^+(\Lambda^+)[t^{-1}].
\]
We equip $\B_{\dR}(\Lambda^+)$ with the decreasing filtration defined by
\[
 \Fil^m\B_{\dR}(\Lambda^+):=t^m\B_{\dR}^+(\Lambda^+)\quad (m\in\Z).
\]
This is separated and exhaustive.
\end{defn}

\begin{defn}
Let $\A_{\inf}(\Lambda^+)[\frac{\Ker \theta}{p}]$
be the $\A_{\inf}(\Lambda^+)$-subalgebra of $\A_{\inf}(\Lambda^+)[p^{-1}]$ generated by $p^{-1}\Ker \theta_{\A_{\inf}(\Lambda^+)}$.
 We define
$\A_{\max}(\Lambda^+)=\A_{\max}(\Lambda,\Lambda^+)$ to be the $p$-adic completion of $\A_{\inf}(\Lambda^+)[\frac{\Ker \theta}{p}]$.

Similarly, let $\A_{\inf,K}(\Lambda^+)[\frac{\Ker \theta}{\pi}]$
be the $\A_{\inf,K}(\Lambda^+)$-subalgebra of $\A_{\inf,K}(\Lambda^+)[p^{-1}]$ generated by $\pi^{-1}\Ker \theta_{\A_{\inf,K}(\Lambda^+)}$.
 We define
$\A_{\max,K}(\Lambda^+)=\A_{\max,K}(\Lambda,\Lambda^+)$ to be the $p$-adic completion of $\A_{\inf,K}(\Lambda^+)[\frac{\Ker \theta}{\pi}]$.
\end{defn}

\begin{lem}
The Frobenius $\varphi$ on $\A_{\inf}(\Lambda^+)$ extends to an endomorphism on $\A_{\inf}(\Lambda^+)[\frac{\Ker \theta}{p}]$ and thus on $\A_{\max}(\Lambda^+)$.
\end{lem}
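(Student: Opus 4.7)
The plan is to observe that by Lemma~\ref{lem: ker theta is principal}, the ideal $\Ker \theta_{\A_{\inf}(\Lambda^+)}$ is principal with generator $\xi := [p^\flat] - p$, so $\A_{\inf}(\Lambda^+)[\frac{\Ker\theta}{p}]$ coincides with the subring $\A_{\inf}(\Lambda^+)[\xi/p]$ of $\A_{\inf}(\Lambda^+)[p^{-1}]$. Since $\varphi$ on $\A_{\inf}(\Lambda^+) = W(\Lambda^{+\flat})$ is a ring endomorphism, it extends uniquely to the localization $\A_{\inf}(\Lambda^+)[p^{-1}]$. The only thing to check for the first half of the claim is therefore that $\varphi(\xi/p)$ still lies in $\A_{\inf}(\Lambda^+)[\xi/p]$.

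This will follow from a direct computation. Since $\Lambda^{+\flat}$ has characteristic $p$ and Teichm\"uller lifts are multiplicative, $\varphi([p^\flat]) = [(p^\flat)^p] = [p^\flat]^p = (\xi + p)^p$, so
\[
\varphi(\xi) = (\xi + p)^p - p = \xi^p + \sum_{k=1}^{p-1}\binom{p}{k}\xi^{p-k}p^k + (p^p - p).
\]
Dividing by $p$: the term $\xi^p/p$ equals $p^{p-1}(\xi/p)^p$ and so lies in $\A_{\inf}(\Lambda^+)[\xi/p]$; each binomial term $\binom{p}{k}\xi^{p-k}p^{k-1}$ (for $1 \le k \le p-1$) already lies in $\A_{\inf}(\Lambda^+)$; and $p^{p-1} - 1 \in \Z$. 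Hence $\varphi(\xi)/p$ lies in $\A_{\inf}(\Lambda^+)[\xi/p]$, and $\varphi$ restricts to a ring endomorphism of this subring.

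For the extension to $\A_{\max}(\Lambda^+)$, note that the endomorphism just constructed satisfies $\varphi(p) = p$, so it is continuous for the $p$-adic topology on $\A_{\inf}(\Lambda^+)[\xi/p]$. It therefore extends uniquely to the $p$-adic completion, which by definition is $\A_{\max}(\Lambda^+)$. The only nontrivial step in this plan is the binomial identity showing that $\varphi(\xi)/p$ has $p$-power denominator at most one; beyond it, everything is formal, and I do not anticipate any serious obstacle.
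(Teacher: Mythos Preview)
Your proof is correct and follows essentially the same approach as the paper: both reduce to the principal generator $\xi=[p^\flat]-p$ and verify by direct computation that $\varphi(\xi)/p\in\A_{\inf}(\Lambda^+)[\xi/p]$. The paper writes this more compactly as $\varphi(\xi/p)=p^{p-1}(\xi/p+1)^p-1$, whereas you expand the binomial term by term, but the content is identical.
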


We call this endomorphism on $\A_{\max}(\Lambda^+)$ the Frobenius and denote it by $\varphi$.

\begin{proof}
Recall that $\Ker\theta_{\A_{\inf}(\Lambda^+)}$ is generated by $[p^\flat]-p$.
Hence the assertion follows from
\[
 \varphi\biggl(\frac{[p^\flat]-p}{p}\biggr)=\frac{[p^\flat]^p-p}{p}=p^{p-1}\biggl(\frac{[p^\flat]-p}{p}+1\biggr)^p-1\in \A_{\inf}(\Lambda^+)\biggl[\frac{\Ker \theta}{p}\biggr].
\]
\end{proof}

\begin{lem}
 The rings $\A_{\max}(\Lambda^+)$ and $\A_{\max,K}(\Lambda^+)$
are $p$-torsion free.
\end{lem}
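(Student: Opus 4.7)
The plan is to reduce $p$-torsion freeness of $\A_{\max}(\Lambda^+)$ and $\A_{\max,K}(\Lambda^+)$ to two elementary ingredients: (a) the uncompleted subrings $\A_{\inf}(\Lambda^+)[\frac{\Ker\theta}{p}] \subset \A_{\inf}(\Lambda^+)[p^{-1}]$ and $\A_{\inf,K}(\Lambda^+)[\frac{\Ker\theta}{\pi}] \subset \A_{\inf,K}(\Lambda^+)[p^{-1}]$ are $p$-torsion free; and (b) the $p$-adic completion of any $p$-torsion free ring is $p$-torsion free.

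For (a), I would first note that $\A_{\inf}(\Lambda^+) = W(\Lambda^{+\flat})$ is $p$-torsion free because $\Lambda^{+\flat}$ is perfect of characteristic $p$: multiplication by $p$ sends a Witt vector $(a_0, a_1, \ldots)$ to $(0, a_0^p, a_1^p, \ldots)$, which vanishes only when each $a_i^p = 0$, hence $a_i = 0$ by perfectness. Since $K/K_0$ is finite and totally ramified, $\calO_K$ is finite free over $W(k)$, so $\A_{\inf,K}(\Lambda^+) = \calO_K \otimes_{W(k)} \A_{\inf}(\Lambda^+)$ is free over $\A_{\inf}(\Lambda^+)$, and thus also $p$-torsion free. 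The two subrings in (a) are defined as subrings of the corresponding $p$-inverted rings, which are $p$-torsion free, so they inherit the property.

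For (b), let $R$ be any $p$-torsion free ring and $\hat R = \varprojlim_n R/p^n R$. If $x = (\bar r_n)_n \in \hat R$ satisfies $px = 0$, then $p r_n \in p^n R$ for each $n$, and $p$-torsion freeness of $R$ lets us write $r_n = p^{n-1} s_n$ with $s_n \in R$ well-defined modulo $pR$. The compatibility $r_{n+1} \equiv r_n \pmod{p^n R}$ then reads $p^n s_{n+1} - p^{n-1} s_n \in p^n R$, forcing $s_n \in pR$ and hence $\bar r_n = 0$ in $R/p^n R$; so $x = 0$. Applying this general fact to $R = \A_{\inf}(\Lambda^+)[\frac{\Ker\theta}{p}]$ and $R = \A_{\inf,K}(\Lambda^+)[\frac{\Ker\theta}{\pi}]$ yields the lemma. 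There is no substantial obstacle; the only step requiring care is the diagonal bookkeeping in (b), a standard inverse-limit argument.
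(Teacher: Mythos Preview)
Your proof is correct and follows essentially the same approach as the paper's own argument: both reduce to showing the uncompleted ring $\A_{\inf,K}(\Lambda^+)[\frac{\Ker\theta}{\pi}]$ is $p$-torsion free (as a subring of a $p$-localization, using that $\A_{\inf,K}(\Lambda^+)$ is free over the $p$-torsion free ring $\A_{\inf}(\Lambda^+)$) and then pass to the $p$-adic completion. The paper leaves the completion step implicit, whereas you spell out the standard inverse-limit bookkeeping in (b); otherwise the arguments coincide.
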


\begin{proof}
It is enough to show that $\A_{\max,K}(\Lambda^+)$ is $p$-torsion free.
 Note that $\A_{\inf,K}(\Lambda^+)$ is a finite free module over $\A_{\inf}(\Lambda^+)$ and thus $p$-torsion free.
Hence $\A_{\inf,K}(\Lambda^+)[\frac{\Ker\theta}{\pi}]$ is also $p$-torsion free. The assertion for $\A_{\max,K}(\Lambda^+)$ follows from this.
\end{proof}

\begin{defn}
Set
\begin{align*}
 \B_{\max}^+(\Lambda^+)&=\B_{\max}^+(\Lambda,\Lambda^+):=\A_{\max}(\Lambda^+)[p^{-1}],
\quad\text{and}\\
 \B_{\max,K}^+(\Lambda^+)&=\B_{\max,K}^+(\Lambda,\Lambda^+):=\A_{\max,K}(\Lambda^+)[p^{-1}].
\end{align*}
 \end{defn}

\begin{lem}
 The element
\[
 t=\sum_{m=1}^\infty\frac{(-1)^{m-1}}{m}([\varepsilon]-1)^m
\]
converges in $\frac{1}{p^r}\A_{\max,K}(\Lambda^+)$ for a sufficiently large $r\in\N$.
It also converges in $\A_{\max}(\Lambda^+)$.
\end{lem}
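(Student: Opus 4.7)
The plan is to exploit the fact that $[\varepsilon]-1$ lies in $\Ker\theta_{\A_{\inf}(\Lambda^+)}$ and then estimate $p$-adic valuations term by term.

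First I would handle the easier ring $\A_{\max}(\Lambda^+)$. Since $\theta_{\A_{\inf}(\Lambda^+)}([\varepsilon]-1)=\varepsilon^\sharp-1=0$, the element $u:=\tfrac{[\varepsilon]-1}{p}$ lies in the subring $\A_{\inf}(\Lambda^+)[\tfrac{\Ker\theta}{p}]$, hence in $\A_{\max}(\Lambda^+)$. Rewriting
\[
\frac{(-1)^{m-1}}{m}([\varepsilon]-1)^m=\frac{(-1)^{m-1}\,p^m}{m}\,u^m,
\]
the coefficient lies in $p^{\,m-v_p(m)}\Z_p$. Since $m-v_p(m)\to\infty$ (and is $\geq 0$ for every $m\geq 1$), the partial sums form a $p$-adically Cauchy sequence in $\A_{\max}(\Lambda^+)$, which is $p$-adically complete by definition. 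Hence $t\in\A_{\max}(\Lambda^+)$.

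For $\A_{\max,K}(\Lambda^+)$ the argument is the same but with an accounting of the ramification: now $v:=\tfrac{[\varepsilon]-1}{\pi}$ lies in $\A_{\inf,K}(\Lambda^+)[\tfrac{\Ker\theta}{\pi}]\subset\A_{\max,K}(\Lambda^+)$, and
\[
\frac{(-1)^{m-1}}{m}([\varepsilon]-1)^m=\frac{(-1)^{m-1}\pi^m}{m}\,v^m.
\]
Letting $e=[K:K_0]$ be the ramification index, so that $v_p(\pi)=1/e$, the scalar coefficient has $p$-adic valuation $\tfrac{m}{e}-v_p(m)$, which tends to $+\infty$. Setting
\[
r:=\max\bigl(0,\,\bigl\lceil\,\sup_{m\geq 1}\,(v_p(m)-\tfrac{m}{e})\,\bigr\rceil\bigr),
\]
which is finite since $v_p(m)-\tfrac{m}{e}\to -\infty$, every term lies in $p^{-r}\A_{\max,K}(\Lambda^+)$, and the partial sums multiplied by $p^r$ form a Cauchy sequence in the $p$-adically complete ring $\A_{\max,K}(\Lambda^+)$. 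So $t\in\tfrac{1}{p^r}\A_{\max,K}(\Lambda^+)$ as required.

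The only subtle step is the second one: in the ramified case one must keep track of the finitely many initial terms where the $p$-adic denominator coming from $1/m$ exceeds the numerator contribution from $\pi^m$, and observe that this discrepancy is bounded uniformly in $m$ so that a single $p^{-r}$ suffices. This is really a routine $p$-adic estimate, so I do not anticipate any substantive obstacle.
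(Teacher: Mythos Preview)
Your proof is correct and follows essentially the same approach as the paper: both arguments rewrite the $m$-th term as $\frac{(-1)^{m-1}\pi^m}{m}\bigl(\frac{[\varepsilon]-1}{\pi}\bigr)^m$ (respectively with $p$ in place of $\pi$), use that $[\varepsilon]-1\in\Ker\theta$ to see the bracketed factor lies in $\A_{\max,K}(\Lambda^+)$, and then observe that the scalar coefficients are $p$-adically bounded below and tend to zero. Your version is slightly more explicit about the valuations, but there is no substantive difference.
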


\begin{proof}
 Since the proofs are similar (and the second assertion is easier to prove), we show the first.
Write
\[
 \frac{(-1)^{m-1}}{m}([\varepsilon]-1)^m
=\frac{(-1)^{m-1}\pi^m}{m} \biggl(\frac{[\varepsilon]-1}{\pi}\biggr)^m.
\]
Observe that $\frac{(-1)^{m-1}\pi^m}{m}$ approaches zero $p$-adically when $m\to\infty$ and thus there exists $r\in\N$ such that $\frac{(-1)^{m-1}\pi^m}{m}\in \frac{1}{p^r}\A_{\max,K}(\Lambda^+)$ for every $m$.
Hence the first assertion follows from the above observation and $[\varepsilon]-1\in \Ker \theta_{\A_{\max,K}(\Lambda^+)}$.
\end{proof}

\begin{prop}
We have
\[
\calO_K\otimes_{W(k)}\A_{\inf}(\Lambda^+)\!\biggl[\frac{\Ker\theta}{p}\biggr]
\!\!\subset\! \A_{\inf,K}(\Lambda^+)\!\biggl[\frac{\Ker\theta}{\pi}\biggr]
\!\!\subset\! \frac{1}{p}\!\biggl(\!\calO_K\otimes_{W(k)}\A_{\inf}(\Lambda^+)\!\biggl[\frac{\Ker\theta}{p}\biggr]\biggr).
\]
In particular,
\[
 K\otimes_{K_0}\B_{\max}^+(\Lambda^+)\cong \B_{\max,K}^+(\Lambda^+).
\]
\end{prop}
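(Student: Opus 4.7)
Let $e = [K:K_0]$ and choose $u \in \calO_K^\times$ with $\pi^e = pu$. The plan is to produce a ``tilt'' of this relation: I will take $u^\flat := (\pi^\flat)^e / p^\flat \in \Lambda^{+\flat}$, well-defined because at level $m$ the ratio $\pi_m^e/p_m$ is a $p^m$-th root of $u \in \calO_K^\times$, hence a unit of $\calO_C$. Then $(u^\flat)^\sharp = u$ and multiplicativity of Teichm\"uller gives $[\pi^\flat]^e = [p^\flat]\cdot[u^\flat]$ with $[u^\flat] \in \A_{\inf}(\Lambda^+)^\times$. Set $x := [\pi^\flat] - \pi$, $w := x/\pi$, and $v := ([p^\flat] - p)/p$. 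By Lemma~\ref{lem: ker theta is principal}, the two rings in question identify with $\A_{\inf,K}(\Lambda^+)[w]$ and $\A_{\inf,K}(\Lambda^+)[v]$ respectively, so the goal reduces to showing $v \in \A_{\inf,K}(\Lambda^+)[w]$ and $p w^n \in \A_{\inf,K}(\Lambda^+)[v]$ for every $n \geq 0$.

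First I will derive a closed-form relation between $v$ and $w$. Since $\theta([u^\flat]) = u = \theta(u)$, we have $[u^\flat] - u \in \Ker\theta_{\A_{\inf,K}(\Lambda^+)} = (x)$, so I can write $[u^\flat] = u + x\delta$ with $\delta \in \A_{\inf,K}(\Lambda^+)$. Using $[\pi^\flat] = \pi(1+w)$ and $\pi^e = pu$, a direct computation gives
\[
[u^\flat] \cdot v \;=\; \frac{[\pi^\flat]^e - p[u^\flat]}{p} \;=\; u\bigl((1+w)^e - 1\bigr) - \pi w \delta,
\]
and since $[u^\flat]$ is a unit this exhibits $v$ as an element of $\A_{\inf,K}(\Lambda^+)[w]$, proving the first inclusion.

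For the second inclusion I solve the same identity for $w^e$ and induct on $n$. The base case $0 \leq n \leq e$ is immediate from $p w^n = \pi^{e-n} u^{-1} x^n \in \A_{\inf,K}(\Lambda^+)$. For $n > e$, rearranging yields
\[
u w^e \;=\; [u^\flat] v \;-\; u\sum_{k=1}^{e-1}\binom{e}{k} w^k \;+\; \pi w \delta,
\]
and multiplying by $u^{-1} p w^{n-e}$ expresses $p w^n$ as an $\A_{\inf,K}(\Lambda^+)$-linear combination of $v \cdot p w^{n-e}$ and of $p w^m$ for various $n-e \leq m \leq n-1$; since all such $m$ are strictly less than $n$, the inductive hypothesis closes the step. (The case $e = 1$ is tautological, as $K = K_0$.)

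The ``in particular'' conclusion is then formal. Because $\calO_K$ is a finite free $W(k)$-module, $\calO_K \otimes_{W(k)}(-)$ commutes with $p$-adic completion, and both inclusions pass to $p$-adic completions to give
\[
\calO_K \otimes_{W(k)} \A_{\max}(\Lambda^+) \;\subset\; \A_{\max,K}(\Lambda^+) \;\subset\; \tfrac{1}{p}\bigl(\calO_K \otimes_{W(k)} \A_{\max}(\Lambda^+)\bigr);
\]
inverting $p$ collapses the two outer rings, yielding the desired isomorphism $K \otimes_{K_0} \B_{\max}^+(\Lambda^+) \cong \B_{\max,K}^+(\Lambda^+)$. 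The main obstacle I foresee is deriving the key identity cleanly and arranging the induction so that every $p w^m$ appearing on the right-hand side genuinely has $m < n$; once that identity is in place, the rest is explicit manipulation in $\A_{\inf,K}(\Lambda^+)$.
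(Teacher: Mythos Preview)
Your proof is correct. Both your argument and the paper's rest on the same two ingredients: Lemma~\ref{lem: ker theta is principal} identifying the two rings as $\A_{\inf,K}(\Lambda^+)[v]$ and $\A_{\inf,K}(\Lambda^+)[w]$, and the multiplicative relation $[\pi^\flat]^e = [u^\flat][p^\flat]$ coming from a tilt of $\pi^e = up$. The ``in particular'' step is handled identically.

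The difference is in execution. You work additively with $v = \frac{[p^\flat]-p}{p}$ and $w = \frac{[\pi^\flat]-\pi}{\pi}$, which forces you to expand $(1+w)^e$, carry the correction term $[u^\flat] - u = x\delta$, and run an induction on $n$ for the second inclusion. The paper instead works multiplicatively with the ratios $\frac{[p^\flat]}{p} = 1+v$ and $\frac{[\pi^\flat]}{\pi} = 1+w$, for which the relation $[\pi^\flat]^e = [a^\flat]^{-1}[p^\flat]$ (with $a = u^{-1}$) gives exact closed forms
\[
\Bigl(\tfrac{[p^\flat]}{p}\Bigr)^m = [a^\flat]^m a^{-m}\Bigl(\tfrac{[\pi^\flat]}{\pi}\Bigr)^{em},
\qquad
\Bigl(\tfrac{[\pi^\flat]}{\pi}\Bigr)^m = \tfrac{1}{p}\Bigl(a[a^\flat]^{-1}\tfrac{[p^\flat]}{p}\Bigr)^q \cdot \tfrac{p}{\pi^r}[\pi^\flat]^r
\quad (m = qe + r,\ 0 \le r < e),
\]
from which both inclusions are immediate --- no error term, no induction. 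The trick of replacing the difference generators by the ratio generators (which generate the same subrings, since they differ by $1$) is what makes the multiplicative structure transparent; it is worth remembering.
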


\begin{proof}
Set $e=[K:K_0]$ and write $p=a\pi^e$ and $p^\flat=a^\flat (\pi^\flat)^e$ for some $a\in \calO_K^\times$ and $a^\flat\in (\calO_{C}^\flat)^\times$.
For every $m\in \N$, we have
\[
 \biggl(\frac{[p^\flat]}{p}\biggr)^m=[a^\flat]^ma^{-m}\biggl(\frac{[\pi^\flat]}{\pi}\biggr)^{em}.
\]
On the other hand, if we write $m=qe+r$ with $0\leq r<e$, we have
\[
 \biggl(\frac{[\pi^\flat]}{\pi}\biggr)^m
=\frac{1}{p}\biggl(a[a^\flat]^{-1}\frac{[p^\flat]}{p}\biggr)^q\frac{p}{\pi^r}[\pi^\flat]^r.
\]
The first assertion follows from these equalities and Lemma~\ref{lem: ker theta is principal}.
Taking the $p$-adic completion of the first part yields
\[
\calO_K\otimes_{W(k)}\A_{\max}(\Lambda^+)
\subset \A_{\max,K}(\Lambda^+)
\subset \frac{1}{p}\biggl(\calO_K\otimes_{W(k)}\A_{\max}(\Lambda^+)\biggr).
\]
Hence the second assertion follows by inverting $p$.
\end{proof}

Let us now turn to Colmez's description of the horizontal period rings $\B_{\dR}^+(\Lambda^+)$ and $\B_{\max,K}^+(\Lambda^+)$ as $K$-vector spaces \cite[\S 8.4]{Colmez}.

\begin{construction}\label{const:section of theta map}
Fix a family of elements $(e_i)_{i\in I}$ of $\Lambda^+$ such that the images of $e_i$'s in $\Lambda^+/\pi\Lambda^+$ form a $k$-basis of $\Lambda^+/\pi\Lambda^+$.
Then every element of $\Lambda$ is written uniquely as $\sum_{i\in I}a_ie_i$ with $a_i\in K$ such that for every $r>0$ there are only finitely many $i$'s with $\lvert a_i\rvert_\Lambda\geq r$.
For each $i\in I$, choose $\widetilde{e}_i\in \A_{\inf,K}(\Lambda^+)$ such that $\theta_{\A_{\inf,K}(\Lambda^+)}(\widetilde{e}_i)=e_i$.
Consider the map
\[
 s\colon \Lambda\ra \A_{\inf,K}(\Lambda^+)[p^{-1}];\quad
\sum_{i\in I}a_ie_i\mapsto\sum_{i\in I}a_i\widetilde{e}_i.
\]
This is a $K$-linear map and satisfies $\theta_{\A_{\inf,K}(\Lambda^+)}\circ s=\id$ and $s(\Lambda^+)\subset \A_{\inf,K}(\Lambda^+)$. In particular, $s$ is continuous with respect to $p$-adic topology.

Set 
\[
 v=\pi^{-1}([\pi^\flat]-\pi).
\]
Recall $\B_{\dR}^+(\Lambda^+)=\varprojlim_m \A_{\inf,K}(\Lambda^+)[p^{-1}]/(\Ker \theta_{\A_{\inf,K}(\Lambda^+)}[p^{-1}])^m$.
Note that $v$ generates $\Ker\theta_{\B_{\dR}^+(\Lambda^+)}$ by Lemmas~\ref{lem: ker theta is principal} and \ref{lem:horizontal de Rham period rings are isomorphic}. 
For $x\in \B_{\dR}^+(\Lambda^+)$, we define two sequences $\bigl(a_m(x)\bigr)_{m\in\N}$
and $\bigl(b_m(x)\bigr)_{m\in\N}$ with $a_m(x)\in \B_{\dR}^+(\Lambda^+)$ and $b_m(x)\in \Lambda$ by 
\begin{align*}
 a_0(x)&=x,\\
 b_m(x)&=\theta_{\B_{\dR}^+(\Lambda^+)}\bigl(a_m(x)\bigr),\quad a_{m+1}(x)=\frac{1}{v}\bigl(a_m(x)-s(b_m(x))\bigr).
\end{align*}
Define
\[
 \widetilde{\theta}_{s,v}\colon \B_{\dR}^+(\Lambda^+)\ra \Lambda[[X]]
\quad\text{by}\quad \widetilde{\theta}_{s,v}(x):=\sum_{m=0}^\infty b_m(x)X^m.
\]
\end{construction}

\begin{lem}\label{lem: properties of theta-tilde}
\hfill
\begin{enumerate}
 \item The map $\widetilde{\theta}_{s,v}$ is a $K$-linear isomorphism, and the inverse map is given by
$\sum_{m=0}^\infty c_mX^m\mapsto \sum_{m=0}^\infty s(c_m)v^m$.
 \item 
For $F(X)\in K[[X]]$ and $x\in \B_{\dR}^+(\Lambda^+)$, we have
\[
 \widetilde{\theta}_{s,v}\bigl(xF(v)\bigr)=\widetilde{\theta}_{s,v}(x)F(X).
\]
 \item We have the following inclusions of $\calO_K$-modules:
\[
 \Lambda^+[[\pi^2 X]]\subset
\widetilde{\theta}_{s,v}\bigl(\A_{\inf,K}(\Lambda^+)\bigr)\subset
 \Lambda^+[[\pi X]].
\]
 \item
The map $\widetilde{\theta}_{s,v}$ induces natural identifications (as $K$-vector spaces)
\[
\A_{\max,K}(\Lambda^+)\cong \Lambda^+\langle X\rangle, \quad\text{and}\quad
\B_{\max,K}^+(\Lambda^+)\cong \Lambda\langle X\rangle.
\]
Here $\Lambda^+\langle X\rangle$ denotes the $p$-adic completion of the polynomial algebra $\Lambda^+[X]$ and $\Lambda\langle X\rangle:=\Lambda^+\langle X\rangle[p^{-1}]$.
\end{enumerate}
\end{lem}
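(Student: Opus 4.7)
My plan is to treat the four assertions in order, each one essentially a formal consequence of the recursive definition combined with the two principal facts already in hand: that $v$ generates $\Ker\theta_{\B_{\dR}^+(\Lambda^+)}$ (Lemma~\ref{lem: ker theta is principal} and Lemma~\ref{lem:horizontal de Rham period rings are isomorphic}), and that $\pi v=[\pi^{\flat}]-\pi$ generates $\Ker\theta_{\A_{\inf,K}(\Lambda^+)}$.

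For (i), the candidate inverse $\Psi(\sum c_m X^m):=\sum s(c_m)v^m$ is well-defined because $\B_{\dR}^+(\Lambda^+)$ is complete for the $\Ker\theta$-adic topology by Lemma~\ref{lem:BdR is complete} and $v$ generates that ideal, so the series converges. $K$-linearity of $\widetilde{\theta}_{s,v}$ is immediate from $K$-linearity of $s$ and $\theta$. To see $\widetilde{\theta}_{s,v}\circ\Psi=\id$, I would feed $y=\sum s(c_m)v^m$ into the recursion: $b_0(y)=\theta(y)=c_0$, then $a_1(y)=v^{-1}(y-s(c_0))=\sum_{m\ge 1}s(c_m)v^{m-1}$, so $b_1(y)=c_1$, and induction gives $b_m(y)=c_m$. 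The opposite composition $\Psi\circ\widetilde{\theta}_{s,v}$ is then handled by showing that $x-\sum_{m<N}s(b_m(x))v^m=v^N a_N(x)\in v^N\B_{\dR}^+(\Lambda^+)$ (again by induction using the recursion), so the difference tends to $0$ in the $v$-adic topology.

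For (ii), the content is that $\widetilde{\theta}_{s,v}(vx)=X\widetilde{\theta}_{s,v}(x)$. This falls out of the recursion directly: if $x'=vx$, then $b_0(x')=\theta(vx)=0$ and $a_1(x')=v^{-1}(vx-s(0))=x$, whence $b_{m+1}(x')=b_m(x)$ for all $m\ge 0$. Iterating gives the statement for $F(X)=X^n$, $K$-linearity extends it to polynomials, and continuity of $\widetilde{\theta}_{s,v}$ with respect to the $v$-adic / $X$-adic topologies (established in (i)) extends it to arbitrary $F(X)\in K[[X]]$.

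For (iii), the upper inclusion will follow from the claim that $a_m(x)\in\pi^m\A_{\inf,K}(\Lambda^+)$ for $x\in\A_{\inf,K}(\Lambda^+)$, proved by induction: given $a_m(x)=\pi^m\widetilde{y}_m$ with $\widetilde{y}_m\in\A_{\inf,K}(\Lambda^+)$, the element $\widetilde{y}_m-s(\theta(\widetilde{y}_m))$ lies in $\Ker\theta_{\A_{\inf,K}(\Lambda^+)}=(\pi v)$, hence equals $\pi v\widetilde{y}_{m+1}$ and $a_{m+1}(x)=\pi^{m+1}\widetilde{y}_{m+1}$. Applying $\theta$ then gives $b_m(x)\in\pi^m\Lambda^+$, which is exactly the condition for $\widetilde{\theta}_{s,v}(x)\in\Lambda^+[[\pi X]]$. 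For the lower inclusion, given $F(X)=\sum c_m(\pi^2X)^m\in\Lambda^+[[\pi^2X]]$, the inverse of (i) produces
\[
\Psi(F)=\sum_{m\ge 0}\pi^{2m}s(c_m)v^m=\sum_{m\ge 0}\pi^m\,s(c_m)\,(\pi v)^m,
\]
and since $\pi v\in\A_{\inf,K}(\Lambda^+)$, $s(c_m)\in\A_{\inf,K}(\Lambda^+)$, and $\pi^m\to 0$ in the $\pi$-adic (hence $p$-adic) topology on the complete ring $\A_{\inf,K}(\Lambda^+)$, the series converges in $\A_{\inf,K}(\Lambda^+)$.

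For (iv), recall $\A_{\inf,K}(\Lambda^+)[\Ker\theta/\pi]=\A_{\inf,K}(\Lambda^+)[v]$, and $\A_{\max,K}(\Lambda^+)$ is its $p$-adic completion. Combining (ii) and the upper inclusion of (iii), for $y=\sum_{n=0}^{N}x_n v^n\in\A_{\inf,K}(\Lambda^+)[v]$ the coefficient of $X^m$ in $\widetilde{\theta}_{s,v}(y)$ lies in $\sum_{n\le\min(m,N)}\pi^{m-n}\Lambda^+$, which tends $p$-adically to $0$ as $m\to\infty$; hence $\widetilde{\theta}_{s,v}\big(\A_{\inf,K}(\Lambda^+)[v]\big)\subset\Lambda^+\langle X\rangle$. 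Since $\widetilde{\theta}_{s,v}$ is $p$-adically continuous and $\Lambda^+\langle X\rangle$ is $p$-adically complete, this extends to $\widetilde{\theta}_{s,v}(\A_{\max,K}(\Lambda^+))\subset\Lambda^+\langle X\rangle$. Conversely, for $\sum c_m X^m\in\Lambda^+\langle X\rangle$ with $c_m\to 0$ $p$-adically in $\Lambda^+$, the inverse series $\sum s(c_m)v^m$ converges $p$-adically in $\A_{\max,K}(\Lambda^+)$ because $v\in\A_{\max,K}(\Lambda^+)$ and $s(c_m)\to 0$ $p$-adically in $\A_{\inf,K}(\Lambda^+)\subset\A_{\max,K}(\Lambda^+)$. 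These two inclusions, combined with (i), give the $K$-linear isomorphism $\A_{\max,K}(\Lambda^+)\cong\Lambda^+\langle X\rangle$; inverting $p$ yields $\B_{\max,K}^+(\Lambda^+)\cong\Lambda\langle X\rangle$.

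The main obstacle I expect is part (iv): one must juggle two different topologies (the $v$-adic topology used to define $\widetilde{\theta}_{s,v}$ and the $p$-adic topology defining $\A_{\max,K}$ and $\Lambda^+\langle X\rangle$) and check that the map is simultaneously continuous for both. The key technical input making this possible is the sharp estimate $b_m(x)\in\pi^m\Lambda^+$ from (iii), which is what forces $p$-adic convergence of the coefficients at each $X^m$ when $v^n$ is allowed to appear in $y$.
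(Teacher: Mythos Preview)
Your proof is correct. Parts (i)--(iii) match the paper's argument essentially verbatim; the paper is just terser (for (i) and (ii) it says only that they follow from the recursive definition and the fact that $v$ generates $\Ker\theta$, leaving the induction you wrote out implicit).

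For part (iv) you take a slightly different route. You argue directly that $\widetilde{\theta}_{s,v}$ sends $\A_{\inf,K}(\Lambda^+)[v]$ into $\Lambda^+\langle X\rangle$ via the coefficient estimate from (iii), extend by $p$-adic continuity, and then check the inverse separately. The paper instead uses (ii) and (iii) to obtain the sandwich
\[
\Lambda^+[[\pi^2 X]][X]\subset\widetilde{\theta}_{s,v}\bigl(\A_{\inf,K}(\Lambda^+)[\Ker\theta/\pi]\bigr)\subset\Lambda^+[[\pi X]][X],
\]
and then simply observes that the $p$-adic completions of the two outer rings are both $\Lambda^+\langle X\rangle$, forcing the same for the middle. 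The paper's version is slicker and sidesteps your explicit continuity verification; your version is more hands-on and makes the interplay of the two topologies explicit. One small presentational point: at this stage in the paper the map $\A_{\max,K}(\Lambda^+)\to\B_{\dR}^+(\Lambda^+)$ has not yet been constructed (it comes immediately after this lemma), so your phrase ``extends to $\widetilde{\theta}_{s,v}(\A_{\max,K}(\Lambda^+))\subset\Lambda^+\langle X\rangle$'' should be understood as \emph{defining} the induced map on the $p$-adic completion, not as restricting an already-existing map---which is also how the paper's word ``induces'' should be read.
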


\begin{proof}
Part (i) follows from the fact that $\Ker \theta_{\B_{\dR}^+(\Lambda^+)}$ is generated by $v=\pi^{-1}([\pi^\flat]-\pi)$, and
Part (ii) follows from the definition of $\widetilde{\theta}_{s,v}$ and the fact that $\theta_{\B_{\dR}^+(\Lambda^+)}$ and $s$ are both $K$-linear.

We show Part (iii). 
For $x\in \A_{\inf}(\Lambda^+)$, we see $b_m(x)\in \pi^m\Lambda^+$ by induction on $m$.
Hence the second inclusion follows. For the first inclusion, choose any $f\in \Lambda^+[[\pi^2X]]$ and write $f=\sum_{m=0}^\infty c_m(\pi^2X)^m$ with $c_m\in\Lambda^+$.
Then $s(c_m)\in \A_{\inf,K}(\Lambda^+)$ and $y:=\sum_{m=0}^\infty s(c_m)\pi^m([\pi^\flat]-\pi)^m$ converges in $\A_{\inf,K}(\Lambda^+)$. It is now easy to see $\widetilde{\theta}_{s,v}(y)=f$.

For Part (iv), first note that Parts (ii) and (iii) imply
\[
 \Lambda^+[[\pi^2 X]][X]\subset
\widetilde{\theta}_{s,v}\biggl(\A_{\inf,K}(\Lambda^+)\biggl[\frac{\Ker\theta}{\pi}\biggr]\biggr)\subset
\Lambda^+[[\pi X]][X].
\]
Hence the identification $\A_{\max,K}(\Lambda^+)\cong \Lambda^+\langle X\rangle$  follows from
\[
 \varprojlim_{m}\Lambda^+[[\pi^2 X]][X]/p^m\Lambda^+[[\pi^2 X]][X]\cong
 \varprojlim_{m}\Lambda^+[[\pi X]][X]/p^m\Lambda^+[[\pi X]][X]\cong \Lambda^+\langle X\rangle.
\]
By inverting $p$, we obtain $\B_{\max,K}^+(\Lambda^+)\cong \Lambda\langle X\rangle$.
\end{proof}

We will show that there is an injective $K$-algebra homomorphism
\[
 \B_{\max,K}^+(\Lambda^+)\ra \B_{\dR}^+(\Lambda^+).
\]
Since $\B_{\dR}^+(\Lambda^+)$ is $\Ker \theta_{\B_{\dR}^+(\Lambda^+)}$-adically complete, the natural $K$-algebra homomorphism
$\A_{\inf,K}(\Lambda^+)\bigl[\frac{\Ker\theta}{\pi}\bigr]\ra \B_{\dR}^+(\Lambda^+)$
extends to $\A_{\max,K}(\Lambda^+)\ra \B_{\dR}^+(\Lambda^+)$ and thus to
$\B_{\max,K}^+(\Lambda^+)\ra \B_{\dR}^+(\Lambda^+)$.
We deduce the injectivity from the description of these period rings in terms of $\widetilde{\theta}_{s,v}$ as follows:

\begin{prop}
The map $\widetilde{\theta}_{s,v}$ induces identifications (as $K$-vector spaces)
\[
 \xymatrix{
\A_{\max,K}(\Lambda^+)\ar^\cong[d]\ar[r]
& \B_{\max,K}^+(\Lambda^+)\ar[d]^\cong\ar[r]
& \B_{\dR}^+(\Lambda^+)\ar[d]^\cong_{\widetilde{\theta}_{v,s}}\\
\Lambda^+\langle X\rangle \ar@{^{(}->}[r]
&\Lambda\langle X\rangle \ar@{^{(}->}[r]
&\Lambda[[X]], 
}
\]
making the diagram commutative.
In particular, the top horizontal maps are injective, and
$\B_{\max,K}^+(\Lambda)$ is $t$-torsion free.
\end{prop}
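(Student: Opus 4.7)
The plan is to reduce commutativity of the diagram to its restriction to the dense subring $\A_{\inf,K}(\Lambda^+)\bigl[\tfrac{\Ker\theta}{\pi}\bigr]$ and then propagate by continuity, after which injectivity of the top row and $t$-torsion freeness follow formally. The key observation is that all three vertical arrows ultimately come from the single $K$-linear map $\widetilde{\theta}_{s,v}\colon \B_{\dR}^+(\Lambda^+)\to \Lambda[[X]]$ of Lemma~\ref{lem: properties of theta-tilde}(i): the isomorphism $\A_{\max,K}(\Lambda^+)\cong \Lambda^+\langle X\rangle$ of Lemma~\ref{lem: properties of theta-tilde}(iv) was obtained precisely by restricting $\widetilde{\theta}_{s,v}$ to $\A_{\inf,K}(\Lambda^+)\bigl[\tfrac{\Ker\theta}{\pi}\bigr]\hookrightarrow \B_{\dR}^+(\Lambda^+)$ and passing to $p$-adic completion, and the isomorphism $\B_{\max,K}^+(\Lambda^+)\cong \Lambda\langle X\rangle$ follows by inverting $p$.

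Granting this, the left square commutes trivially since its horizontal arrows are both the operation of inverting $p$ and the vertical maps are $K$-linear. For the right square, what needs to be checked is that the natural map $\B_{\max,K}^+(\Lambda^+)\to \B_{\dR}^+(\Lambda^+)$ corresponds under $\widetilde{\theta}_{s,v}$ to the inclusion $\Lambda\langle X\rangle\hookrightarrow \Lambda[[X]]$. I would verify this on the dense $K$-subspace $\A_{\inf,K}(\Lambda^+)\bigl[\tfrac{\Ker\theta}{\pi}\bigr]$, where both routes tautologically agree with the given inclusion into $\B_{\dR}^+(\Lambda^+)$, and then use the $p$-adic continuity provided by Lemma~\ref{lem: properties of theta-tilde}(iii)--(iv), which matches $p$-adic convergence on the source with coefficient-wise $p$-adic convergence inside $\Lambda^+\langle X\rangle\subset \Lambda[[X]]$.

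Once commutativity is in hand, the injectivity of the two top horizontal arrows is automatic, since the vertical maps are bijections and the bottom arrows $\Lambda^+\langle X\rangle\hookrightarrow \Lambda\langle X\rangle\hookrightarrow \Lambda[[X]]$ are obvious inclusions. For the $t$-torsion freeness of $\B_{\max,K}^+(\Lambda^+)$, I would combine the now-established embedding $\B_{\max,K}^+(\Lambda^+)\hookrightarrow \B_{\dR}^+(\Lambda^+)$ with the $t$-torsion freeness of $\B_{\dR}^+(\Lambda^+)$ proved earlier. The main technical obstacle, I anticipate, is pinning down the precise sense in which the natural $K$-algebra map $\A_{\max,K}(\Lambda^+)\to \B_{\dR}^+(\Lambda^+)$ extends the one on $\A_{\inf,K}(\Lambda^+)\bigl[\tfrac{\Ker\theta}{\pi}\bigr]$: since the source is a $p$-adic completion while $\B_{\dR}^+(\Lambda^+)$ is not $p$-adically complete, the extension must be read off via $\widetilde{\theta}_{s,v}$, under which $p$-adic Cauchy sequences on the source correspond to coefficient-wise $p$-adic convergent sequences inside $\Lambda^+\langle X\rangle\subset \Lambda[[X]]\cong \B_{\dR}^+(\Lambda^+)$.
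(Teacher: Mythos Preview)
Your proposal is correct and follows essentially the same approach as the paper. The paper's proof is terser: it simply notes that the vertical isomorphisms are already established in Lemma~\ref{lem: properties of theta-tilde}, treats commutativity as implicit in that construction, and then deduces injectivity and $t$-torsion freeness exactly as you do. Your ``technical obstacle'' about extending the map from the $p$-adic completion $\A_{\max,K}(\Lambda^+)$ into the non-$p$-adically-complete $\B_{\dR}^+(\Lambda^+)$ is handled by the paper in the paragraph immediately preceding the proposition: the extension exists because $\B_{\dR}^+(\Lambda^+)$ is $\Ker\theta$-adically complete (and each quotient $\B_{\dR}^+/\Fil^m$, being finite free over $\Lambda$, is $p$-adically complete), so one extends level by level and takes the inverse limit; your alternative of reading off the extension through $\widetilde{\theta}_{s,v}$ is equally valid and amounts to the same thing.
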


\begin{proof}
 We have already proved that the vertical maps are all isomorphisms.
Since the bottom horizontal maps are injective, so are the top horizontal maps.
Since $\B_{\dR}^+(\Lambda^+)$ is $t$-torsion free, the last assertion follows.
\end{proof}

\begin{defn}
 We set
\begin{align*}
 \B_{\max}(\Lambda^+)&=\B_{\max}(\Lambda,\Lambda^+):=\B_{\max}^+(\Lambda^+)[t^{-1}],
\quad\text{and}\\
 \B_{\max,K}(\Lambda^+)&=\B_{\max,K}(\Lambda,\Lambda^+):=\B_{\max,K}^+(\Lambda^+)[t^{-1}].
\end{align*}
Since $\varphi(t)=pt$, the Frobenius $\varphi$ on $\A_{\max}(\Lambda^+)$ extends to an endomorphism on $\B_{\max}(\Lambda^+)$, which we still call the Frobenius and denote by $\varphi$.
\end{defn}

Let us define the horizontal semistable period rings.
Consider the elements
\[
\log \frac{[p^\flat]}{p}\!:=\!\sum_{m=1}^\infty\!\frac{(-1)^{m-1}}{m}\biggl(\frac{[p^\flat]}{p}-1\!\biggr)^m\!\!,\;
 \log \frac{[\pi^\flat]}{\pi}\!:=\!\sum_{m=1}^\infty\!\frac{(-1)^{m-1}}{m}\biggl(\frac{[\pi^\flat]}{\pi}-1\!\biggr)^m\!\!\in \B_{\dR}^+(\Lambda^+).
\]

\begin{defn}
 Define $\B_{\st}^+(\Lambda^+)=\B_{\st}^+(\Lambda, \Lambda^+)$ to be 
the $\B_{\max}^+(\Lambda^+)$-subalgebra of $\B_{\dR}^+(\Lambda^+)$ generated by $\log \frac{[p^\flat]}{p}$.
Similarly, define $\B_{\st,K}^+(\Lambda^+)=\B_{\st,K}^+(\Lambda, \Lambda^+)$ to be 
the $\B_{\max,K}^+(\Lambda^+)$-subalgebra of $\B_{\dR}^+(\Lambda^+)$ generated by $\log \frac{[p^\flat]}{p}$
\footnote{Strictly speaking, our definitions slightly differ from the ones in \cite[\S 8.6]{Colmez}. However, our horizontal semistable period rings are isomorphic to Colmez's period rings by Corollary~\ref{cor:semistable period ring otimes K}.}.
\end{defn}

\begin{lem}\label{lem: well-definedness of Bst}
The definitions of  $\B_{\st}^+(\Lambda^+)$ and $\B_{\st}^+(\Lambda^+)$ are  independent of the choice of a compatible system of $p$-power roots $(p_m)$ of $p$. 
\end{lem}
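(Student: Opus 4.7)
The plan is to compare two choices $(p_m)$ and $(p_m')$ of compatible systems of $p$-power roots of $p$ and show that the resulting elements $\log \tfrac{[p^\flat]}{p}$ and $\log \tfrac{[(p')^\flat]}{p}$ differ by an element already in $\B_{\max}^+(\Lambda^+)$ (respectively $\B_{\max,K}^+(\Lambda^+)$).

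First I would set $\zeta_m := p_m'/p_m \in \overline{K}$. Since $p_m^{p^m} = p = (p_m')^{p^m}$, each $\zeta_m$ is a $p^m$-th root of unity, and the relations $p_{m+1}^p = p_m$ and $(p_{m+1}')^p = p_m'$ force $\zeta_{m+1}^p = \zeta_m$; moreover $\zeta_0 = 1$. Thus
\[
\eta := (\zeta_0 \bmod p, \zeta_1 \bmod p, \ldots) \in \calO_C^\flat \subset \Lambda^{+\flat}
\]
is a well-defined element with $\eta \cdot p^\flat = (p')^\flat$, and consequently $[(p')^\flat]/p = [\eta]\cdot [p^\flat]/p$ in $\A_{\inf}(\Lambda^+)[\tfrac{\Ker\theta}{p}] \subset \B_{\dR}^+(\Lambda^+)$.

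The key technical step is to show that $\log [\eta]$ converges in $\A_{\max}(\Lambda^+)$. Since $\zeta_0 = 1$, we have $\eta^\sharp = 1$, so $[\eta] - 1 \in \Ker \theta_{\A_{\inf}(\Lambda^+)}$. Because $\Ker \theta_{\A_{\inf}(\Lambda^+)}$ is generated by $[p^\flat]-p$, which lies in $p\A_{\max}(\Lambda^+)$ (as $\tfrac{[p^\flat]-p}{p} \in \A_{\max}(\Lambda^+)$ by construction), we obtain $[\eta]-1 \in p\A_{\max}(\Lambda^+)$, and more generally $([\eta]-1)^m \in p^m\A_{\max}(\Lambda^+)$. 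Since $v_p(p^m/m) \to \infty$ as $m \to \infty$, the series
\[
\log[\eta] = \sum_{m=1}^\infty \frac{(-1)^{m-1}}{m}([\eta]-1)^m
\]
converges $p$-adically in $\A_{\max}(\Lambda^+)$, hence defines an element of $\B_{\max}^+(\Lambda^+) \subset \B_{\max,K}^+(\Lambda^+)$.

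To finish, applying the formal identity $\log(xy) = \log x + \log y$ (valid here because both $[\eta]$ and $[p^\flat]/p$ reduce to $1$ modulo $\Ker\theta_{\B_{\dR}^+(\Lambda^+)}$, so the three logarithms all converge in $\B_{\dR}^+(\Lambda^+)$ and the usual power series identity holds), I get
\[
\log \frac{[(p')^\flat]}{p} = \log [\eta] + \log \frac{[p^\flat]}{p}.
\]
As $\log[\eta] \in \B_{\max}^+(\Lambda^+)$, the two elements $\log \tfrac{[p^\flat]}{p}$ and $\log \tfrac{[(p')^\flat]}{p}$ generate the same $\B_{\max}^+(\Lambda^+)$-subalgebra of $\B_{\dR}^+(\Lambda^+)$, and likewise over $\B_{\max,K}^+(\Lambda^+)$, proving the lemma. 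The main obstacle is the convergence analysis for $\log[\eta]$; once one recognizes that $\Ker \theta \subset p\A_{\max}(\Lambda^+)$, the rest is routine manipulation of power series.
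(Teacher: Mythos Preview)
Your proof is correct and follows essentially the same approach as the paper. The only difference is that the paper exploits the already-fixed compatible system $(\varepsilon_m)$ of $p$-power roots of unity: since $(\zeta_m)$ lies in $\Z_p(1)$, one can write $\zeta_m = \varepsilon_m^{\alpha}$ for a single $\alpha \in \Z_p$, whence $\eta = \varepsilon^{\alpha}$ and $\log[\eta] = \alpha t$, which is already known to lie in $\A_{\max}(\Lambda^+)$; this avoids redoing the convergence estimate you carry out for $\log[\eta]$, but the substance is the same.
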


\begin{proof}
 If $(p'_m)$ is a different choice, there exists $\alpha\in \Z_p$ such that $p'_m=\varepsilon_m^{\alpha}p_m$ for every $m\in\N$. It follows 
\[
 \log\frac{[p'^\flat]}{p}=\log\frac{[p^\flat]}{p}+\alpha t \quad\text{in $\B_{\dR}^+(\Lambda^+)$ with $\alpha t\in\B_{\max}^+(\Lambda^+)$}.
\]
\end{proof}

\begin{lem}\label{lem: Bst in BdR}
The ring $\B_{\st,K}^+(\Lambda^+)$ coincides with the $\B_{\max,K}^+(\Lambda^+)$-subalgebra of $\B_{\dR}^+(\Lambda^+)$ generated by $\log \frac{[\pi^\flat]}{\pi}$. 
\end{lem}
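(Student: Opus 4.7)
The plan is to show that the two candidate generators $\log\frac{[p^\flat]}{p}$ and $\log\frac{[\pi^\flat]}{\pi}$ differ by an element of $\B_{\max,K}^+(\Lambda^+)$, so that each lies in the $\B_{\max,K}^+(\Lambda^+)$-subalgebra of $\B_{\dR}^+(\Lambda^+)$ generated by the other. Set $e=[K:K_0]$ and write $p=a\pi^e$ with $a\in\calO_K^\times$, and correspondingly $p^\flat=a^\flat(\pi^\flat)^e$ with $a^\flat\in(\calO_C^\flat)^\times$. Note $(a^\flat)^\sharp=a$ from comparing sharps on both sides. Applying the Teichm\"uller lift (which is multiplicative) yields the identity
\[
\frac{[p^\flat]}{p}=\frac{[a^\flat]}{a}\Bigl(\frac{[\pi^\flat]}{\pi}\Bigr)^e
\]
in $\A_{\inf,K}(\Lambda^+)[p^{-1}]$, hence in $\B_{\dR}^+(\Lambda^+)$. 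Taking $\log$ inside $\B_{\dR}^+(\Lambda^+)$ gives the key relation
\[
\log\frac{[p^\flat]}{p}=\log\frac{[a^\flat]}{a}+e\cdot\log\frac{[\pi^\flat]}{\pi}.
\]

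The heart of the argument is then to prove that the correction term $\log\frac{[a^\flat]}{a}$ actually lies in $\B_{\max,K}^+(\Lambda^+)$. Since $\theta_{\A_{\inf,K}(\Lambda^+)}([a^\flat])=(a^\flat)^\sharp=a$, we have $[a^\flat]-a\in\Ker\theta_{\A_{\inf,K}(\Lambda^+)}$, and by the construction of $\A_{\max,K}(\Lambda^+)$ we have $\pi^{-1}\Ker\theta_{\A_{\inf,K}(\Lambda^+)}\subset \A_{\max,K}(\Lambda^+)$. Writing $y:=\frac{[a^\flat]}{a}-1=\frac{[a^\flat]-a}{a}$, we therefore obtain $y\in\pi\A_{\max,K}(\Lambda^+)$, i.e., $y=\pi z$ for some $z\in\A_{\max,K}(\Lambda^+)$. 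Consequently $\frac{y^m}{m}=\frac{\pi^m}{m}z^m$; since $v_p(\pi^m/m)=m/e-v_p(m)\to\infty$, the $p$-adic valuation of $\pi^m/m$ tends to $+\infty$, so the partial sums of
\[
\log\frac{[a^\flat]}{a}=\sum_{m\geq 1}\frac{(-1)^{m-1}}{m}y^m
\]
converge in $\A_{\max,K}(\Lambda^+)$ (up to finitely many initial terms lying in $\B_{\max,K}^+(\Lambda^+)$), giving $\log\frac{[a^\flat]}{a}\in\B_{\max,K}^+(\Lambda^+)$ as desired.

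With the correction term safely in $\B_{\max,K}^+(\Lambda^+)$, the displayed identity shows immediately that $\log\frac{[p^\flat]}{p}$ lies in the $\B_{\max,K}^+(\Lambda^+)$-subalgebra generated by $\log\frac{[\pi^\flat]}{\pi}$, and conversely, dividing through by $e\in\Q^\times\subset K^\times$, that $\log\frac{[\pi^\flat]}{\pi}$ lies in the $\B_{\max,K}^+(\Lambda^+)$-subalgebra generated by $\log\frac{[p^\flat]}{p}$. The two subalgebras therefore coincide, which is the claim. I expect the only genuinely subtle point to be the convergence of $\log\frac{[a^\flat]}{a}$ in $\B_{\max,K}^+(\Lambda^+)$; note the argument really uses $y\in\pi\A_{\max,K}(\Lambda^+)$ rather than just $y\in\Ker\theta$, so it is crucial that we work with $\A_{\max,K}$ (built using $\pi^{-1}\Ker\theta$) rather than $\A_{\max}$ (built using $p^{-1}\Ker\theta$), mirroring the role played by $\pi$ throughout the rest of the section.
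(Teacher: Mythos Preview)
Your proof is correct and follows essentially the same approach as the paper: write $p=a\pi^e$ and $p^\flat=a^\flat(\pi^\flat)^e$, deduce the additive relation $\log\frac{[p^\flat]}{p}=\log\frac{[a^\flat]}{a}+e\log\frac{[\pi^\flat]}{\pi}$ in $\B_{\dR}^+(\Lambda^+)$, and check that $\log\frac{[a^\flat]}{a}\in\B_{\max,K}^+(\Lambda^+)$ via the $p$-adic convergence of $\pi^m/m$. Your care in noting that finitely many initial terms may lie only in $\B_{\max,K}^+(\Lambda^+)$ rather than $\A_{\max,K}(\Lambda^+)$ is a nice touch; the paper asserts convergence in $\A_{\max,K}(\Lambda^+)$ directly, which is harmless for the conclusion but slightly less precise when $e$ is large.
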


\begin{proof}
Set $e=[K:K_0]$.
 Write $p=a\pi^e$ for some $a\in\calO_K^\times$ and take
$a^\flat\in (\calO_{C}^\flat)^\times$ such that $p^\flat=a^\flat(\pi^\flat)^e$.
Then $[a^\flat]a^{-1}\in\A_{\inf,K}(\Lambda^+)$ satisfies $\theta_{\A_{\inf,K}(\Lambda^+)}([a^\flat]a^{-1}-1)=0$.
It follows that
the series
\[
 \sum_{m=1}^\infty\frac{(-1)^{m-1}}{m}\biggl(\frac{[a^\flat]}{a}-1\biggr)^m
\]
converges in $\A_{\max,K}(\Lambda^+)$. We denote the limit by $\log\frac{[a^\flat]}{a}$.
We can also check that the image of $\log\frac{[a^\flat]}{a}$ in $\B_{\dR}^+(\Lambda^+)$
is given by the same series and satisfies
\[
 \log \frac{[p^\flat]}{p}=\log\frac{[a^\flat]}{a}+e\log \frac{[\pi^\flat]}{\pi}.
\]
Hence the assertion follows.
\end{proof}

We will describe $\B_{\st,K}^+(\Lambda^+)$ via $\widetilde{\theta}_{s,v}$.
For this we need the following fact:

\begin{lem}\label{lem:transcendence of log}
 Let $A$ be a uniform $\Q_p$-Banach algebra.
Then the power series $\log(1+X):=\sum_{m=0}^\infty\frac{(-1)^{m-1}}{m}X^m\in A[[X]]$ is transcendental over $A\langle X\rangle$.
Hence the $A\langle X\rangle$-subalgebra $A\langle X\rangle[\log(1+X)]$ of $A[[X]]$ generated by $\log(1+X)$ is isomorphic to a polynomial algebra over $A\langle X\rangle$.
\end{lem}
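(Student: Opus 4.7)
The plan is to reduce to the case where $A$ is a complete nonarchimedean field of characteristic zero using the Berkovich spectrum, and then to handle the field case via a differential equation followed by a coefficient asymptotics argument.

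First I would exploit the uniformity of $A$: the norm equals the spectral seminorm, so $A$ embeds into $\prod_{y\in\mathcal{M}(A)}\mathcal{H}(y)$. Assume $\sum_{i=0}^n a_i(X)\log(1+X)^i=0$ in $A[[X]]$ with $a_n\neq 0$. Some Taylor coefficient $a_{n,k_0}$ of $a_n$ is nonzero in $A$, so there is $y\in\mathcal{M}(A)$ at which $a_{n,k_0}$ has nonzero image in $\mathcal{H}(y)$. The continuous $\Q_p$-algebra map $A\to\mathcal{H}(y)$ extends coefficient-wise to $A\langle X\rangle\to\mathcal{H}(y)\langle X\rangle$ and to $A[[X]]\to\mathcal{H}(y)[[X]]$; it preserves the relation and sends $a_n$ to a nonzero element. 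Hence it suffices to treat the case $A=F$, a complete nonarchimedean field of characteristic zero containing $\Q_p$.

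Next, in the field case, $F\langle X\rangle$ is a PID, so any algebraic dependence of $\log(1+X)$ yields a minimal monic polynomial $P(T)=T^n+a_{n-1}T^{n-1}+\cdots\in K[T]$ with $P(\log(1+X))=0$, where $K:=\Frac(F\langle X\rangle)$. I would then use the derivation $D:=(1+X)\frac{d}{dX}$, which preserves $F\langle X\rangle$, extends to $K$ by the quotient rule, and satisfies $D(\log(1+X))=1$. Applying $D$ to $P(\log(1+X))=0$ gives a polynomial of degree at most $n-1$ in $\log(1+X)$ which must vanish by minimality; the coefficient of $\log(1+X)^{n-1}$ then reads $D(a_{n-1})=-n$. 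A short local analysis at $X=0$ using $(1+X)a_{n-1}'=-n$, namely looking at the lowest-order Laurent coefficient in any coprime presentation $a_{n-1}=f/g$, rules out a pole of $a_{n-1}$ at $X=0$, so $a_{n-1}\in F[[X]]$; integrating in $F[[X]]$ yields $a_{n-1}=-n\log(1+X)+c$ for some $c\in F$. In particular, $\log(1+X)\in\Frac(F\langle X\rangle)$.

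The hard part is to rule out $\log(1+X)\in\Frac(F\langle X\rangle)$. After replacing $F$ by its completed algebraic closure, Weierstrass preparation lets us rewrite any assumed identity $\log(1+X)=f/g$ as $p(X)\log(1+X)=\tilde f(X)$ with $p\in F[X]$ a nonzero polynomial and $\tilde f\in F\langle X\rangle$. Writing $p(X)=\sum_{j=0}^d p_jX^j$ and fixing $j_\ast$ with $p_{j_\ast}\neq 0$, the $N$-th Taylor coefficient of $p(X)\log(1+X)$ for $N>d$ equals $(-1)^{N-1}\sum_{j=0}^d(-1)^j p_j/(N-j)$. Along the resonant sequence $N=p^m+j_\ast$, the summand $j=j_\ast$ has $p$-adic norm $|p_{j_\ast}|_p\cdot p^m$, while the remaining summands are uniformly bounded since $|j_\ast-j|_p$ stays bounded away from zero for $j\in\{0,\dots,d\}\setminus\{j_\ast\}$. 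Thus these Taylor coefficients are $p$-adically unbounded, contradicting $\tilde f\in F\langle X\rangle$. The main obstacle here is exactly that the naive leading term $p(-1)/N$ of $\sum p_j/(N-j)$ is irrelevant $p$-adically, because larger $v_p(N)$ amplifies small denominators; one must feed carefully chosen resonant values of $N$ into the identity to isolate a single divergent summand.
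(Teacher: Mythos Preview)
Your proof is correct. The reduction to the field case via the Gel'fand transform is exactly the paper's first step. Where you diverge is in handling the field case: you run a differential-equation argument using $D=(1+X)\tfrac{d}{dX}$ and minimality to force $\log(1+X)\in\Frac(F\langle X\rangle)$, then kill this by a coefficient-growth computation along $N=p^m+j_\ast$. The paper instead exploits that $\log(1+X)$ vanishes at $X=\zeta-1$ for every $p$-power root of unity $\zeta$ (these lie in the open unit disk, so both $f_i$ and $\log(1+X)$ converge there); substituting gives $f_0(\zeta-1)=0$ for infinitely many such points, and since a nonzero element of $F\langle X\rangle$ has only finitely many zeros in $\calO_F$ (Weierstrass preparation), one gets $f_0=0$ and then iterates. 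The paper's route is a couple of lines once one remembers that $p$-power roots of unity supply infinitely many zeros of $\log$; your route avoids any special knowledge of roots of $\log$ and would adapt to other transcendental series satisfying a simple first-order ODE with polynomial right-hand side, at the cost of the longer asymptotic analysis.
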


\begin{proof}
Since $A$ is uniform,  
equip $A$ with its spectral norm. Then
the Gel'fand transform $A\ra \prod_{x\in\calM(A)}\calH(x)$ is isometric with each $\calH(x)$ a field (\cite[Corollary 1.3.2 (ii)]{Berkovich-book}).
We can thus reduce the general case to the case where $A$ is a field.
We may further assume that $A$ is a complete algebraically closed field.
In this case, the assertion is well-known.
In fact, suppose that there exist $f_0(X),\ldots,f_m(X)\in A\langle X\rangle$ such that
\[
 f_0(X)+f_1(X)\log (1+X)+\cdots+f_m(X)(\log (1+X))^m=0.
\]
If $\zeta$ is a $p$-power root of unity, then $X=\zeta-1$ is a root of $\log(1+X)$, and thus it is also a root of $f_0(X)$.
Since any nonzero element of $A\langle X\rangle$ has at most finitely many roots in $\calO_A$,
we have $f_0(X)=0$. By repeating the same argument, we conclude that $f_0(X),\ldots,f_m(X)$ are all zero.
\end{proof}

\begin{prop}\label{Prop: Colmez's description of semistable period ring}
The map $\widetilde{\theta}_{s,v}$ induces identifications (as $K$-vector spaces)
\[
 \xymatrix{
\B_{\max,K}^+(\Lambda^+)\ar[d]^\cong\ar[r]
& \B_{\st,K}^+(\Lambda^+) \ar[d]^\cong\ar[r]
& \B_{\dR}^+(\Lambda^+)\ar[d]^\cong\\
\Lambda\langle X\rangle \ar[r]
&\Lambda\langle X\rangle[\log(1+X)] \ar[r]
&\Lambda[[X]],
}
\]
sending $\log \frac{[\pi^\flat]}{\pi}$ to $\log(1+X)$.
\end{prop}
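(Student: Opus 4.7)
The proposition asserts three vertical $K$-vector space identifications. The left one is Lemma~\ref{lem: properties of theta-tilde}(iv) and the right one is Lemma~\ref{lem: properties of theta-tilde}(i); only the middle identification carries new content, and it suffices to show that under $\widetilde{\theta}_{s,v}$ the subring $\B_{\st,K}^+(\Lambda^+)$ of $\B_{\dR}^+(\Lambda^+)$ corresponds to the subspace $\Lambda\langle X\rangle[\log(1+X)]$ of $\Lambda[[X]]$, with $\log\frac{[\pi^\flat]}{\pi}$ going to $\log(1+X)$. By Lemma~\ref{lem: Bst in BdR}, $\B_{\st,K}^+(\Lambda^+) = \B_{\max,K}^+(\Lambda^+)[\log\frac{[\pi^\flat]}{\pi}]$, so everything reduces to controlling this single element.

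First, I would compute $\widetilde{\theta}_{s,v}(\log\frac{[\pi^\flat]}{\pi})$. Since $v = \pi^{-1}([\pi^\flat]-\pi)$ satisfies $[\pi^\flat]/\pi = 1+v$, we have $\log\frac{[\pi^\flat]}{\pi} = \sum_{m\geq 1}\frac{(-1)^{m-1}}{m}v^m = F(v)$ for the series $F(X) := \log(1+X) \in K[[X]]$. By choosing the family $(e_i)$ in Construction~\ref{const:section of theta map} so that some $e_{i_0} = 1$ with lift $\widetilde{e}_{i_0} = 1$, we arrange $s(1)=1$, from which a direct unwinding of the recursion defining $b_m$ gives $\widetilde{\theta}_{s,v}(1)=1$. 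Then Lemma~\ref{lem: properties of theta-tilde}(ii) applied to $x=1$ and $F$ produces $\widetilde{\theta}_{s,v}(\log\frac{[\pi^\flat]}{\pi})=\log(1+X)$, as required.

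Next, for a general $z = \sum_{n=0}^N a_n(\log\frac{[\pi^\flat]}{\pi})^n \in \B_{\st,K}^+(\Lambda^+)$ with $a_n \in \B_{\max,K}^+(\Lambda^+)$, note that $(\log\frac{[\pi^\flat]}{\pi})^n = F_n(v)$ where $F_n(X) := (\log(1+X))^n \in K[[X]]$. Applying Lemma~\ref{lem: properties of theta-tilde}(ii) termwise and combining with the identification $\B_{\max,K}^+(\Lambda^+) \cong \Lambda\langle X\rangle$ from Lemma~\ref{lem: properties of theta-tilde}(iv) yields
\[
\widetilde{\theta}_{s,v}(z) = \sum_{n=0}^N \widetilde{\theta}_{s,v}(a_n)\,(\log(1+X))^n \in \Lambda\langle X\rangle[\log(1+X)].
\]
Conversely, any element of $\Lambda\langle X\rangle[\log(1+X)]$ arises in this form by lifting each coefficient through the $K$-linear isomorphism $\widetilde{\theta}_{s,v}\colon \B_{\max,K}^+(\Lambda^+) \to \Lambda\langle X\rangle$, so the restricted map is surjective; injectivity is inherited from the injectivity of $\widetilde{\theta}_{s,v}$ on all of $\B_{\dR}^+(\Lambda^+)$.

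The target $\Lambda\langle X\rangle[\log(1+X)]$ is a genuine polynomial ring over $\Lambda\langle X\rangle$: because $(\Lambda,\Lambda^+)$ is perfectoid, $\Lambda$ is uniform, and Lemma~\ref{lem:transcendence of log} applies. Transporting this transcendence back through the bijection further shows that $\log\frac{[\pi^\flat]}{\pi}$ is transcendental over $\B_{\max,K}^+(\Lambda^+)$, so the presentation $z=\sum_n a_n(\log\frac{[\pi^\flat]}{\pi})^n$ used above is unique. I do not foresee a major obstacle; the only delicate point is the normalization $s(1)=1$, which is simply a choice available in Construction~\ref{const:section of theta map}. All substantive work is already encapsulated in Lemmas~\ref{lem: properties of theta-tilde} and \ref{lem:transcendence of log}.
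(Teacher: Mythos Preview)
Your argument is correct and follows the same route as the paper: write $\B_{\st,K}^+(\Lambda^+)=\B_{\max,K}^+(\Lambda^+)[\log\frac{[\pi^\flat]}{\pi}]$ via Lemma~\ref{lem: Bst in BdR}, observe that $\log\frac{[\pi^\flat]}{\pi}=F(v)$ for $F(X)=\log(1+X)\in K[[X]]$, and then apply Lemma~\ref{lem: properties of theta-tilde}(ii),(iv) to identify the image. You are in fact slightly more careful than the paper's proof, which tacitly uses $\widetilde{\theta}_{s,v}(1)=1$ when asserting $\widetilde{\theta}_{s,v}\bigl((\log\frac{[\pi^\flat]}{\pi})^m\bigr)=(\log(1+X))^m$; your remark that one may choose $e_{i_0}=1$ and $\widetilde{e}_{i_0}=1$ in Construction~\ref{const:section of theta map} makes this explicit. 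The appeal to Lemma~\ref{lem:transcendence of log} at the end is not needed for the proposition itself (it is what the paper uses for the subsequent Corollary~\ref{cor:semistable period ring otimes K}), but it does no harm.
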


\begin{proof}
Recall $v=\frac{[\pi^\flat]}{\pi}-1$. Hence we have $\widetilde{\theta}_{s,v}((\log\frac{[\pi^\flat]}{\pi})^m)=(\log(1+X))^m$ by Lemma~\ref{lem: properties of theta-tilde}(ii).
By Lemma~\ref{lem: Bst in BdR}, 
\[
 \B_{\st,K}^{+}(\Lambda^+)=\sum_{m=0}^\infty \B_{\max,K}^+(\Lambda^+)\biggl(\log\frac{[\pi^\flat]}{\pi}\biggr)^m
\]
as $K$-vector subspaces of $\B_{\dR}^+(\Lambda^+)$.
It follows from Lemma~\ref{lem: properties of theta-tilde}(ii) and (iv) that
\[
 \widetilde{\theta}_{s,v}(\B_{\st,K}^{+}(\Lambda^+))=\sum_{m=0}^\infty \Lambda\langle X\rangle(\log (1+X))^m\subset \Lambda[[X]].
\]
\end{proof}

\begin{cor}\label{cor:semistable period ring otimes K}
The $\B_{\max}^+(\Lambda^+)$-algebra homomorphism
from the polynomial ring $\B_{\max}^+(\Lambda^+)[u]$ to $\B_{\st}^+(\Lambda^+)$ sending $u$ to $\log \frac{[p^\flat]}{p}$ is an isomorphism.
Similarly, the $\B_{\max,K}^+(\Lambda^+)$-algebra homomorphism $\B_{\max,K}^+(\Lambda^+)[u]\ra \B_{\st,K}^+(\Lambda^+); u\mapsto\log \frac{[p^\flat]}{p}$ is an isomorphism.
In particular, $K\otimes_{K_0}\B_{\st}^+(\Lambda^+)\cong \B_{\st,K}^+(\Lambda^+)$.
\end{cor}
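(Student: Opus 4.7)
The plan is to reduce the assertion to the transcendence statement of Lemma~\ref{lem:transcendence of log} via the description of $\B_{\st,K}^+(\Lambda^+)$ provided by Proposition~\ref{Prop: Colmez's description of semistable period ring}, and then to deduce the first isomorphism and the tensor identification from the $K$-version.

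First I would show that $\log \frac{[\pi^\flat]}{\pi}$ is transcendental over $\B_{\max,K}^+(\Lambda^+)$. Suppose $\sum_{m=0}^N b_m (\log \frac{[\pi^\flat]}{\pi})^m = 0$ in $\B_{\dR}^+(\Lambda^+)$ with $b_m \in \B_{\max,K}^+(\Lambda^+)$. Because $v = \frac{[\pi^\flat]}{\pi} - 1$, we may write $\log \frac{[\pi^\flat]}{\pi} = F(v)$ with $F(X) = \log(1+X) \in K[[X]]$, so Lemma~\ref{lem: properties of theta-tilde}(ii) yields $\widetilde{\theta}_{s,v}\bigl(b_m (\log \frac{[\pi^\flat]}{\pi})^m\bigr) = \widetilde{\theta}_{s,v}(b_m)\,(\log(1+X))^m$. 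Applying $\widetilde{\theta}_{s,v}$ to the relation therefore gives $\sum_m \widetilde{\theta}_{s,v}(b_m)(\log(1+X))^m = 0$ in $\Lambda[[X]]$, with each $\widetilde{\theta}_{s,v}(b_m) \in \Lambda\langle X\rangle$ by Lemma~\ref{lem: properties of theta-tilde}(iv). Since $\Lambda$ is uniform (as $(\Lambda,\Lambda^+)$ is perfectoid), Lemma~\ref{lem:transcendence of log} forces $\widetilde{\theta}_{s,v}(b_m) = 0$, and injectivity of $\widetilde{\theta}_{s,v}$ then gives $b_m = 0$.

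Next, the computation inside the proof of Lemma~\ref{lem: Bst in BdR} shows that $\log \frac{[p^\flat]}{p} - e \log \frac{[\pi^\flat]}{\pi} = \log \frac{[a^\flat]}{a}$ lies in $\A_{\max,K}(\Lambda^+) \subset \B_{\max,K}^+(\Lambda^+)$, where $e = [K:K_0]$. Consequently the $\B_{\max,K}^+(\Lambda^+)$-subalgebras generated by $\log \frac{[p^\flat]}{p}$ and by $\log \frac{[\pi^\flat]}{\pi}$ coincide, and $\log \frac{[p^\flat]}{p}$ is transcendental over $\B_{\max,K}^+(\Lambda^+)$ as well. Combined with Lemma~\ref{lem: Bst in BdR}, this proves that $\B_{\max,K}^+(\Lambda^+)[u] \to \B_{\st,K}^+(\Lambda^+)$ is an isomorphism. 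Since $\B_{\max}^+(\Lambda^+) \hookrightarrow \B_{\max,K}^+(\Lambda^+)$, any polynomial relation for $\log\frac{[p^\flat]}{p}$ with coefficients in the smaller ring is already a relation over the larger, so transcendence descends and $\B_{\max}^+(\Lambda^+)[u] \to \B_{\st}^+(\Lambda^+)$ is an isomorphism too.

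For the final identification, I would invoke the already-established isomorphism $K \otimes_{K_0} \B_{\max}^+(\Lambda^+) \cong \B_{\max,K}^+(\Lambda^+)$. Because extension of scalars commutes with taking a polynomial ring, we obtain $K \otimes_{K_0} \B_{\st}^+(\Lambda^+) \cong K \otimes_{K_0} \B_{\max}^+(\Lambda^+)[u] \cong \B_{\max,K}^+(\Lambda^+)[u] \cong \B_{\st,K}^+(\Lambda^+)$, and one checks the composition sends $u$ to $\log\frac{[p^\flat]}{p}$. The only point requiring care is the first step of the proof: although $\widetilde{\theta}_{s,v}$ is merely a $K$-linear bijection and not a ring homomorphism, the computation uses only its compatibility with multiplication by elements of $K[[v]]$, which is precisely the content of Lemma~\ref{lem: properties of theta-tilde}(ii); this is the hinge on which the argument turns.
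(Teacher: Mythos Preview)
Your proof is correct and follows essentially the same route as the paper: both arguments reduce the $K$-version to the transcendence of $\log(1+X)$ over $\Lambda\langle X\rangle$ (Lemma~\ref{lem:transcendence of log}) via the description furnished by Lemma~\ref{lem: properties of theta-tilde}(ii),(iv) and Lemma~\ref{lem: Bst in BdR}, and then obtain the last identification from $K\otimes_{K_0}\B_{\max}^+(\Lambda^+)\cong\B_{\max,K}^+(\Lambda^+)$. The only cosmetic difference is in deducing the $K_0$-version: the paper simply specializes the $K$-argument to $K=K_0$, whereas you descend transcendence along the inclusion $\B_{\max}^+(\Lambda^+)\hookrightarrow\B_{\max,K}^+(\Lambda^+)$; both are immediate.
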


\begin{proof}
For the second assertion, we only need to show that the homomorphism is bijective.
Hence it follows from Lemmas~\ref{lem: properties of theta-tilde}(ii), \ref{lem: Bst in BdR}, \ref{lem:transcendence of log}, and Proposition~\ref{Prop: Colmez's description of semistable period ring}.
The first assertion is reduced to the second assertion by considering the case $K=K_0$.
The last assertion follows from the first two since $K\otimes_{K_0}\B_{\max}^+(\Lambda^+)\cong \B_{\max,K}^+(\Lambda^+)$.
\end{proof}

\begin{defn}
We extend the Frobenius on $\B_{\max}(\Lambda^+)$ to the polynomial ring $\B_{\max}^+(\Lambda^+)[u]$
by setting $\varphi(u)=pu$.
Then the $\B_{\max}^+(\Lambda^+)$-linear derivation $N=-\frac{d}{du}$ on $\B_{\max}^+(\Lambda^+)[u]$ satisfies $N\varphi=p\varphi N$.
Via the $\B_{\max}^+(\Lambda^+)$-algebra isomorphism
\[
 \B_{\max}^+(\Lambda^+)[u]\stackrel{\cong}{\lra}\B_{\st}^+(\Lambda^+);\quad u\mapsto \log\frac{[p^\flat]}{p},
\]
we define the Frobenius $\varphi$ and the monodromy operator $N$ on $\B_{\st}^+(\Lambda^+)$. Note  $N\varphi=p\varphi N$ and 
\[
 \varphi\biggl(\log\frac{[p^\flat]}{p}\biggr)=p\log\frac{[p^\flat]}{p},\quad\text{and}\quad
N\biggl(\biggl(\log\frac{[p^\flat]}{p}\biggr)^m\biggr)=-m\biggl(\log\frac{[p^\flat]}{p}\biggr)^{m-1}.
\]
As in Lemma~\ref{lem: well-definedness of Bst}, we can check that the definitions of $\varphi$ and $N$ on $\B_{\st}^+(\Lambda^+)$ are independent of the choice of a compatible system of $p$-power roots of $p$.
\end{defn}

\begin{defn}
 Set
\begin{align*}
 \B_{\st}(\Lambda^+)&=\B_{\st}(\Lambda,\Lambda^+):=\B_{\st}^+(\Lambda^+)[t^{-1}],
\quad\text{and}\\
 \B_{\st,K}(\Lambda^+)&=\B_{\st,K}(\Lambda,\Lambda^+):=\B_{\st,K}^+(\Lambda^+)[t^{-1}].
\end{align*}
\end{defn}

Note that there are isomorphisms
\[
 \B_{\max}(\Lambda^+)[u]\stackrel{\cong}{\lra}\B_{\st}(\Lambda^+),
\quad\text{and}\quad
\B_{\max,K}(\Lambda^+)[u]\stackrel{\cong}{\lra}\B_{\st,K}(\Lambda^+)
\]
sending $u$ to $\log\frac{[p^\flat]}{p}$.
Moreover, the Frobenius $\varphi$ (resp.~the monodromy operator $N$) on $\B_{\st}^+(\Lambda^+)$ extends to a ring (resp.~$\B_{\max}^+(\Lambda^+)$-linear) endomorphism on $\B_{\st}(\Lambda^+)$, which we still denote by $\varphi$ (resp.~$N$).

\begin{lem}
 The sequence
\[
 0\lra \B_{\max}(\Lambda^+)\lra \B_{\st}(\Lambda^+)\stackrel{N}{\lra}\B_{\st}(\Lambda^+)\lra 0
\]
is exact.
\end{lem}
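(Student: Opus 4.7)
The plan is to reduce everything to a formal statement about the polynomial ring $\B_{\max}(\Lambda^+)[u]$ via the isomorphism
\[
\B_{\max}(\Lambda^+)[u]\stackrel{\cong}{\lra}\B_{\st}(\Lambda^+),\quad u\mapsto \log\tfrac{[p^\flat]}{p}
\]
recorded right after the definition of $\B_{\st}(\Lambda^+)$. Under this isomorphism the map in the sequence from $\B_{\max}(\Lambda^+)$ to $\B_{\st}(\Lambda^+)$ becomes the inclusion of scalars into the polynomial ring, and by construction $N$ corresponds to $-\tfrac{d}{du}$.

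Injectivity of the first map is then immediate since $\B_{\max}(\Lambda^+)[u]$ is free (in particular faithfully flat) over $\B_{\max}(\Lambda^+)$. For exactness at the middle, I will compute the kernel of $-\tfrac{d}{du}$ directly: writing an element as $\sum_{i=0}^n a_i u^i$ with $a_i\in \B_{\max}(\Lambda^+)$, its image is $-\sum_{i=1}^n i\, a_i\, u^{i-1}$, which vanishes iff $i a_i=0$ for all $i\geq 1$. Since $\B_{\max}(\Lambda^+)$ is a $\Q_p$-algebra, multiplication by each positive integer $i$ is injective, so all $a_i$ with $i\geq 1$ vanish, leaving precisely the constants $\B_{\max}(\Lambda^+)$.

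For surjectivity of $N$, I will just exhibit an antiderivative: given $\sum_{i=0}^n b_i u^i\in \B_{\max}(\Lambda^+)[u]$, the element $-\sum_{i=0}^n \tfrac{b_i}{i+1} u^{i+1}$ is well-defined because $\B_{\max}(\Lambda^+)$ contains $\Q$, and its image under $-\tfrac{d}{du}$ is $\sum_{i=0}^n b_i u^i$. This step is the only one where one must genuinely use that $p$ (hence every integer) has been inverted; there is no real obstacle since we are already working in the $\Q_p$-algebra $\B_{\max}(\Lambda^+)$. Assembling these three steps yields the exactness of the displayed sequence.
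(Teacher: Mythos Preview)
Your proof is correct and follows the same approach as the paper: both use the isomorphism $\B_{\max}(\Lambda^+)[u]\cong\B_{\st}(\Lambda^+)$ under which $N$ becomes $-\tfrac{d}{du}$. The paper merely states that the exactness ``follows easily'' from this isomorphism, while you have spelled out the elementary calculus on the polynomial ring explicitly.
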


\begin{proof}
 This follows easily from the isomorphism $ \B_{\max}(\Lambda^+)[u]\stackrel{\cong}{\lra}\B_{\st}(\Lambda^+)$ commuting with $N$.
\end{proof}

We also introduce the horizontal crystalline period ring for completeness:
\begin{defn}
 Let $\A_{\cris}'(\Lambda^+)$ denote  the PD-envelope of $\A_{\inf}(\Lambda^+)$ relative to the ideal $\Ker \theta_{\A_{\inf}(\Lambda^+)}$ compatible with the canonical PD-structure on $p\A_{\inf}(\Lambda^+)$.
Define $\A_{\cris}(\Lambda^+)=\A_{\cris}(\Lambda, \Lambda^+)$ to be the $p$-adic completion of $\A_{\cris}'(\Lambda^+)$.
\end{defn}

Note that the natural inclusion 
$\A_{\cris}'(\Lambda^+)\hra \A_{\inf}(\Lambda^+)\bigr[\frac{\ker\theta}{p}\bigr]$
yields an inclusion
\[
 \A_{\cris}(\Lambda^+)\hra \A_{\max}(\Lambda^+).
\]

\begin{lem}
 The Frobenius $\varphi$ on $\A_{\inf}(\Lambda^+)$ extends to a ring endomorphism $\varphi$ on $\A_{\cris}(\Lambda^+)$.
\end{lem}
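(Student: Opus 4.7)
The plan is to invoke the universal property of PD-envelopes, then pass to the $p$-adic completion by continuity. Recall that $\A_{\cris}'(\Lambda^+)$ is by definition the PD-envelope of $\A_{\inf}(\Lambda^+)$ relative to $I := \Ker\theta_{\A_{\inf}(\Lambda^+)}$, compatible with the canonical PD-structure on $p\A_{\inf}(\Lambda^+)$. Its PD-ideal $\overline{J}$ contains both $p\A_{\cris}'(\Lambda^+)$ and the image of $I$. By the universal property, the composition
\[
 \A_{\inf}(\Lambda^+)\xrightarrow{\varphi}\A_{\inf}(\Lambda^+)\hookrightarrow \A_{\cris}'(\Lambda^+)
\]
will extend uniquely to a ring endomorphism $\varphi$ of $\A_{\cris}'(\Lambda^+)$ provided that $\varphi(I)\subset \overline{J}$ and that the induced PD-map is compatible with the canonical PD-structure on $p\A_{\cris}'(\Lambda^+)$. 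The latter compatibility is automatic because $\varphi(p)=p$.

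To verify $\varphi(I)\subset\overline{J}$, I would use Lemma~\ref{lem: ker theta is principal}, which says that $I$ is generated by $\xi:=[p^\flat]-p$, so it suffices to check the single element $\varphi(\xi)$. By binomial expansion,
\[
 \varphi(\xi)=[p^\flat]^p-p=(\xi+p)^p-p=\xi^p+\sum_{k=1}^{p-1}\binom{p}{k}\xi^k p^{p-k}+(p^p-p).
\]
The term $\xi^p$ lies in $I\A_{\cris}'(\Lambda^+)\subset \overline{J}$, and every remaining term is divisible by $p$, hence lies in $p\A_{\cris}'(\Lambda^+)\subset \overline{J}$. This gives $\varphi(\xi)\in\overline{J}$ and therefore $\varphi(I)\subset\overline{J}$, producing the desired lift $\varphi\colon \A_{\cris}'(\Lambda^+)\to \A_{\cris}'(\Lambda^+)$.

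Finally, since any ring homomorphism satisfies $\varphi(p^n\A_{\cris}'(\Lambda^+))\subset p^n\A_{\cris}'(\Lambda^+)$, the map $\varphi$ is $p$-adically continuous and thus extends uniquely to the $p$-adic completion $\A_{\cris}(\Lambda^+)$. There is no serious obstacle here: the entire argument is a formal application of the universal property of PD-envelopes together with an elementary binomial identity, and the continuity step for passing to the completion is automatic.
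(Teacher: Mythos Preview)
Your proof is correct and follows essentially the same approach as the paper: both verify that $\varphi$ sends $\Ker\theta$ into the PD-ideal, invoke the universal property of the PD-envelope, and then pass to the $p$-adic completion. The only cosmetic difference is that the paper compresses your explicit binomial computation for the generator $\xi=[p^\flat]-p$ into the single observation that the Witt vector Frobenius satisfies $\varphi(x)\equiv x^p\pmod{p\A_{\inf}(\Lambda^+)}$ for every $x$, which immediately gives $\varphi(I)\subset I+p\A_{\inf}(\Lambda^+)\subset\overline{J}$ without needing Lemma~\ref{lem: ker theta is principal}.
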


\begin{proof}
 Since $\varphi(x)\equiv x^p \bmod{p\A_{\inf}(\Lambda^+)}$ for $x\in \A_{\inf}(\Lambda^+)$,
the Frobenius $\varphi$ on $\A_{\inf}(\Lambda^+)$ extends to a ring endomorphism on $\A_{\cris}'(\Lambda^+)$.
Passing to the $p$-adic completion, it further extends to a ring endomorphism on $\A_{\cris}(\Lambda^+)$.
\end{proof}

By construction, the inclusion $\A_{\cris}(\Lambda^+)\hra \A_{\max}(\Lambda^+)$ is $\varphi$-equivariant.

\begin{lem}
 We have
\[
 \varphi(\A_{\max}(\Lambda^+))\subset \A_{\cris}(\Lambda^+)\subset \A_{\max}(\Lambda^+).
\]
\end{lem}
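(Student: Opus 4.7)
The plan is the following. The second inclusion $\A_{\cris}(\Lambda^+)\subset \A_{\max}(\Lambda^+)$ has already been noted immediately after the definition of $\A_{\cris}(\Lambda^+)$, since the PD-envelope $\A_{\cris}'(\Lambda^+)$ embeds into $\A_{\inf}(\Lambda^+)[\tfrac{\ker\theta}{p}]$ and both sides are stable under $p$-adic completion. So the real content is the first inclusion $\varphi(\A_{\max}(\Lambda^+))\subset \A_{\cris}(\Lambda^+)$.

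Since $\A_{\max}(\Lambda^+)$ is by definition the $p$-adic completion of $\A_{\inf}(\Lambda^+)[\tfrac{\ker\theta}{p}]$, and $\A_{\cris}(\Lambda^+)$ is $p$-adically complete, and $\varphi$ is continuous for the $p$-adic topology (as $\varphi(p)=p$), it suffices to show
\[
\varphi\!\left(\A_{\inf}(\Lambda^+)\!\left[\tfrac{\ker\theta}{p}\right]\right)\subset \A_{\cris}(\Lambda^+).
\]
We have $\varphi(\A_{\inf}(\Lambda^+))\subset \A_{\inf}(\Lambda^+)\subset \A_{\cris}(\Lambda^+)$, so by Lemma~\ref{lem: ker theta is principal} the task reduces to showing that $\varphi(\xi/p)\in \A_{\cris}(\Lambda^+)$ for the generator $\xi:=[p^\flat]-p$ of $\ker\theta_{\A_{\inf}(\Lambda^+)}$.

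The computation is explicit. Writing $[p^\flat]=\xi+p$, we get
\[
\varphi(\xi)=[p^\flat]^p-p=(\xi+p)^p-p=\xi^p+\sum_{k=1}^{p-1}\binom{p}{k}\xi^kp^{p-k}+p^p-p,
\]
and therefore
\[
\varphi(\xi)/p=(p-1)!\,\gamma_p(\xi)+\sum_{k=1}^{p-1}\tfrac{1}{p}\binom{p}{k}\xi^kp^{p-k}+p^{p-1}-1,
\]
where $\gamma_p(\xi)=\xi^p/p!$ denotes the divided power. For $1\leq k\leq p-1$ the binomial coefficient $\binom{p}{k}$ is divisible by $p$, so $\tfrac{1}{p}\binom{p}{k}\in\Z$, and every term on the right-hand side lies in $\A_{\cris}'(\Lambda^+)$, hence in $\A_{\cris}(\Lambda^+)$. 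This gives $\varphi(\xi/p)\in \A_{\cris}(\Lambda^+)$.

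The only mild subtlety is the continuity/completion step at the end, but since $\A_{\cris}(\Lambda^+)$ is by construction $p$-adically complete and $\varphi$ preserves the $p$-adic filtration on $\A_{\inf}(\Lambda^+)[\tfrac{\ker\theta}{p}]$, the Frobenius on this subring extends uniquely and compatibly to the $p$-adic completion $\A_{\max}(\Lambda^+)$, landing in $\A_{\cris}(\Lambda^+)$. I do not anticipate any real obstacle; the whole argument is the standard PD-envelope calculation and the main thing to be careful about is merely bookkeeping in the binomial expansion of $[p^\flat]^p$.
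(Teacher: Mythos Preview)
Your proof is correct and follows essentially the same approach as the paper. The only cosmetic difference is that the paper works with an arbitrary $x\in\Ker\theta$ and uses the congruence $\varphi(x)\equiv x^p\pmod{p}$ to write $\varphi(x/p)=(p-1)!\,x^{[p]}+y$ in one line, whereas you specialize to the generator $\xi=[p^\flat]-p$ and unfold the binomial expansion explicitly; both arrive at the same conclusion.
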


\begin{proof}
We need to prove the first inclusion.  For this, it is enough to show
\[
 \varphi\biggl(\A_{\inf}(\Lambda^+)\biggr[\frac{\ker\theta}{p}\biggr]\biggr)\subset \A_{\cris}'(\Lambda^+).
\]
Choose $x\in \Ker\theta_{\A_{\inf}(\Lambda^+)}$ and write $\varphi(x)=x^p+py$ for some $y\in\A_{\inf}(\Lambda^+)$. Then we have
\[
 \varphi\biggl(\frac{x}{p}\biggr)=(p-1)!x^{[p]}+y\in \A_{\cris}'(\Lambda^+).
\]
\end{proof}

\begin{defn}
 Set
\begin{align*}
  \B_{\cris}^+(\Lambda^+)&= \B_{\cris}^+(\Lambda, \Lambda^+):=\A_{\cris}(\Lambda)[p^{-1}],\quad\text{and}\\
 \B_{\cris}(\Lambda^+)&=\B_{\cris}(\Lambda, \Lambda^+):=\B_{\cris}(\Lambda)[t^{-1}].
\end{align*}
The Frobenius $\varphi$ on $\A_{\cris}(\Lambda^+)$ naturally extends over these rings, and the previous lemma implies
\[
 \varphi(\B_{\max}(\Lambda^+))\subset \B_{\cris}(\Lambda^+)\subset \B_{\max}(\Lambda^+).
\]
\end{defn}

\begin{rem}
 The construction of all the period rings is functorial on perfectoid $C$-Banach pairs $(\Lambda,\Lambda^+)$ with morphisms of $K$-Banach pairs.
\end{rem}

The following lemma will be heavily used in the proof of the purity theorem for horizontal semistable representations (Theorem~\ref{thm:purity for horiztaonlly semistable}).

\begin{lem}\label{lem:Cartesian diagram of period rings}
 Let $(\Lambda',\Lambda'^+)$ be another perfectoid $C$-Banach pair such that $\Lambda'^+$ is an $\calO_{C}$-algebra, and  let $f^+\colon \Lambda^+\hra \Lambda'^+$ be an injective $\calO_K$-algebra homomorphism with $p$-torsion free cokernel.
Then the natural commutative diagram
\[
 \xymatrix{
\B_{\st,K}^+(\Lambda^+)
\ar[r]\ar[d]
&\B_{\dR,K}^+(\Lambda^+)\ar[d]\\
\B_{\st,K}^+(\Lambda'^+)
\ar[r]
&\B_{\dR,K}^+(\Lambda'^+)
}
\]
is Cartesian with injective vertical maps.
\end{lem}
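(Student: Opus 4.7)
My plan is to reduce the lemma to Colmez's explicit description (Proposition~\ref{Prop: Colmez's description of semistable period ring}) and then exploit a $K$-linear retraction of the inclusion $\Lambda\hookrightarrow\Lambda'$. The key preparatory step is to make the identifications functorial in $f^+$. Since the cokernel of $f^+$ is $p$-torsion free, hence $\pi$-torsion free, the snake lemma applied to multiplication by $\pi$ yields an injection $\Lambda^+/\pi\Lambda^+\hookrightarrow\Lambda'^+/\pi\Lambda'^+$; I extend a $k$-basis $(e_i)_{i\in I}$ of the former to a $k$-basis $(e_j)_{j\in J}$ of the latter and choose lifts $\widetilde{e}_j\in\A_{\inf,K}(\Lambda'^+)$ compatibly with lifts $\widetilde{e}_i\in\A_{\inf,K}(\Lambda^+)$, which makes the sections of Construction~\ref{const:section of theta map} compatible with $f^+$. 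Consequently, $\widetilde{\theta}_{s,v}$ and $\widetilde{\theta}_{s',v}$ fit into a commutative diagram with the natural maps between the period rings, reducing the lemma to proving the analogous Cartesian-and-injective property for
\[
\xymatrix@R=1em{
\Lambda\langle X\rangle[\log(1+X)] \ar[r]\ar[d] & \Lambda[[X]]\ar[d]\\
\Lambda'\langle X\rangle[\log(1+X)] \ar[r] & \Lambda'[[X]].
}
\]

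Injectivity of the vertical maps is then routine: from the $p$-torsion free cokernel of $f^+$ I get $\Lambda\hookrightarrow\Lambda'$, and also $\Lambda^+\langle X\rangle\hookrightarrow\Lambda'^+\langle X\rangle$ stays injective under $p$-adic completion because the cokernel of $\Lambda^+[X]\hookrightarrow\Lambda'^+[X]$ is still $p$-torsion free. Inverting $p$ and using the transcendence of $\log(1+X)$ over Tate algebras (Lemma~\ref{lem:transcendence of log}), injectivity propagates to the polynomial algebras. For the Cartesian property, I build a $K$-linear retraction $\mathrm{pr}\colon \Lambda'\to\Lambda$ via the extended basis: $\sum_{j\in J}a_j e_j\mapsto \sum_{i\in I}a_i e_i$. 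This $\mathrm{pr}$ restricts to the identity on $\Lambda$ and sends $\Lambda'^+$ into $\Lambda^+$, since in the basis expansion of an element of $\Lambda'^+$ all coefficients $a_j$ lie in $\calO_K$; it therefore extends coefficient-wise to $K$-linear retractions $\Lambda'[[X]]\to\Lambda[[X]]$ and $\Lambda'\langle X\rangle\to\Lambda\langle X\rangle$, and these retractions fix $\log(1+X)$ since its coefficients lie in $K$.

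To finish the Cartesian step, take $F\in\Lambda[[X]]$ whose image in $\Lambda'[[X]]$ lies in $\Lambda'\langle X\rangle[\log(1+X)]$, and write it uniquely (by Lemma~\ref{lem:transcendence of log}) as $F=\sum_{j=0}^m g'_j(X)(\log(1+X))^j$ with $g'_j\in\Lambda'\langle X\rangle$. Applying the coefficient-wise $\mathrm{pr}$ gives
\[
F=\mathrm{pr}(F)=\sum_{j=0}^m\mathrm{pr}(g'_j)(X)(\log(1+X))^j\in\Lambda\langle X\rangle[\log(1+X)],
\]
completing the argument. The step I expect to require the most care is verifying that $\mathrm{pr}$ is $p$-adically bounded so that it carries the Tate algebra $\Lambda'\langle X\rangle$ into $\Lambda\langle X\rangle$ when applied coefficient-wise; this should follow from a careful inspection of the $\pi$-adic basis expansion used in Construction~\ref{const:section of theta map}, but it is the only place where the explicit normed-algebra structure genuinely enters the argument.
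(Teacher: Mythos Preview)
Your overall strategy matches the paper's: reduce via Colmez's explicit description (Proposition~\ref{Prop: Colmez's description of semistable period ring}) to the concrete diagram in $\Lambda\langle X\rangle[\log(1+X)]\subset\Lambda[[X]]$, then verify the Cartesian property there. Your retraction argument for the last step is essentially a direct proof of what the paper quotes as Lemma~\ref{lem: BC Lemma 6.3.1} (from Berger--Colmez), so the two endgames are equivalent; your version has the virtue of being self-contained, and the ``$p$-adic boundedness'' of $\mathrm{pr}$ that you flag is indeed routine once one checks that an element of $\Lambda'^+$ has all its basis coefficients in $\calO_K$.

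There is, however, a genuine gap in the reduction step. You write $\widetilde\theta_{s,v}$ and $\widetilde\theta_{s',v}$ with the \emph{same} $v=\pi^{-1}([\pi^\flat]-\pi)$ and assert that they fit into a commutative square with the natural maps on period rings. For this you need $\A_{\inf,K}(f^+)([\pi^\flat])=[\pi^\flat]$, i.e.\ that $f^+$ carries the compatible system $(\pi_m)\subset\calO_C\subset\Lambda^+$ to the corresponding system in $\Lambda'^+$ coming from the $\calO_C$-structure there. But $f^+$ is only assumed to be an $\calO_K$-algebra map, not an $\calO_C$-algebra map, so this can fail; and if it fails, Proposition~\ref{Prop: Colmez's description of semistable period ring} on the $\Lambda'$ side (which uses the $[\pi^\flat]$ from the $\calO_C$-structure on $\Lambda'^+$) is not compatible with your $\widetilde\theta_{s',v}$. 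The paper handles this by first observing that the diagram of period rings depends only on $f^+$ as an $\calO_K$-algebra homomorphism, and then modifying the $C$-Banach structure on $\Lambda'$ (precomposing with an isometric $K$-automorphism of $C$) so that $f(\pi_n)=\pi_n$ for all $n$. Once you insert this adjustment, your argument goes through unchanged.
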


\begin{proof}
By construction of the period rings, we have a natural commutative diagram as in the statement. We will prove that it is Cartesian.

Let $f\colon \Lambda\ra \Lambda'$ denote the induced $K$-algebra homomorphism from $f^+$ by inverting $p$. We also change the norm on $\Lambda$ (resp.~$\Lambda'$) so that $\Lambda^+$ (resp.~$\Lambda'^+$) becomes the unit ball.
Then $f$ is a closed isometric embedding of $K$-Banach algebras;
by $p$-torsion freeness of $\Cok f^+$, the induced map $\Lambda^+/p^n\Lambda^+\ra \Lambda'^+/p^n\Lambda'^+$ is injective for all $n\in\N$, and thus $f$ is closed and isometric.

The commutative diagram in question only depends on $f^+$ as an $\calO_K$-algebra homomorphism and not on the entire $C$-Banach structures on $\Lambda$ and $\Lambda'$. Hence by replacing the $C$-Banach structure $C\ra \Lambda'$ by precomposing an isometric $K$-algebra automorphism $C\ra C$,
we may further assume $f(\pi_n)=\pi_n$ for every $n\in\N$.

We will explain that Construction~\ref{const:section of theta map} is made compatibly with $f^+\colon \Lambda^+\hra \Lambda'^+$.
Since the map
$\Lambda^+/\pi\Lambda^+\ra \Lambda'^+/\pi\Lambda'^+$ is injective, 
choose a family of elements $(e_i)_{i\in I}$ of $\Lambda^+$ and a family of elements $(e_j')_{j\in J}$ of $\Lambda'^{+}$ such that
the images of $e_i$'s in $\Lambda^+/\pi\Lambda^+$ form a $k$-basis of $\Lambda^+/\pi\Lambda^+$ and such that the images of $f^+(e_i)$'s and $e_j'$'s in $\Lambda'^+/\pi\Lambda'^+$ form a $k$-basis of $\Lambda'^+/\pi\Lambda'^+$;
this is possible by lifting $k$-bases of $\Lambda^+/\pi\Lambda^+\subset \Lambda'^{+}/\pi\Lambda'^{+}$.
We also choose a lift $\widetilde{e}_i\in\A_{\inf,K}(\Lambda^+)$ of $e_i$ with respect to $\theta_{\A_{\inf,K}(\Lambda^+)}$ and a lift $\widetilde{e}_j'\in\A_{\inf,K}(\Lambda'^+)$ of $e_j'$ with respect to $\theta_{\A_{\inf,K}(\Lambda'^+)}$.

Consider the $K$-linear maps
\begin{align*}
 &s\colon \Lambda\ra \A_{\inf,K}(\Lambda^+)[p^{-1}];\quad
\sum_{i\in I}a_ie_i\mapsto\sum_{i\in I}a_i\widetilde{e}_i,\quad\text{and}\\
 &s'\colon \Lambda'\ra \A_{\inf,K}(\Lambda'^+)[p^{-1}];\quad\!\!
\sum_{i\in I}a_if(e_i)+\sum_{j\in J}b_je_j'\mapsto\sum_{i\in I}a_i\A_{\inf,K}(f)(\widetilde{e}_i)+\sum_{j\in J}b_j\widetilde{e}_j'.
\end{align*}
Then we have $\A_{\inf,K}(f)\circ s=s'\circ f$.

Since $f(\pi_n)=\pi_n$ for every $n\in\N$, we have $\A_{\inf,K}(f)([\pi^\flat])=[\pi^\flat]$.
Hence we apply Construction~\ref{const:section of theta map} to the maps $s$ and $s'$ with $v=\pi^{-1}([\pi^\flat]-\pi)$ and obtain the commutative diagram
\[
\xymatrix{
\B_{\dR,K}^+(\Lambda^+)\ar[d]\ar[r]^{\widetilde{\theta}_{s,v}}_\cong
&\Lambda[[X]]\ar[d]\\
\B_{\dR,K}^+(\Lambda'^+)\ar[r]^{\widetilde{\theta}_{s',v}}_\cong 
& \Lambda'[[X]].
} 
\]

By these maps and Proposition~\ref{Prop: Colmez's description of semistable period ring},
it suffices to show that the commutative diagram
\[
\xymatrix{
 \Lambda\langle X\rangle[\log (1+X)]\ar[r]\ar[d]
& \Lambda[[X]]\ar[d]\\
 \Lambda'\langle X\rangle[\log (1+X)]\ar[r]
& \Lambda'[[X]]
}
\]
is Cartesian.
This follows from Lemma~\ref{lem: BC Lemma 6.3.1} below.
\end{proof}

 \begin{lem}\label{lem: BC Lemma 6.3.1}
  Let $A$ be a $\Q_p$-Banach algebra and $B$ a closed subalgebra of $A$.
If 
\[
 g(x)\in \bigoplus_{i=0}^h A\langle X\rangle\log (1+X)^i
\]
 satisfies $g(x)\in B[[X]]$,
then 
\[
 g(x)\in \bigoplus_{i=0}^h B\langle X\rangle\log (1+X)^i.
\]
 \end{lem}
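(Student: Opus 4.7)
The plan is to use continuous $\Q_p$-linear functionals on $A$ vanishing on $B$ and reduce the statement to the transcendence result in Lemma~\ref{lem:transcendence of log}. The starting observation is that $L := \log(1+X) = \sum_{m\geq 1}(-1)^{m-1}X^m/m$ has rational coefficients, and in particular each $L^i$ lies in $\Q_p[[X]]$.

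The key steps are as follows. Given any continuous $\Q_p$-linear map $\varphi\colon A\to \Q_p$, extend it coefficient-wise to $\widetilde\varphi\colon A[[X]]\to \Q_p[[X]]$; continuity of $\varphi$ ensures that $\widetilde\varphi$ carries $A\langle X\rangle$ into $\Q_p\langle X\rangle$. Because $L^i\in\Q_p[[X]]$ and $\varphi$ is $\Q_p$-linear, multiplication by $L^i$ commutes with $\widetilde\varphi$, so writing $g(X)=\sum_{i=0}^h f_i(X)L^i$ one obtains $\widetilde\varphi(g)=\sum_{i=0}^h \widetilde\varphi(f_i)L^i$ in $\Q_p[[X]]$. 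Now suppose $\varphi|_B=0$. Then $\widetilde\varphi(g)=0$ because $g\in B[[X]]$, and Lemma~\ref{lem:transcendence of log} applied to the uniform $\Q_p$-Banach algebra $\Q_p$ itself (note $\Q_p^\circ=\Z_p$) tells us that $L$ is transcendental over $\Q_p\langle X\rangle$. Therefore $\widetilde\varphi(f_i)=0$ in $\Q_p\langle X\rangle$ for every $i$, which unwinds to the statement that $\varphi$ annihilates every coefficient of every $f_i$.

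To conclude, I invoke the non-archimedean Hahn--Banach (Ingleton) theorem: since $\Q_p$ is spherically complete, the closed $\Q_p$-subspace $B\subset A$ is the intersection of the kernels of all continuous $\Q_p$-linear functionals $A\to\Q_p$ vanishing on it. Hence every coefficient of every $f_i$ lies in $B$, and since these coefficients tend to $0$ in $A$ (equivalently in the closed subspace $B$), we conclude $f_i\in B\langle X\rangle$ for each $i$. The main (and fairly mild) obstacle is this last step: one needs an ambient Hahn--Banach-type separation tool for closed subspaces, which is ultimately provided by the spherical completeness of $\Q_p$. All other steps --- extension of $\varphi$ to power series, commutation with $L^i$, and the appeal to transcendence --- are essentially formal.
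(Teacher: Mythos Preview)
Your proof is correct. The paper itself gives no argument here --- it simply cites \cite[Lemme 6.3.1]{Berger-Colmez} --- so the comparison is with the Berger--Colmez proof rather than anything in the present paper. Your approach via continuous $\Q_p$-linear functionals is clean and entirely self-contained within this paper's framework: the coefficient-wise extension $\widetilde\varphi$ commutes with multiplication by $\log(1+X)^i$ precisely because the latter has $\Q_p$-coefficients; the transcendence input is Lemma~\ref{lem:transcendence of log} applied to the uniform $\Q_p$-Banach algebra $\Q_p$; and Ingleton's theorem (valid since $\Q_p$ is spherically complete) supplies enough functionals to separate the closed subspace $B$. Each step checks out. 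The Berger--Colmez argument is somewhat different in flavor (working more directly with the power-series expansion), but your functional-analytic reduction is arguably more transparent and has the advantage of using only tools already set up in the paper.
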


\begin{proof}
 This is stated and proved in the proof of \cite[Lemme 6.3.1]{Berger-Colmez}.
\end{proof}

\begin{rem}
Assume that  $\Lambda$ is sympathetic (see \cite[p.356]{Colmez} for definition).
Under this assumption, Colmez shows that the natural sequence
\[
 0\ra \Q_p\ra \B_{\max}(\Lambda^\circ)^{\varphi=1}\ra \B_{\dR}(\Lambda^\circ)/\B_{\dR}^+(\Lambda^\circ)\ra 0
\]
is exact \cite[Proposition 9.25]{Colmez}. The sequence is called the fundamental exact sequence. The exactness does not hold for perfectoid $C$-Banach pairs $(\Lambda,\Lambda^+)$ in general. See also Proposition~\ref{prop:fundamental exact sequence}.
\end{rem}

\section{Preliminaries on base rings}\label{section:base rings}\hfill

In this section, we review the class of base rings which is considered in \cite{Brinon-crisdeRham}.
As in the previous section, 
let $k$ be a perfect field of characteristic $p$ and set $K_0:=W(k)[p^{-1}]$.
Let $K$ be a totally ramified finite extension of $K_0$ and fix a uniformizer $\pi$ of $K$.
We denote by $C$ the $p$-adic completion of an algebraic closure $\overline{K}$ of $K$.

Let $\calO_K\langle T_1^{\pm 1},\ldots,T_n^{\pm 1}\rangle$ denote
the $p$-adic completion of $\calO_K[T_1^{\pm 1},\ldots,T_n^{\pm 1}]$.

\begin{set-up}[{\cite[p.~7]{Brinon-crisdeRham}}]\label{set-up: Brinon's rings}
Let $\widetilde{R}$ be a ring obtained from $\calO_K\langle T_1^{\pm 1},\ldots,T_n^{\pm 1}\rangle$
by a finite number of iterations of the following operations:
\begin{itemize}
 \item $p$-adic completion of an \'etale extension;
 \item $p$-adic completion of a localization;
 \item completion with respect to an ideal containing $p$.
\end{itemize}
We further assume that either $\calO_K\langle T_1^{\pm 1},\ldots,T_n^{\pm 1}\rangle\ra \widetilde{R}$ has geometrically regular fibers or $\widetilde{R}$ has Krull dimension less than two, and that $k\ra \widetilde{R}/\pi\widetilde{R}$ is geometrically integral.
Let $R$ be an $\widetilde{R}$-algebra such that
\begin{enumerate}
 \item $R$ is an integrally closed domain;
 \item $R$ is finite and flat over $\widetilde{R}$;
 \item $R[p^{-1}]$ is \'etale over $\widetilde{R}[p^{-1}]$;
 \item $K$ is algebraically closed in $R[p^{-1}]$.
\end{enumerate}
\end{set-up}

\begin{defn}[{\cite[p.~9]{Brinon-crisdeRham}}]
 We say that $R$ satisfies (BR) if $R=\widetilde{R}$.
This is a condition on good reduction (bonne r\'eduction), and \cite{Brinon-crisdeRham} often assumes it in the study of (horizontal) crystalline representations.
\end{defn}

Set $R_K:=R[p^{-1}]$. Fix an algebraic closure $\overline{\Frac R}$ of $\Frac R$ containing $\overline{K}$.
Let $\overline{R}$ be the union of finite $R$-subalgebras $R'$ of $\overline{\Frac R}$ such that $R'[p^{-1}]$ is \'etale over $R_K$.
Note that $\overline{R}$ is a normal domain as it can be written as an increasing union of such
(by replacing each $R'$ with its normalization).
Let $\widehat{\overline{R}}$ denote the $p$-adic completion of $\overline{R}$. Then $\widehat{\overline{R}}$ is an $\calO_{C}$-algebra.
Note that $\overline{R}[p^{-1}]$ is the union of finite Galois extensions of $R_K$, and thus
we set\footnote{Our $\calG_{R_K}$ is denoted by $\calG_R$ in \cite{Brinon-crisdeRham}.}
\[
 \calG_{R_K}:=\Gal(\overline{R}[p^{-1}]/R_K).
\]
It is a profinite group acting on $\overline{R}$, and this action extends to $\widehat{\overline{R}}$ continuously.

Let us summarize the properties of $\widehat{\overline{R}}$.

\begin{lem}[{\cite[Proposition 2.0.3]{Brinon-crisdeRham}}]
 The ring $\widehat{\overline{R}}$
is $p$-torsion free.
Moreover, $\overline{R}\ra\widehat{\overline{R}}$ is injective, and
$\overline{R}\cap p\widehat{\overline{R}}=p\overline{R}$.
\end{lem}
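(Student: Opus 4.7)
My plan is to handle the three assertions in order, leveraging the description of $\overline{R}$ as a filtered colimit of finite normal $R$-subalgebras $R'\subset\overline{\Frac R}$ with $R'[p^{-1}]$ \'etale over $R_K$.

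First, $\overline{R}$ is $p$-torsion free as a filtered union of subrings of the field $\overline{\Frac R}$. The $p$-torsion freeness of $\widehat{\overline{R}}=\varprojlim_n \overline{R}/p^n\overline{R}$ then follows formally: if $(x_n)_n$ is annihilated by $p$, then $px_n\in p^n\overline{R}$ forces $x_n\in p^{n-1}\overline{R}$ by $p$-torsion freeness of $\overline{R}$, and the compatibility $(x_n\bmod p^{n-1})=x_{n-1}$ then forces $x_n\equiv 0\pmod{p^n}$, so $(x_n)=0$.

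For $\overline{R}\hookrightarrow\widehat{\overline{R}}$ I will prove the key lemma that for any inclusion $R'\subset R''$ of finite normal $R$-subalgebras of $\overline{R}$ (with both $R'[p^{-1}]$ and $R''[p^{-1}]$ \'etale over $R_K$), the reduction $R'/p^nR'\hookrightarrow R''/p^nR''$ is injective. Indeed, if $x\in R'\cap p^nR''$, then $x/p^n\in\Frac(R')\cap R''$ is integral over $R'$ (since $R''$ is finite, hence integral, over $R'$), and normality of $R'$ gives $x/p^n\in R'$, i.e., $x\in p^nR'$. Passing to the filtered colimit over $R''$ yields $R'/p^nR'\hookrightarrow\overline{R}/p^n\overline{R}$. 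Now, given $x\in\bigcap_n p^n\overline{R}$, pick $R'\ni x$; then $x\in\bigcap_n p^nR'=0$ by Krull's intersection theorem applied to the Noetherian ring $R'$. To invoke Krull I need $p$ in the Jacobson radical of $R'$, which I will deduce from the same property for $\widetilde R$: every maximal ideal of $\calO_K\langle T_1^{\pm 1},\ldots,T_n^{\pm 1}\rangle$ contains $p$ (the power-bounded invertible generators $T_i^{\pm 1}$ force their image in any residue field to lie in its valuation ring, contradicting the residue being a field unless $p$ already vanishes), the allowed operations building $\widetilde R$ preserve this, and the finite extensions $\widetilde R\subset R\subset R'$ transfer the property.

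The third claim is then formal. For a $p$-torsion free abelian group $M$, the natural map $M/pM\to\widehat M/p\widehat M$ is an isomorphism, proved by lifting a compatible sequence in $\widehat M$ modulo $p$ using the unique division by $p$ afforded by $p$-torsion freeness. Applied to $M=\overline{R}$, any $x\in\overline{R}\cap p\widehat{\overline{R}}$ maps to zero in $\widehat{\overline{R}}/p\widehat{\overline{R}}\cong\overline{R}/p\overline{R}$, hence $x\in p\overline{R}$. The main technical obstacle is the normality-based injectivity $R'/p^nR'\hookrightarrow R''/p^nR''$ in the second step: flatness of $\overline{R}$ over $R'$ fails in general because intermediate extensions $R'\subset R''$ are allowed to be ramified in residue characteristic $p$, and it is precisely the normality of the intermediate rings that compensates and delivers this ``flat-like'' injectivity.
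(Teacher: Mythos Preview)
Your proof is correct. The paper does not supply its own argument for this lemma; it simply cites Brinon's Proposition 2.0.3, so there is nothing to compare against directly. Your approach---using normality of each intermediate $R'$ to obtain $R'\cap p^n\overline{R}=p^nR'$, then Krull's intersection theorem in the Noetherian ring $R'$---is the natural one.

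One small remark: your justification that $p$ lies in the Jacobson radical of $\calO_K\langle T_1^{\pm1},\ldots,T_n^{\pm1}\rangle$ via ``power-bounded invertible generators'' is a bit opaque as written; the cleanest argument is simply that this ring is $p$-adically complete, so every element of $1+p\cdot(\text{ring})$ is a unit. In fact, since each $R'$ is a Noetherian \emph{domain} in which $p$ is nonzero and not a unit (the latter because $R'$ is integral over $R$ and $p\notin R^\times$), Krull's intersection theorem already gives $\bigcap_n p^nR'=0$ without any appeal to the Jacobson radical.
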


We will explain that the ring $\widehat{\overline{R}}[p^{-1}]$ admits a $C$-Banach algebra structure and that $(\widehat{\overline{R}}[p^{-1}],\widehat{\overline{R}})$ is a perfectoid $C$-Banach pair (hence we can apply the results in the previous section).
For $x\in \widehat{\overline{R}}[p^{-1}]$, set
\[
 \lvert  x\rvert=\inf\bigl\{ \lvert a\rvert_{C}
\bigm| \, a\in C, \,x\in a\widehat{\overline{R}}\bigr\}.
\]
Note that we normalize the norm $\lvert\;\rvert_{C}$ on $C$ by $\lvert p\rvert_{C}=p^{-1}$.

\begin{lem}
 The function $\lvert\;\rvert$ is a $C$-algebra norm on $\widehat{\overline{R}}[p^{-1}]$, and 
$\widehat{\overline{R}}[p^{-1}]$ is complete with respect to this norm.
Namely, $\widehat{\overline{R}}[p^{-1}]$ becomes a $C$-Banach algebra with this norm.
Moreover, $\widehat{\overline{R}}$ is an open subring of $\widehat{\overline{R}}[p^{-1}]$, and the induced topology on $\widehat{\overline{R}}$ agrees with the $p$-adic topology (i.e., the topology defined by the ideal $p\widehat{\overline{R}}$).
\end{lem}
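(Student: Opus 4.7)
The plan is to exploit the $\calO_C$-algebra structure on $\widehat{\overline{R}}$ to compare the proposed norm with the $p$-adic structure. I would first record the key observation that for $a,b\in C$ with $0<|a|_C\leq|b|_C$ one has $a/b\in\calO_C$ and hence $a\widehat{\overline{R}}\subset b\widehat{\overline{R}}$; in particular, the set $S_x:=\{a\in C\setminus\{0\}\mid x\in a\widehat{\overline{R}}\}$ is upward closed in $|\cdot|_C$. For $x\in\widehat{\overline{R}}[p^{-1}]$, writing $x=p^{-m}y$ with $y\in\widehat{\overline{R}}$ gives $p^{-m}\in S_x$, so $|x|$ is finite. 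From this setup the seminorm axioms follow routinely: submultiplicativity from $a\widehat{\overline{R}}\cdot b\widehat{\overline{R}}\subset ab\widehat{\overline{R}}$, the ultrametric inequality from $a\widehat{\overline{R}}\subset b\widehat{\overline{R}}$ when $|a|_C\leq|b|_C$, and the axiom $|ax|=|a|_C|x|$ by definition.

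To see $|\cdot|$ is in fact a norm, I would suppose $|x|=0$ and for each $n\in\N$ pick $a\in S_x$ with $|a|_C<p^{-n}$; then $a/p^n\in\calO_C$, so $x\in a\widehat{\overline{R}}\subset p^n\widehat{\overline{R}}$, and $p$-adic separatedness of $\widehat{\overline{R}}$ (inherited from its construction as a $p$-adic completion) forces $x=0$. The same computation shows more precisely that $\{x:|x|<p^{-n}\}\subset p^n\widehat{\overline{R}}$ for each $n$, while $p^n\widehat{\overline{R}}\subset\{x:|x|\leq p^{-n}\}$ is immediate. These two chains of inclusions sandwich the $p$-adic filtration of $\widehat{\overline{R}}$ between two cofinal systems of norm-neighborhoods of $0$, simultaneously proving that $\widehat{\overline{R}}$ is open in $\widehat{\overline{R}}[p^{-1}]$ and that the subspace topology coincides with the $p$-adic one.

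The final step is completeness, which I would deduce by rescaling. A Cauchy sequence $(x_m)$ is norm-bounded, say $|x_m|\leq M$; choosing $N\in\N$ with $p^{-N}<M^{-1}$ gives $|p^Nx_m|<1$ for all $m$, so by the preceding paragraph $p^Nx_m\in\widehat{\overline{R}}$. The rescaled sequence is then $p$-adically Cauchy in $\widehat{\overline{R}}$ via the topology identification, and $\widehat{\overline{R}}$ is $p$-adically complete, so it converges to some $y\in\widehat{\overline{R}}$; this gives $x_m\to p^{-N}y$ in $\widehat{\overline{R}}[p^{-1}]$.

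I do not expect a serious obstacle anywhere: the only subtle point is that the value group $|C^\times|_C$ is dense in $\R_{>0}$, so the infimum defining $|x|$ is not necessarily realized, but the $\calO_C$-algebra structure lets one replace any near-minimizer by a power of $p$ when converting between norm-bounds and $p$-adic bounds.
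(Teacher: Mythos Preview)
Your proposal is correct and follows essentially the same approach as the paper: both establish the sandwich inclusions between $p$-adic balls $p^m\widehat{\overline{R}}$ and norm-balls (the paper writes $p^{m+1}\widehat{\overline{R}}\subset\{\lvert x\rvert\leq p^{-(m+1)}\}\subset p^m\widehat{\overline{R}}$, equivalent to your pair of inclusions), and then deduce the norm property, the topology comparison, and completeness from $p$-adic separatedness and completeness of $\widehat{\overline{R}}$. The paper also records $\widehat{\overline{R}}\cap C=\calO_C$ to justify $\lvert 1\rvert=1$, which you leave implicit, but this is a detail rather than a different idea.
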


\begin{proof}
 By construction, we see $\widehat{\overline{R}}\cap C=\calO_{C}$ inside $\widehat{\overline{R}}[p^{-1}]$. Since $\widehat{\overline{R}}$ is  $p$-torsion free and $p$-adically separated, the function $\lvert\;\rvert$ is a $C$-algebra norm. The $p$-adic completeness of $\widehat{\overline{R}}$ implies that $\widehat{\overline{R}}[p^{-1}]$ is complete with respect to $\lvert\;\rvert$.
Finally, the second assertion follows from the inclusions
\[
p^{m+1} \widehat{\overline{R}}\subset\{x\in \widehat{\overline{R}}[p^{-1}]\mid \lvert x\rvert\leq p^{-(m+1)}\}\subset p^m \widehat{\overline{R}}.
\]
\end{proof}

\begin{lem}\label{lem:uniform}
The $C$-Banach algebra $\widehat{\overline{R}}[p^{-1}]$ is uniform.
\end{lem}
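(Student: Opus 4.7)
The plan is to prove the stronger statement that the subring of power-bounded elements coincides with $\widehat{\overline{R}}$:
\[
\widehat{\overline{R}}[p^{-1}]^\circ = \widehat{\overline{R}}.
\]
Since $\widehat{\overline{R}}$ is bounded (by the previous lemma, it is sandwiched between $p^{m+1}$- and $p^m$-balls in $\widehat{\overline{R}}[p^{-1}]$), this equality yields uniformity.

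The inclusion $\widehat{\overline{R}} \subseteq \widehat{\overline{R}}[p^{-1}]^\circ$ is immediate from the submultiplicativity $\lvert xy\rvert \leq \lvert x\rvert\cdot\lvert y\rvert$ of the $C$-algebra norm, together with the fact that $\lvert x\rvert \leq 1$ for $x \in \widehat{\overline{R}}$. For the reverse inclusion, let $x \in \widehat{\overline{R}}[p^{-1}]$ be power-bounded. Then there exists $m \geq 0$ such that $p^m x^n \in \widehat{\overline{R}}$ for every $n \geq 0$. Hence the $\widehat{\overline{R}}$-submodule of $\widehat{\overline{R}}[p^{-1}]$ generated by $\{x^n\}_{n \geq 0}$ is contained in the finitely generated $\widehat{\overline{R}}$-module $p^{-m}\widehat{\overline{R}}$, which shows that $x$ is integral over $\widehat{\overline{R}}$.

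The entire argument therefore reduces to verifying that $\widehat{\overline{R}}$ is integrally closed in $\widehat{\overline{R}}[p^{-1}]$. This is the main obstacle: while $\overline{R}$ is integrally closed in $\overline{\Frac R}$ by construction (as an increasing union of normalizations of finite covers that are \'etale after inverting $p$), integral closedness is not in general preserved under $p$-adic completion. The crucial additional feature of our setup is the richness of $\overline{R}$: by construction it contains every finite \'etale extension of $R_K = R[p^{-1}]$, so in particular it contains enough $p$-power roots of units to make the pair $(\widehat{\overline{R}}[p^{-1}], \widehat{\overline{R}})$ an \emph{almost perfectoid} $C$-Banach pair over $(C,\calO_C)$, in the sense that the Frobenius on $\widehat{\overline{R}}/p\widehat{\overline{R}}$ is almost surjective. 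This is essentially the content of Faltings' almost purity theorem in Brinon's framework.

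Granting the almost perfectoid structure, one deduces the needed integral closedness by a standard two-step argument: given a monic integral relation $x^N + a_{N-1} x^{N-1} + \cdots + a_0 = 0$ with $a_i \in \widehat{\overline{R}}$ and $x \in \widehat{\overline{R}}[p^{-1}]$, one reduces modulo successive powers of $p$, uses almost surjectivity of Frobenius to lift $x$ to an element of $\widehat{\overline{R}}$ modulo each $p^n$, and assembles these lifts by $p$-adic completeness to conclude $x \in \widehat{\overline{R}}$. The technically delicate point is controlling the "almost" error terms so that the lift actually lies in $\widehat{\overline{R}}$ rather than merely in an almost-completion; this is precisely where the hypotheses on $\widetilde{R}$ in Set-up~\ref{set-up: Brinon's rings} (geometric regularity or low Krull dimension, geometric integrality of $\widetilde{R}/\pi$) enter, ensuring that Faltings' theorem applies.
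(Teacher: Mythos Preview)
Your reduction from ``power-bounded'' to ``integral'' has a genuine gap. You argue that $\widehat{\overline{R}}[x]\subset p^{-m}\widehat{\overline{R}}$, a finitely generated $\widehat{\overline{R}}$-module, and conclude that $x$ is integral. But the criterion ``contained in a finitely generated module implies integral'' requires the base ring to be Noetherian (so that the submodule $\widehat{\overline{R}}[x]$ is itself finitely generated, allowing the Cayley--Hamilton argument). The ring $\widehat{\overline{R}}$ is decidedly \emph{not} Noetherian --- it contains $\calO_C$ and all $p$-power roots of units --- so this step fails as written. One can salvage the implication using the $\calO_C$-algebra structure (from $p^m x^n\in\widehat{\overline{R}}$ one gets $(p^{m/n}x)^n\in\widehat{\overline{R}}$, hence $p^{m/n}x\in\widehat{\overline{R}}$ once integral closedness is known, and then let $n\to\infty$), but you do not do this.

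More seriously, your proposed route to integral closedness is both circular relative to the paper's development and far heavier than necessary. The paper proves uniformity \emph{first} (this lemma), then integral closedness (the next lemma), and only \emph{then} the perfectoid property; you are invoking the perfectoid structure to prove the first step. Moreover, the paper's arguments for both uniformity and integral closedness are entirely elementary: they use only that $\overline{R}$ is a normal domain (an increasing union of normal domains) together with the density of $\overline{R}[p^{-1}]$ in $\widehat{\overline{R}}[p^{-1}]$. Concretely, for uniformity the paper shows $(\widehat{\overline{R}}[p^{-1}])^\circ\subset p^{-1}\widehat{\overline{R}}$ by writing any power-bounded $x$ as $u+v$ with $u\in\overline{R}[p^{-1}]$ and $v\in\widehat{\overline{R}}$; then $u$ is also power-bounded, so $(pu)^n\in\overline{R}$ for some $n$, and normality of $\overline{R}$ gives $pu\in\overline{R}$. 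No almost mathematics, no Frobenius lifts, no appeal to Faltings' theorem. Your sketch of the ``technically delicate point'' about controlling almost-error terms is not needed here at all.
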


\begin{proof}
Since $p^{-1}\widehat{\overline{R}}$ is a bounded subset, it suffices to show $\bigl(\widehat{\overline{R}}[p^{-1}]\bigr)^\circ\subset p^{-1}\widehat{\overline{R}}$.

 First we prove $\bigl(\widehat{\overline{R}}[p^{-1}]\bigr)^\circ\cap \overline{R}[p^{-1}]\subset p^{-1}\overline{R}$.
Take $x\in \bigl(\widehat{\overline{R}}[p^{-1}]\bigr)^\circ\cap \overline{R}[p^{-1}]$.
Then the sequence $(p^nx^n)_n$ converges to zero as $n\to\infty$.
In particular, $(px)^n\in\overline{R}$ for some $n$. Since $\overline{R}$ is a normal domain, we have $px\in\overline{R}$ and thus $x\in p^{-1}\overline{R}$.

For $x\in \bigl(\widehat{\overline{R}}[p^{-1}]\bigr)^\circ$, write $x=u+v$ with $u\in \overline{R}[p^{-1}]$ and $v\in\widehat{\overline{R}}$.
Since $\bigl(\widehat{\overline{R}}[p^{-1}]\bigr)^\circ$ is a ring containing $\widehat{\overline{R}}$, we have $u\in \bigl(\widehat{\overline{R}}[p^{-1}]\bigr)^\circ$.
Hence $u\in p^{-1}\overline{R}$ by what we have proved, and we conclude $x\in p^{-1}\widehat{\overline{R}}$.
\end{proof}

\begin{lem}\label{lem:ring of integral elements}
 The subring $\widehat{\overline{R}}$ of $\widehat{\overline{R}}[p^{-1}]$ is a ring of integral elements.
\end{lem}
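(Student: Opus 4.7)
The plan is to verify the three defining conditions of a ring of integral elements: (a) $\widehat{\overline{R}}$ is a subring of $\bigl(\widehat{\overline{R}}[p^{-1}]\bigr)^\circ$, (b) $\widehat{\overline{R}}$ is open in $\widehat{\overline{R}}[p^{-1}]$, and (c) $\widehat{\overline{R}}$ is integrally closed in $\widehat{\overline{R}}[p^{-1}]$.

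Conditions (a) and (b) are essentially immediate. For (a), $\widehat{\overline{R}}$ is a $p$-adically bounded subring, hence for $x \in \widehat{\overline{R}}$ the orbit $\{x^n\}_{n\in\N}$ is contained in the bounded set $\widehat{\overline{R}}$, so $x$ is power-bounded. For (b), the previous lemma already identified the subspace topology on $\widehat{\overline{R}}$ inherited from $\widehat{\overline{R}}[p^{-1}]$ with the $p$-adic topology, so $\widehat{\overline{R}}$ is open.

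The substantive step is (c). Given $x \in \widehat{\overline{R}}[p^{-1}]$ satisfying a monic relation $x^n + a_{n-1}x^{n-1} + \cdots + a_0 = 0$ with $a_i \in \widehat{\overline{R}}$, I first observe that $x$ is automatically power-bounded (iterating the integral relation shows that every power of $x$ lies in the bounded subset $\sum_{i=0}^{n-1}\widehat{\overline{R}}\cdot x^i$). The uniformity lemma on $\widehat{\overline{R}}[p^{-1}]$ then places $x \in p^{-1}\widehat{\overline{R}}$, so I can write $x = y/p$ with $y \in \widehat{\overline{R}}$. Multiplying the monic equation through by $p^n$ yields
\[
 y^n + p\, a_{n-1} y^{n-1} + p^2 a_{n-2} y^{n-2} + \cdots + p^n a_0 = 0,
\]
so in particular $y^n \in p\widehat{\overline{R}}$.

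The remaining (and main) task is to promote this to $y \in p\widehat{\overline{R}}$, which would give $x \in \widehat{\overline{R}}$. My plan is to use the identity $\overline{R} \cap p\widehat{\overline{R}} = p\overline{R}$ together with the density of $\overline{R}$ in $\widehat{\overline{R}}$ to approximate $y$ by $z \in \overline{R}$ modulo a high power of $p$, so that the integral relation descends to a relation on $z$ in $\overline{R}$. Since $\overline{R}$ is normal and contains $p^{1/m}$ for every $m$ (as $\overline{K} \subset \overline{R}[p^{-1}]$), the ratios $z/p^{j/n}$ are roots of monic polynomials over $\overline{R}$ that can be read off from the modified equation, and normality then forces these ratios to lie in $\overline{R}$, giving successively better $p$-adic divisibility of $z$ and hence of $y$. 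The main obstacle is precisely that the naive implication $y^n \in p\widehat{\overline{R}} \Longrightarrow y \in p\widehat{\overline{R}}$ fails (for instance, $p^{1/n} \in \overline{R}$ is a counterexample), so $\overline{R}/p\overline{R}$ is not reduced, and one must use the full strength of the integral equation---not merely its leading term modulo $p$---to carry out the iterative bootstrapping of the $p$-adic valuation of $y$ up to the required threshold.
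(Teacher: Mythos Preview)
Your verification of (a) and (b) is fine, and you correctly isolate (c) as the content. The gap is in your plan for (c). Writing $x=y/p$ and aiming for $y\in p\widehat{\overline{R}}$ is just a restatement of the goal, and your bootstrapping scheme does not close it. Concretely, suppose you know $z\in p^{v}\overline{R}$ with $0\le v<1$ and you feed this into the scaled equation $z^n+\sum_{i=1}^n p^{i}a'_{n-i}z^{n-i}\equiv 0\pmod{p^N}$. The term $p^i a'_{n-i}z^{n-i}$ lies in $p^{\,i+v(n-i)}\overline{R}$; minimizing over $i\ge 1$ gives exponent $1+v(n-1)/n$, so $z^n\in p^{\,1+v(n-1)/n}\overline{R}$, whence $z\in p^{\,v'}\overline{R}$ with $v'=\tfrac{1}{n}+v\tfrac{n-1}{n}$. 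Starting at $v_0=0$ you get $1-v_{k}=\bigl(\tfrac{n-1}{n}\bigr)^k$, which tends to $0$ but never reaches it. So the iteration by itself only yields $z\in p^{1-\varepsilon}\overline{R}$ for every $\varepsilon>0$; you still owe an argument (e.g.\ via discreteness of the height-one valuations on a fixed finite normal extension $R'\ni z$) to pass from this to $z\in p\overline{R}$. That last step is doable, but you have not supplied it, and it is where the actual work lies.

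More to the point, the detour through $y=px$ is unnecessary. The paper approximates $x$ itself by $y_l\in\overline{R}[p^{-1}]$ and each $a_i$ by $a_{i,l}\in\overline{R}$ modulo $p^l$, and then observes that the \emph{defect}
\[
  c_l \;:=\; y_l^{m}+a_{m-1,l}\,y_l^{m-1}+\cdots+a_{0,l}
\]
is an element of $\overline{R}[p^{-1}]$ with $\lvert c_l\rvert\le \lvert p^l\rvert\cdot\max\{1,\lvert x\rvert^{m-1}\}$. For $l$ large this is $<1$, hence $c_l\in\widehat{\overline{R}}\cap\overline{R}[p^{-1}]=\overline{R}$. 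Thus $y_l$ satisfies the \emph{exact} monic equation $T^m+a_{m-1,l}T^{m-1}+\cdots+(a_{0,l}-c_l)=0$ with coefficients in $\overline{R}$, and normality of $\overline{R}$ gives $y_l\in\overline{R}$ in a single step; then $x=y_l+(x-y_l)\in\widehat{\overline{R}}$. The key idea you are missing is to treat the approximation error as a correction to the constant term of a monic polynomial over $\overline{R}$, rather than trying to squeeze ever more $p$-divisibility out of a congruence.
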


\begin{proof}
We need to show that $\widehat{\overline{R}}$ is integrally closed in $\widehat{\overline{R}}[p^{-1}]$.

Let $x\in\widehat{\overline{R}}[p^{-1}]$ be an element that is integral over $\widehat{\overline{R}}$ and suppose that $x$ satisfies
\[
 x^m+a_{m-1}x^{m-1}+\dots+a_0=0
\]
for some $a_0,\ldots,a_{m-1}\in \widehat{\overline{R}}$. For each $l\in \N$, there exist
$y_l\in\overline{R}[p^{-1}]$ and $z_l\in p^l\widehat{\overline{R}}$ such that
$x=y_l+z_l$. Similarly, for each $i=0,\ldots,m-1$. there exists $a_{i,l}\in \overline{R}$ such that $a_i-a_{i,l}\in p^l\widehat{\overline{R}}$. We compute
\begin{align*}
& \lvert y_l^m+a_{m-1,l}y_l^{m-1}+\dots+a_{0,l}\rvert\\
&=\lvert (y_l^m+a_{m-1,l}y_l^{m-1}+\dots+a_{0,l})-(x^m+a_{m-1}x^{m-1}+\dots+a_0)\rvert\\
&\leq \max\bigl\{\lvert (y_l^m+a_{m-1,l}y_l^{m-1}+\dots+a_{0,l})-(x^m+a_{m-1,l}x^{m-1}+\dots+a_{0,l})\rvert, \\
&\phantom{maxmax}\lvert (x^m+a_{m-1,l}x^{m-1}+\dots+a_{0,l})-(x^m+a_{m-1}x^{m-1}+\dots+a_0)\rvert\bigr\}\\
&\leq \lvert p^l\rvert\,\max\bigl\{1,\lvert x\rvert^{m-1}\bigr\}.
\end{align*}
Since $\{y\in \widehat{\overline{R}}[p^{-1}]\mid \lvert y\rvert<1\}\subset \widehat{\overline{R}}$ and $\overline{R}[p^{-1}]\cap \widehat{\overline{R}}=\overline{R}$, we see that 
$y_l^m+a_{m-1,l}y_l^{m-1}+\dots+a_{0,l}\in \overline{R}$ for $l\gg 0$.
For such $l$, we have $y_l\in \overline{R}$ by normality of $\overline{R}$ and thus $x=y_l+z_l\in\widehat{\overline{R}}$.
\end{proof}

\begin{lem}
 The $C$-Banach pair $(\widehat{\overline{R}}[p^{-1}],\widehat{\overline{R}})$ is perfectoid.
\end{lem}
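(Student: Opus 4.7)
The two conditions to be verified are (i) the uniformity of $\widehat{\overline{R}}[p^{-1}]$, and (ii) the surjectivity of the Frobenius on $\widehat{\overline{R}}/p\widehat{\overline{R}}$. Condition (i) is Lemma~\ref{lem:uniform}, and Lemma~\ref{lem:ring of integral elements} already supplies the ring-of-integral-elements condition, so only (ii) requires a new argument.

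Since $\overline{R}$ is $p$-torsion free and $p$-adically separated, the canonical map $\overline{R}/p\overline{R}\to \widehat{\overline{R}}/p\widehat{\overline{R}}$ is an isomorphism, so I reduce (ii) to the statement: for every $a\in\overline{R}$, there exists $b\in\overline{R}$ with $b^p\equiv a\pmod{p\overline{R}}$.

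The plan is to exhibit $b$ as a root of a polynomial of the form $X^p-\tilde a$, where $\tilde a\in a+p\overline{R}$ is chosen so as to be a unit in $\overline{R}[p^{-1}]$. For such a $\tilde a$, the discriminant of $X^p-\tilde a$ is $\pm p^p\tilde a^{p-1}$, hence a unit in $\overline{R}[p^{-1}]$; so the polynomial is separable, and its root $b$, which is integral over $R$ and generates a finite \'etale extension of $R[p^{-1}]$, lies in $\overline{R}$ by the very definition of $\overline{R}$. Then $b^p=\tilde a\equiv a\pmod{p\overline{R}}$ as desired.

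The crux of the argument is therefore producing $c\in\overline{R}$ for which $a+pc$ is a unit in $\overline{R}[p^{-1}]$, i.e.\ for which its vanishing locus on $\Spec\overline{R}[p^{-1}]$ is empty. Here I would exploit the richness of $\overline{R}$ in \'etale extensions. Because each $T_i\in R^{\times}$ and $\pi\in K^{\times}\subset R_K^{\times}$, the polynomials $X^{p^n}-T_i$ and $X^{p^n}-\pi$ have unit discriminants over $R_K$; thus every $p^n$-th root of $T_i$ and of $\pi$ lies in $\overline{R}$. Using these together with the fact that $\overline{R}[p^{-1}]$ is a filtered union of finite \'etale $R_K$-algebras, one enlarges the finite $R'$ containing $a$ to a further finite \'etale extension $R''\subset\overline{R}$ where a suitable $c$ can be produced to make $a+pc$ nowhere-vanishing on $\Spec R''[p^{-1}]$.

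The main obstacle is precisely this genericity step: rigorously producing $c$, possibly after successively absorbing components of the vanishing locus of $a+pc$ by passing to larger \'etale covers inside $\overline{R}$. An alternative, more flexible route would be to vary the \emph{shape} of the polynomial, considering $f(X)=X^p-a+p\,g(X)$ for a polynomial $g\in\overline{R}[X]$ whose coefficients can be tuned so that the discriminant of $f$---a polynomial in the coefficients congruent modulo $p$ to the discriminant of $X^p-a$---becomes a unit in $\overline{R}[p^{-1}]$; once this is arranged, the same argument yields $b\in\overline{R}$ with $b^p\equiv a\pmod{p\overline{R}}$.
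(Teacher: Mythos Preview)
Your reduction to Frobenius surjectivity on $\overline{R}/p\overline{R}$ is fine, but the main approach cannot be completed: in general there is \emph{no} $c\in\overline{R}$ with $a+pc\in(\overline{R}[p^{-1}])^\times$. Take $R=\calO_K\langle T^{\pm1}\rangle$ and $a=T-1$, and suppose $c\in R_1$ for some finite normal $R_1\subset\overline{R}$ with $R_1[p^{-1}]$ finite \'etale over $R_K$ of rank $d$. The norm $N:=N_{R_1[p^{-1}]/R_K}(a+pc)=\det\bigl((T-1)\id+p\,m_c\bigr)$ lies in $(T-1)^d+pR$, since the coefficients of the characteristic polynomial of the integral element $c$ lie in $R$ by normality of $R$. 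Now an element of $K\langle T^{\pm1}\rangle$ is a unit precisely when it has a \emph{unique} Laurent coefficient of maximal absolute value, whereas every element of $(T-1)^d+pR$ has both its $T^0$- and $T^d$-coefficients of norm $1$. Hence $N\notin R_K^\times$, so $a+pc\notin(R_1[p^{-1}])^\times$, and your genericity step is impossible, not merely unfinished.

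Your alternative route---perturbing the polynomial itself---is the right idea and is precisely what the paper does, but keeping $\deg f=p$ still runs into trouble: for $f(X)=X^p+pX-a$ with $a=T-1$ and $p$ odd, one computes $\mathrm{Res}\bigl(f,X^{p-1}+1\bigr)=\pm\bigl((T-1)^{p-1}+(p-1)^{p-1}\bigr)$, again with two dominant coefficients and hence not a unit in $R_K$. The paper's fix is to pass to degree $p^2$: with $f(T)=T^{p^2}+pT-x$ one has $f'(T)=p\bigl(1+pT^{p^2-1}\bigr)$. Working over a finite normal $R_1\ni x$ (which is $p$-adically complete, being finite over the complete Noetherian ring $R$), the quotient $R_1[T]/(f)$ is free over $R_1$, hence $p$-adically complete and $p$-torsion free, so $1+pT^{p^2-1}$ is automatically a unit there. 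Thus $(R_1[T]/(f))[p^{-1}]$ is \'etale over $R_K$, the image $y$ of $T$ lies in $\overline{R}$, and $(y^p)^p\equiv x\pmod{p\overline{R}}$. The point you were missing is that differentiating $T^{p^2}$ produces an \emph{extra} factor of $p$, which is exactly what forces the non-constant part of $f'/p$ into $p\cdot(\text{integral})$ and makes the $1$-unit argument go through uniformly in $x$.
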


\begin{proof}
It remains to prove that the Frobenius on $\widehat{\overline{R}}/p\widehat{\overline{R}}$ is surjective.
 Note that $\widehat{\overline{R}}/p\widehat{\overline{R}}=\overline{R}/p\overline{R}$.  Let $\overline{x}\in \overline{R}/p\overline{R}$ and choose a lift $x\in\overline{R}$ of $\overline{x}$.
Set $R':=R[x][T]/(T^{p^2}+pT-x)$. Then $R'$ is finite over $R$ and $R'[p^{-1}]$ is \'etale over $R_K$. Choose an embedding of $R'$ into $\overline{\Frac R}$ and let $y$ denote the image of $T$. Then $y\in \overline{R}$ and it satisfies $(y^p)^p\equiv x\bmod{p\overline{R}}$.
Hence the Frobenius on $\overline{R}/p\overline{R}$ is surjective. 
\end{proof}

\section{Horizontal crystalline, semistable, and de Rham representations}\label{section:horizontal semistable representations}

In this section, we define horizontal crystalline, semistable, and de Rham representations of $\calG_{R_K}$, and study their basic properties. We mainly follow \cite[\S 8]{Brinon-crisdeRham}, where horizontal crystalline and de Rham representations are studied.

\subsection{Admissible representations}
We review the notion of admissible representations.
Let $E$ be a finite extension of $\Q_p$ and $G$ a topological group.

\begin{defn}
 An \emph{$E$-representation} of $G$ is a finite-dimensional $E$-vector space equipped with a continuous action of $G$.
A morphism of $E$-representations of $G$ is an $E$-linear map commuting with actions of $G$.
We denote the category of $E$-representations of $G$ by $\Rep_E(G)$.
It is naturally a Tannakian category.
\end{defn}

\begin{rem}
Note that \cite[\S 1]{Fontaine-exposeIII} deals with an abstract field $E$ and an abstract group $G$ but that properties discussed there continue to hold in our setting.
\end{rem}

\begin{defn}[cf.~{\cite[1.3]{Fontaine-exposeIII}}]
\hfill
\begin{enumerate}
 \item  An \emph{$(E,G)$-ring} is an $E$-algebra $B$ equipped with an $E$-linear action of $G$.
 \item Let $B$ be an $(E,G)$-ring and let $V\in\Rep_E(G)$.
We set
\[
 D_B(V):=(B\otimes_{E}V)^{G},
\]
and we let 
\[
 \alpha_B(V)\colon B\otimes_{B^G}D_B(V)\ra B\otimes_EV
\]
denote the $B$-linear map induced from the inclusion 
$D_B(V)\subset B\otimes_EV$.
We say that $V$ is \emph{$B$-admissible}
if $\alpha_B(V)$ is an isomorphism of $B$-modules.\\
\end{enumerate}
\end{defn}

\begin{lem}\label{lem: admissiblity of two period rings}
 Let $G'$ be another topological group and let $\alpha\colon G'\ra G$ be a continuous group homomorphism.
Let $B$ be an $(E,G)$-ring and $B'$ an $(E,G')$-ring equipped with an $E$-algebra homomorphism $\beta\colon B\ra B'$ satisfying
\[
 \beta(\alpha(g')\cdot b)=g'\cdot \beta(b)\quad \text{for all $g'\in G'$ and $b\in B$}.
\]
Let $V\in \Rep_E(G)$ and let $V|_{G'}$ denote the corresponding $E$-representation of $G'$.
If $V$ is $B$-admissible and $\alpha_{B'}(V|_{G'})$ is injective, then $V|_{G'}$ is $B'$-admissible, and
the natural map
\[
 (B')^{G'}\otimes_{B^G}D_B(V)\ra D_{B'}(V|_{G'})
\]
is an isomorphism. 
\end{lem}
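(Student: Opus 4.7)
The plan is to exhibit a single commutative diagram that simultaneously yields both conclusions. First I would construct the natural map $\gamma\colon (B')^{G'} \otimes_{B^G} D_B(V) \to D_{B'}(V|_{G'})$. The compatibility $\beta(\alpha(g')b) = g'\beta(b)$ is exactly the statement that $\beta \otimes \id_V \colon B \otimes_E V \to B' \otimes_E V$ becomes $G'$-equivariant when $G'$ acts on the source through $\alpha$; hence it sends $D_B(V)$ into $D_{B'}(V|_{G'})$. Since $\beta$ also sends $B^G$ into $(B')^{G'}$, this $B^G$-linear map extends to the claimed $(B')^{G'}$-linear map $\gamma$.

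Next I would form the commutative diagram
\[
\begin{CD}
B' \otimes_{B^G} D_B(V) @>{\id_{B'} \otimes \gamma}>> B' \otimes_{(B')^{G'}} D_{B'}(V|_{G'}) \\
@V{B' \otimes_B \alpha_B(V)}VV @VV{\alpha_{B'}(V|_{G'})}V \\
B' \otimes_E V @= B' \otimes_E V,
\end{CD}
\]
where the upper-left corner is identified with $B' \otimes_{(B')^{G'}}\bigl((B')^{G'} \otimes_{B^G} D_B(V)\bigr)$, and a direct unwinding on simple tensors shows that the composition down-then-right equals $B' \otimes_B \alpha_B(V)$. The left vertical is an isomorphism by $B$-admissibility of $V$, so this composition is an isomorphism; combined with the hypothesized injectivity of $\alpha_{B'}(V|_{G'})$, a one-line diagram chase forces $\alpha_{B'}(V|_{G'})$ to be bijective, establishing $B'$-admissibility of $V|_{G'}$, and then $\id_{B'} \otimes \gamma$ must also be an isomorphism.

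It remains to upgrade the isomorphism $\id_{B'} \otimes \gamma$ to $\gamma$ itself. For this I would rely on the standard fact (a consequence of admissibility, cf.~\cite[\S 1]{Fontaine-exposeIII}) that $D_B(V)$ is finite projective of rank $\dim_E V$ over $B^G$, and similarly $D_{B'}(V|_{G'})$ over $(B')^{G'}$ once $B'$-admissibility has been proved. Both source and target of $\gamma$ are then finite projective $(B')^{G'}$-modules of rank $\dim_E V$, and $\id_{B'} \otimes \gamma$ being an isomorphism suffices, via a faithfulness argument for $(B')^{G'} \to B'$ on the cokernel (which is finitely generated and vanishes after base change), to conclude $\gamma$ is itself an isomorphism. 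This descent is the main potential obstacle: it hinges on having enough regularity in the rings of invariants, but such properties are standard in the $(E,G)$-ring framework of \cite{Fontaine-exposeIII} in which the lemma is applied.
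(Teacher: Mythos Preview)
Your diagram chase establishing $B'$-admissibility is essentially the paper's argument. The difference lies in how you extract the isomorphism $\gamma$. The paper observes that your map $\id_{B'}\otimes\gamma\colon B'\otimes_{B^G}D_B(V)\to B'\otimes_{(B')^{G'}}D_{B'}(V|_{G'})$ is $G'$-equivariant (with $G'$ acting on the left only through $B'$), and then simply takes $G'$-invariants on both sides. On the right this returns $D_{B'}(V|_{G'})$ via $\alpha_{B'}$; on the left it returns $(B')^{G'}\otimes_{B^G}D_B(V)$ once one knows that invariants commute with $-\otimes_{B^G}D_B(V)$, which is immediate when $D_B(V)$ is free over $B^G$. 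This bypasses your faithful-flatness descent, the rank comparison, and the cokernel chase entirely.

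Your caveat about regularity is on point: in the bare generality of the lemma, both routes tacitly use something extra. The paper's route needs $D_B(V)$ free (or projective) over $B^G$ so that invariants commute with the tensor; your route needs that \emph{and} faithful flatness of $(B')^{G'}\to B'$. In every application in the paper $B^G$ and $(B')^{G'}$ are fields (namely $K_0$ or $K$), so both hypotheses hold, but the invariant-taking argument is strictly more economical and uses only the equivariant structure already in hand rather than an auxiliary descent principle.
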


\begin{proof}
Consider the commutative diagram
\[
 \xymatrix{
B\otimes_{B^G}D_B(V)\ar[rrr]^{\alpha_B(V)}_{\cong}\ar[d]
&&& B\otimes_EV\ar[d]\\
B'\otimes_{(B')^{G'}}D_{B'}(V|_{G'})\ar@{^{(}->}[rrr]^{\alpha_{B'}(V|_{G'})}
&&& B'\otimes_EV.
}
\]
It follows that $V\subset B'\otimes_EV$ is contained in the image of $\alpha_{B'}(V|_{G'})$.
Since $\alpha_{B'}(V|_{G'})$ is injective and $B'$-linear, it is an isomorphism.
We then have a $G'$-equivariant isomorphism
\[
 B'\otimes_{B}(B\otimes_{B^G}D_B(V))\stackrel{\cong}{\lra}B'\otimes_{(B')^{G'}}D_{B'}(V|_{G'}).
\]
Taking $G'$-invariants yields an isomorphism
$(B')^{G'}\otimes_{B^G}D_B(V)\stackrel{\cong}{\lra} D_{B'}(V|_{G'})$.
 \end{proof}

\begin{prop}[{\cite[Proposition 3.5]{Brinon-cris}}]\label{prop:tensor and SES of admissible representations}
Let $B$ be an $(E,G)$-ring and assume that $B$ is faithfully flat over $F:=B^G$
and that $\alpha_B(V)$ is injective for every $V\in\Rep_E(G)$.
Let $V_1,V_2\in\Rep_E(G)$.
\begin{enumerate}
 \item If $V_1$ and $V_2$ are $B$-admissible, then
so is $V_1\otimes_EV_2$ and the natural map
$D_B(V_1)\otimes_FD_B(V_2)\ra D_B(V_1\otimes_EV_2)$ is an isomorphism.
 \item Let $V\in\Rep_E(G)$ be an extension of $V_2$ by $V_1$.
If $V$ is $B$-admissible, then so are $V_1$ and $V_2$.
Moreover, the sequence $0\ra D_B(V_1)\ra D_B(V)\ra D_B(V_2)\ra 0$ is exact.
 \item Let $h\in\N$ and let $V\in\Rep_E(G)$ be $B$-admissible.
Then $\bigwedge_E^hV$ is also $B$-admissible and the natural map
$\bigwedge_F^hD_B(V)\ra D_B(\bigwedge_E^hV)$ is an isomorphism.
\end{enumerate}
\end{prop}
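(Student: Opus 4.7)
The plan is to exploit the single assumption that $\alpha_B(V)$ is always injective (so admissibility is equivalent to surjectivity of $\alpha_B(V)$) together with faithful flatness of $B/F$ for descent. Each part reduces to a diagram chase after tensoring with $B$.

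\textbf{Part (i).} First I would check there is a natural $B$-linear commutative diagram
\[
\xymatrix@C=1.5em{
B\otimes_F\bigl(D_B(V_1)\otimes_FD_B(V_2)\bigr)\ar[r]\ar[d]_-{\cong} & B\otimes_FD_B(V_1\otimes_EV_2)\ar[d]^-{\alpha_B(V_1\otimes V_2)} \\
\bigl(B\otimes_FD_B(V_1)\bigr)\otimes_B\bigl(B\otimes_FD_B(V_2)\bigr)\ar[r]^-{\alpha_B(V_1)\otimes\alpha_B(V_2)}_-{\cong} & B\otimes_E(V_1\otimes_EV_2),
}
\]
whose bottom row is an isomorphism by hypothesis. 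Thus $\alpha_B(V_1\otimes V_2)$ is surjective; combined with its injectivity it is an isomorphism, i.e.\ $V_1\otimes_EV_2$ is $B$-admissible. The same diagram shows that applying $B\otimes_F-$ to $D_B(V_1)\otimes_FD_B(V_2)\to D_B(V_1\otimes_EV_2)$ is an isomorphism, and faithful flatness of $B$ over $F$ gives the map itself is an isomorphism.

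\textbf{Part (ii).} Because $E$ is a field, tensoring $0\to V_1\to V\to V_2\to 0$ with $B$ yields an exact sequence. Applying the left-exact functor $(-)^G$ and then $B\otimes_F-$ (which is exact) gives the left-exact top row of
\[
\xymatrix@C=1.3em{
0\ar[r] & B\otimes_FD_B(V_1)\ar[r]\ar[d]_-{\alpha_1} & B\otimes_FD_B(V)\ar[r]\ar[d]_-{\alpha}^-{\cong} & B\otimes_FD_B(V_2)\ar[d]^-{\alpha_2} & \\
0\ar[r] & B\otimes_EV_1\ar[r] & B\otimes_EV\ar[r] & B\otimes_EV_2\ar[r] & 0,
}
\]
with $\alpha_1,\alpha_2$ injective by hypothesis. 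A direct chase shows $\alpha_2$ is surjective (lift any $x\in B\otimes_EV_2$ to $B\otimes_EV$, pull back through $\alpha^{-1}$, and project to $B\otimes_FD_B(V_2)$), hence an isomorphism; then for any $y\in B\otimes_EV_1$, the element $\alpha^{-1}(y)$ maps to $0$ in $B\otimes_FD_B(V_2)$ by commutativity and injectivity of $\alpha_2$, so it lies in $B\otimes_FD_B(V_1)$, giving surjectivity of $\alpha_1$. Hence $V_1,V_2$ are admissible. This also makes the top row short exact, and faithful flatness descends exactness to the sequence $0\to D_B(V_1)\to D_B(V)\to D_B(V_2)\to 0$.

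\textbf{Part (iii).} I would iterate (i) to see that $V^{\otimes h}$ is admissible with $D_B(V^{\otimes h})\cong D_B(V)^{\otimes h}$. Since $E\supset\Q_p$ has characteristic zero, the antisymmetrization projector $\tfrac{1}{h!}\sum_{\sigma\in S_h}\mathrm{sgn}(\sigma)\sigma$ is a $G$-equivariant idempotent on $V^{\otimes h}$ with image $\bigwedge_E^hV$, realizing $\bigwedge_E^hV$ as a direct summand of $V^{\otimes h}$ in $\Rep_E(G)$. Applying part (ii) to the corresponding split short exact sequence shows $\bigwedge_E^hV$ is admissible, and the same projector on $D_B(V)^{\otimes h}$ identifies $D_B(\bigwedge_E^hV)$ with $\bigwedge_F^hD_B(V)$ via the natural map.

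The only delicate point is the diagram chase in (ii)—in particular confirming surjectivity of $\alpha_2$ using only that $\alpha$ is an isomorphism and $\alpha_2$ is injective, which is the whole substance of the assertion; the other parts are formal once (ii) is known.
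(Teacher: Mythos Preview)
The paper does not supply a proof of this proposition at all; it simply records the statement and cites \cite[Proposition 3.5]{Brinon-cris}. So there is no argument in the paper to compare against, and your proposal functions as an independent verification of the cited result.

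Your argument is correct and essentially reproduces the standard proof. A couple of minor remarks: in part (iii) you implicitly use that $F=B^G$ also has characteristic zero (so that the antisymmetrization idempotent makes sense on $D_B(V)^{\otimes h}$ and its image is $\bigwedge_F^hD_B(V)$); this holds because the $G$-action on $B$ is $E$-linear, hence $E\subset F$. Also, in part (ii) the descent of exactness via faithful flatness requires knowing the top row is exact on the right as well; you have shown the top row becomes short exact once $\alpha_1,\alpha_2$ are isomorphisms, so this is fine. With these points noted, the proof is complete.
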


\begin{prop}[{\cite[Proposition 3.7]{Brinon-cris}}]\label{prop:dual of admissible representations}
Let $B$ be an $(E,G)$-ring satisfying the following conditions:
\begin{enumerate}
 \item $F:=B^G$ is Noetherian and $B$ is faithfully flat over $F$;
 \item $\alpha_B(V)$ is injective for every $V\in\Rep_E(G)$;
 \item if $V\in\Rep_E(G)$ is $B$-admissible and $\dim_EV=1$,
then the dual representation $V^\vee$ is also $B$-admissible.
\end{enumerate}
Then for every $B$-admissible representation $V\in\Rep_E(G)$,
the dual $V^\vee$ is also $B$-admissible and the natural map
\[
 D_B(V^\vee)\ra \Hom_F(D_B(V),F)
\]
is an isomorphism.
\end{prop}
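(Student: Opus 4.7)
The plan is to handle $B$-admissibility of $V^\vee$ and the $\Hom$-isomorphism separately: the former reduces to hypothesis~(iii) via an exterior-power trick, while the latter comes from faithfully flat descent, under which the natural map becomes the identity.

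For $B$-admissibility of $V^\vee$, setting $h=\dim_E V$, I would use the $G$-equivariant isomorphism
\[
 V^\vee\cong \bigwedge_E^{h-1}V\otimes_E (\det V)^\vee,\qquad \det V:=\bigwedge_E^h V,
\]
coming from the perfect pairing $V\otimes_E\bigwedge_E^{h-1}V\to \det V$. Proposition~\ref{prop:tensor and SES of admissible representations}(iii) makes $\bigwedge_E^{h-1}V$ and $\det V$ admissible; hypothesis~(iii) then applies to the one-dimensional $\det V$ to give admissibility of $(\det V)^\vee$; and Proposition~\ref{prop:tensor and SES of admissible representations}(i) combines these to yield admissibility of $V^\vee$.

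For the isomorphism, I would construct $\varphi\colon D_B(V^\vee)\to \Hom_F(D_B(V),F)$ by adjointness from the pairing
\[
 D_B(V)\otimes_F D_B(V^\vee)\cong D_B(V\otimes_E V^\vee)\lra D_B(E)=F
\]
induced by the $G$-equivariant evaluation $V\otimes_E V^\vee\to E$, where the first isomorphism is Proposition~\ref{prop:tensor and SES of admissible representations}(i) applied to the now-admissible pair $(V,V^\vee)$. To verify $\varphi$ is bijective, I would apply the faithfully flat base change $B\otimes_F-$. By admissibility of $V^\vee$, $B\otimes_F D_B(V^\vee)\cong B\otimes_E V^\vee$, while
\[
 B\otimes_F\Hom_F(D_B(V),F)\cong \Hom_B(B\otimes_F D_B(V),B)\cong \Hom_B(B\otimes_E V,B)\cong B\otimes_E V^\vee,
\]
provided $D_B(V)$ is finitely generated projective over $F$. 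Tracing definitions then shows $B\otimes_F\varphi$ corresponds to the identity of $B\otimes_E V^\vee$ under these identifications, so faithful flatness of $B$ over $F$ upgrades this to bijectivity of $\varphi$.

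The main obstacle is justifying that $D_B(V)$ is a finitely generated projective $F$-module, which is what makes the Hom base-change computation legitimate. I would deduce this by faithfully flat descent: admissibility of $V$ exhibits $B\otimes_F D_B(V)\cong B\otimes_E V$ as finite free over $B$, and both flatness and finite presentation descend along $F\to B$, with Noetherianity of $F$ (hypothesis~(i)) then yielding finite generation and projectivity. Apart from this descent step, every other ingredient is a formal manipulation with $D_B$ and the compatibilities recorded in Proposition~\ref{prop:tensor and SES of admissible representations}.
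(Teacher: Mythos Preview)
Your proposal is correct. The paper does not give its own proof of this proposition; it simply cites \cite[Proposition 3.7]{Brinon-cris}. Your argument---reducing admissibility of $V^\vee$ to the one-dimensional case via $V^\vee\cong\bigwedge_E^{h-1}V\otimes_E(\det V)^\vee$ and Proposition~\ref{prop:tensor and SES of admissible representations}, then verifying the $\Hom$-isomorphism by faithfully flat descent after establishing that $D_B(V)$ is finite projective over $F$---is the standard route and is essentially what Brinon does in the cited reference.
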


\subsection{Horizontal crystalline, semistable, and de Rham representations}

Let $R$ be an $\calO_K$-algebra satisfying the conditions in Set-up~\ref{set-up: Brinon's rings}. Recall that we set $\calG_{R_K}:=\Gal(\overline{R_K}/R_K)$ and that  $(\widehat{\overline{R}}[p^{-1}],\widehat{\overline{R}})$ is a perfectoid $C$-Banach pair.

\begin{defn}
 Set
\begin{align*}
\rmB_{\max}^\nabla(R)&:=\B_{\max}(\widehat{\overline{R}}),\quad
 \rmB_{\max,K}^\nabla(R):=\B_{\max,K}(\widehat{\overline{R}}),\\
\rmB_{\cris}^\nabla(R)&:=\B_{\cris}(\widehat{\overline{R}}),\quad\\
\rmB_{\st}^\nabla(R)&:=\B_{\st}(\widehat{\overline{R}}),\quad
\rmB_{\st,K}^\nabla(R):=\B_{\st,K}(\widehat{\overline{R}}),\\
\rmB_{\dR}^\nabla(R)&:=\B_{\dR}(\widehat{\overline{R}})=\B_{\dR,K}(\widehat{\overline{R}}), \quad\text{and}\quad \rmB_{\dR}^{\nabla+}(R):=\B_{\dR}^+(\widehat{\overline{R}}).
\end{align*}
By functoriality of the construction, all these period rings have an action of $\calG_{R_K}$.
In particular, they are $(\Q_p,\calG_{R_K})$-rings.
To simplify the notation, for $V\in\Rep_{\Q_p}(\calG_{R_K})$, we write $D_{\max}^\nabla(V)$ for $D_{\rmB_{\max}^\nabla(R)}(V)$ and $\alpha_{\max}^\nabla(V)$ for $\alpha_{\rmB_{\max}^\nabla(R)}(V)$. We use similar conventions for other period rings.
 \end{defn}

\begin{rem}\hfill
\begin{enumerate}
 \item
 By definition, our period rings $\rmB_{\max}^\nabla(R)$, $\rmB_{\cris}^\nabla(R)$, and $\rmB_{\dR}^\nabla(R)$ coincide with the horizontal period rings defined in \cite[D\'efinitions 5.1.2, 6.1.2]{Brinon-crisdeRham} and denoted by the same symbols.
 \item The period rings $\rmB_{\max}^\nabla(R)$ and $\rmB_{\cris}^\nabla(R)$ admit the Frobenius $\varphi$. Similarly, $\rmB_{\st}^\nabla(R)$ admits the Frobenius $\varphi$ and the monodromy operator $N$ satisfying $N\varphi=p\varphi N$.
 \item
Recall the following relations among period rings:
\begin{align*}
 &K\otimes_{K_0}\rmB_{\max}^\nabla(R)=\rmB_{\max,K}^\nabla(R),\quad
K\otimes_{K_0}\rmB_{\st}^\nabla(R)=\rmB_{\st,K}^\nabla(R),\\
& (\rmB_{\st}^\nabla(R))^{N=0}=\rmB_{\max}^\nabla(R), \quad\text{and}\quad
\varphi(\rmB_{\max}^\nabla(R))\subset \rmB_{\cris}^\nabla(R)\subset \rmB_{\max}^\nabla(R).
\end{align*}
  \item The action of $\calG_{R_K}$ on $\rmB_{\max}^\nabla(R)$ and $\rmB_{\cris}^\nabla(R)$ commutes with the Frobenius.
In particular, for $V\in\Rep_{\Q_p}(\calG_{R_K})$, the modules $D_{\max}^\nabla(V)$ and $D_{\cris}^\nabla(V)$ are equipped with the Frobenius $\varphi$ by restricting $\varphi\otimes\id$ on 
$\rmB_{\max}^\nabla(R)\otimes_{\Q_p}V$ and $\rmB_{\cris}^\nabla(R)\otimes_{\Q_p}V$.
The maps $\alpha_{\max}^\nabla(V)$ and $\alpha_{\cris}^\nabla(V)$ are $\varphi$-equivariant.
We also have
\[
 \varphi(D_{\max}^\nabla(V))\subset D_{\cris}^\nabla(V)\subset D_{\max}^\nabla(V).
\]
 \item The action of $\calG_{R_K}$ on $\rmB_{\st}^\nabla(R)$ commutes with the Frobenius and the monodromy operator.
In particular, for $V\in\Rep_{\Q_p}(\calG_{R_K})$, the module $D_{\st}^\nabla(V)$ is equipped with the Frobenius $\varphi$ and the monodromy operator $N$ by restricting $\varphi\otimes\id$ and $N\otimes\id$ on $\rmB_{\st}^\nabla(R)\otimes_{\Q_p}V$.
These endomorphisms satisfy
\[
 N\varphi=p\varphi N,\quad\text{and}\quad (D_{\st}^\nabla(V))^{N=0}=D_{\max}^\nabla(V).
\]
\end{enumerate}
\end{rem}

\begin{prop}[{\cite[Corollaire 5.3.7]{Brinon-crisdeRham}}]
 We have
\[
 \bigl(\rmB_{\dR}^\nabla(R)\bigr)^{\calG_{R_K}}=K.
\]
\end{prop}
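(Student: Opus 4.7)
The plan is to reduce the assertion, via the $t$-adic filtration on $\rmB_{\dR}^{\nabla+}(R)=\B_{\dR}^+(\widehat{\overline{R}})$, to the Hodge--Tate statement that, for the cyclotomic twist by the cyclotomic character $\chi$,
\[
\bigl(\widehat{\overline{R}}[p^{-1}](m)\bigr)^{\calG_{R_K}} = \begin{cases} K & \text{if } m=0, \\ 0 & \text{if } m\neq 0. \end{cases}
\]
For $R$ as in Set-up~\ref{set-up: Brinon's rings}, this is the standard relative Hodge--Tate computation worked out in this generality by Brinon, and I would invoke it as black-box input.

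First I would treat $\rmB_{\dR}^{\nabla+}(R)$. By Lemma~\ref{lem: t is a generator} the filtration $\Fil^m=t^m\rmB_{\dR}^{\nabla+}(R)=(\Ker\theta)^m$ is well-defined, and by Lemma~\ref{lem:BdR is complete} the ring $\rmB_{\dR}^{\nabla+}(R)$ is complete and separated for it. Since $g(t)=\chi(g)\,t$, multiplication by $t^m$ gives a $\calG_{R_K}$-equivariant isomorphism $\Fil^m/\Fil^{m+1}\cong \widehat{\overline{R}}[p^{-1}](m)$. Given $x\in \rmB_{\dR}^{\nabla+}(R)^{\calG_{R_K}}$, the image $\theta(x)\in \widehat{\overline{R}}[p^{-1}]^{\calG_{R_K}}$ equals some $a_0\in K$ by the $m=0$ case, and $x-a_0\in \Fil^1$ remains Galois-invariant. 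Its image in $\Fil^1/\Fil^2\cong \widehat{\overline{R}}[p^{-1}](1)$ is then a Galois-invariant element, hence zero by the Hodge--Tate vanishing at $m=1$. Iterating and using separatedness gives $x-a_0\in \bigcap_m \Fil^m=0$, so $x=a_0\in K$.

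To pass from $\rmB_{\dR}^{\nabla+}(R)$ to $\rmB_{\dR}^{\nabla}(R)=\rmB_{\dR}^{\nabla+}(R)[t^{-1}]$, let $0\neq y\in \rmB_{\dR}^{\nabla}(R)^{\calG_{R_K}}$ and write $y=t^{-j}u$ with $j\geq 0$, $u\in \rmB_{\dR}^{\nabla+}(R)$ and $\theta(u)\neq 0$. Galois invariance of $y$ together with $g(t)=\chi(g)\,t$ gives $g(u)=\chi(g)^{j}u$, so $\theta(u)\in \widehat{\overline{R}}[p^{-1}]$ satisfies $g(\theta(u))=\chi(g)^{j}\theta(u)$; equivalently it is a nonzero invariant vector in $\widehat{\overline{R}}[p^{-1}](-j)$. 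The Hodge--Tate vanishing forces $j=0$, whence $y=u\in \rmB_{\dR}^{\nabla+}(R)^{\calG_{R_K}}=K$ by the previous step.

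The only nontrivial ingredient is the Hodge--Tate input; everything else is formal devissage combined with separated completeness. Brinon proves that input by climbing the Kummer tower built from the generators $T_1,\ldots,T_n$ and invoking almost purity to reduce to the classical Ax--Sen--Tate theorem for $K$. Rather than reprove it, I would simply cite Brinon's result, as the paper already does.
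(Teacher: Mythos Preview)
Your proposal is correct and follows the standard d\'evissage argument via the $t$-adic filtration reducing to the relative Hodge--Tate computation; this is exactly how Brinon proves it in \cite[Corollaire 5.3.7]{Brinon-crisdeRham}. The paper itself gives no proof and simply cites Brinon, so there is nothing further to compare.
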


\begin{cor}
 We have
\begin{align*}
\bigl(\rmB_{\max,K}^\nabla(R)\bigr)^{\calG_{R_K}}
&=\bigl(\rmB_{\st,K}^\nabla(R)\bigr)^{\calG_{R_K}}
=K;\\
\bigl(\rmB_{\cris}^\nabla(R)\bigr)^{\calG_{R_K}}
&=\bigl(\rmB_{\max}^\nabla(R)\bigr)^{\calG_{R_K}}
=\bigl(\rmB_{\st}^\nabla(R)\bigr)^{\calG_{R_K}}=K_0.
\end{align*}
\end{cor}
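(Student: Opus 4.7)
The plan is to derive all five equalities from the displayed proposition $\bigl(\rmB_{\dR}^\nabla(R)\bigr)^{\calG_{R_K}}=K$ using the inclusions, base change relations, and $\varphi$-compatibility that were set up in the construction of the period rings.

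First, I would handle the two rings with subscript $K$. From the construction, $\rmB_{\max,K}^\nabla(R)$ is (by definition) a subring of $\rmB_{\st,K}^\nabla(R)$, which in turn is a subring of $\rmB_{\dR}^\nabla(R)$; and each of these rings contains $K$ (since $\B_{\max,K}^+$, $\B_{\st,K}^+$ and $\B_{\dR}^+$ are all $K$-algebras, and the elements of $K$ are obviously Galois invariant). Therefore
\[
K\subset \bigl(\rmB_{\max,K}^\nabla(R)\bigr)^{\calG_{R_K}}\subset \bigl(\rmB_{\st,K}^\nabla(R)\bigr)^{\calG_{R_K}}\subset \bigl(\rmB_{\dR}^\nabla(R)\bigr)^{\calG_{R_K}}=K,
\]
which forces both intermediate invariant rings to equal $K$.

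Next, for $\rmB_{\max}^\nabla(R)$ and $\rmB_{\st}^\nabla(R)$ I would use the identifications $K\otimes_{K_0}\rmB_{\max}^\nabla(R)=\rmB_{\max,K}^\nabla(R)$ and $K\otimes_{K_0}\rmB_{\st}^\nabla(R)=\rmB_{\st,K}^\nabla(R)$ already recorded in the earlier remark. The key observation is that $\calG_{R_K}$ acts trivially on $K\subset R_K$, so it acts on the tensor product only through the second factor; combined with the fact that $K$ is a finite (hence faithfully flat) extension of $K_0$, taking $\calG_{R_K}$-invariants commutes with $K\otimes_{K_0}(-)$. Applying this to the previous step yields
\[
K=K\otimes_{K_0}\bigl(\rmB_{\max}^\nabla(R)\bigr)^{\calG_{R_K}},\qquad
K=K\otimes_{K_0}\bigl(\rmB_{\st}^\nabla(R)\bigr)^{\calG_{R_K}},
\]
so by faithful flatness both $(\rmB_{\max}^\nabla(R))^{\calG_{R_K}}$ and $(\rmB_{\st}^\nabla(R))^{\calG_{R_K}}$ must equal $K_0$ (they contain $K_0$ and become $1$-dimensional over $K_0$ after base change to $K$).

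Finally, for $\rmB_{\cris}^\nabla(R)$ I would use the sandwich $\varphi(\rmB_{\max}^\nabla(R))\subset \rmB_{\cris}^\nabla(R)\subset \rmB_{\max}^\nabla(R)$ together with the fact that the Galois action commutes with $\varphi$. Taking $\calG_{R_K}$-invariants gives
\[
\varphi\bigl(\bigl(\rmB_{\max}^\nabla(R)\bigr)^{\calG_{R_K}}\bigr)\subset \bigl(\rmB_{\cris}^\nabla(R)\bigr)^{\calG_{R_K}}\subset \bigl(\rmB_{\max}^\nabla(R)\bigr)^{\calG_{R_K}}=K_0,
\]
and since the Frobenius on $K_0=W(k)[p^{-1}]$ is a bijection (as $k$ is perfect), the left-hand side is also $K_0$, pinning the middle term to $K_0$ as well. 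There is no genuinely hard step here — the one point to be careful about is justifying the commutation of $\calG_{R_K}$-invariants with the finite base change $K_0\to K$, which requires the (easy) fact that $\calG_{R_K}$ fixes $K$ pointwise, something that holds because $K\subset R_K$ by construction.
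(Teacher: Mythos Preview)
Your proof is correct and follows essentially the same approach as the paper: both use the inclusion of the $K$-subscripted period rings into $\rmB_{\dR}^\nabla(R)$ to get the first line, the base-change identification $K\otimes_{K_0}\rmB_{\max}^\nabla(R)\cong\rmB_{\max,K}^\nabla(R)$ (and its semistable analogue) together with triviality of the $\calG_{R_K}$-action on $K$ to deduce the $K_0$-values, and the $\varphi$-sandwich $\varphi(\rmB_{\max}^\nabla)\subset\rmB_{\cris}^\nabla\subset\rmB_{\max}^\nabla$ for the crystalline ring. Your write-up is simply more explicit about the faithful-flatness step that the paper leaves implicit.
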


\begin{proof}
Since we have $\calG_{R_K}$-equivariant $K$-algebra homomorphisms
\[
 K\otimes_{K_0} \rmB_{\max}^\nabla(R)\stackrel{\cong}{\lra} \rmB_{\max,K}^\nabla(R)\hra\rmB_{\dR}^\nabla(R),
\]
we obtain
\[
 \bigl(\rmB_{\max,K}^\nabla(R)\bigr)^{\calG_{R_K}}=K,\quad\text{and}\quad
\bigl(\rmB_{\max}^\nabla(R)\bigr)^{\calG_{R_K}}=K_0.
\]
By the inclusions $\varphi(\rmB_{\max}^\nabla(R))\subset \rmB_{\cris}^\nabla(R)\subset \rmB_{\max}^\nabla(R)$, we have $\bigl(\rmB_{\cris}^\nabla(R)\bigr)^{\calG_{R_K}}=K_0$.
We can prove the remaining assertions in the same way.
\end{proof}

The following is clear from what we have discussed:

\begin{lem}\label{lem:basic properties of D}
Let $V\in\Rep_{\Q_p}(\calG_{R_K})$.
 \begin{enumerate}
 \item $D_{\max}^\nabla(V)$ and $D_{\st}^\nabla(V)$ are $K_0$-vector spaces, and
$D_{\max,K}^\nabla(V)$, $D_{\st,K}^\nabla(V)$, and $D_{\dR}^\nabla(V)$ are $K$-vector spaces.
  \item We have 
\[
 K\otimes_{K_0}D_{\max}^\nabla(V)\cong D_{\max,K}^\nabla(V),
\quad\text{and}\quad
 K\otimes_{K_0}D_{\st}^\nabla(V)\cong D_{\st,K}^\nabla(V).
\]
Moreover, $\alpha_{\max}^\nabla(V)$ is an isomorphism if and only if 
$\alpha_{\max,K}^\nabla(V)$ is an isomorphism.
Similarly, $\alpha_{\st}^\nabla(V)$ is an isomorphism if and only if 
$\alpha_{\st,K}^\nabla(V)$ is an isomorphism.
 \item There are natural injective maps of $K$-vector spaces:
\[
 D_{\max,K}^\nabla(V)\hra 
 D_{\st,K}^\nabla(V)\hra
 D_{\dR}^\nabla(V).
\]
 \end{enumerate}
\end{lem}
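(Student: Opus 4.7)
The plan is to deduce all three parts directly from the structural results already established for the period rings, with the main content being the commutation of Galois invariants with base change from $K_0$ to $K$.

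Part (i) is immediate: by construction $D_{\max}^\nabla(V)$ and $D_{\st}^\nabla(V)$ are modules over $(\rmB_{\max}^\nabla(R))^{\calG_{R_K}}$ and $(\rmB_{\st}^\nabla(R))^{\calG_{R_K}}$, which were both computed to be $K_0$ in the preceding corollary. Likewise $D_{\max,K}^\nabla(V), D_{\st,K}^\nabla(V), D_{\dR}^\nabla(V)$ are modules over $K$ for the same reason.

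For Part (ii), I would exploit the identities $K\otimes_{K_0}\rmB_{\max}^\nabla(R)\cong \rmB_{\max,K}^\nabla(R)$ and $K\otimes_{K_0}\rmB_{\st}^\nabla(R)\cong \rmB_{\st,K}^\nabla(R)$ (which were recorded before the lemma; the second follows from Corollary~\ref{cor:semistable period ring otimes K}). Since $K/K_0$ is a \emph{finite} extension with trivial $\calG_{R_K}$-action, $K$ is a free $K_0$-module of finite rank, so the functor $K\otimes_{K_0}(-)$ commutes with taking $\calG_{R_K}$-invariants on any $K_0[\calG_{R_K}]$-module. Applied to $\rmB_{\max}^\nabla(R)\otimes_{\Q_p}V$ this yields
\[
K\otimes_{K_0}D_{\max}^\nabla(V)\;\cong\;\bigl(K\otimes_{K_0}\rmB_{\max}^\nabla(R)\otimes_{\Q_p}V\bigr)^{\calG_{R_K}}\;=\;D_{\max,K}^\nabla(V),
\]
and the analogous computation settles the $\st$ case. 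For the statement about $\alpha$-maps, I would apply $K\otimes_{K_0}(-)$ to $\alpha_{\max}^\nabla(V)$; using the identifications above one checks this is precisely $\alpha_{\max,K}^\nabla(V)$. Faithful flatness of $K/K_0$ then gives the equivalence ``iso $\iff$ iso'', and the $\st$ version is identical.

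For Part (iii), the injections of period rings $\rmB_{\max,K}^{\nabla+}(R)\hookrightarrow \rmB_{\st,K}^{\nabla+}(R)\hookrightarrow \rmB_{\dR}^{\nabla+}(R)$ were established via the embedding $\widetilde{\theta}_{s,v}$ in Proposition~\ref{Prop: Colmez's description of semistable period ring} together with Corollary~\ref{cor:semistable period ring otimes K}; inverting the non-zero-divisor $t$ preserves injectivity, giving $\rmB_{\max,K}^\nabla(R)\hookrightarrow \rmB_{\st,K}^\nabla(R)\hookrightarrow \rmB_{\dR}^\nabla(R)$. Tensoring with the flat $\Q_p$-module $V$ and taking $\calG_{R_K}$-invariants (a left-exact operation) yields the desired chain of $K$-linear injections.

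This is essentially bookkeeping once the ring-theoretic facts are in hand, so there is no serious obstacle; the one point that deserves care is justifying the interchange $K\otimes_{K_0}(-)^{\calG_{R_K}}=(K\otimes_{K_0}-)^{\calG_{R_K}}$, which is where the finiteness of $[K:K_0]$ (equivalently, the fact that $K$ is a finitely presented flat $K_0$-module with trivial Galois action) is used.
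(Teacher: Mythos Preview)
Your proposal is correct and matches what the paper has in mind: the paper gives no explicit proof here, simply stating that the lemma ``is clear from what we have discussed,'' and your argument is precisely the routine unpacking of that claim from the preceding ring-theoretic facts (the computation of $\calG_{R_K}$-invariants of the period rings, the identities $K\otimes_{K_0}\rmB_{\max}^\nabla(R)\cong\rmB_{\max,K}^\nabla(R)$ and $K\otimes_{K_0}\rmB_{\st}^\nabla(R)\cong\rmB_{\st,K}^\nabla(R)$, and the chain of injections $\B_{\max,K}^+\hookrightarrow\B_{\st,K}^+\hookrightarrow\B_{\dR}^+$). The one nontrivial point you flag --- that $K\otimes_{K_0}(-)$ commutes with $\calG_{R_K}$-invariants because $K$ is finite free over $K_0$ with trivial action --- is exactly the right justification.
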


\begin{lem}
For every $V\in\Rep_{\Q_p}(\calG_{R_K})$, the maps 
\[
 \alpha_{\cris}^\nabla(V), \quad\alpha_{\max}^\nabla(V), \quad\alpha_{\max,K}^\nabla(V), \quad\alpha_{\st}^\nabla(V), \quad\alpha_{\st,K}^\nabla(V), \quad\alpha_{\dR}^\nabla(V)
\]
 are all injective.
Moreover, we have
\begin{align*}
  \dim_{K_0}D_{\max}^\nabla(V)=\dim_KD_{\max,K}^\nabla(V)
&\leq 
\dim_{K_0}D_{\st}^\nabla(V)=\dim_KD_{\st,K}^\nabla(V)\\
&\leq \dim_KD_{\dR}^\nabla(V)\\
&\leq \dim_{\Q_p}V.
\end{align*}
\end{lem}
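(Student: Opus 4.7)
My plan is to deduce all six injectivities, and the dimension chain, from a single input: the injectivity of $\alpha_{\dR}^\nabla(V)$, which I would take from Brinon's monograph \cite{Brinon-crisdeRham}. The strategy is to embed each of the other five period rings into $\rmB_{\dR}^\nabla(R)$ in a $\calG_{R_K}$-equivariant manner, and then diagram-chase.

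First I would record the chain of $\calG_{R_K}$-equivariant inclusions among the period rings. The inclusion $\rmB_{\cris}^\nabla(R) \hookrightarrow \rmB_{\max}^\nabla(R)$ comes from $\varphi(\rmB_{\max}^\nabla) \subset \rmB_{\cris}^\nabla \subset \rmB_{\max}^\nabla$ established in the previous section. The inclusion $\rmB_{\max}^\nabla(R) \hookrightarrow \rmB_{\st}^\nabla(R)$ follows from Corollary~\ref{cor:semistable period ring otimes K}, which realizes $\rmB_{\st}^\nabla$ as a polynomial algebra $\rmB_{\max}^\nabla[u]$. The isomorphisms $K \otimes_{K_0} \rmB_{\max}^\nabla \cong \rmB_{\max,K}^\nabla$ and $K \otimes_{K_0} \rmB_{\st}^\nabla \cong \rmB_{\st,K}^\nabla$, together with faithful flatness of $K$ over $K_0$, give $\rmB_{\max}^\nabla \hookrightarrow \rmB_{\max,K}^\nabla$ and $\rmB_{\st}^\nabla \hookrightarrow \rmB_{\st,K}^\nabla$; the inclusion $\rmB_{\st,K}^\nabla \hookrightarrow \rmB_{\dR}^\nabla$ holds by definition (Proposition~\ref{Prop: Colmez's description of semistable period ring}). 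For any such subring $B \subset \rmB_{\dR}^\nabla(R)$, set $F := B^{\calG_{R_K}}$, which is $K$ or $K_0$ by the preceding corollary. I then form the commutative diagram
\[
\xymatrix{
B \otimes_F D_B^\nabla(V) \ar[r]^-{\alpha_B^\nabla(V)} \ar[d] & B \otimes_{\Q_p} V \ar@{^{(}->}[d] \\
\rmB_{\dR}^\nabla(R) \otimes_K D_{\dR}^\nabla(V) \ar@{^{(}->}[r]_-{\alpha_{\dR}^\nabla(V)} & \rmB_{\dR}^\nabla(R) \otimes_{\Q_p} V,
}
\]
in which the right vertical arrow is injective because $V$ is finite-dimensional over $\Q_p$ and $B \hookrightarrow \rmB_{\dR}^\nabla(R)$, while the left vertical arrow is injective because $F$-linearly independent elements of $D_B^\nabla(V)$ remain $K$-linearly independent in $D_{\dR}^\nabla(V)$ (trivial when $F = K$; when $F = K_0$, use Lemma~\ref{lem:basic properties of D}(ii), together with the inclusion $\rmB_{\cris}^\nabla \hookrightarrow \rmB_{\max}^\nabla$ to handle the crystalline case, and Lemma~\ref{lem:basic properties of D}(iii)). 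A diagram chase then yields the injectivity of $\alpha_B^\nabla(V)$ from that of $\alpha_{\dR}^\nabla(V)$.

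For the dimension chain, the two equalities are immediate from Lemma~\ref{lem:basic properties of D}(ii); the inner inequalities come from the natural inclusions $D_{\max,K}^\nabla(V) \hookrightarrow D_{\st,K}^\nabla(V) \hookrightarrow D_{\dR}^\nabla(V)$ of Lemma~\ref{lem:basic properties of D}(iii); and the outer bound $\dim_K D_{\dR}^\nabla(V) \leq \dim_{\Q_p} V$ follows from the injectivity of $\alpha_{\dR}^\nabla(V)$ by comparing ranks of free $\rmB_{\dR}^\nabla(R)$-modules, which is meaningful once one knows $\rmB_{\dR}^\nabla(R)$ is a domain. The main obstacle is the injectivity of $\alpha_{\dR}^\nabla(V)$ itself: this is the genuine content, as one must establish both that $\rmB_{\dR}^\nabla(R)$ is a domain (seen, e.g., via the associated graded of the $t$-adic filtration on $\rmB_{\dR}^{\nabla+}(R)$, which identifies with a polynomial ring over $\widehat{\overline{R}}[p^{-1}]$) and a form of the Fontaine lemma ensuring that $K$-linearly independent elements of $D_{\dR}^\nabla(V)$ are $\rmB_{\dR}^\nabla(R)$-linearly independent in $\rmB_{\dR}^\nabla(R) \otimes_{\Q_p} V$ (typically via a comparison of invariants of $\rmB_{\dR}^\nabla(R)$ and of its fraction field). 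Both points are handled in \cite{Brinon-crisdeRham}.
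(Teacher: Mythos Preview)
Your proposal is correct and follows essentially the same approach as the paper: both reduce all six injectivities to the injectivity of $\alpha_{\dR}^\nabla(V)$ taken from \cite[Proposition~8.2.9]{Brinon-crisdeRham} via the $\calG_{R_K}$-equivariant embeddings of the smaller period rings into $\rmB_{\dR}^\nabla(R)$, and both derive the dimension chain from Lemma~\ref{lem:basic properties of D}(ii)--(iii). The only difference is organizational---the paper writes out the diagram explicitly for $\rmB_{\max}^\nabla$, then says the same works for $\rmB_{\st}^\nabla$ and handles $\rmB_{\cris}^\nabla$ via $\rmB_{\cris}^\nabla \subset \rmB_{\max}^\nabla$, whereas you package everything uniformly for a general subring $B$---and you supply a bit more detail on the final bound $\dim_K D_{\dR}^\nabla(V) \leq \dim_{\Q_p} V$, which the paper leaves implicit.
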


\begin{proof}
The map $\alpha_{\dR}^\nabla(V)$ is injective by \cite[Proposition 8.2.9]{Brinon-crisdeRham}.
Consider the commutative diagram
\[
 \xymatrix{
 K\otimes_{K_0}( \rmB_{\max}^\nabla(R)\otimes_{K_0}D_{\max}^\nabla(V))
\ar@{=}[d]\ar[rrr]^{K\otimes_{K_0}\alpha_{\max}^\nabla(V)}
&&& K\otimes_{K_0}(\rmB_{\max}^\nabla(R)\otimes_{\Q_p}V)\ar@{=}[d]\\
  \rmB_{\max,K}^\nabla(R)\otimes_{K}D_{\max,K}^\nabla(V)
\ar@{^{(}->}[d]\ar[rrr]^{\alpha_{\max,K}^\nabla(V)}
&&& \rmB_{\max,K}^\nabla(R)\otimes_{\Q_p}V\ar@{^{(}->}[d]\\
  \rmB_{\dR}^\nabla(R)\otimes_{K}D_{\dR}^\nabla(V)
\ar[rrr]^{\alpha_{\dR}^\nabla(V)}
&&& \rmB_{\dR}^\nabla(R)\otimes_{\Q_p}V.
}
\]
The vertical equalities follow from Lemma~\ref{lem:basic properties of D}(ii).
Hence the injectivity of $\alpha_{\dR}^\nabla(V)$ implies that of
 $\alpha_{\max}^\nabla(V)$ and $\alpha_{\max,K}^\nabla(V)$.
The same argument works for the injectivity of $\alpha_{\st}^\nabla(V)$ and $\alpha_{\st,K}^\nabla(V)$. The injectivity of $\alpha_{\cris}^\nabla(V)$ follows from that of $\alpha_{\max}^\nabla(V)$ and the inclusion $\rmB_{\cris}^\nabla(R)\subset \rmB_{\max}^\nabla(R)$ by a similar argument.
The second assertion follows from Lemma~\ref{lem:basic properties of D}(iii).
\end{proof}

\begin{rem}
 The injectivity of $\alpha_{\cris}^\nabla(V)$ is also proved in \cite[Proposition 8.2.9]{Brinon-crisdeRham} under the additional assumption (BR) on $R$.
\end{rem}

\begin{lem}\label{lem:equiv of max and cris admissibility}
 For $V\in\Rep_{\Q_p}(\calG_{R_K})$, $\alpha_{\max}^\nabla(V)$ is an isomorphism if and only if $\alpha_{\cris}^\nabla(V)$ is an isomorphism.
Similarly, $\alpha_{\st}^\nabla(V)$ is an isomorphism if and only if $\alpha_{\rmB_{\cris}^\nabla(R)[\log \frac{[p^\flat]}{p}]}(V)$ is an isomorphism.
\end{lem}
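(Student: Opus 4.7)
The plan is to exploit the containment $\varphi(\rmB_{\max}^\nabla(R)) \subset \rmB_{\cris}^\nabla(R) \subset \rmB_{\max}^\nabla(R)$ together with the fact that $\varphi$ commutes with the $\calG_{R_K}$-action. Both $\alpha_{\max}^\nabla(V)$ and $\alpha_{\cris}^\nabla(V)$ are already known to be injective, so only surjectivity is at stake; since both maps are linear over the corresponding period ring, it suffices to show that each $v \in V$ lies in the appropriate image.

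If $\alpha_{\cris}^\nabla(V)$ is an isomorphism, write $v = \sum_i b_i d_i$ with $b_i \in \rmB_{\cris}^\nabla(R)$ and $d_i \in D_{\cris}^\nabla(V)$. The inclusions $\rmB_{\cris}^\nabla(R) \subset \rmB_{\max}^\nabla(R)$ and $D_{\cris}^\nabla(V) \subset D_{\max}^\nabla(V)$ let us read the same identity inside $\rmB_{\max}^\nabla(R) \otimes_{\Q_p} V$, so $v$ lies in the image of $\alpha_{\max}^\nabla(V)$, and surjectivity of $\alpha_{\max}^\nabla(V)$ follows.

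For the converse, suppose $\alpha_{\max}^\nabla(V)$ is an isomorphism. Given $v \in V$, write $v = \sum_i b_i d_i$ with $b_i \in \rmB_{\max}^\nabla(R)$ and $d_i \in D_{\max}^\nabla(V)$. The operator $\varphi \otimes \id$ on $\rmB_{\max}^\nabla(R) \otimes_{\Q_p} V$ fixes every element of $V$ (since $\varphi(1) = 1$), so applying it yields $v = \sum_i \varphi(b_i) \varphi(d_i)$. By hypothesis $\varphi(b_i) \in \rmB_{\cris}^\nabla(R)$, and $\varphi(d_i) \in \rmB_{\cris}^\nabla(R) \otimes_{\Q_p} V$ is $\calG_{R_K}$-invariant because $\varphi$ commutes with the Galois action, so $\varphi(d_i) \in D_{\cris}^\nabla(V)$. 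Hence $v$ lies in the image of $\alpha_{\cris}^\nabla(V)$, giving surjectivity.

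The second assertion is proved by the same argument with $\rmB_{\max}^\nabla(R)$ replaced by $\rmB_{\st}^\nabla(R)$ and $\rmB_{\cris}^\nabla(R)$ replaced by $\rmB_{\cris}^\nabla(R)[\log \tfrac{[p^\flat]}{p}]$; the relevant containment $\varphi(\rmB_{\st}^\nabla(R)) \subset \rmB_{\cris}^\nabla(R)[\log \tfrac{[p^\flat]}{p}] \subset \rmB_{\st}^\nabla(R)$ follows from $\varphi(\log \tfrac{[p^\flat]}{p}) = p\log \tfrac{[p^\flat]}{p}$ together with $\varphi(\rmB_{\max}^\nabla(R)) \subset \rmB_{\cris}^\nabla(R)$. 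No step presents a genuine obstacle; the only point requiring care is verifying that $\varphi \otimes \id$ interacts correctly with the Galois invariance and the module structures, which is purely formal.
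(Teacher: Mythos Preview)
Your proof is correct and follows the same approach as the paper's. The paper presents the argument via commutative diagrams (factoring the square built from $\varphi\otimes_\varphi\varphi$ and $\varphi\otimes\id$ through $\rmB_{\cris}^\nabla(R)$), whereas you work directly with elements, but the substance is identical: in both cases the key observation is that $\varphi\otimes\id$ fixes $V$ and carries $D_{\max}^\nabla(V)$ into $D_{\cris}^\nabla(V)$ by the containment $\varphi(\rmB_{\max}^\nabla(R))\subset\rmB_{\cris}^\nabla(R)$.
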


\begin{proof}
The sufficiency follows from Lemma~\ref{lem: admissiblity of two period rings}.
Since the proof of the necessity of the two assertions is similar, we deal with the necessity for the first. 
Assume that $\alpha_{\max}^\nabla(V)$ is an isomorphism.
Then the Frobenius $\varphi\colon \rmB_{\max}^\nabla(R)\ra \rmB_{\max}^\nabla(V)$ induces the commutative diagram
\[
 \xymatrix{
\rmB_{\max}^\nabla(R)\otimes_{K_0}D_{\max}^\nabla(V)
\ar[d]_{\varphi\otimes_{\varphi}\varphi}\ar[rr]^{\alpha_{\max}^\nabla(V)}_{\cong}
&&
\rmB_{\max}^\nabla(R)\otimes_{\Q_p}V\ar[d]^{\varphi\otimes\id}\\
\rmB_{\max}^\nabla(R)\otimes_{K_0}D_{\max}^\nabla(V)
\ar[rr]^{\alpha_{\max}^\nabla(V)}_{\cong}
&&
\rmB_{\max}^\nabla(R)\otimes_{\Q_p}V.
}
\]
Since $\varphi\colon \rmB_{\max}^\nabla(R)\ra \rmB_{\max}^\nabla(R)$ factors through $\rmB_{\cris}^\nabla(R)\subset \rmB_{\max}^\nabla(R)$, the above diagram also factors as
\[
 \xymatrix{
\rmB_{\max}^\nabla(R)\otimes_{K_0}D_{\max}^\nabla(V)
\ar[d]\ar[rr]^{\alpha_{\max}^\nabla(V)}_{\cong}
&&
\rmB_{\max}^\nabla(R)\otimes_{\Q_p}V\ar[d]\\
\rmB_{\cris}^\nabla(R)\otimes_{K_0}D_{\cris}^\nabla(V)
\ar[d]\ar@{^{(}->}[rr]^{\alpha_{\cris}^\nabla(V)}
&&
\rmB_{\cris}^\nabla(R)\otimes_{\Q_p}V\ar[d]\\
\rmB_{\max}^\nabla(R)\otimes_{K_0}D_{\max}^\nabla(V)
\ar[rr]^{\alpha_{\max}^\nabla(V)}_{\cong}
&&
\rmB_{\max}^\nabla(R)\otimes_{\Q_p}V.
}
\]
In particular, the image of $\alpha_{\cris}^\nabla(V)$ contains $V$.
Since $\alpha_{\cris}^\nabla(V)$ is injective and $\rmB_{\cris}^\nabla(R)$-linear, it is an isomorphism.
\end{proof}

\begin{lem}\label{lem: semistable with zero monodromoy implies crystalline}
 For $V\in \Rep_{\Q_p}(\calG_{R_K})$, $\alpha_{\max}^\nabla(V)$ is an isomorphism if and only if $\alpha_{\st}^\nabla(V)$ is an isomorphism and the monodromy operator $N$ on $D_{\st}^\nabla(V)$ is zero.
\end{lem}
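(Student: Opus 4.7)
The proof will rest on three facts established earlier: (i) the short exact sequence
\[
 0\to \rmB_{\max}^\nabla(R)\to \rmB_{\st}^\nabla(R)\stackrel{N}{\to}\rmB_{\st}^\nabla(R)\to 0
\]
together with its consequence $(D_{\st}^\nabla(V))^{N=0}=D_{\max}^\nabla(V)$; (ii) Lemma~\ref{lem: admissiblity of two period rings} applied to the $\calG_{R_K}$-equivariant inclusion $\rmB_{\max}^\nabla(R)\hookrightarrow \rmB_{\st}^\nabla(R)$; and (iii) the unconditional injectivity of $\alpha_{\st}^\nabla(V)$.

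For the ``only if'' direction, assume $\alpha_{\max}^\nabla(V)$ is an isomorphism. By (ii) combined with (iii), $V$ is $\rmB_{\st}^\nabla(R)$-admissible and the induced map
\[
 (\rmB_{\st}^\nabla(R))^{\calG_{R_K}}\otimes_{(\rmB_{\max}^\nabla(R))^{\calG_{R_K}}} D_{\max}^\nabla(V)\ra D_{\st}^\nabla(V)
\]
is an isomorphism. Since both Galois invariants equal $K_0$, this forces $D_{\max}^\nabla(V)=D_{\st}^\nabla(V)$ inside $\rmB_{\st}^\nabla(R)\otimes_{\Q_p}V$. But $D_{\max}^\nabla(V)\subset \rmB_{\max}^\nabla(R)\otimes_{\Q_p}V=(\rmB_{\st}^\nabla(R)\otimes_{\Q_p}V)^{N=0}$, so the monodromy vanishes on $D_{\st}^\nabla(V)$.

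For the converse, assume $\alpha_{\st}^\nabla(V)$ is an isomorphism and $N=0$ on $D_{\st}^\nabla(V)$. By (i) we immediately have $D_{\st}^\nabla(V)=(D_{\st}^\nabla(V))^{N=0}=D_{\max}^\nabla(V)$. The isomorphism $\alpha_{\st}^\nabla(V)\colon \rmB_{\st}^\nabla(R)\otimes_{K_0} D_{\st}^\nabla(V)\xrightarrow{\cong}\rmB_{\st}^\nabla(R)\otimes_{\Q_p} V$ intertwines the derivation $N$ on each side (which reduces to $N\otimes \id$ on the left thanks to $N=0$ on $D_{\st}^\nabla(V)$, and equals $N\otimes\id$ on the right since the Galois module $V$ carries no monodromy of its own). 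Applying the exact functor $(-)^{N=0}$, which commutes with tensoring by a vector space over the base field, yields an isomorphism $\rmB_{\max}^\nabla(R)\otimes_{K_0}D_{\max}^\nabla(V)\xrightarrow{\cong}\rmB_{\max}^\nabla(R)\otimes_{\Q_p} V$, namely $\alpha_{\max}^\nabla(V)$.

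I do not anticipate any real obstacle: once one records the $N$-equivariance of $\alpha_{\st}^\nabla(V)$ (an immediate consequence of the Leibniz rule, since $N$ is a derivation on $\rmB_{\st}^\nabla(R)$) and uses the exactness of the monodromy sequence for $\rmB_{\st}^\nabla(R)$ (proved earlier via the polynomial presentation $\rmB_{\st}\cong \rmB_{\max}[u]$), the remainder is a short diagram chase.
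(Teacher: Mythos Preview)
Your argument is correct and is essentially the paper's proof. The paper phrases the sufficiency direction as a diagram chase comparing the kernels of $f=N\otimes\id+\id\otimes N$ on $\rmB_{\st}^\nabla(R)\otimes_{K_0}D_{\st}^\nabla(V)$ and of $N\otimes\id$ on $\rmB_{\st}^\nabla(R)\otimes_{\Q_p}V$ under the isomorphism $\alpha_{\st}^\nabla(V)$, which is exactly your taking $(-)^{N=0}$ on both sides; your one imprecision is calling $(-)^{N=0}$ an ``exact functor'' (you only need that an $N$-equivariant isomorphism restricts to an isomorphism on kernels, together with flatness of tensoring by a $K_0$- or $\Q_p$-vector space to identify those kernels).
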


\begin{proof}
 The necessity follows from Lemma~\ref{lem: admissiblity of two period rings}
and $(D_{\st}^\nabla(V))^{N=0}=D_{\max}^\nabla(V)$.
We prove the sufficiency. Note that the multiplication map
\[
 \rmB_{\st}^\nabla(R)\otimes_{K_0}\rmB_{\st}^\nabla(R)\ra \rmB_{\st}^\nabla(R)
\]
is compatible with $N\otimes \id+\id\otimes N$ on the source and $N$ on the target.
Since $\alpha_{\st}^\nabla(V)$ is an isomorphism,  this yields the following diagram with exact rows
\[
 \xymatrix{
0 \ar[r]&
\rmB_{\max}^\nabla\otimes_{\Q_p}V\ar[r]&
\rmB_{\st}^\nabla\otimes_{\Q_p}V\ar[r]^{N\otimes \id}&
\rmB_{\st}^\nabla\otimes_{\Q_p}V\ar[r]& 0\\
0 \ar[r]&
\Ker (f)\ar[r]\ar[u]_{\cong}&
\rmB_{\st}^\nabla\otimes_{K_0}D_{\st}^\nabla(V)\ar[r]^{f}\ar[u]_{\cong}^{\alpha_{\st}^\nabla(V)}&
\rmB_{\st}^\nabla\otimes_{K_0}D_{\st}^\nabla(V)\ar[r]\ar[u]_{\cong}^{\alpha_{\st}^\nabla(V)}& 0,
}
\]
where $\rmB_{\max}^\nabla=\rmB_{\max}^\nabla(R)$, $\rmB_{\st}^\nabla=\rmB_{\st}^\nabla(R)$, and $f$ denotes $N\otimes \id+\id\otimes N$.
Since the monodromy operator $N$ on $D_{\st}^\nabla(V)$ is assumed to be zero, 
we conclude $\Ker f=\rmB_{\max}^\nabla(R)\otimes_{K_0}D_{\max}^\nabla(V)$ and the left vertical isomorphism is $\alpha_{\max}^\nabla(V)$.
\end{proof}

\begin{defn}\label{def:horizontal de Rham representations}
 Let $V\in\Rep_{\Q_p}(\calG_{R_K})$.
\begin{enumerate}
 \item We say that $V$ is \emph{horizontal crystalline} if $\alpha_{\max}^\nabla(V)$ is an isomorphism.
 \item We say that $V$ is \emph{horizontal semistable} if $\alpha_{\st}^\nabla(V)$ is an isomorphism.
 \item We say that $V$ is \emph{horizontal de Rham} if $\alpha_{\dR}^\nabla(V)$ is an isomorphism.
\end{enumerate}
See Lemma~\ref{lem:basic properties of D}(ii), \ref{lem:equiv of max and cris admissibility}, and \ref{lem: semistable with zero monodromoy implies crystalline} for equivalent definitions.
By Lemma~\ref{lem: admissiblity of two period rings}, a horizontal crystalline representation is horizontal semistable, and a horizontal semistable representation is horizontal de Rham. 
\end{defn}

\begin{rem}
 Our definition of horizontal de Rham representations coincides with that of \cite[p.~117]{Brinon-crisdeRham}. By Lemma~\ref{lem:equiv of max and cris admissibility}, our definition of horizontal crystalline representations coincides with the definition of horizontal $R_0$-crystalline representations in \cite[p.~117]{Brinon-crisdeRham}.
\end{rem}

\begin{rem}\label{rem:tensor of horizontal de Rham representations}
Observe that the period rings $\rmB_{\max}^\nabla(R), \rmB_{\st}^\nabla(R)$, and $\rmB_{\dR}^\nabla(R)$ all satisfy the assumptions of Proposition~\ref{prop:tensor and SES of admissible representations}.
Hence the full subcategory of horizontal crystalline (resp.~semistable, resp.~de Rham) representations of $\calG_{R_K}$ is stable under taking tensor products, subquotients, and exterior products.
\end{rem}

\begin{lem}\label{lem:dual of horizontal de Rham}
The full subcategory of horizontal de Rham representations of $\calG_{R_K}$ is stable under
taking duals.
If $R$ satisfies Condition (BR), then 
the full subcategory of horizontal crystalline (resp. semistable) representations of $\calG_{R_K}$ is stable under taking duals.
\end{lem}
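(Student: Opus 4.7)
The plan is to apply Proposition~\ref{prop:dual of admissible representations} with $G=\calG_{R_K}$ and $B$ taken to be $\rmB_{\dR}^\nabla(R)$ for the first statement, and $\rmB_{\max}^\nabla(R)$ or $\rmB_{\st}^\nabla(R)$ (under (BR)) for the second statement. Once the three hypotheses of that proposition are verified for each period ring, the stability under duals follows formally.

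Hypothesis~(i) is automatic: by the preceding computation of Galois invariants, $F:=B^{\calG_{R_K}}$ equals $K$ in the de Rham case and $K_0$ in the crystalline and semistable cases, both of which are fields. Hence $F$ is Noetherian and every nonzero $F$-module is faithfully flat over $F$. Hypothesis~(ii)---injectivity of $\alpha_B(V)$ for every $V\in\Rep_{\Q_p}(\calG_{R_K})$---is precisely the content of the lemma stated immediately before the one under consideration. The substantive step is thus the verification of hypothesis~(iii) in the one-dimensional case.

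To that end, let $V=\Q_p v$ be a one-dimensional $B$-admissible representation on which $\calG_{R_K}$ acts through a continuous character $\chi\colon\calG_{R_K}\to\Q_p^\times$. Identifying $B\otimes_{\Q_p}V$ with $B$ via $v$, the invariant subspace $D_B(V)$ corresponds to the $F$-submodule
\[
 B^{\chi^{-1}}=\{b\in B\mid g(b)=\chi(g)^{-1}b\text{ for every }g\in\calG_{R_K}\},
\]
and $\alpha_B(V)$ becomes, after choosing a generator of this line, the multiplication map $B\otimes_F B^{\chi^{-1}}\to B$. Admissibility of $V$ means this is an isomorphism between free rank-one $B$-modules, so $B^{\chi^{-1}}$ is generated over $F$ by a single element $b_0$ which must be a unit of $B$. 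Its inverse $b_0^{-1}$ lies in $B^{\chi}$, and applying the same reasoning to the dual basis $v^*\in V^\vee$ shows $\alpha_B(V^\vee)$ is multiplication by $b_0^{-1}$, again an isomorphism, so $V^\vee$ is $B$-admissible. The main (and essentially only) obstacle is the elementary but crucial observation that one-dimensional admissibility forces $b_0$ to be a genuine unit of $B$ rather than merely a nonzero element; once that invertibility is secured, the argument for the dual is formal and uniform across all three period rings.
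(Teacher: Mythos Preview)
Your argument is correct, and it takes a somewhat different route from the paper's proof. Both proofs invoke Proposition~\ref{prop:dual of admissible representations} to reduce to verifying condition~(iii) in the one-dimensional case. The paper then cites \cite[Th\'eor\`eme 8.4.2]{Brinon-crisdeRham} for the horizontal de Rham and (under (BR)) horizontal crystalline cases, and handles the semistable case by observing that if $D_{\st}^\nabla(V)$ is one-dimensional then the relation $N\varphi=p\varphi N$ forces $N=0$, so one-dimensional horizontal semistable representations are already horizontal crystalline. You instead verify condition~(iii) directly and uniformly: since $F$ is a field, $D_B(V)=F b_0$ for some $b_0\in B$, and surjectivity of $\alpha_B(V)$ (multiplication by $b_0$ on $B$) forces $b_0\in B^\times$; then $b_0^{-1}$ generates $D_B(V^\vee)$ and $\alpha_B(V^\vee)$ is multiplication by $b_0^{-1}$, hence an isomorphism.

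Your approach is more elementary and self-contained, and it has a small bonus: it never uses Condition~(BR), so it actually proves the stronger statement that duals of horizontal crystalline and semistable representations are again such without any hypothesis on $R$. The paper's restriction to (BR) enters only through the citation of Brinon's theorem. On the other hand, the paper's proof records the useful structural fact that every one-dimensional horizontal semistable representation is horizontal crystalline, which your argument bypasses entirely.
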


\begin{proof}
 By Proposition~\ref{prop:dual of admissible representations}, it suffices to prove the assertion for one-dimensional representations.
For horizontal de Rham representations (resp.~horizontal crystalline representations under Condition (BR)), it is \cite[Th\'eor\`eme 8.4.2]{Brinon-crisdeRham}.
In general, if $D_{\st}^\nabla(V)$ is one-dimensional over $K_0$ for $V\in\Rep_{\Q_P}(\calG_{R_K})$,  then $N=0$ by the relation $N\varphi=p\varphi N$.
In particular, every one-dimensional horizontal semistable representation is horizontal crystalline, and thus so is its dual.
\end{proof}

\begin{prop}\label{prop:fundamental exact sequence}
The sequence
\[
 0\ra \Q_p\ra \rmB_{\max}^\nabla(R)^{\varphi=1}\ra 
 \rmB_{\dR}^\nabla(R)/ \rmB_{\dR}^{\nabla+}(R)\ra 0
\]
is exact.
\end{prop}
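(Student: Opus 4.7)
The plan is to follow the template of the classical fundamental exact sequence (Fontaine; Colmez \cite[Proposition 9.25]{Colmez} for sympathetic $C$-Banach algebras), adapted to the horizontal setting via the specific structure of base rings $R$ from Set-up~\ref{set-up: Brinon's rings}. Injectivity at $\Q_p$ is immediate: $\Q_p \subset K_0 \subset \rmB_{\max}^\nabla(R)$ and $\varphi$ acts trivially on $K_0$.

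For exactness at $\rmB_{\max}^\nabla(R)^{\varphi=1}$, the goal is to show $\rmB_{\max}^\nabla(R)^{\varphi=1} \cap \rmB_{\dR}^{\nabla+}(R) = \Q_p$ inside $\rmB_{\dR}^\nabla(R)$. I would first establish
\[
\rmB_{\max}^\nabla(R) \cap \rmB_{\dR}^{\nabla+}(R) = \rmB_{\max}^{\nabla+}(R)
\]
using the $K$-linear isomorphism $\widetilde\theta_{s,v}$ of Proposition~\ref{Prop: Colmez's description of semistable period ring}, which identifies $\rmB_{\max,K}^{\nabla+}(R) \cong \widehat{\overline{R}}[p^{-1}]\langle X\rangle$ as a subspace of $\rmB_{\dR}^{\nabla+}(R) \cong \widehat{\overline{R}}[p^{-1}][[X]]$. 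Then I would prove $(\rmB_{\max}^{\nabla+}(R))^{\varphi=1} = \Q_p$ by mod-$p$ reduction: the reduction of a $\varphi$-fixed element of $\A_{\max}(\widehat{\overline{R}})$ in $\A_{\max}(\widehat{\overline{R}})/p$ is Frobenius-fixed, and direct analysis of this quotient (a divided-power extension of $\widehat{\overline{R}}^\flat/(\Ker \theta)$-type) forces such elements into $\F_p$. A $p$-adic induction then promotes this to $\Z_p$, and inverting $p$ yields $\Q_p$.

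For surjectivity, I would use the filtration on $\rmB_{\dR}^\nabla(R)/\rmB_{\dR}^{\nabla+}(R)$ by $t$-adic order and construct, for each $a \in \widehat{\overline{R}}[p^{-1}]$ and $n \geq 1$, a $\varphi$-fixed element of $\rmB_{\max}^\nabla(R)$ with image $a/t^n$ modulo lower-order poles. Using the $K$-linear section $s$ of $\theta$ from Construction~\ref{const:section of theta map} together with classical $\varphi$-fixed lifts available for $b \in C$ (via the classical fundamental exact sequence applied to the subring $\B_{\max}(\calO_{C}) \hookrightarrow \rmB_{\max}^\nabla(R)$), I would expand $a$ along a $k$-basis of $\widehat{\overline{R}}/\pi$ and assemble $\varphi$-fixed lifts coefficient by coefficient. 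To upgrade a general lift $y \in \rmB_{\max}^\nabla(R)$ of the target class to a $\varphi$-fixed lift, I would solve the cohomological equation $(\varphi - 1)(w) = y - \varphi(y)$ in a Banach completion of $\rmB_{\max}^{\nabla+}(R)$ by iterating the contractive Frobenius.

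The main obstacle is ensuring convergence of these Frobenius series in $\rmB_{\max}^\nabla(R)$ rather than merely in $\rmB_{\dR}^\nabla(R)$, a subtlety highlighted by the preceding remark that the exactness fails for general perfectoid pairs. This requires uniform $p$-adic estimates on iterated Teichm\"uller lifts $[\alpha^{p^{-k}}]$ in $\A_{\max}(\widehat{\overline{R}})$, which reduce to the perfectoid property of $(\widehat{\overline{R}}[p^{-1}], \widehat{\overline{R}})$ established in Lemmas~\ref{lem:uniform} and \ref{lem:ring of integral elements}, together with finiteness properties of $R$ over $\widetilde{R}$ from Set-up~\ref{set-up: Brinon's rings}.
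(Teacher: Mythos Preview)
The paper's proof is a two-line reduction: from $\varphi(\rmB_{\max}^\nabla(R)) \subset \rmB_{\cris}^\nabla(R) \subset \rmB_{\max}^\nabla(R)$ one gets $\rmB_{\max}^\nabla(R)^{\varphi=1} = \rmB_{\cris}^\nabla(R)^{\varphi=1}$, and the crystalline fundamental exact sequence is \cite[Proposition 6.2.24]{Brinon-crisdeRham}. You are instead attempting a self-contained proof.

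Your plan for the left and middle exactness is broadly reasonable, though the justification of $\rmB_{\max}^\nabla(R) \cap \rmB_{\dR}^{\nabla+}(R) = \rmB_{\max}^{\nabla+}(R)$ via $\widetilde\theta_{s,v}$ is incomplete: that map is only $K$-linear (more precisely, $K[[v]]$-linear), and $t$ is \emph{not} of the form $F(v)$ with $F \in K[[X]]$, so the image of $t^{-n}\rmB_{\max}^{\nabla+}(R)$ under $\widetilde\theta_{s,v}$ is not anything transparent inside $\Lambda[[X]]$. The statement is nonetheless true and provable by other means.

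The surjectivity argument, however, has a genuine gap. You propose to find $w$ in (a completion of) $\rmB_{\max}^{\nabla+}(R)$ with $(\varphi-1)(w) = y - \varphi(y)$, so that $y+w$ is $\varphi$-fixed and has the same image as $y$ modulo $\rmB_{\dR}^{\nabla+}(R)$. But $\varphi$ preserves $\rmB_{\max}^{\nabla+}(R)$, so $(\varphi-1)(w)$ lies in $\rmB_{\max}^{\nabla+}(R)$, whereas $y - \varphi(y)$ has the same pole order as $y$ (since $\varphi(t^{-n}) = p^{-n}t^{-n}$); the equation is therefore unsolvable whenever $y \notin \rmB_{\max}^{\nabla+}(R)$. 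Nor is $\varphi$ contractive on $\rmB_{\max}^{\nabla+}(R)$ in any topology that would make an iterated series converge there. The alternative of transporting $\varphi$-fixed lifts from $\B_{\max}(\calO_C)$ along the $k$-basis expansion also fails: the section $s$ of Construction~\ref{const:section of theta map} is $K$-linear but not $\varphi$-equivariant, so $\varphi$-invariance does not propagate coefficientwise. What actually drives surjectivity in Brinon's argument is an Artin--Schreier-type statement in characteristic $p$ (surjectivity of $x \mapsto x^p - x$ on a suitable quotient of $\widehat{\overline{R}}^\flat$), which in turn rests on the fact that $\overline{R}[p^{-1}]$ has no nontrivial finite \'etale covers---precisely the input unavailable for a general perfectoid pair, as the remark preceding the proposition signals.
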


\begin{proof}
Since $\varphi(\rmB_{\max}^\nabla(R))\subset \rmB_{\cris}^\nabla(R)\subset \rmB_{\max}^\nabla(R)$, we get 
\[
 \rmB_{\cris}^\nabla(R)^{\varphi=1}=\rmB_{\max}^\nabla(R)^{\varphi=1}.
\]
 Hence the assertion is \cite[Proposition 6.2.24]{Brinon-crisdeRham}.
\end{proof}

The exact sequence in the proposition is called \emph{the fundamental exact sequence}.

\subsection{Functoriality}\label{subsection:functoriality}

Let $R$ be an $\calO_K$-algebra satisfying the conditions in Set-up~\ref{set-up: Brinon's rings}
and fix an algebraic closure $\overline{\Frac R}$ of $\Frac R$ as before.
Let us discuss functorial aspects of our formalism in two special cases.

Let $R'$ be another $\calO_R$-algebra satisfying the conditions in Set-up~\ref{set-up: Brinon's rings} and let $f\colon R\ra R'$ be an $\calO_K$-algebra homomorphism.
Recall 
\[
 \calG_{R_K}=\Gal(\overline{R}[p^{-1}]/R_K)= \pi_1^{\et}(\Spec R_K,\overline{\Frac R}).
\]
Note that $R_K\ra R'_K\ra \overline{\Frac R'}$ defines another geometric point of $\Spec R_K$.
Choose a path on $\Spec R_K$ from $\overline{\Frac R'}$ to $\overline{\Frac R}$ (cf.~\cite[Expos\'e V, 7]{SGA1}).
Concretely, this is equivalent to choosing an $\calO_K$-algebra homomorphism $\overline{f}\colon \overline{R}\ra \overline{R'}$ extending $f$. 
The path defines a group homomorphism
\begin{align*}
  \calG_{R'_K}&= \pi_1^{\et}(\Spec R'_K,\overline{\Frac R'})\stackrel{\Spec f_{\ast}}{\lra} \pi_1^{\et}(\Spec R_K,\overline{\Frac R'})\\
&\lra \pi_1^{\et}(\Spec R_K,\overline{\Frac R})= \calG_{R_K}.
\end{align*}
In what follows (e.g. Proposition~\ref{prop:horizontal semistable plus crystalline at one point}), we always choose a path so that $\overline{f}$ is an $\calO_{\overline{K}}$-algebra homomorphism.
Then $\overline{f}$ gives rise to a  homomorphism $(\widehat{\overline{R}}[p^{-1}],\widehat{\overline{R}})\ra (\widehat{\overline{R'}}[p^{-1}],\widehat{\overline{R'}})$ of $C$-Banach pairs and 
induces homomorphisms 
\[
 \rmB_{\max}^\nabla(R)\ra\rmB_{\max}^\nabla(R'),\quad
 \rmB_{\st}^\nabla(R)\ra\rmB_{\st}^\nabla(R'),\quad\text{and}\quad
 \rmB_{\dR}^\nabla(R)\ra\rmB_{\dR}^\nabla(R').
\]
Moreover they are compatible with $\calG_{R'_K}\ra \calG_{R_K}$.

\begin{lem}
If a $\Q_p$-representation $V$ of $\calG_{R_K}$ is horizontal crystalline (resp. semistable, resp. de Rham), then so is the representation $V|_{\calG_{R'_K}}$ of $\calG_{R'_K}$.
\end{lem}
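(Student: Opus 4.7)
The plan is to apply Lemma~\ref{lem: admissiblity of two period rings} in each of the three cases. Indeed, the discussion preceding the lemma gives us, for $\ast\in\{\max,\st,\dR\}$, a $\Q_p$-algebra homomorphism
\[
\beta_\ast\colon \rmB_\ast^\nabla(R)\ra \rmB_\ast^\nabla(R'),
\]
and this $\beta_\ast$ is compatible with the continuous group homomorphism $\alpha\colon \calG_{R'_K}\ra \calG_{R_K}$ in the sense required by Lemma~\ref{lem: admissiblity of two period rings}, i.e., $\beta_\ast(\alpha(g')\cdot b)=g'\cdot \beta_\ast(b)$ for all $g'\in \calG_{R'_K}$ and $b\in \rmB_\ast^\nabla(R)$. (This compatibility follows from functoriality of the period ring construction applied to the morphism $(\widehat{\overline{R}}[p^{-1}],\widehat{\overline{R}})\ra (\widehat{\overline{R'}}[p^{-1}],\widehat{\overline{R'}})$ of $C$-Banach pairs induced by $\overline{f}$.)

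First I would verify the hypotheses of Lemma~\ref{lem: admissiblity of two period rings}. Take $V\in\Rep_{\Q_p}(\calG_{R_K})$ and assume it is $\rmB_\ast^\nabla(R)$-admissible. We need the injectivity of $\alpha_{\rmB_\ast^\nabla(R')}(V|_{\calG_{R'_K}})$; but this has already been established for $\ast\in\{\max,\st,\dR\}$ (and indeed for $\cris$ as well) in the lemma in this subsection asserting that all the comparison maps $\alpha_\ast^\nabla(W)$ are injective for any $W\in\Rep_{\Q_p}(\calG_{R'_K})$. Then Lemma~\ref{lem: admissiblity of two period rings} yields that $V|_{\calG_{R'_K}}$ is $\rmB_\ast^\nabla(R')$-admissible, i.e., horizontal de Rham (resp.~semistable, resp.~crystalline), giving the three cases simultaneously.

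Thus the whole argument is a direct transcription of Lemma~\ref{lem: admissiblity of two period rings} once the compatibility $\beta_\ast(\alpha(g')\cdot b)=g'\cdot \beta_\ast(b)$ is spelled out. There is no real obstacle; the only mild point to check is that the chosen path making $\overline{f}$ an $\calO_{\overline{K}}$-algebra homomorphism actually produces $\calG_{R'_K}$-equivariant maps on the period rings, which follows because the construction of $\A_{\inf}$, $\B_{\max}$, $\B_{\st}$, and $\B_{\dR}$ is functorial in morphisms of the underlying perfectoid $C$-Banach pair.
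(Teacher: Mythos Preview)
Your proposal is correct and takes essentially the same approach as the paper: both invoke Lemma~\ref{lem: admissiblity of two period rings} directly, using the functorial maps $\rmB_\ast^\nabla(R)\ra\rmB_\ast^\nabla(R')$ compatible with $\calG_{R'_K}\ra\calG_{R_K}$ together with the injectivity of the $\alpha_\ast^\nabla$ maps. The paper's proof is a one-liner (``This follows from Lemma~\ref{lem: admissiblity of two period rings}''), and your version simply unpacks the hypotheses being verified.
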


\begin{proof}
 This follows from Lemma~\ref{lem: admissiblity of two period rings}.
\end{proof}

We also consider the case of changing the base field.
Let $L$ be a finite extension of $K$, and set
\[
 R_{\calO_L}:=R\otimes_{\calO_K}\calO_L \quad\text{and}\quad R_L:=R_{\calO_L}[p^{-1}]=R_K\otimes_KL.
\]
Observe that $R_{\calO_L}$ satisfies the conditions in Set-up~\ref{set-up: Brinon's rings} with respect to $L$.
Since $\overline{\Frac R}$ is an algebraic closure of $\Frac R_{\calO_L}$, we have $\overline{R_{\calO_L}}=\overline{R}$. It follows $\calG_{R_L}$ is an open subgroup of $\calG_{R_K}$ of finite index.

\begin{lem}\label{lem:potentially horizontal de Rham is horizontal de Rham}
 Let $V\in\Rep_{\Q_p}(\calG_{R_K})$.
\begin{enumerate}
 \item If $V$ is horizontal crystalline, then the representation $V|_{\calG_{R_L}}$ of $\calG_{R_L}$ is also horizontal crystalline.
Similarly, if $V$ is horizontal semistable, then so is $V|_{\calG_{R_L}}$.
 \item $V$ is horizontal de Rham if and only if $V|_{\calG_{R_L}}$ is horizontal de Rham.
\end{enumerate}
\end{lem}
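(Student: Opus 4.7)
The key observation is that any finite étale extension of $R_L$ is already a finite étale extension of $R_K$, so $\overline{R_{\calO_L}}=\overline{R}$ inside a fixed algebraic closure of $\Frac R$. Hence $\widehat{\overline{R_{\calO_L}}}=\widehat{\overline{R}}$, and each horizontal period ring $\rmB_\ast^\nabla(R_{\calO_L})$ for $\ast\in\{\max,\st,\dR\}$ coincides with its counterpart $\rmB_\ast^\nabla(R)$ as a ring with all extra structure (Frobenius, monodromy, filtration); only the acting Galois group shrinks from $\calG_{R_K}$ to the open subgroup $\calG_{R_L}$.

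For (i), I apply Lemma~\ref{lem: admissiblity of two period rings} with $B=B'=\rmB_\ast^\nabla(R)$ for $\ast\in\{\max,\st\}$, $(G,G')=(\calG_{R_K},\calG_{R_L})$, and $\beta=\id$. The injectivity of $\alpha_{B'}(V|_{\calG_{R_L}})$ is automatic, and $V$ is $B$-admissible by assumption, so the lemma yields that $V|_{\calG_{R_L}}$ is $B'$-admissible, i.e.\ horizontal crystalline (resp.\ semistable). The ``only if'' direction of (ii) follows by the very same argument with $B=B'=\rmB_{\dR}^\nabla(R)$.

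For the ``if'' direction of (ii), I first reduce to the case where $L/K$ is Galois by enlarging $L$ to a finite Galois extension $L'/K$ containing it; the ``only if'' direction just proved, applied to $L'/L$, ensures that $V|_{\calG_{R_{L'}}}$ is still horizontal de Rham. When $L'/K$ is Galois, the fact that $K$ is algebraically closed in $R_K$ makes $R_{L'}/R_K$ a finite étale Galois cover with group $\Gal(L'/K)$, so $\calG_{R_{L'}}$ is normal in $\calG_{R_K}$ with quotient canonically isomorphic to $\Gal(L'/K)$. Then $D_{\dR}^\nabla(V|_{\calG_{R_{L'}}})$ is an $L'$-vector space of dimension $\dim_{\Q_p}V$ carrying a semi-linear $\Gal(L'/K)$-action whose fixed subspace is $D_{\dR}^\nabla(V)$, so Galois descent supplies an isomorphism $L'\otimes_K D_{\dR}^\nabla(V)\cong D_{\dR}^\nabla(V|_{\calG_{R_{L'}}})$. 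Tensoring with $\rmB_{\dR}^\nabla(R)$ over $L'$ fits $\alpha_{\dR}^\nabla(V)$ into the commutative square
\[
\xymatrix{
\rmB_{\dR}^\nabla(R)\otimes_K D_{\dR}^\nabla(V) \ar[r]^{\alpha_{\dR}^\nabla(V)} \ar[d]_{\cong}
& \rmB_{\dR}^\nabla(R)\otimes_{\Q_p} V \ar@{=}[d]\\
\rmB_{\dR}^\nabla(R)\otimes_{L'} D_{\dR}^\nabla(V|_{\calG_{R_{L'}}}) \ar[r]_-{\cong}
& \rmB_{\dR}^\nabla(R)\otimes_{\Q_p} V,
}
\]
whose bottom row is an isomorphism because $V|_{\calG_{R_{L'}}}$ is horizontal de Rham; this forces $\alpha_{\dR}^\nabla(V)$ to be an isomorphism.

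The main obstacle will be cleanly verifying the semi-linearity required for Galois descent: one must check that the embedding $L'\hookrightarrow \rmB_{\dR}^\nabla(R)$ identifying $L'$ with $(\rmB_{\dR}^\nabla(R))^{\calG_{R_{L'}}}$ is equivariant with respect to the action of $\calG_{R_K}/\calG_{R_{L'}}\cong\Gal(L'/K)$ and the standard Galois action on $L'$. Once that compatibility is in place, Hilbert~90 for $L'/K$ delivers the descent isomorphism and finishes the proof.
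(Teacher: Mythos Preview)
Your proof is correct and follows essentially the same approach as the paper: part (i) and the forward direction of (ii) via Lemma~\ref{lem: admissiblity of two period rings}, and the converse of (ii) by reducing to the Galois case and applying Galois descent to $D_{\dR}^\nabla(V|_{\calG_{R_{L'}}})$. Your presentation is somewhat more detailed (the explicit commutative square and the remark on semi-linearity), but the underlying argument is the same.
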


\begin{proof}
 Part (i) and the necessity of Part (ii) follow from Lemma~\ref{lem: admissiblity of two period rings}. Assume that $V|_{\calG_{R_L}}$ is horizontal de Rham. Using the necessity, we may reduce to the case where $L$ is Galois over $K$. Then $\calG_{R_L}$ is a normal subgroup of $\calG_{R_K}$ with quotient $\Gal(L/K)$.
Since $\rmB_{\dR}^\nabla(R)=\rmB_{\dR}^\nabla(R_{\calO_L})$, the finite-dimensional $L$-vector space $D_{\dR}^\nabla(V|_{\calG_{R_{\calO_L}}})$ has a semilinear $\Gal(L/K)$-action and 
\[
 D_{\dR}^\nabla(V)=D_{\dR}^\nabla(V|_{\calG_{R_L}})^{\Gal(L/K)}.
\]
By Galois descent, we have
\[
 L\otimes_KD_{\dR}^\nabla(V)\stackrel{\cong}{\lra}D_{\dR}^\nabla(V|_{\calG_{R_L}}).
\]
It follows that $\alpha_{\dR}^\nabla(V)=\alpha_{\dR}^\nabla(V|_{\calG_{R_L}})$ and it is an isomorphism by assumption.
\end{proof}

By combining these two cases, we can consider the following setting:

\begin{prop}\label{prop:horizontal semistable plus crystalline at one point}
Let $L$ be a finite extension of $K$ and $R'$ an $\calO_L$-algebra satisfying the conditions in Set-up~\ref{set-up: Brinon's rings} with respect to $L$.
Let $f\colon R\ra R'$ be an $\calO_K$-algebra homomorphism. Fix a path on $\Spec R_K$ from $\overline{\Frac R'}$ to $\overline{\Frac R}$ and consider the associated homomorphism $\calG_{R'_L}\ra\calG_{R_K}$. 
 Let $V\in\Rep_{\Q_p}(\calG_{R_K})$ and set $f^\ast V:=V|_{\calG_{R'_L}}$
If $V$ is horizontal semistable and $f^\ast V$ is horizontal crystalline, then $V$ is horizontal crystalline.
\end{prop}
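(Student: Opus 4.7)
The strategy is to reduce the problem to showing that the monodromy operator $N$ on $D_{\st}^\nabla(V)$ vanishes. Indeed, by Lemma~\ref{lem: semistable with zero monodromoy implies crystalline}, once $V$ is known to be horizontal semistable, it is horizontal crystalline if and only if $N_V=0$ on $D_{\st}^\nabla(V)$, so the entire goal is to force $N_V=0$.

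First I would invoke the functoriality discussed in Subsection~\ref{subsection:functoriality}. The chosen path together with the extension $\overline{f}\colon \overline{R}\ra \overline{R'}$ produces a group homomorphism $\calG_{R'_L}\ra \calG_{R_K}$ and a compatible $\Q_p$-algebra homomorphism $\rmB_{\st}^\nabla(R)\ra \rmB_{\st}^\nabla(R')$. Applying the constructions of Section~\ref{section:period rings} to the induced morphism of $C$-Banach pairs with the same compatible system $(p_m)$ on both sides, this map sends $\log\frac{[p^\flat]}{p}$ to $\log\frac{[p^\flat]}{p}$; under the presentations $\rmB_{\st}^\nabla(R)\cong \rmB_{\max}^\nabla(R)[u]$ and $\rmB_{\st}^\nabla(R')\cong \rmB_{\max}^\nabla(R')[u]$ of Corollary~\ref{cor:semistable period ring otimes K}, it is therefore equivariant for both $\varphi$ and $N=-d/du$.

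Next I would apply Lemma~\ref{lem: admissiblity of two period rings} with $B=\rmB_{\st}^\nabla(R)$, $B'=\rmB_{\st}^\nabla(R')$, $G=\calG_{R_K}$, and $G'=\calG_{R'_L}$. Since $V$ is $B$-admissible by hypothesis and $\alpha_{\st}^\nabla(f^\ast V)$ is always injective, $f^\ast V$ is $B'$-admissible and there is a natural isomorphism
\[
L_0\otimes_{K_0}D_{\st}^\nabla(V)\stackrel{\cong}{\lra}D_{\st}^\nabla(f^\ast V),
\]
where $L_0:=\bigl(\rmB_{\st}^\nabla(R')\bigr)^{\calG_{R'_L}}=W(k_L)[p^{-1}]$ for $k_L$ the residue field of $L$. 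Because the structural map intertwines the monodromy operators and $N$ acts as zero on scalars, this isomorphism identifies $\id\otimes N_V$ on the left with $N_{f^\ast V}$ on the right.

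Finally, the assumption that $f^\ast V$ is horizontal crystalline combined with Lemma~\ref{lem: semistable with zero monodromoy implies crystalline} gives $N_{f^\ast V}=0$, so $\id\otimes N_V=0$ on $L_0\otimes_{K_0}D_{\st}^\nabla(V)$. Faithful flatness of the field extension $K_0\hra L_0$ then forces $N_V=0$, and one more invocation of Lemma~\ref{lem: semistable with zero monodromoy implies crystalline} concludes. No step is serious; the only point requiring any attention is the $N$-equivariance of the functorial map $\rmB_{\st}^\nabla(R)\ra \rmB_{\st}^\nabla(R')$, which follows at once from the polynomial presentation $\rmB_{\st}^\nabla=\rmB_{\max}^\nabla[u]$ together with compatible choices of $p$-power roots.
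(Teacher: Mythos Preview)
Your proof is correct and follows essentially the same approach as the paper's: both arguments use Lemma~\ref{lem: admissiblity of two period rings} to obtain the base-change isomorphism $L_0\otimes_{K_0}D_{\st}^\nabla(V)\cong D_{\st}^\nabla(f^\ast V)$ compatible with monodromy, then conclude $N=0$ on $D_{\st}^\nabla(V)$ by faithful flatness of $K_0\hookrightarrow L_0$ and apply Lemma~\ref{lem: semistable with zero monodromoy implies crystalline}. Your treatment is slightly more explicit about why the functorial map on $\rmB_{\st}^\nabla$ is $N$-equivariant, but the content is identical.
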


\begin{proof}
Denote the residue field of $L$ by $k_L$ and set $L_0=W(k_L)[p^{-1}]$.
By Lemma~\ref{lem: admissiblity of two period rings}, we have
$D_{\st}^\nabla(f^\ast V)\cong L_0\otimes_{K_0}D_{\st}^\nabla(V)$.
Moreover, it follows from the construction that this isomorphism is compatible with the monodromy operator $N$ on the left hand side and $\id\otimes N$ on the right hand side.
Since $f^\ast V$ is horizontal crystalline, we have $N=0$ on $D_{\st}^\nabla(f^\ast V)$. By faithful flatness of $K_0\ra L_0$, we also have $N=0$ on $D_{\st}^\nabla(V)$. Hence $V$ is horizontal crystalline by Lemma~\ref{lem: semistable with zero monodromoy implies crystalline}.
\end{proof}

\begin{rem}
 We will use the proposition when $R'=\calO_L$.
In this case, it is informally phrased as follows: if a horizontal semistable representation is crystalline at one classical point, then it is horizontal crystalline.
\end{rem}

\subsection{Horizontal de Rham representations and de Rham representations}\label{subsection:de Rham representations}

In this subsection, we review the de Rham period ring studied in \cite{Brinon-crisdeRham}.
This ring will be used in the next section to define the $V_{\st}$-functor.
Let $R$ be an $\calO_K$-algebra satisfying the conditions in Set-up~\ref{set-up: Brinon's rings}.
Set
\[
 \widehat{\Omega}_R:=\varprojlim_m\Omega^1_{R/\Z}/p^m\Omega^1_{R/\Z}.
\]
\begin{defn}
 Let $\rmB_{\dR}(R)$ denote the de Rham period ring of $R$ defined in \cite[D\'efinition 5.1.5]{Brinon-crisdeRham}.
We briefly recall the definition below. 
\end{defn}

The map $\theta_{\A_{\inf}(\widehat{\overline{R}})}\colon \A_{\inf}(\widehat{\overline{R}})\ra \widehat{\overline{R}}$ extends $R$-linearly to the map
\[
 \theta_R\colon R\otimes_{\Z}\A_{\inf}(\widehat{\overline{R}})\ra \widehat{\overline{R}}.
\]
Let $\rmA_{\inf}(\widehat{\overline{R}}/R)$ denote the completion of the ring $R\otimes_{\Z}\A_{\inf}(\widehat{\overline{R}})$ with respect to the topology defined by the ideal $\theta_R^{-1}(p\widehat{\overline{R}})$ (\cite[D\'efinition 5.1.3]{Brinon-crisdeRham}). The map $\theta_R$ naturally extends to a map
$\rmA_{\inf}(\widehat{\overline{R}}/R)[p^{-1}]\ra \widehat{\overline{R}}[p^{-1}]$, 
which we still denote by $\theta_R$.
We set
\[
 \rmB_{\dR}^+(R):=\varprojlim_m\rmA_{\inf}(\widehat{\overline{R}}/R)[p^{-1}]/(\Ker\theta_R)^m,
\]
and denote the natural map $\rmB_{\dR}^+(R)\ra\widehat{\overline{R}}[p^{-1}]$ by $\theta_R$.
For $m\in\N$, we set $\Fil^m\rmB_{\dR}^+(R):=(\Ker\theta_R)^m$. This is a decreasing separated and exhaustive filtration on $\rmB_{\dR}^+(R)$.
We define
\[
 \rmB_{\dR}(R):=\rmB_{\dR}^+(R)[t^{-1}].
\]
The ring has an action of $\calG_{R_K}$, a decreasing separated and exhaustive filtration $\Fil^\bullet$ given by
\[
 \Fil^0\rmB_{\dR}(R)=\sum_{i=0}^\infty t^{-i}\Fil^i\rmB_{\dR}^+(R),\;\text{and}\;
\Fil^m\rmB_{\dR}(R)=t^m\Fil^0\rmB_{\dR}(R)\quad (m\in\Z),
\]
 and a $\rmB_{\dR}^\nabla(R)$-linear integrable connection
\[
 \nabla\colon \rmB_{\dR}(R)\ra \rmB_{\dR}(R)\otimes_R\widehat{\Omega}_R
\]
satisfying the Griffiths transversality
\[
 \nabla(\Fil^m\rmB_{\dR}(R))\subset \Fil^{m-1}\rmB_{\dR}(R)\otimes_{R}\widehat{\Omega}_R.
\]
We also have $(\rmB_{\dR}(R))^{\calG_{R_K}}=R_K$ and  $\rmB_{\dR}^\nabla(R)=\rmB_{\dR}(R)^{\nabla=0}$ (\cite[Propositions 5.2.12,  5.3.3]{Brinon-crisdeRham}).
The ring $\rmB_{\dR}(R)$ is a $(\Q_p,\calG_{R_K})$-ring.

\begin{defn}
For $V\in \Rep_{\Q_p}(\calG_{R_K})$, we write $D_{\dR}(V)$ for $D_{\rmB_{\dR}(R)}(V)$ and $\alpha_{\dR}(V)$ for $\alpha_{\rmB_{\dR}(R)}(V)$.
We say that $V$ is \emph{de Rham} if it is $\rmB_{\dR}(R)$-admissible, i.e., $\alpha_{\dR}(V)$ is an isomorphism of $\rmB_{\dR}(R)$-modules.
Note that $\alpha_{\dR}(V)$ is always injective by \cite[Proposition 8.2.4]{Brinon-crisdeRham}.
\end{defn}

The $R_K$-module $D_{\dR}(V)$ is projective of rank at most $\dim_{\Q_p}V$ by \cite[Proposition 8.3.1]{Brinon-crisdeRham}, and it is equipped with induced connection
\[
\nabla\colon D_{\dR}(V)\ra D_{\dR}(V)\otimes_R\widehat{\Omega}_R, 
\]
and filtration $\Fil^\bullet D_{\dR}(V)$.
We also have a natural $R_K$-linear map
\[
 \beta_{\dR}(V)\colon R_K\otimes_KD_{\dR}^\nabla(V)\ra D_{\dR}(V).
\]

\begin{lem}[{\cite[Proposition 8.2.1]{Brinon-crisdeRham}}]
For $V\in \Rep_{\Q_p}(\calG_{R_K})$, it is horizontal de Rham if and only if it is de Rham
and $\beta_{\dR}(V)$ is an isomorphism.
\end{lem}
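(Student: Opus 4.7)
The plan is to pass between the $\rmB_{\dR}^\nabla(R)$- and $\rmB_{\dR}(R)$-formalisms via base change, then extract information by taking either $\calG_{R_K}$-invariants or the kernel of $\nabla$. The starting observation is that the base change of $\alpha_{\dR}^\nabla(V)$ along the inclusion $\rmB_{\dR}^\nabla(R)\hra\rmB_{\dR}(R)$ factors as
\[
\rmB_{\dR}(R)\otimes_KD_{\dR}^\nabla(V)\xrightarrow{\id\otimes\beta_{\dR}(V)}\rmB_{\dR}(R)\otimes_{R_K}D_{\dR}(V)\xrightarrow{\alpha_{\dR}(V)}\rmB_{\dR}(R)\otimes_{\Q_p}V,
\]
since both composites send $b\otimes d$ to $b\cdot d$ after identifying $D_{\dR}^\nabla(V)\subset D_{\dR}(V)\subset \rmB_{\dR}(R)\otimes_{\Q_p}V$.

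For the forward direction, I would assume $\alpha_{\dR}^\nabla(V)$ is an isomorphism, so that the composite above becomes an isomorphism of $\rmB_{\dR}(R)$-modules. Applying $\calG_{R_K}$-invariants, and using that $\calG_{R_K}$ acts trivially on the finite-dimensional $K$-vector space $D_{\dR}^\nabla(V)$ so that invariants commute with $\otimes_KD_{\dR}^\nabla(V)$, together with $\rmB_{\dR}(R)^{\calG_{R_K}}=R_K$ and the definition $D_{\dR}(V)=(\rmB_{\dR}(R)\otimes_{\Q_p}V)^{\calG_{R_K}}$, the invariants of the composite become an isomorphism $R_K\otimes_K D_{\dR}^\nabla(V)\stackrel{\cong}{\lra}D_{\dR}(V)$, which one checks is $\beta_{\dR}(V)$. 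With $\beta_{\dR}(V)$ and the diagonal both isomorphisms, the factorization forces $\alpha_{\dR}(V)$ to be surjective; combined with its standing injectivity, it is an isomorphism, i.e., $V$ is de Rham.

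For the converse, assume $V$ is de Rham and $\beta_{\dR}(V)$ is an isomorphism. Then both maps in the factorization are isomorphisms, hence so is the base change of $\alpha_{\dR}^\nabla(V)$ to $\rmB_{\dR}(R)$. I would descend by taking $\nabla=0$ on both sides: since $\nabla$ acts only through the $\rmB_{\dR}(R)$-factor ($D_{\dR}^\nabla(V)$ and $V$ are each annihilated by $\nabla$), and $\rmB_{\dR}(R)^{\nabla=0}=\rmB_{\dR}^\nabla(R)$, the $\nabla=0$ subspaces are $\rmB_{\dR}^\nabla(R)\otimes_KD_{\dR}^\nabla(V)$ and $\rmB_{\dR}^\nabla(R)\otimes_{\Q_p}V$, and the restriction of the base-changed iso is exactly $\alpha_{\dR}^\nabla(V)$; it is therefore an isomorphism.

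The proof is essentially diagrammatic, and the main point requiring care is the bookkeeping around the factorization so that the map produced by taking $\calG_{R_K}$-invariants in the forward direction is indeed the paper's $\beta_{\dR}(V)$, together with the interchanges of $\calG_{R_K}$-invariants and $\nabla=0$ with $\otimes_KD_{\dR}^\nabla(V)$ and $\otimes_{\Q_p}V$; these reduce to a basis argument thanks to finite-dimensionality of $D_{\dR}^\nabla(V)$ over $K$ and the triviality of the relevant actions on $D_{\dR}^\nabla(V)$ and $V$.
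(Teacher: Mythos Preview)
The paper does not supply its own proof of this lemma; it is simply attributed to \cite[Proposition 8.2.1]{Brinon-crisdeRham}. Your argument is correct and is essentially the standard one: use the factorization of $\rmB_{\dR}(R)\otimes_{\rmB_{\dR}^\nabla(R)}\alpha_{\dR}^\nabla(V)$ through $\id\otimes\beta_{\dR}(V)$ and $\alpha_{\dR}(V)$, then pass in one direction by taking $\calG_{R_K}$-invariants (using $\rmB_{\dR}(R)^{\calG_{R_K}}=R_K$) and in the other by taking $\nabla=0$ (using $\rmB_{\dR}(R)^{\nabla=0}=\rmB_{\dR}^\nabla(R)$). The bookkeeping you flag---that invariants and horizontal sections commute with $\otimes_K D_{\dR}^\nabla(V)$ and $\otimes_{\Q_p}V$---is indeed just a basis argument over the finite-dimensional $K$- and $\Q_p$-vector spaces involved. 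One minor remark: in the forward direction, once you know $\beta_{\dR}(V)$ is an isomorphism, the first arrow $\id\otimes\beta_{\dR}(V)$ in your factorization is already an isomorphism, so $\alpha_{\dR}(V)$ is an isomorphism outright; there is no need to separate surjectivity from the standing injectivity.
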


\begin{lem}[{\cite[Proposition 8.4.3]{Brinon-crisdeRham}}]
Let $V\in\Rep_{\Q_p}(\calG_{R_K})$. If $V$ is de Rham, then the isomorphism
\[
 \alpha_{\dR}(V)\colon \rmB_{\dR}(R)\otimes_{R_K}D_{\dR}(V)\stackrel{\cong}{\lra}
\rmB_{\dR}(R)\otimes_{\Q_p}V
\]
is compatible with $\calG_{R_K}$-actions and connections, and strictly compatible with filtrations.
\end{lem}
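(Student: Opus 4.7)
The plan is to verify the three asserted compatibilities in turn. Galois-equivariance and the compatibility with connections will follow essentially tautologically from the construction of $\alpha_{\dR}(V)$, while strict compatibility with filtrations is the substantive point and requires a duality argument.

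First, since $D_{\dR}(V)=(\rmB_{\dR}(R)\otimes_{\Q_p}V)^{\calG_{R_K}}$ and $\alpha_{\dR}(V)$ is the $\rmB_{\dR}(R)$-linear extension of the inclusion $D_{\dR}(V)\hookrightarrow \rmB_{\dR}(R)\otimes_{\Q_p}V$ (i.e. multiplication in $\rmB_{\dR}(R)\otimes_{\Q_p}V$), $\calG_{R_K}$-equivariance is immediate. For the connection: the $\rmB_{\dR}^\nabla(R)$-linear connection $\nabla$ on $\rmB_{\dR}(R)$ is $\calG_{R_K}$-equivariant and extends to $\rmB_{\dR}(R)\otimes_{\Q_p}V$ by acting trivially on $V$; taking $\calG_{R_K}$-invariants (using the trivial Galois action on $\widehat{\Omega}_R$) yields the connection on $D_{\dR}(V)$, and compatibility of $\alpha_{\dR}(V)$ with connections is then just the Leibniz rule for the $\rmB_{\dR}(R)$-module structure. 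For filtrations, the definition $\Fil^j D_{\dR}(V):=D_{\dR}(V)\cap \Fil^j(\rmB_{\dR}(R)\otimes_{\Q_p}V)$ gives the forward inclusion $\alpha_{\dR}(V)(\Fil^i\rmB_{\dR}(R)\otimes \Fil^j D_{\dR}(V))\subset \Fil^{i+j}$ by multiplicativity of $\Fil^\bullet \rmB_{\dR}(R)$.

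Strict compatibility is the part that requires work; equivalently, I must show that $\gr^m\alpha_{\dR}(V)$ is an isomorphism for every $m\in\Z$. My approach is to pass to duals. First, $\rmB_{\dR}(R)$ satisfies the hypotheses of Proposition~\ref{prop:dual of admissible representations} (the rank-one case reduces to twisting $\rmB_{\dR}(R)^\times$ by a character and is handled as in Brinon's setting), so $V^\vee$ is again de Rham. The evaluation pairing $V\otimes_{\Q_p}V^\vee\to\Q_p(0)$ induces, via $\alpha_{\dR}(V)$ and $\alpha_{\dR}(V^\vee)$, a perfect pairing of projective $R_K$-modules $D_{\dR}(V)\otimes_{R_K}D_{\dR}(V^\vee)\to R_K$ that is filtration-preserving. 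A direct computation in $\rmB_{\dR}(R)$ then identifies the filtration on $D_{\dR}(V^\vee)$ with the dual filtration of $D_{\dR}(V)$. Combining the forward filtration-preservation applied to both $V$ and $V^\vee$ with perfection of the pairing forces equality of graded dimensions in each degree, and hence strictness.

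The main obstacle is the duality-of-filtrations step: one must verify that the filtration on $D_{\dR}(V^\vee)$ coming intrinsically and the one obtained from $D_{\dR}(V)^\vee$ through the pairing really coincide. This amounts to a standard computation in Fontaine-style $p$-adic Hodge theory but needs care in the relative (base ring $R$) setting, since $R_K$ need not be a field. As an alternative route, one could invoke strictness in the rank-one case—where the single filtration jump makes it automatic—and combine it with the compatibility of $\alpha_{\dR}$ with determinants and with duals (via Proposition~\ref{prop:tensor and SES of admissible representations}) to deduce strictness in general.
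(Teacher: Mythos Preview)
The paper does not prove this lemma itself; it defers entirely to \cite[Proposition~8.4.3]{Brinon-crisdeRham}. Your handling of the Galois action and the connection is correct and is what anyone would write.

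For strictness, there is a genuine gap. The step you call a ``direct computation''---identifying the filtration on $D_{\dR}(V^\vee)$ with the dual filtration of $D_{\dR}(V)$---is the entire content of the argument, and you give no reasoning for it beyond acknowledging that it ``needs care.'' Crucially, the paper points out immediately after this lemma (Remark~\ref{rem:horizontal comparison does not preserve filtrations} and Example~\ref{example: non-compatibility of filtrations}) that the analogous strictness claim for the \emph{horizontal} comparison $\alpha_{\dR}^\nabla(V)$ is \emph{false}. One can verify in that example that the filtration on $D_{\dR}^\nabla(V^\vee)$ is \emph{not} the dual filtration of $D_{\dR}^\nabla(V)$: the latter is concentrated in degree~$1$, while the former sits in degree~$0$. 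So your duality-of-filtrations step is exactly where the horizontal analogue breaks, yet nothing in your sketch distinguishes $\rmB_{\dR}(R)$ from $\rmB_{\dR}^\nabla(R)$. A valid argument must invoke something specific to the former---for instance the richer structure of $\gr^\bullet\rmB_{\dR}(R)$, or the fact that the graded pieces $\gr^iD_{\dR}(V)$ are projective over $R_K$ (\cite[Proposition~8.3.4]{Brinon-crisdeRham}), which permits a reduction to the field case by specialization.

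Your alternative determinant route does not work either: strictness for $\det V$ does not imply strictness for $V$. Again Example~\ref{example: non-compatibility of filtrations} illustrates this in the horizontal setting, where $\alpha_{\dR}^\nabla(\det V)$ is trivially strict (rank one, single jump) while $\alpha_{\dR}^\nabla(V)$ is not.
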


\begin{lem}[{\cite[Corollaire 5.2.11]{Brinon-crisdeRham}}]\label{lem: restriction of filtrations to horizontal de Rham}
For $m\in\Z$, we have
\[
 \rmB_{\dR}^\nabla(R)\cap \Fil^m\rmB_{\dR}(R)=t^m\rmB_{\dR}^{\nabla+}(R).
\]
 \end{lem}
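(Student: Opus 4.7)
The plan is to prove the two inclusions separately. The inclusion $t^m\rmB_{\dR}^{\nabla+}(R)\subset\rmB_{\dR}^\nabla(R)\cap\Fil^m\rmB_{\dR}(R)$ is the easy direction, and reduces to verifying that $t\in\Fil^1\rmB_{\dR}^+(R)$. This holds because the natural ring homomorphism $\A_{\inf}(\widehat{\overline{R}})\to\rmA_{\inf}(\widehat{\overline{R}}/R)$ sends $\Ker\theta_{\A_{\inf}(\widehat{\overline{R}})}$ into $\Ker\theta_R$, and $t$ lies in the former ideal after inverting $p$; combined with $\rmB_{\dR}^{\nabla+}(R)\subset\rmB_{\dR}^\nabla(R)$ and the definition of the filtration on $\rmB_{\dR}(R)$, the inclusion follows.

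For the reverse inclusion, I first clear $t$-denominators. Given $x\in\rmB_{\dR}^\nabla(R)\cap\Fil^m\rmB_{\dR}(R)$, I write $x=t^{-N}y$ with $y\in\rmB_{\dR}^{\nabla+}(R)$ and $N\geq 0$; then $y=t^N x\in\rmB_{\dR}^{\nabla+}(R)\cap\Fil^{m+N}\rmB_{\dR}(R)$. So the statement reduces to the following key identity for every $k\geq 0$:
\[
\rmB_{\dR}^{\nabla+}(R)\cap\Fil^k\rmB_{\dR}(R)=t^k\rmB_{\dR}^{\nabla+}(R).
\]

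The key identity will be established by induction on $k$, using the basic fact that $\theta_R|_{\rmB_{\dR}^{\nabla+}(R)}$ coincides with $\theta_{\rmB_{\dR}^{\nabla+}(R)}$, whose kernel is $t\rmB_{\dR}^{\nabla+}(R)$ by Lemma~\ref{lem: t is a generator}. For the base case $k=1$: an element $y\in\rmB_{\dR}^{\nabla+}(R)\cap\Fil^1\rmB_{\dR}(R)$ lies in $\Ker\theta_R$ and hence in $t\rmB_{\dR}^{\nabla+}(R)$. For the inductive step, given $y\in\rmB_{\dR}^{\nabla+}(R)\cap\Fil^k\rmB_{\dR}(R)$, by the induction hypothesis one writes $y=t^{k-1}z$ with $z\in\rmB_{\dR}^{\nabla+}(R)$, and the remaining task is to deduce $\theta_R(z)=0$ from $t^{k-1}z\in(\Ker\theta_R)^k$.

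The main obstacle consists of two intertwined structural statements about $\rmB_{\dR}^+(R)$: first, the equality $\Fil^k\rmB_{\dR}(R)\cap\rmB_{\dR}^+(R)=(\Ker\theta_R)^k$ for every $k\geq 0$ (needed implicitly in the inductive hypothesis); second, the non-zerodivisor property of the image of $t$ in the associated graded ring $\gr^\bullet\rmB_{\dR}^+(R)$ (needed to pass from $t^{k-1}z\in(\Ker\theta_R)^k$ to $z\in\Ker\theta_R$). Both will be resolved by invoking the explicit description of $\rmB_{\dR}^+(R)$ developed in \cite{Brinon-crisdeRham}: after choosing compatible systems of $p$-power roots of the coordinates $T_i$ in $\overline{R}$ and setting $U_i:=\log([T_i^\flat]/T_i)\in\rmB_{\dR}^+(R)$, the elements $t,U_1,\ldots,U_n$ (together with the analogous logarithmic generators produced by the additional operations permitted in Set-up~\ref{set-up: Brinon's rings}) generate $\Ker\theta_R$ and their symbols form a regular sequence in $\gr^\bullet\rmB_{\dR}^+(R)$, making the latter into a polynomial algebra over $\widehat{\overline{R}}[p^{-1}]$. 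Both required properties are then immediate from this polynomial description.
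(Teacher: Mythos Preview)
The paper does not prove this lemma; it simply cites \cite[Corollaire 5.2.11]{Brinon-crisdeRham}. Your argument is a sound reconstruction of the underlying reason: everything reduces to the fact that the image of $t$ in $\gr^\bullet\rmB_{\dR}^+(R)$ is a non-zerodivisor, which Brinon deduces from the explicit power-series description $\rmB_{\dR}^+(R)\cong\rmB_{\dR}^{\nabla+}(R)[[U_1,\ldots,U_n]]$ (his Proposition~5.2.2; compare also \cite[Proposition~6.10]{Scholze-p-adicHodge}, and the paragraph preceding Lemma~\ref{lem:affinoid perfectoid vanishing of structural de Rham sheaf} in the present paper).

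Two minor comments. First, your parenthetical about ``analogous logarithmic generators produced by the additional operations permitted in Set-up~\ref{set-up: Brinon's rings}'' is off: none of those operations enlarges $R_K\otimes_R\widehat{\Omega}_R$ beyond the free module on $dT_1,\ldots,dT_n$, so $\Ker\theta_R$ is already generated by $t,U_1,\ldots,U_n$ and no further elements are needed. Second, your two obstacles collapse into one: the equality $\rmB_{\dR}^+(R)\cap\Fil^k\rmB_{\dR}(R)=(\Ker\theta_R)^k$ itself follows, by a short induction, from the regularity of $\bar{t}$ in $\gr^\bullet\rmB_{\dR}^+(R)$, so that single structural fact is all you need.
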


\begin{rem}\label{rem:horizontal comparison does not preserve filtrations}
Let $V\in \Rep_{\Q_p}(\calG_{R_K})$ be a horizontal de Rham representation.
The filtration $\Fil^m\rmB_{\dR}^\nabla(R):=t^m\rmB_{\dR}^{\nabla+}(R)$
on $\rmB_{\dR}^\nabla(R)$ induces a filtration on $D_{\dR}^\nabla(V)$.
The maps $\alpha_{\dR}^\nabla(V)$ and $\beta_{\dR}(V)$ preserve filtrations, i.e., 
\begin{align*}
 \alpha_{\dR}^\nabla(V)&(\Fil^m( \rmB_{\dR}^\nabla(R)\otimes_K D_{\dR}^\nabla(V)))\subset
\Fil^m \rmB_{\dR}^\nabla(R)\otimes_{\Q_p}V, 
\\
\beta_{\dR}(V)&(\Fil^m( R_K\otimes_K D_{\dR}^\nabla(V)))\subset\Fil^m D_{\dR}(V).
\end{align*}
 However, neither $\alpha_{\dR}^\nabla(V)$ nor $\beta_{\dR}(V)$ is strictly compatible with filtrations in general\footnote{\cite[Proposition 8.4.3]{Brinon-crisdeRham} claims that $\alpha_{\dR}^\nabla(V)$ is strictly compatible with filtrations, but a proof is not given.}.
See the example below.
\end{rem}

\begin{example}\label{example: non-compatibility of filtrations}
We will give an example of a horizontal crystalline representation (hence horizontal de Rham) such that $\alpha_{\dR}^\nabla(V)$ is not strictly compatible with filtrations.

Assume $K=K_0$ for simplicity.
Let $R=\calO_K\langle T^{\pm}\rangle$ and let $h(T)=1+pT\in R$.
Fix a nontrivial compatible system $(1,\varepsilon_1,\ldots)$ of $p$-power roots of $1$ ($\varepsilon_1\neq 1$) and a compatible system $(h,h^{1/p},\ldots)$ of $p$-power roots of $h$.
They define elements of $\widehat{\overline{R}}^\flat$:
\[
 \varepsilon:=(1\bmod p,\varepsilon_1\bmod p,\ldots), \qquad
 h^\flat:=(h\bmod p,h^{1/p}\bmod p,\ldots).
\]
Consider the two-dimensional $\Q_p$-representation $V$ of $\calG_{R_K}$
corresponding to $(\varepsilon, h^\flat)$.
Namely, 
set $V=\Q_pe_1+\Q_pe_2$ and define the action of $g\in \calG_{R_K}$ on $V$
by 
\[
 ge_1=\chi(g)e_1,\quad ge_2=\eta(g)e_1+e_2, 
\]
where $\chi\colon \calG_{R_K}\ra \Gal(\overline{K}/K)\ra \Z_p^\times$ is the $p$-adic cyclotomic character defined by $g(\varepsilon_m)=\varepsilon_m^{\chi(g)}$, 
 and $\eta\colon \calG_{R_K}\ra \Z_p$ is a map defined by
$g(h^{1/p^m})=\varepsilon_m^{\eta(g)}h^{1/p^m}$.

We will show that $V$ is a horizontal crystalline representation.

\begin{claim*}
 The series
\[
 \log [h^\flat]:=\sum_{m=1}^\infty  (-1)^{m-1}\frac{([h^\flat]-1)^m}{m}
\]
converges in $\rmA_{\max}^\nabla(R):=\A_{\max}(\widehat{\overline{R}})$ and satisfies
\[
 g\bigl(\log [h^\flat]\bigr)=\eta(g)t+\log[h^\flat],\quad \varphi\bigl(\log [h^\flat]\bigr)=p\log [h^\flat].
\]
Moreover, we have
$\log[h^\flat]\in \Fil^0\rmB_{\dR}^\nabla(R)\smallsetminus \Fil^1\rmB_{\dR}^\nabla(R)$.
\end{claim*}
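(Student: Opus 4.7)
The plan is to verify the four assertions in turn, by reducing each to a computation in the horizontal period rings.

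For convergence in $\rmA_{\max}^\nabla(R) = \A_{\max}(\widehat{\overline{R}})$, the essential observation is that $[h^\flat]-1 \in p\A_{\max}(\widehat{\overline{R}})$. Since $T$ is a unit in $R$, pick a compatible system of $p$-power roots of $T$ in $\overline{R}$ and let $T^\flat \in \widehat{\overline{R}}^\flat$ be the resulting element (so that $(T^\flat)^\sharp = T$). Then $\theta\bigl([h^\flat]-1-p[T^\flat]\bigr) = h-1-pT = 0$, so $[h^\flat]-1-p[T^\flat] \in \Ker\theta \subset p\A_{\max}(\widehat{\overline{R}})$; combined with $p[T^\flat] \in p\A_{\max}(\widehat{\overline{R}})$, this yields $[h^\flat]-1 \in p\A_{\max}(\widehat{\overline{R}})$. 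Then
\[
\frac{(-1)^{m-1}([h^\flat]-1)^m}{m} = \frac{(-1)^{m-1}p^m}{m}\biggl(\frac{[h^\flat]-1}{p}\biggr)^m
\]
has $p$-adic valuation $m-v_p(m) \to \infty$, giving convergence in the $p$-adically complete ring $\A_{\max}(\widehat{\overline{R}})$.

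The Galois and Frobenius identities come from Teichm\"uller lifts of the defining relations. Under the monoid isomorphism $\widehat{\overline{R}}^\flat \cong \varprojlim_{x\mapsto x^p}\widehat{\overline{R}}$, the equations $g(h^{1/p^m}) = \varepsilon_m^{\eta(g)} h^{1/p^m}$ become $g(h^\flat) = \varepsilon^{\eta(g)} h^\flat$ in $\widehat{\overline{R}}^\flat$, where $\varepsilon^{\eta(g)}$ is defined componentwise (since $\varepsilon_m^{\eta(g)}$ depends only on $\eta(g) \bmod p^m$); similarly the Frobenius on the tilt gives $\varphi(h^\flat) = (h^\flat)^p$. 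Applying Teichm\"uller yields
\[
g[h^\flat] = [\varepsilon]^{\eta(g)}[h^\flat], \qquad \varphi[h^\flat] = [h^\flat]^p
\]
in $\A_{\inf}(\widehat{\overline{R}})$. Acting $g$ and $\varphi$ termwise on the convergent log series, and using the power-series identities $\log(xy) = \log x + \log y$ and $\log x^a = a\log x$---the latter obvious for $a \in \Z$ and extended by continuity to $a \in \Z_p$ via the identification $[\varepsilon]^a = \exp(at)$ in $\B_{\dR}^+(\widehat{\overline{R}})$---then give $g(\log[h^\flat]) = \eta(g)t + \log[h^\flat]$ and $\varphi(\log[h^\flat]) = p\log[h^\flat]$.

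For the filtration claims, convergence in $\A_{\max}(\widehat{\overline{R}})$ together with the inclusion $\B_{\max}^+(\widehat{\overline{R}}) \hra \B_{\dR}^+(\widehat{\overline{R}})$ of Proposition~\ref{Prop: Colmez's description of semistable period ring} gives $\log[h^\flat] \in \B_{\dR}^+(\widehat{\overline{R}}) = \Fil^0 \rmB_{\dR}^\nabla(R)$. To rule out $\log[h^\flat] \in \Fil^1 \rmB_{\dR}^\nabla(R) = \Ker\theta_{\B_{\dR}^+(\widehat{\overline{R}})}$ (by Lemma~\ref{lem: t is a generator}), apply $\theta$ termwise to the log series:
\[
\theta(\log[h^\flat]) = \sum_{m=1}^\infty \frac{(-1)^{m-1}}{m}(h-1)^m = \log(1+pT),
\]
which is a nonzero element of $\widehat{\overline{R}}[p^{-1}]$ since $\log(1+pT) = \sum_{m\geq 1}(-1)^{m-1}p^mT^m/m$ is a nonzero power series over $K$ in the transcendental unit $T$. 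The chief technical point is the rigorous justification of $\log[\varepsilon]^a = at$ for $a \in \Z_p$; this follows by continuity from the case $a \in \Z$, using that $[\varepsilon]-1 \in \Ker\theta$ lies in the maximal ideal of $\B_{\dR}^+(\widehat{\overline{R}})$, so that $[\varepsilon]^a$ makes sense as $\exp(at)$ in this complete ring.
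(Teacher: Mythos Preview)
Your proof is correct and follows essentially the same approach as the paper's: the convergence via $[h^\flat]-1 \in p\A_{\max}$ (you construct the witness $[T^\flat]$ explicitly while the paper just invokes surjectivity of $\theta$ to write $[h^\flat]-1=x+py$), the Galois/Frobenius identities from Teichm\"uller lifts of the defining relations (where the paper in fact gives less detail, merely citing \cite{Fontaine-exposeII} and \cite{Brinon-Conrad}), and the filtration via $\theta(\log[h^\flat]) = \log(1+pT) \neq 0$ are all as in the paper. The one place to tighten is the continuity argument for $\log[\varepsilon^a]=at$: rather than invoking the identification $[\varepsilon]^a=\exp(at)$ in $\B_{\dR}^+$---which as written is slightly circular, since what you actually need is that the Teichm\"uller lift $[\varepsilon^a]$ coincides with $\exp(at)$---it is cleaner to argue directly in $\A_{\max}$ with the $p$-adic topology, using the elementary estimate $(1+pu)^{p^n}\in 1+p^{n+1}\A_{\max}$ (applied to $u=([\varepsilon]-1)/p$) to obtain continuity of $a\mapsto[\varepsilon^a]$ on $\Z_p$.
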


\begin{proof}
Observe $\theta_{\A_{\inf}(\widehat{\overline{R}})}([h^\flat]-1)=h-1=pT\in  p\widehat{\overline{R}}$.
Hence we can write $[h^\flat]-1=x+py$ for some $x\in \Ker\theta_{\A_{\inf}(\widehat{\overline{R}})}$ and $y\in \A_{\inf}(\widehat{\overline{R}})$.
It follows
\[
 (-1)^{m-1}\frac{([h^\flat]-1)^m}{m}
=(-1)^{m-1}\frac{p^m}{m}\biggl(\frac{[h^\flat]-1}{p}\biggr)^m
=(-1)^{m-1}\frac{p^m}{m}\biggl(\frac{x}{p}+y\biggr)^m.
\]
Since $\frac{p^m}{m}\in \Z_p$ and it converges to zero as $m\to \infty$,
we conclude that the series $\sum_{m=1}^\infty  (-1)^{m-1}\frac{([h^\flat]-1)^m}{m}$
converges in $\rmA_{\max}^\nabla(R)$.

We can easily check the remaining statements (cf.~\cite[3.1.3]{Fontaine-exposeII} and \cite[Lemma 9.2.2]{Brinon-Conrad}).
For example, since $\theta_{\rmA_{\max}^\nabla(R)}\colon \rmA_{\max}^\nabla(R)\ra \widehat{\overline{R}}$ is continuous with respect to the $p$-adic topology, we have 
\[
 \theta(\log [h^\flat])=\sum_{m=1}^\infty  (-1)^{m-1}\frac{(h-1)^m}{m}=\sum_{m=1}^\infty  (-1)^{m-1}\frac{(pT)^m}{m}\neq 0.
\]
Hence $\log[h^\flat]\in \Fil^0\rmB_{\dR}^\nabla(R)\smallsetminus \Fil^1\rmB_{\dR}^\nabla(R)$.
\end{proof}

Set 
\[
 f_1=\frac{1}{t}e_1,\quad f_2=e_2-\frac{\log [h^\flat]}{t}e_1\in \rmB_{\max}^\nabla(R)\otimes_{\Q_p} V.
\]
Then we can check that 
$f_1$ and $f_2$ are fixed by $\calG_{R_K}$ and generate $\rmB_{\max}^\nabla(R)\otimes_{\Q_p} V$ as a $\rmB_{\max}^\nabla(R)$-module.
This implies that $V$ is horizontal crystalline with $D_{\max}^\nabla(V)=Kf_1+Kf_2$.
We also see that $\varphi(f_1)=\frac{1}{p}f_1$ and $\varphi(f_2)=f_2$.
Note that $D_{\dR}^\nabla(V)=D_{\max}^\nabla(V)$ in this case and 
if we equip $D_{\dR}^\nabla(V)$ with a decreasing filtration
induced from $ \Fil^m(\rmB_{\dR}^\nabla(R)\otimes_{\Q_p}V):=t^m\rmB_{\dR}^{\nabla+}(R)\otimes_{\Q_p}V$, 
we have
\[
 \Fil^{-1}D_{\dR}^\nabla(V)=D_{\dR}^\nabla(V),\quad \Fil^0D_{\dR}^\nabla(V)=0.
\]
In particular, the $\rmB_{\dR}^\nabla(R)$-linear isomorphism
\[
 \alpha_{\dR}^\nabla(V)\colon
\rmB_{\dR}^\nabla(R)\otimes_KD_{\dR}^\nabla(V)\stackrel{\cong}{\lra}\rmB_{\dR}^\nabla(R)\otimes_{\Q_p}V
\]
\emph{is not} strictly compatible with filtrations.
Indeed, we have $e_2=1\otimes f_2+\log[h^\flat]\otimes f_1\not\in \Fil^0(\rmB_{\dR}^\nabla(R)\otimes_KD_{\dR}^\nabla(V))$ while $e_2\in \Fil^0(\rmB_{\dR}^\nabla(R)\otimes_{\Q_p}V)$.

Let now turn to $D_{\dR}(V)=R_Kf_1+R_Kf_2$. We will give another $R_K$-basis.
Note that $h$ is invertible in $R$ and 
$[h^\flat]/h-1\in \Ker \theta_R$.
It follows that the series
\[
 \log \frac{[h^\flat]}{h}:=\sum_{m=1}^\infty  (-1)^{m-1}\frac{\bigl([h^\flat]/h-1\bigr)^m}{m}
\]
converges in $\rmB_{\dR}^+(R)$.
Let
\[
  \log h:=\sum_{m=1}^\infty  (-1)^{m-1}\frac{\bigl(h-1\bigr)^m}{m}
=\sum_{m=1}^\infty  (-1)^{m-1}\frac{p^m}{m}T^m\in R.
\]
We can check
\[
 \log[h^\flat]=\log \frac{[h^\flat]}{h}+\log h\in \rmB_{\dR}^+(R).
\]
Set
\[
 f_3=e_2-\frac{\log [h^\flat]/h}{t}e_1=f_2+\frac{\log h}{t}e_1\in \rmB_{\dR}(R)\otimes_{\Q_p}V.
\]
Then $f_3$ is fixed by $\calG_{R_K}$, and $D_{\dR}(V)=R_Kf_1+R_Kf_3$.
Consider the filtration on $D_{\dR}(V)$ induced from
$ \Fil^m(\rmB_{\dR}(R)\otimes_{\Q_p}V):=(\Fil^m\rmB_{\dR}(R))\otimes_{\Q_p}V.$
Since $\log\frac{[h^\flat]}{h}\in \Fil^1\rmB_{\dR}(R)$, we see
\[
 \Fil^{-1}D_{\dR}(V)=D_{\dR}(V),\quad \Fil^{0}D_{\dR}(V)=R_Kf_3,\quad \Fil^{1}D_{\dR}(V)=0.
\]
It follows that
the natural $R_K$-linear isomorphism
\[
\beta_{\dR}(V)\colon R_K\otimes_{K} D_{\dR}^\nabla(V)\stackrel{\cong}{\lra}D_{\dR}(V)
\]
\emph{is not} strictly compatible with filtrations. Note that the $\rmB_{\dR}(R)$-linear isomorphism
\[
 \alpha_{\dR}(V)\colon
\rmB_{\dR}(R)\otimes_{R_K}D_{\dR}(V)\stackrel{\cong}{\lra}\rmB_{\dR}(R)\otimes_{\Q_p}V
\]
is strictly compatible with filtrations.
\end{example}

\section{Filtered $(\varphi,N,\Gal(L/K),R_K)$-modules}\label{section:filtered phi N modules}

In this section, we follow \cite{Fontaine-exposeIII} and define filtered $(\varphi,N,\Gal(L/K),R_K)$-modules and the functors $D_{\pst}$ and $V_{\pst}$.

As before, let $k$ be a perfect field of characteristic $p$ and set $K_0:=W(k)[p^{-1}]$.
Let $K$ be a totally ramified finite extension of $K_0$.
We denote the maximal unramified extension of $K_0$ inside $\overline{K}$ by $K_0^\ur$.
Let $L$ be a Galois extension of $K$ inside $\overline{K}$. Let $L_0$ be the maximal unramified extension of $K_0$ in $L$ and $\sigma$ the absolute Frobenius on $L_0$.
Note $(\overline{K})_0=K_0^\ur$.

Let us first recall the definition of $(\varphi,N,\Gal(L/K))$-modules:
\begin{defn}[{\cite[4.2.1]{Fontaine-exposeIII}}]
 A \emph{$(\varphi,\!N,\!\Gal(L/K))$-module} is a finite-dimensional $L_0$-vector space $D$ equipped with
\begin{enumerate}
 \item an injective $\sigma$-semilinear map $\varphi\colon D\ra D$ (\emph{Frobenius}),
 \item an $L_0$-linear endomorphism $N\colon D\ra D$ (\emph{monodromy operator}), and
 \item a semilinear action of $\Gal(L/K)$
\end{enumerate}
such that they satisfy the following compatibilities:
\begin{itemize}
 \item $N\varphi=p\varphi N$;
 \item for every $g\in\Gal(L/K)$, $g\varphi=\varphi g$, and $gN=Ng$.
\end{itemize}
When $L=K$, we simply call it a $(\varphi,N)$-module.

A morphism of $(\varphi,N,\Gal(L/K))$-modules is an $L_0$-linear map that commutes with $\varphi, N$, and $\Gal(L/K)$-action.

We say that a $(\varphi,N,\Gal(L/K))$-module $D$ is \emph{discrete} if the action of $\Gal(L/K)$ on $D$ is discrete, i.e., for every $d\in D$, the stabilizer subgroup of $d$ is open in $\Gal(L/K)$.
\end{defn}

Let $R$ be an $\calO_K$-algebra satisfying the conditions in Set-up~\ref{set-up: Brinon's rings}.
We set
\[
 R_{\calO_L}:=\calO_L\otimes_{\calO_K}R,\quad\text{and}\quad
 R_L:=L\otimes_KR_K.
\]
Note that $R_{\calO_L}$ is $p$-adically complete if $L$ is finite over $K$ but that it is not the case in general.
For an $L_0$-vector space $D$, we write $D_{R_L}$
for $R_L\otimes_{L_0}D$.

\begin{defn}[cf.~{\cite[4.3.2]{Fontaine-exposeIII}}]
 A \emph{filtered $(\varphi,N,\Gal(L/K), R_K)$-module}
is a pair consisting of
\begin{enumerate}
 \item a $(\varphi,N,\Gal(L/K))$-module $D$, and 
 \item a separated and exhaustive decreasing filtration
$(\Fil^i D_{R_L})_{i\in\Z}$ of $D_{R_L}$
\end{enumerate} 
such that
each $\Fil^i D_{R_L}$ is a $\Gal(L/K)$-stable projective
$R_L$-submodule of $D_{R_L}$.
When $L=K$, we simply call it a $(\varphi,N,R_K)$-module.

A morphism of filtered $(\varphi,N,\Gal(L/K),R_K)$-modules is defined to be a morphism of $(\varphi,N,\Gal(L/K))$-modules whose scalar extension from $L_0$ to $R_L$ preserves filtrations.
A filtered $(\varphi,N,\Gal(L/K), R_K)$-module is called \emph{discrete} if it is discrete as a $(\varphi,N,\Gal(L/K))$-module.

We denote the full subcategory of discrete filtered $(\varphi,N,\Gal(L/K),R_K)$-modules by $\MF_{L/K}(\varphi,N,R_K)$ .
\end{defn}

\begin{prop}\label{prop:Dst functor}
Let $L$ be a finite Galois extension of $K$ and let $V\in\Rep_{\Q_p}(\calG_{R_K})$.
If $V|_{\calG_{R_L}}$ is horizontal semistable, then
$D_{\st}^\nabla(V|_{\calG_{R_L}})$ is a $(\varphi,N,\Gal(L/K))$-module.
Moreover, if we equip $D_{\st}^\nabla(V|_{\calG_{RL}})$ with the filtration
induced from $D_{\dR}(V|_{\calG_{R_L}})$ via the isomorphism
\[
R_L\otimes_{L_0}D_{\st}^\nabla(V|_{\calG_{R_L}})\stackrel{\cong}{\lra}
D_{\dR}(V|_{\calG_{R_L}}),
\]
then 
$D_{\st}^\nabla(V|_{\calG_{R_L}})$ is a filtered $(\varphi,N,\Gal(L/K),R_K)$-module.
\end{prop}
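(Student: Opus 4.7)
The plan is to bootstrap from the observation that, since $\overline{R_{\calO_L}} = \overline{R}$ inside $\overline{\Frac R}$, the ring $\rmB_{\st}^\nabla(R_{\calO_L}) = \B_{\st}(\widehat{\overline{R}})$ coincides with $\rmB_{\st}^\nabla(R)$, now regarded as an $L_0$-algebra via $W(k_L) \hookrightarrow \A_{\inf}(\widehat{\overline{R}})$, and that the natural $\calG_{R_K}$-action on it restricts to the $\calG_{R_L}$-action used to form $D_{\st}^\nabla(V|_{\calG_{R_L}})$. Taking $\calG_{R_L}$-invariants of the diagonal $\calG_{R_K}$-action on $\rmB_{\st}^\nabla(R_{\calO_L}) \otimes_{\Q_p} V$ then produces an action of the finite quotient $\calG_{R_K}/\calG_{R_L} = \Gal(L/K)$ on $D_{\st}^\nabla(V|_{\calG_{R_L}})$.

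The required compatibilities all descend from the corresponding facts at the level of the period ring: $\varphi$ and $N$ commute with the entire $\calG_{R_K}$-action on $\rmB_{\st}^\nabla(R_{\calO_L})$ by the functorial definitions in Section~\ref{section:period rings}, and the relation $N\varphi = p\varphi N$ is built in; injectivity of $\varphi$ ultimately reduces to perfectness of the tilt of $\widehat{\overline{R}}$; finite-dimensionality of $D_{\st}^\nabla(V|_{\calG_{R_L}})$ over $L_0$ follows from horizontal semistability; and discreteness is automatic since $\Gal(L/K)$ is finite. For $L_0$-semilinearity of the $\Gal(L/K)$-action, note that $\calG_{R_K}$ acts on the subring $W(k_L) \subset W(\overline{k}) \subset \A_{\inf}(\widehat{\overline{R}})$ through the composite $\calG_{R_K} \twoheadrightarrow \Gal(\overline{K}/K) \twoheadrightarrow \Gal(\overline{k}/k)$; restricting to $W(k_L)$ recovers the natural $\Gal(L_0/K_0)$-action, matching the convention $\Gal(L/K) \twoheadrightarrow \Gal(L_0/K_0)$ in the definition of a $(\varphi, N, \Gal(L/K))$-module.

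For the filtered structure, I would observe that horizontal semistability of $V|_{\calG_{R_L}}$ implies horizontal de Rham-ness, and hence de Rham-ness with $\beta_{\dR}(V|_{\calG_{R_L}})$ an isomorphism; combining the dimension-forced equality $D_{\st,L}^\nabla(V|_{\calG_{R_L}}) = D_{\dR}^\nabla(V|_{\calG_{R_L}})$ (via Lemma~\ref{lem:basic properties of D}) with $\beta_{\dR}$ gives a canonical $R_L$-linear isomorphism
\[
R_L \otimes_{L_0} D_{\st}^\nabla(V|_{\calG_{R_L}}) \stackrel{\cong}{\lra} D_{\dR}(V|_{\calG_{R_L}}).
\]
Pulling back the filtration $\Fil^i D_{\dR}(V|_{\calG_{R_L}})$---whose pieces are projective $R_L$-submodules---provides the required filtered structure; its $\Gal(L/K)$-stability follows from $\Gal(L/K)$-equivariance of the displayed isomorphism, which reduces to the functorial $\calG_{R_K}$-equivariance of the inclusion $\rmB_{\st}^\nabla(R_{\calO_L}) \hookrightarrow \rmB_{\dR}(R_{\calO_L})$. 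The main technical point---though not a deep obstacle---is tracking the $L_0$-semilinearity and the equivariance of the comparison with $D_{\dR}$; both amount to unwinding the functoriality of Section~\ref{section:period rings} rather than presenting any substantive difficulty.
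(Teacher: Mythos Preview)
Your proposal is correct and follows essentially the same approach as the paper: both use the identification $\rmB_{\st}^\nabla(R_{\calO_L})=\rmB_{\st}^\nabla(R)$ to produce the $\Gal(L/K)$-action via $\calG_{R_K}/\calG_{R_L}$, and both deduce the filtered structure from the filtration on $D_{\dR}(V|_{\calG_{R_L}})$. The paper is terser, citing \cite[Proposition 8.3.4(a)]{Brinon-crisdeRham} for the projectivity of the filtration pieces that you assert directly, and it does not spell out the semilinearity or the injectivity of $\varphi$ as you do; but the underlying argument is the same.
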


\begin{proof}
Recall that $\rmB_{\st}^\nabla(R_{\calO_L})\otimes_{\Q_p}V$ admits an injective $\sigma$-semilinear map $\varphi\otimes\id$ and an $L_0$-linear endomorphism $N\otimes \id$
satisfying $(N\otimes \id)(\varphi\otimes\id)=p(\varphi\otimes\id)(N\otimes\id)$.
Moreover, by the identification $\rmB_{\st}^\nabla(R_{\calO_L})\otimes_{\Q_p}V=\rmB_{\st}^\nabla(R)\otimes_{\Q_p}V$, it is equipped with diagonal semilinear $\calG_{R_K}$-action  commuting with $\varphi\otimes\id$ and $N\otimes \id$.
The first assertion follows from these remarks and $\calG_{R_K}/\calG_{R_L}\cong \Gal(L/K)$. The second assertion follows from the first and \cite[Proposition 8.3.4(a)]{Brinon-crisdeRham}.
\end{proof}

\begin{defn}
Assume that $L$ is finite over $K$.  Recall  the natural identifications $\rmB_{\st}^\nabla(R_{\calO_L})=\rmB_{\st}^\nabla(R)$ and $\rmB_{\dR}^\nabla(R_{\calO_L})=\rmB_{\dR}^\nabla(R)$.
For a filtered $(\varphi,N,\Gal(L/K), R_K)$-module $D$, set
\[
 V_{\st,L}(D):=(\rmB_{\st}^\nabla(R)\otimes_{L_0}D)^{\varphi=1, N=0}\cap
\Fil^0(\rmB_{\dR}(R)\otimes_{R_L}D_{R_L}),
\]
where $\varphi$ (resp.~$N$) on $\rmB_{\st}^\nabla(R)\otimes_{L_0}D$ denotes $\varphi\otimes\varphi$ (resp.~$N\otimes \id+\id\otimes N$) and $\Fil^\bullet$ on $\rmB_{\dR}(R)\otimes_{R_L}D_{R_L}$ denotes $\sum_{m\in\Z}\Fil^m\rmB_{\dR}(R)\otimes_{R_L} \Fil^{\bullet-m}D_{R_L}$.
 It is a $\Q_p$-vector space equipped with a $\Q_p$-linear action of $\calG_{R_K}$.
\end{defn}

\begin{prop}\label{prop:Vst functor}
Let $L$ be a finite Galois extension of $K$ and let $V\in\Rep_{\Q_p}(\calG_{R_K})$.
If $V|_{\calG_{R_L}}$ is horizontal semistable, then
the natural $\Q_p$-linear map
\[
 V\ra V_{\st,L}(D_{\st}^\nabla(V|_{\calG_{R_L}}))
\]
is a $\calG_{R_K}$-equivariant isomorphism.
\end{prop}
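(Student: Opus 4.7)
The plan is to unwind the definition of $V_{\st,L}(D)$, with $D := D_{\st}^\nabla(V|_{\calG_{R_L}})$, by transporting it through the two comparison isomorphisms supplied by the hypothesis and then identifying the result with $V$ via the fundamental exact sequence.

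First I would use that $\alpha_{\st}^\nabla(V|_{\calG_{R_L}})\colon \rmB_{\st}^\nabla(R)\otimes_{L_0}D\stackrel{\cong}{\lra}\rmB_{\st}^\nabla(R)\otimes_{\Q_p}V$ is an isomorphism equivariant for $\varphi$, $N$, and $\calG_{R_K}$, sending $\varphi\otimes\varphi$ to $\varphi\otimes\id$ and $N\otimes\id+\id\otimes N$ to $N\otimes\id$. Since $(\rmB_{\st}^\nabla(R))^{N=0}=\rmB_{\max}^\nabla(R)$ and $V$ is a $\Q_p$-vector space, this identifies
\[
(\rmB_{\st}^\nabla(R)\otimes_{L_0}D)^{\varphi=1,\,N=0}\;\cong\;\rmB_{\max}^\nabla(R)^{\varphi=1}\otimes_{\Q_p}V.
\]
Next, combining the canonical isomorphism $R_L\otimes_{L_0}D\cong D_{\dR}(V|_{\calG_{R_L}})$ of Proposition~\ref{prop:Dst functor} with the fact that $\alpha_{\dR}(V|_{\calG_{R_L}})$ is a $\calG_{R_K}$-equivariant isomorphism strictly compatible with filtrations, I would identify $\Fil^0(\rmB_{\dR}(R)\otimes_{R_L}D_{R_L})$ with $\Fil^0\rmB_{\dR}(R)\otimes_{\Q_p}V$ inside $\rmB_{\dR}(R)\otimes_{\Q_p}V$.

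Under these two identifications, $V_{\st,L}(D)$ becomes the kernel of the composite
\[
\rmB_{\max}^\nabla(R)^{\varphi=1}\otimes_{\Q_p}V\;\hookrightarrow\;\rmB_{\dR}^\nabla(R)\otimes_{\Q_p}V\;\lra\;\bigl(\rmB_{\dR}(R)/\Fil^0\rmB_{\dR}(R)\bigr)\otimes_{\Q_p}V.
\]
Since the image of the first arrow lies in $\rmB_{\dR}^\nabla(R)$, Lemma~\ref{lem: restriction of filtrations to horizontal de Rham} with $m=0$ shows that the composite factors through the injection $\rmB_{\dR}^\nabla(R)/\rmB_{\dR}^{\nabla+}(R)\hookrightarrow\rmB_{\dR}(R)/\Fil^0\rmB_{\dR}(R)$. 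Hence $V_{\st,L}(D)$ equals the kernel of
\[
\rmB_{\max}^\nabla(R)^{\varphi=1}\otimes_{\Q_p}V\;\lra\;\bigl(\rmB_{\dR}^\nabla(R)/\rmB_{\dR}^{\nabla+}(R)\bigr)\otimes_{\Q_p}V,
\]
which by Proposition~\ref{prop:fundamental exact sequence} tensored with the flat $\Q_p$-module $V$ is precisely $V$, embedded via $v\mapsto 1\otimes v$; chasing the identifications back, this embedding coincides with the natural map in the statement, and the argument is $\calG_{R_K}$-equivariant throughout.

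The main obstacle I anticipate is bookkeeping: confirming that every identification above is $\calG_{R_K}$-equivariant rather than merely $\calG_{R_L}$-equivariant (in particular for the semilinear $\Gal(L/K)$-action on $D$), and that the filtration on $D_{R_L}$ prescribed in the definition of $V_{\st,L}$ is transported to $\Fil^0\rmB_{\dR}(R)\otimes_{\Q_p}V$ by the chain $R_L\otimes_{L_0}D\cong D_{\dR}(V|_{\calG_{R_L}})$ followed by $\alpha_{\dR}(V|_{\calG_{R_L}})$. Once these compatibilities are pinned down, the proof reduces to the fundamental exact sequence together with the key injectivity $\rmB_{\dR}^\nabla(R)\cap\Fil^0\rmB_{\dR}(R)=\rmB_{\dR}^{\nabla+}(R)$.
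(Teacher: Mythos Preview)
Your proposal is correct and follows essentially the same route as the paper: transport $V_{\st,L}(D)$ through $\alpha_{\st}^\nabla$ and $\alpha_{\dR}$ to $\bigl((\rmB_{\st}^\nabla(R))^{\varphi=1,N=0}\cap\Fil^0\rmB_{\dR}(R)\bigr)\otimes_{\Q_p}V$, then use Lemma~\ref{lem: restriction of filtrations to horizontal de Rham} and Proposition~\ref{prop:fundamental exact sequence} to identify this with $V$. The paper phrases the final step as an intersection rather than a kernel, but the content and the bookkeeping you flag (the $\calG_{R_K}$-equivariance via the $\Gal(L/K)$-semilinear structure on $D$, and the filtration coming from Proposition~\ref{prop:Dst functor}) are handled exactly as you outline.
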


\begin{proof}
We use the identification $\rmB_{\st}^\nabla(R_{\calO_L})=\rmB_{\st}^\nabla(R)$.
For simplicity, we write $V_L$ for $V|_{\calG_{R_L}}$.
 Since $V_L$ is horizontal semistable, we have the commutative diagram
\[
 \xymatrix{
\rmB_{\st}^\nabla(R)\otimes_{L_0}D_{\st}^\nabla(V_L)
\ar[rrr]^{\alpha_{\st}^\nabla(V_L)}_{\cong}\ar[d]
&&& \rmB_{\st}^\nabla(R)\otimes_{\Q_p}V\ar[d]\\
\rmB_{\dR}(R)\otimes_{R_L}D_{\dR}(V_L)
\ar[rrr]^{\alpha_{\dR}(V_L)}_{\cong}
&&& \rmB_{\dR}(R)\otimes_{\Q_p}V.
}
\]
Furthermore, the isomorphism $\alpha_{\st}^\nabla(V_L)$ is compatible with $\calG_{R_K}$-actions, Frobenii, and monodromy operators, and the isomorphism $\alpha_{\dR}(V_L)$ is compatible with $\calG_{R_K}$-actions and strictly compatible with filtrations. Note also $D_{\st}^\nabla(V_L)_{R_L}\cong D_{\dR}(V_L)$.
Hence we have $\calG_{R_K}$-equivariant isomorphisms
\begin{align*}
 V_{\st}(D_{\st}^\nabla(V_L))&=(\rmB_{\st}^\nabla(R)\otimes_{L_0}D_{\st}^\nabla(V_L))^{\varphi=1, N=0}\cap \Fil^0(\rmB_{\dR}(R)\otimes_{R_L}D_{\st}^\nabla(V_L)_{R_L})\\
&\cong(\rmB_{\st}^\nabla(R)\otimes_{L_0}D_{\st}^\nabla(V_L))^{\varphi=1, N=0}\cap \Fil^0(\rmB_{\dR}(R)\otimes_{R_L}D_{\dR}(V_L))\\
&\cong (\rmB_{\st}^\nabla(R)\otimes_{\Q_p}V)^{\varphi=1, N=0}\cap \Fil^0(\rmB_{\dR}(R)\otimes_{\Q_p}V)\\
&\cong \bigl((\rmB_{\st}^\nabla(R))^{\varphi=1, N=0}\cap \Fil^0(\rmB_{\dR}(R))\bigr)\otimes_{\Q_p}V.
\end{align*}
By Lemma~\ref{lem: restriction of filtrations to horizontal de Rham} and Proposition~\ref{prop:fundamental exact sequence}, we have
\begin{align*}
 (\rmB_{\st}^\nabla(R))^{\varphi=1, N=0}\cap \Fil^0(\rmB_{\dR}(R))
&=(\rmB_{\st}^\nabla(R))^{\varphi=1, N=0}\cap (\rmB_{\dR}^\nabla(R)\cap\Fil^0(\rmB_{\dR}(R)))\\
&=(\rmB_{\max}^\nabla(R))^{\varphi=1}\cap \rmB_{\dR}^{\nabla+}(R)=\Q_p.
\end{align*}
Hence $\bigl((\rmB_{\st}^\nabla(R))^{\varphi=1, N=0}\cap \Fil^0(\rmB_{\dR}(R))\bigr)\otimes_{\Q_p}V$ is $\calG_{R_K}$-equivariantly isomorphic to $V$.
\end{proof}

\begin{defn}
For $V\in\Rep_{\Q_p}(\calG_{R_K})$, set
\[
 D_{\pst}^\nabla(V):=\varinjlim_LD_{\st}^\nabla(V|_{\calG_{R_L}}),
\]
where $L$ ranges over all the finite Galois extensions of $K$ inside $\overline{K}$.
It is a vector space over $K_0^\ur$ equipped with a Frobenius $\varphi$, a monodromy operator $N$, and a semilinear action of $\Gal(\overline{K}/K)$.
By unwinding the definitions, we see $D_{\pst}^\nabla(V)^{\Gal(\overline{K}/L)}=D_{\st}^\nabla(V|_{\calG_{R_L}})$ for every finite Galois extension $L$ of $K$ inside $\overline{K}$.

Let $\Rep_{\pst}^\nabla(\calG_{R_K})$ be the full subcategory of $\Rep_{\Q_p}(\calG_{R_K})$
consisting of objects $V$ with the following property: there exists a finite Galois extension $L$ of $K$ such that $V|_{\calG_{R_L}}$ is horizontal semistable.
\end{defn}

\begin{rem}
 By Lemma~\ref{lem:potentially horizontal de Rham is horizontal de Rham}(ii), every object of $\Rep_{\pst}^\nabla(\calG_{R_K})$ is horizontal de Rham.
We will prove that $\Rep_{\pst}^\nabla(\calG_{R_K})$ is exactly the full subcategory of horizontal de Rham representations if $R$ satisfies Condition (BR) (Theorem~\ref{thm:p-adic monodromy for horizontal de Rham representations}).
In this case, $\Rep_{\pst}^\nabla(\calG_{R_K})$ is a Tannakian subcategory of $\Rep_{\Q_p}(\calG_{R_K})$ by Remark~\ref{rem:tensor of horizontal de Rham representations},  Lemma~\ref{lem:dual of horizontal de Rham} and by Theorem~\ref{thm:p-adic monodromy for horizontal de Rham representations} or the fact that $R_{\calO_L}$ satisfies Condition (BR) with respect to $L$ for every finite extension $L$ of $K$.
\end{rem}

\begin{lem}
Let $V\in \Rep_{\pst}^\nabla(\calG_{R_K})$.
Let $L$ be a finite Galois extension of $K$ inside $\overline{K}$ such that $V|_{\calG_{R_L}}$ is horizontal semistable. 
Then there is a natural identification $D_{\pst}^\nabla(V)_{R_{\overline{K}}}\cong R_{\overline{K}}\otimes_LD_{\dR}(V|_{\calG_{R_L}})$.
Moreover, if we set
\[
 \Fil^\bullet D_{\pst}^\nabla(V)_{R_{\overline{K}}}:=R_{\overline{K}}\otimes_{R_L}\Fil^\bullet D_{\dR}(V|_{\calG_{R_L}}),
\]
then $D_{\pst}^\nabla(V)$ is a discrete filtered $(\varphi,N,\Gal(\overline{K}/K), R_K)$-module and the filtration is independent of the choice of $L$.
\end{lem}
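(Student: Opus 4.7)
The plan is to unwind the direct limit defining $D_{\pst}^\nabla(V)$ so that everything is expressed in terms of the single horizontal semistable representation $V|_{\calG_{R_L}}$, transport the filtration on $D_{\dR}(V|_{\calG_{R_L}})$ to the base change, and verify independence of the choice of $L$ by a functoriality argument.

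To establish the identification, observe that for any finite Galois extension $L'$ of $K$ inside $\overline{K}$ containing $L$, the restriction $V|_{\calG_{R_{L'}}}$ is again horizontal semistable by Lemma~\ref{lem:potentially horizontal de Rham is horizontal de Rham}(i), and Lemma~\ref{lem: admissiblity of two period rings} applied to the natural inclusion $\rmB_{\st}^\nabla(R_{\calO_L})\hra\rmB_{\st}^\nabla(R_{\calO_{L'}})$ yields a canonical isomorphism $L'_0\otimes_{L_0}D_{\st}^\nabla(V|_{\calG_{R_L}})\stackrel{\cong}{\lra}D_{\st}^\nabla(V|_{\calG_{R_{L'}}})$ compatible with $\varphi$, $N$, and the $\Gal(\overline{K}/K)$-action. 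Passing to the direct limit gives $D_{\pst}^\nabla(V)\cong K_0^\ur\otimes_{L_0}D_{\st}^\nabla(V|_{\calG_{R_L}})$; tensoring with $R_{\overline{K}}$ over $K_0^\ur$ and applying the horizontal de Rham comparison $R_L\otimes_{L_0}D_{\st}^\nabla(V|_{\calG_{R_L}})\cong D_{\dR}(V|_{\calG_{R_L}})$ from Proposition~\ref{prop:Dst functor} then produces the desired identification. The $\Gal(\overline{K}/K)$-action on $D_{\pst}^\nabla(V)$ is discrete by construction, since every element lies in $D_{\st}^\nabla(V|_{\calG_{R_L}})=D_{\pst}^\nabla(V)^{\Gal(\overline{K}/L)}$ for a suitable~$L$.

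For the filtration, each $\Fil^i D_{\dR}(V|_{\calG_{R_L}})$ is a $\Gal(L/K)$-stable projective $R_L$-submodule of $D_{\dR}(V|_{\calG_{R_L}})$ by Proposition~\ref{prop:Dst functor} and \cite[Proposition 8.3.1]{Brinon-crisdeRham}, so its base change along $R_L\to R_{\overline{K}}$ is a $\Gal(\overline{K}/K)$-stable projective $R_{\overline{K}}$-submodule of $D_{\pst}^\nabla(V)_{R_{\overline{K}}}$, with separatedness and exhaustiveness preserved. Compatibility of the resulting data with $\varphi$, $N$, and the $\Gal(\overline{K}/K)$-action is immediate from the direct-limit construction.

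Finally, I will prove independence of $L$ by comparing the filtrations for any two choices $L\subset L'$. Functoriality of $D_{\dR}$ under the base change $R_{\calO_L}\to R_{\calO_{L'}}$ gives a natural isomorphism $R_{L'}\otimes_{R_L}D_{\dR}(V|_{\calG_{R_L}})\stackrel{\cong}{\lra}D_{\dR}(V|_{\calG_{R_{L'}}})$, and the strict compatibility of $\alpha_{\dR}$ with filtrations from \cite[Proposition 8.4.3]{Brinon-crisdeRham} ensures that this isomorphism carries the tensor-product filtration on the source to the natural filtration on the target. Base changing further along $R_{L'}\to R_{\overline{K}}$ shows that the two induced filtrations on $D_{\pst}^\nabla(V)_{R_{\overline{K}}}$ agree, and any two arbitrary choices of $L$ can be compared by embedding both in a common larger finite Galois extension of $K$. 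The main subtlety is precisely this filtration compatibility, which hinges on the \emph{strict} (as opposed to mere) compatibility of $\alpha_{\dR}$; the horizontal version $\alpha_{\dR}^\nabla$ fails to be strictly compatible (Remark~\ref{rem:horizontal comparison does not preserve filtrations}), so the non-horizontal filtration must be used.
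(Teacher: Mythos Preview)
Your proof is correct and follows essentially the same approach as the paper's own argument: both reduce the direct limit to the single level $L$ via the canonical isomorphisms $D_{\st}^\nabla(V|_{\calG_{R_{L'}}})\cong L_0'\otimes_{L_0}D_{\st}^\nabla(V|_{\calG_{R_L}})$ and $D_{\dR}(V|_{\calG_{R_{L'}}})\cong R_{L'}\otimes_{R_L}D_{\dR}(V|_{\calG_{R_L}})$, the latter being strictly compatible with filtrations. Your write-up is in fact more detailed than the paper's (which simply records these two isomorphisms and their compatibilities and then invokes Proposition~\ref{prop:Dst functor}); your closing remark distinguishing the strict compatibility of $\alpha_{\dR}$ from the failure of strict compatibility for $\alpha_{\dR}^\nabla$ is a useful clarification that the paper does not make explicit here. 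One minor imprecision: the map $\rmB_{\st}^\nabla(R_{\calO_L})\to\rmB_{\st}^\nabla(R_{\calO_{L'}})$ is an identification rather than a proper inclusion, since both equal $\rmB_{\st}^\nabla(R)$ as rings (only the acting Galois group changes).
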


\begin{proof}
For every finite Galois extension $L'$ of $K$ containing $L$, $V|_{\calG_{R_{L'}}}$ is horizontal semistable and we have
\begin{align*}
 D_{\st}^\nabla(V|_{\calG_{R_{L'}}})&\cong L_0'\otimes_{L_0}D_{\st}^\nabla(V|_{\calG_{R_L}}),\quad\text{and}\\
D_{\dR}(V|_{\calG_{R_{L'}}})&\cong R_{L'}\otimes_{R_L}D_{\dR}(V|_{\calG_{R_L}})\cong R_{L'}\otimes_{L_0}D_{\st}^\nabla(V|_{\calG_{R_L}}).
\end{align*}
Moreover, the first isomorphism is compatible with Frobenii, monodromy operators and $\Gal(L'/K)$-actions,  and the second isomorphism is compatible with $\Gal(L'/K)$-actions and strictly compatible with filtrations.
 The assertions follow from these remarks and Proposition~\ref{prop:Dst functor}.
\end{proof}

\begin{defn}
Let $D$ be a filtered $(\varphi,N,\Gal(\overline{K}/K), R_K)$-module.
Set
\[
 V_{\pst}(D):=(\rmB_{\st}^\nabla(R)\otimes_{K_0^\ur}D)^{\varphi=1, N=0}\cap
\Fil^0(\rmB_{\dR}(R)\otimes_{R_{\overline{K}}}D_{R_{\overline{K}}}),
\]
where $\varphi$ (resp. $N$) on $\rmB_{\st}^\nabla(R)\otimes_{K_0^\ur}D$ denotes $\varphi\otimes\varphi$ (resp. $N\otimes \id+\id\otimes N$) and $\Fil^\bullet$ on $\rmB_{\dR}(R)\otimes_{R_{\overline{K}}}D_{R_{\overline{K}}}$ denotes $\sum_{m\in\Z}\Fil^m\rmB_{\dR}(R)\otimes_{R_{\overline{K}}} \Fil^{\bullet-m}D_{R_{\overline{K}}}$.
 It is a $\Q_p$-vector space equipped with a $\Q_p$-linear action of $\calG_{R_K}$.

We say that $D$ is \emph{admissible} if it is isomorphic to $D_{\pst}^\nabla(V)$ for
some $V\in\Rep_{\pst}^\nabla(\calG_{R_K})$.
Let $\MF_{\overline{K}/K}^{\ad}(\varphi,N,R_K)$ be the full subcategory of $\MF_{\overline{K}/K}(\varphi,N,R_K)$ consisting of admissible objects.
\end{defn}

\begin{thm}\label{thm:equivalence by Dpst and Vpst}
 Assume that $R$ satisfies Condition (BR). Then the functor
\[
 D_{\pst}^\nabla\colon \Rep_{\pst}^\nabla(\calG_{R_K})\ra 
\MF_{\overline{K}/K}^{\ad}(\varphi,N,R_K)
\]
is an equivalence of categories with quasi-inverse given by $V_{\pst}$.
\end{thm}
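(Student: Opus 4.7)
The plan is to show that $V_{\pst}$ furnishes a quasi-inverse to $D_{\pst}^\nabla$; full faithfulness will then follow formally from the resulting unit and counit isomorphisms. First I would exhibit a functorial $\calG_{R_K}$-equivariant isomorphism $V\cong V_{\pst}(D_{\pst}^\nabla(V))$ for every $V\in\Rep_{\pst}^\nabla(\calG_{R_K})$. Fix a finite Galois extension $L/K$ inside $\overline{K}$ such that $V|_{\calG_{R_L}}$ is horizontal semistable. By the lemma preceding the theorem, $D_{\pst}^\nabla(V)\cong K_0^{\ur}\otimes_{L_0}D_{\st}^\nabla(V|_{\calG_{R_L}})$, and the filtration on $D_{\pst}^\nabla(V)_{R_{\overline{K}}}$ is obtained by base change from $D_{\dR}(V|_{\calG_{R_L}})$. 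These identifications yield
\begin{align*}
\rmB_{\st}^\nabla(R)\otimes_{K_0^{\ur}}D_{\pst}^\nabla(V) &\cong \rmB_{\st}^\nabla(R)\otimes_{L_0}D_{\st}^\nabla(V|_{\calG_{R_L}}),\\
\rmB_{\dR}(R)\otimes_{R_{\overline{K}}}D_{\pst}^\nabla(V)_{R_{\overline{K}}} &\cong \rmB_{\dR}(R)\otimes_{R_L}D_{\dR}(V|_{\calG_{R_L}}),
\end{align*}
compatibly with Frobenius, monodromy, and filtrations, so that taking $(\varphi=1, N=0)$ and intersecting with $\Fil^0$ on both sides gives the same $\Q_p$-vector space. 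Therefore $V_{\pst}(D_{\pst}^\nabla(V))=V_{\st,L}(D_{\st}^\nabla(V|_{\calG_{R_L}}))$, and Proposition~\ref{prop:Vst functor} produces the desired natural isomorphism with $V$.

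The second natural isomorphism and essential surjectivity are immediate consequences: given $D\in\MF_{\overline{K}/K}^{\ad}(\varphi,N,R_K)$, choose $V$ with $D\cong D_{\pst}^\nabla(V)$ (possible by admissibility) and apply $D_{\pst}^\nabla$ to the unit isomorphism $V\cong V_{\pst}(D_{\pst}^\nabla(V))$ to obtain $D\cong D_{\pst}^\nabla(V_{\pst}(D))$, natural in $D$ by functoriality. Full faithfulness is then standard: applying $V_{\pst}$ to a morphism $D_{\pst}^\nabla(V)\to D_{\pst}^\nabla(V')$ in the target category and conjugating by the unit isomorphisms of Step~1 yields an inverse to the map $\Hom(V,V')\to\Hom(D_{\pst}^\nabla(V),D_{\pst}^\nabla(V'))$ induced by $D_{\pst}^\nabla$.

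The main obstacle I anticipate lies in the filtration bookkeeping of the displayed identifications. One must verify that the tensor-product filtration on $\rmB_{\dR}(R)\otimes_{R_{\overline{K}}}D_{\pst}^\nabla(V)_{R_{\overline{K}}}$, defined from the base-changed filtration on $D_{\pst}^\nabla(V)_{R_{\overline{K}}}$, coincides with the tensor filtration on $\rmB_{\dR}(R)\otimes_{R_L}D_{\dR}(V|_{\calG_{R_L}})$ under the natural identification, and that this matching is strict enough to intertwine the $\Fil^0$ operation with the isomorphism $\alpha_{\dR}(V|_{\calG_{R_L}})$ invoked inside Proposition~\ref{prop:Vst functor}. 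Since $\alpha_{\dR}$ is strictly compatible with filtrations (\cite[Proposition~8.4.3]{Brinon-crisdeRham}) and the filtrations on $R_{\overline{K}}$-base changes are by construction the extensions of those over $R_L$, this reduces to a formal check. Condition (BR) on $R$ does not enter the formal proof itself; it is retained so that Theorem~\ref{thm:p-adic monodromy for horizontal de Rham representations} identifies $\Rep_{\pst}^\nabla(\calG_{R_K})$ with the full subcategory of horizontal de Rham representations, giving the equivalence its intended intrinsic meaning.
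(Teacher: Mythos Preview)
Your reduction of the unit isomorphism $V\cong V_{\pst}(D_{\pst}^\nabla(V))$ to Proposition~\ref{prop:Vst functor} is correct and matches the paper. The gap is in the passage from the unit to full faithfulness. Conjugating by the unit only shows that $f\mapsto \eta_{V'}^{-1}\circ V_{\pst}(f)\circ\eta_V$ is a \emph{left} inverse to $g\mapsto D_{\pst}^\nabla(g)$ (this is the naturality of $\eta$), hence that $D_{\pst}^\nabla$ is faithful. To get fullness you would need $D_{\pst}^\nabla(\eta_{V'}^{-1}\circ V_{\pst}(f)\circ\eta_V)=f$ for an arbitrary $f\colon D_{\pst}^\nabla(V)\to D_{\pst}^\nabla(V')$, and nothing you have written forces this. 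Equivalently, your counit ``choose $V$ with $D\cong D_{\pst}^\nabla(V)$ and apply $D_{\pst}^\nabla$ to $\eta_V$'' is an isomorphism object by object, but the assertion that it is \emph{natural in $D$} is precisely the fullness statement you are trying to prove; ``by functoriality'' does not supply it. (A one-object counterexample: take $F$ from the trivial category to a nontrivial monoid; $GF=\id$ and $F$ is essentially surjective, yet $F$ is not full.)

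The paper closes this gap differently: it applies the unit isomorphism to $V=\Hom_{\Q_p}(V_1,V_2)$ and reads off full faithfulness from the internal Hom. This step uses that $\Rep_{\pst}^\nabla(\calG_{R_K})$ is stable under duals, which is Lemma~\ref{lem:dual of horizontal de Rham} and does require Condition~(BR); so your remark that (BR) ``does not enter the formal proof itself'' is at odds with the paper's argument. If you want to avoid the internal Hom, you should instead build the counit intrinsically: for admissible $D$ the inclusion $V_{\pst}(D)\subset \rmB_{\st}^\nabla(R)\otimes_{K_0^{\ur}}D$ induces a \emph{natural} map $\rmB_{\st}^\nabla(R)\otimes_{\Q_p}V_{\pst}(D)\to \rmB_{\st}^\nabla(R)\otimes_{K_0^{\ur}}D$, compatible with $\varphi$, $N$ and $\calG_{R_K}$; once you know it is an isomorphism (which follows from admissibility and your Step~1), taking $\Gal(\overline{K}/L)$-invariants for varying $L$ yields a natural isomorphism $D_{\pst}^\nabla(V_{\pst}(D))\cong D$, and then your conjugation argument becomes a genuine two-sided inverse.
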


\begin{proof}
By Proposition~\ref{prop:Vst functor}, we see that 
the $\Q_p$-linear map $V\ra V_{\pst}(D_{\pst}(V))$ is a $\calG_{R_K}$-equivariant isomorphism.
For $V_1,V_2\in\Rep_{\pst}^\nabla(\calG_{R_K})$, this result for $V=\Hom(V_1,V_2)\in \Rep_{\pst}^\nabla(\calG_{R_K})$ implies that $D_{\pst}$ is fully faithful.
Hence the assertion follows.
\end{proof}

\begin{rem}
 As mentioned in the introduction, it is desirable to characterize admissible filtered $(\varphi,N,\Gal(\overline{K}/K), R_K)$-module intrinsically.
When $R=K$, admissibility is equivalent to weak admissibility by \cite{Colmez-Fontaine}.
Results on $p$-adic period domains suggest that admissibility will not be characterized by weak admissibility after every specialization of $R$ to a finite extension of $K$ (see \cite[Example 6.7]{Hartl-Rapoport-Zink}).

In \cite{Brinon-crisdeRham}, Brinon defines crystalline representations of $\calG_{R_K}$ and associates to them filtered $(\varphi,\nabla)$-modules over $R_K$ under Condition (BR).
He also defines the notion of pointwise weak admissibility for filtered $(\varphi,\nabla)$-modules over $R_K$ \cite[D\'efinition 7.1.11]{Brinon-crisdeRham} by considering the specialization of $R$ to the $p$-adic completion of the localization of $R$ at the prime ideal $(\pi)$ (when $R$ satisfies Condition (BR)).
Brinon proves that filtered $(\varphi,\nabla)$-modules over $R_K$ coming from crystalline representations are pointwisely weakly admissible (\cite[Proposition 8.3.4]{Brinon-crisdeRham}).
However, Moon \cite{Moon} proves that there are pointwisely weakly admissible filtered $(\varphi,\nabla)$-modules that do not arise from crystalline representations.
\end{rem}

\section{Purity for horizontal semistable representations}\label{section:purity}

\subsection{Statement}\label{subsection:statement of purity}
We first recall our set-up.
Let $k$ be a perfect field of characteristic $p$ and set $K_0:=W(k)[p^{-1}]$.
Let $K$ be a totally ramified finite extension of $K_0$ and fix a uniformizer $\pi$ of $K$.
We denote by $C$ the $p$-adic completion of an algebraic closure $\overline{K}$ of $K$.
 Let $R$ be an $\calO_K$-algebra satisfying the conditions in Set-up~\ref{set-up: Brinon's rings}. We further assume that $R$ satisfies Condition (BR). In particular, $(\pi)\subset R$ is a prime ideal. 

Consider the localization $R_{(\pi)}$ of $R$ at the prime ideal $(\pi)$. Let $\calO_{\calK}$ denote the $p$-adic completion of $R_{(\pi)}$. It is a complete discrete valuation ring with residue field admitting a finite $p$-basis. Set $\calK:=\calO_{\calK}[p^{-1}]$ and
fix an algebraic closure $\overline{\calK}$ of $\calK$. Let $\calO_{\overline{\calK}}$ denote the ring of integers of $\overline{\calK}$.
We denote the $p$-adic completion of $\calO_{\overline{\calK}}$ by $\widehat{\calO_{\overline{\calK}}}$.
Representations of $\Gal(\overline{\calK}/\calK)$ are studied in \cite{Brinon-cris, Morita, Ohkubo}.
In particular, we can discuss whether such a representation is horizontal crystalline, semistable, or de Rham (cf. \cite{Ohkubo}). Note that these classes of representations are defined by admissibility of the period rings $\B_{\cris}(\Lambda^+)$ (or equivalently $\B_{\max}(\Lambda^+)$), $\B_{\st}(\Lambda^+)$, and $\B_{\dR}(\Lambda^+)$ for $\Lambda^+=\widehat{\calO_{\overline{\calK}}}$, respectively (see Lemma~\ref{lem:equiv of max and cris admissibility} and \cite[\S 4]{Ohkubo} for horizontal semistable representations).

Let $T$ denote the set of prime ideals of $\overline{R}$ above $(\pi)\subset R$.
Note that $\calG_{R_K}$ acts transitively on $T$.
For $\fkp\in T$, consider the decomposition subgroup
\[
 \calG_{R_K}(\fkp):=\{g\in\calG_{R_K}\mid\,g(\fkp)=\fkp\}.
\]

We now fix an $R$-algebra embedding $\overline{R}\hra \calO_{\overline{\calK}}$.
This determines a prime ideal of $\overline{R}$ above $(\pi)\subset R$, which we denote by $\fkp_0$, and it gives rise to
 a homomorphism
\[
 \widehat{\calG}_{R_K}(\fkp_0):=\Gal(\overline{\calK}/\calK)\ra \calG_{R_K}.
\]
 Note that the map factors as $\widehat{\calG}_{R_K}(\fkp_0)\twoheadrightarrow \calG_{R_K}(\fkp_0)\hra\calG_{R_K} $ by \cite[Lemme 3.3.1]{Brinon-crisdeRham}.
We also remark that a different choice of the embedding $\overline{R}\hra \calO_{\overline{\calK}}$ defines another ideal $\fkp_0'\in T$ and it changes the homomorphism $\Gal(\overline{\calK}/\calK)\ra \calG_{R_K}$ with its conjugate by an element $g$ of $\calG_{R_K}$ such that $g(\fkp_0)=\fkp_0'$.

\begin{thm}
\label{thm:purity for horiztaonlly semistable}
Assume that $R$ satisfies Condition (BR).
Let $V\in \Rep_{\Q_p}(\calG_{R_K})$.
If $V$ is horizontal de Rham and $V|_{\widehat{\calG}_{R_K}(\fkp_0)}$ is horizontal semistable, then $V$ is horizontal semistable.
\end{thm}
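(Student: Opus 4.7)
The plan is to show $D_{\dR}^\nabla(V)\subseteq D_{\st,K}^\nabla(V)$, both viewed as subspaces of $\rmB_{\dR}^\nabla(R)\otimes_{\Q_p}V$. Combined with the always-injective reverse inclusion $D_{\st,K}^\nabla(V)\subseteq D_{\dR}^\nabla(V)$ and the horizontal de Rham hypothesis $\dim_K D_{\dR}^\nabla(V)=\dim_{\Q_p}V$, this forces $\dim_K D_{\st,K}^\nabla(V)=\dim_{\Q_p}V$, and hence horizontal semistability by Lemma~\ref{lem:basic properties of D}(ii).

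The central tool is Lemma~\ref{lem:Cartesian diagram of period rings} applied to the $\calO_K$-algebra map $f^+\colon\widehat{\overline{R}}\hookrightarrow\widehat{\calO_{\overline{\calK}}}$ obtained from the fixed embedding $\overline{R}\hookrightarrow\calO_{\overline{\calK}}$ by $p$-adic completion. I would first verify that $(\widehat{\calO_{\overline{\calK}}}[p^{-1}],\widehat{\calO_{\overline{\calK}}})$ is a perfectoid $C$-Banach pair and that $f^+$ is injective with $p$-torsion free cokernel; these are the preliminary checks. Granting them, the lemma supplies a Cartesian square of $+$-period rings; localization at $t$ and tensoring over $\Q_p$ with the finite-dimensional space $V$ both preserve finite limits, yielding a Cartesian diagram
\[
\xymatrix{
\rmB_{\st,K}^\nabla(R)\otimes_{\Q_p}V \ar[r]\ar[d] & \rmB_{\dR}^\nabla(R)\otimes_{\Q_p}V \ar[d] \\
\B_{\st,K}(\widehat{\calO_{\overline{\calK}}})\otimes_{\Q_p}V \ar[r] & \B_{\dR}(\widehat{\calO_{\overline{\calK}}})\otimes_{\Q_p}V
}
\]
with injective vertical arrows. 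The hypothesis that $V|_{\widehat{\calG}_{R_K}(\fkp_0)}$, which factors through $\Gal(\overline{\calK}/\calK)$, is horizontal semistable implies (by the analogue of Definition~\ref{def:horizontal de Rham representations} over $\calK$, available in the formalism of Section~\ref{section:period rings}) that it is also horizontal de Rham, so the natural inclusion $D_{\st,K}(V|_{\Gal(\overline{\calK}/\calK)})\hookrightarrow D_{\dR}(V|_{\Gal(\overline{\calK}/\calK)})$ is an equality of $K$-vector spaces of dimension $\dim_{\Q_p}V$. Consequently, for any $v\in D_{\dR}^\nabla(V)$, its image in $\B_{\dR}(\widehat{\calO_{\overline{\calK}}})\otimes_{\Q_p}V$ is $\Gal(\overline{\calK}/\calK)$-invariant and therefore lies in $\B_{\st,K}(\widehat{\calO_{\overline{\calK}}})\otimes_{\Q_p}V$. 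The Cartesian property then yields $v\in\rmB_{\st,K}^\nabla(R)\otimes_{\Q_p}V$, and since $v$ is $\calG_{R_K}$-invariant we conclude $v\in D_{\st,K}^\nabla(V)$, completing the argument.

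The principal technical obstacle I foresee is the preliminary verification of the hypotheses of Lemma~\ref{lem:Cartesian diagram of period rings}: although $\overline{R}\hookrightarrow\calO_{\overline{\calK}}$ is given, $p$-adic completion can \emph{a priori} destroy injectivity or create $p$-torsion in the cokernel. One should establish $\overline{R}\cap p^n\calO_{\overline{\calK}}=p^n\overline{R}$ inside $\overline{\Frac R}$ for every $n$, which I expect reduces to normality of both rings together with the description of $\overline{R}$ as a directed union of normalizations of finite extensions of $R$ inside $\overline{\Frac R}$.
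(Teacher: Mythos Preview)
Your strategy has a genuine gap at exactly the point you flag as the principal obstacle: the claim $\overline{R}\cap p^n\calO_{\overline{\calK}}=p^n\overline{R}$ fails in general, so Lemma~\ref{lem:Cartesian diagram of period rings} does not apply to $f^+\colon\widehat{\overline{R}}\hookrightarrow\widehat{\calO_{\overline{\calK}}}$. The reason is that $\overline{R}$ has many primes above $(\pi)\subset R$ (this is the set $T$ of Subsection~\ref{subsection:statement of purity}), whereas the embedding into $\calO_{\overline{\calK}}$ sees only the single prime $\fkp_0$. Divisibility of $x\in\overline{R}$ by $\pi$ in the normal domain $\overline{R}$ is a condition at \emph{every} height-one prime containing $\pi$, but membership in $\pi\calO_{\overline{\calK}}$ tests only the $\fkp_0$-adic valuation. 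Concretely, take $R=\calO_K\langle T^{\pm 1}\rangle$ with $p$ odd and $R'=R[\sqrt{1+pT}]$; since $1+pT\in R^\times$ this is finite \'etale over $R$, and $R'/\pi R'\cong k[T^{\pm 1}][X]/(X^2-1)$ has two minimal primes. The idempotent lift $\tfrac{1}{2}(1+\sqrt{1+pT})\in R'$ lies in $\pi R'_{\fkq}$ for one of these primes $\fkq$ (where $\pi$ is a uniformizer, the extension being unramified) but not in $\pi R'$. So already at this finite level the cokernel of $R'\hookrightarrow R'_{\fkq}$ has $\pi$-torsion, and your Cartesian square does not exist.

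The paper repairs this by replacing $\widehat{\calO_{\overline{\calK}}}$ with the product $\Lambda_T^+:=\prod_{\fkp\in T}\widehat{\overline{R}_{\fkp}}$ over \emph{all} primes above $(\pi)$; for this target the map from $\widehat{\overline{R}}$ does have $p$-torsion free cokernel (Lemma~\ref{lem:localization map is injective with p-torsion free cokernel}), and Lemma~\ref{lem:Cartesian diagram of period rings} then yields a usable Cartesian square (Lemma~\ref{lem:comparing periods of Lambda R and Lambda T}). The cost is substantial: one must transport the semistability hypothesis from $\fkp_0$ to every $\fkp\in T$ via the transitive $\calG_{R_K}$-action, and then control how $\B_{\st,K}^+(\Lambda_T^+)$ sits inside $\prod_{\fkp}\B_{\st,K}^+(\Lambda_{\fkp}^+)$ (Lemma~\ref{lem:comparing semistable period of Lambda T and prod Lamda fkp}), which requires tracking integral structures through Colmez's explicit description of the period rings. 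This is what accounts for most of the length of Section~\ref{section:purity}.
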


\begin{rem}
The converse of Theorem~\ref{thm:purity for horiztaonlly semistable} also holds by Lemma~\ref{lem: admissiblity of two period rings}. 
\end{rem}

The rest of this section is devoted to the proof of Theorem~\ref{thm:purity for horiztaonlly semistable}. Our proof is modeled after Tsuji's work on a purity theorem on crystalline local systems \cite[Theorem 5.4.8]{Tsuji-cris}.

\subsection{Preliminaries on relevant period rings}

\begin{lem}\label{lem:canonical subfield}
Consider $d\colon \calK\ra \calK\otimes_{\calO_{\calK}}\widehat{\Omega}_{\calO_{\calK}}$,
where $\widehat{\Omega}_{\calO_{\calK}}:=\varprojlim_m\Omega^1_{\calO_{\calK}/\Z}/p^m\Omega^1_{\calO_{\calK}/\Z}$.
 We have
\[
\Ker(d\colon \calK\ra \calK\otimes_{\calO_{\calK}}\widehat{\Omega}_{\calO_{\calK}} )=K.
\]
\end{lem}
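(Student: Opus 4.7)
The plan is to combine higher-order divided-power derivations with an induction on the $\pi$-adic filtration, reducing the problem modulo $\pi$ to a characteristic-$p$ statement about Hasse--Schmidt derivations on the imperfect residue field $\kappa$ of $\calK$. First I would carry out reductions: since $\widehat{\Omega}_{\calO_K/\Z}$ is $\calO_K$-torsion (killed by the different of $K/K_0$), after inverting $p$ there is a canonical isomorphism $\calK\otimes_{\calO_{\calK}}\widehat{\Omega}_{\calO_{\calK}/\Z} \cong \calK\otimes_{\calO_{\calK}}\widehat{\Omega}_{\calO_{\calK}/\calO_K}$, and the latter is a free $\calK$-module on $dT_1,\ldots,dT_n$ inherited from the Tate algebra coordinates underlying $R=\widetilde R$. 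In this basis $d$ becomes $x \mapsto \sum_i(\partial x/\partial T_i)\,dT_i$ and $d\pi = 0$. Writing $x = \pi^{-k}x_0$ with $x_0 \in \calO_{\calK}$ and using $p$-torsion-freeness of $\widehat\Omega_{\calO_{\calK}/\calO_K}$, the problem reduces to showing: if $x_0 \in \calO_{\calK}$ satisfies $\partial x_0/\partial T_i = 0$ for all $i$, then $x_0 \in \calO_K$.

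Next I would invoke higher derivations. Since all first-order partials of $x_0$ vanish, so do all iterated partials, and hence the $\calO_K$-linear divided-power operators $D_\alpha := \frac{1}{\alpha!}\partial^{|\alpha|}/\partial T^\alpha$ satisfy $D_\alpha x_0 = 0$ for every multi-index $\alpha \neq 0$. Each $D_\alpha$ preserves $\calO_{\calK}$: on monomials $D_\alpha(T^\beta) = \binom{\beta}{\alpha}T^{\beta-\alpha}$ has integer coefficients, so $D_\alpha$ is $p$-adically bounded on $\calO_K[T_1^{\pm 1},\ldots,T_n^{\pm 1}]$ and extends through the \'etale, localization, and completion operations building $\calO_{\calK}$. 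The central intermediate claim is then: if $\bar y \in \kappa = \Frac(R/\pi R)$ satisfies $\bar D_\alpha \bar y = 0$ for all $\alpha \neq 0$, then $\bar y \in k$. Indeed, Condition (BR) ($R = \widetilde R$) together with geometric integrality of $k \to R/\pi R$ imply that $k$ is algebraically closed in $\kappa$ and that $\{T_1,\ldots,T_n\}$ forms a $p$-basis of $\kappa$ over $k$. Writing $\bar y = \bar f/\bar g$ and picking $\bar a \in \overline{\kappa}^n$ with $\bar g(\bar a) \neq 0$, a formal Taylor expansion $\bar y = \sum_\beta c_\beta(T-\bar a)^\beta$ in $\overline{\kappa}[[T-\bar a]]$ has coefficient $c_\beta = \bar D_\beta\bar y|_{T=\bar a}$ vanishing for $\beta \neq 0$, so $\bar y = c_0 \in \overline{k}\cap\kappa = k$.

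Finally I would induct on $m \geq 0$. Assuming $x_0 \in \calO_K + \pi^m\calO_{\calK}$, write $x_0 = a + \pi^m y$ with $a \in \calO_K$, $y \in \calO_{\calK}$; $\calO_K$-linearity of each $D_\alpha$ and $\pi$-torsion-freeness yield $D_\alpha y = 0$ for $\alpha \neq 0$, the intermediate claim then gives $\bar y \in k \subset \calO_K/\pi\calO_K$, and hence $x_0 \in \calO_K + \pi^{m+1}\calO_{\calK}$. Since $\calO_K$ is $\pi$-adically complete (and therefore closed in $\calO_{\calK}$), $x_0 \in \bigcap_m(\calO_K + \pi^m\calO_{\calK}) = \calO_K$, yielding $x \in K$. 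The main obstacle will be the intermediate mod-$\pi$ claim: jointly cutting out $k$ in the imperfect field $\kappa$ via Hasse--Schmidt derivations, which relies crucially on the geometric integrality of $R/\pi R$ (forcing $k$ algebraically closed in $\kappa$) and on $\{T_i\}$ being a $p$-basis of $\kappa/k$. A secondary but delicate point is checking that each $D_\alpha$ actually preserves $\calO_{\calK}$ through all the operations used to build $\widetilde R$; this reduces to their continuity and boundedness on the Tate algebra.
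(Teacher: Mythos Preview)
Your approach is essentially correct and is genuinely different from the paper's. The paper gives a two-line proof: since $K$ is algebraically closed in $R_K$ (an assumption in Set-up~\ref{set-up: Brinon's rings}), it is also algebraically closed in $\calK$, and then \cite[Proposition~2.28]{Brinon-cris} directly identifies $\Ker(d)$ with $K$; all the substance is outsourced to Brinon's general result for complete discrete valuation fields with imperfect residue field. Your argument, by contrast, is self-contained and exploits the specific structure of $\calO_{\calK}$ under Condition~(BR): you lift the Hasse--Schmidt operators $D_\alpha$ from the Tate torus through the formally \'etale and completion steps, reduce modulo~$\pi$, and finish by a characteristic-$p$ Taylor expansion combined with the $\pi$-adic induction. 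This buys independence from~\cite{Brinon-cris} at the cost of more bookkeeping, and it makes explicit how the coordinates $T_1,\dots,T_n$ control the kernel. One point to tighten: in your mod-$\pi$ claim you write ``$\bar a\in\overline{\kappa}^{\,n}$'' and ``$\bar y=\bar f/\bar g$'', but what you really need is a $\overline{k}$-point of $\Spec(R/\pi R)$ at which $\bar g\in R/\pi R$ does not vanish, so that the completed local ring there is $\overline{k}[[T_1-a_1,\dots,T_n-a_n]]$ and the resulting embedding of $\kappa$ into the Laurent series field carries $D_\alpha$ to the $\alpha$-th Taylor coefficient; then $\bar y$ lands in $\overline{k}\cap\kappa=k$ as you claim. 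With that clarification (and the routine check that the $D_\alpha$ are $p$-adically bounded on $\calO_K\langle T_1^{\pm1},\dots,T_n^{\pm1}\rangle$, hence propagate through the operations building $\widetilde R$ and $\calO_{\calK}$), the argument goes through.
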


\begin{proof}
Since $K$ is algebraically closed in $R$, it is also algebraically closed in $\calK$. Hence the lemma follows from \cite[Proposition 2.28]{Brinon-cris}.
\end{proof}

\begin{rem}
 In \cite{Ohkubo}, this kernel is denoted by $\calK_{\can}$ (cf.~\cite[Definition 1.3 (i), Remark 1.4 (i)]{Ohkubo}).
\end{rem}

Note that $\widehat{\calO_{\overline{\calK}}}[p^{-1}]$ is a complete algebraically closed field containing $C$.
Hence the pair $\bigl(\widehat{\calO_{\overline{\calK}}}[p^{-1}],\widehat{\calO_{\overline{\calK}}}\bigr)$ is a perfectoid $C$-Banach pair.

For $\fkp\in T$, let $\widehat{\overline{R}_{\fkp}}$ denote
the $p$-adic completion of the localization $\overline{R}_{\fkp}$ of $\overline{R}$ at $\fkp$.
It is naturally an $\calO_{C}$-algebra.
Since $\overline{R}_{\fkp}$ can be written as the union of one-dimensional Noetherian normal local domains with $p$ non-unit,  $\widehat{\overline{R}_{\fkp}}[p^{-1}]$
is a complete valuation field with respect to $p$-adic valuation; the associated norm is given by
\[
 \lvert  x\rvert=\inf\bigl\{ \lvert a\rvert_{C}
\bigm| \, a\in C, \,x\in a\widehat{\overline{R}_{\fkp}}\bigr\}.
\]
In particular, $\widehat{\overline{R}_{\fkp}}[p^{-1}]$ is a uniform $C$-Banach algebra
and $\widehat{\overline{R}_{\fkp}}$ is a ring of integral elements.

\begin{lem}
The $C$-Banach pair $(\widehat{\overline{R}_{\fkp}}[p^{-1}],\widehat{\overline{R}_{\fkp}})$ is perfectoid.
\end{lem}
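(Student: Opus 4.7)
The uniform $C$-Banach algebra structure on $\widehat{\overline{R}_{\fkp}}[p^{-1}]$ and the fact that $\widehat{\overline{R}_{\fkp}}$ is a ring of integral elements have already been established in the discussion immediately preceding the lemma. Hence the only remaining point is to verify that the Frobenius on $\widehat{\overline{R}_{\fkp}}/p\widehat{\overline{R}_{\fkp}}$ is surjective. The plan is to deduce this directly from the analogous statement for $\overline{R}/p\overline{R}$, which has already been established in the course of proving that $(\widehat{\overline{R}}[p^{-1}],\widehat{\overline{R}})$ is perfectoid.

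First I would observe the standard identification $\widehat{\overline{R}_{\fkp}}/p\widehat{\overline{R}_{\fkp}} = \overline{R}_{\fkp}/p\overline{R}_{\fkp}$, which follows from the $p$-adic separatedness of $\overline{R}_{\fkp}$ together with the general fact that $p$-adic completion is an isomorphism modulo any power of $p$. Next, since localization commutes with reduction modulo $p$, the ring $\overline{R}_{\fkp}/p\overline{R}_{\fkp}$ is canonically identified with the localization of $\overline{R}/p\overline{R}$ at the image of the multiplicative subset $\overline{R}\smallsetminus\fkp$.

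Now, the surjectivity of Frobenius on $\overline{R}/p\overline{R}$ is already known (this is the content of the corresponding lemma earlier, proved via the auxiliary extension $R[x][T]/(T^{p^2}+pT-x)$). The final step is the elementary observation that surjectivity of Frobenius is preserved under localization: for any $\bF_p$-algebra $A$ on which Frobenius is surjective and any multiplicative subset $S\subset A$, given $a/s \in S^{-1}A$, choose $b \in A$ with $b^p = as^{p-1}$; then $(b/s)^p = a/s$, so Frobenius is surjective on $S^{-1}A$. Applying this with $A = \overline{R}/p\overline{R}$ and $S$ the image of $\overline{R}\smallsetminus\fkp$ completes the argument.

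I do not expect any serious obstacle; the content of the lemma is really a direct transfer of the global statement to the local setting, and the only small subtlety is making sure that the mod $p$ reduction of $\widehat{\overline{R}_{\fkp}}$ coincides with the localization of $\overline{R}/p\overline{R}$ at $\fkp$, which is immediate from the construction.
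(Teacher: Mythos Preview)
Your proposal is correct and follows essentially the same approach as the paper: reduce modulo $p$ to the localization of $\overline{R}/p\overline{R}$ at $\fkp$, then invoke the already-established surjectivity of Frobenius on $\overline{R}/p\overline{R}$ together with its preservation under localization. The paper's proof is a terse one-sentence version of exactly this argument.
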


\begin{proof}
Since $\widehat{\overline{R}_{\fkp}}/p\widehat{\overline{R}_{\fkp}}=\overline{R}_{\fkp}/p\overline{R}_{\fkp}$ is the localization of $\overline{R}/p\overline{R}$ at the ideal $\fkp\bmod p\overline{R}$, the Frobenius is surjective on $\widehat{\overline{R}_{\fkp}}/p\widehat{\overline{R}_{\fkp}}$ as it is so on $\overline{R}/p\overline{R}$.
\end{proof}

To simplify our notation, set
\[
 \Lambda_R^+:=\widehat{\overline{R}},\quad
 \Lambda_{\fkp}^+:=\widehat{\overline{R}_{\fkp}} \quad(\fkp\in T), \quad\text{and}\quad
 \Lambda_{\calK}^+:=\widehat{\calO_{\overline{\calK}}}.
\]
We also set $\Lambda_{\ast}:=\Lambda_{\ast}^+[p^{-1}]$ for $\ast\in\{R,\fkp,\calK\}$.

 Consider the $\widehat{\calG}_{R_K}(\fkp_0)$-equivariant inclusion
$\overline{R}_{\fkp_0}\hra \calO_{\overline{\calK}}$, 
 where $\widehat{\calG}_{R_K}(\fkp_0)$ acts on 
$\overline{R}_{\fkp_0}$ via $\widehat{\calG}_{R_K}(\fkp_0)\twoheadrightarrow \calG_{R_K}(\fkp_0)$.
This extends to  $\widehat{\calG}_{R_K}(\fkp_0)$-equivariant injective maps
$\Lambda_{\fkp_0}^+\hra \Lambda_{\calK}^+$ and $ \Lambda_{\fkp_0}\hra\Lambda_{\calK}$.
Note that the former is a map of complete valuation rings of rank one and thus it has $p$-torsion free cokernel.

\begin{lem}\label{lem:Cartesian diagram for Lambda fkp and Lambda pi}
The natural
$\widehat{\calG}_{R_K}(\fkp_0)$-equivariant commutative diagram
\[
 \xymatrix{
\B_{\st,K}^+(\Lambda_{\fkp_0}^+)
\ar@{^{(}->}[d]\ar[r]
&
\B_{\dR}^+(\Lambda_{\fkp_0}^+)
\ar@{^{(}->}[d]\\
\B_{\st,K}^+(\Lambda_{\calK}^+)
\ar[r]
&
\B_{\dR}^+(\Lambda_{\calK}^+)
}
\]
is Cartesian.
\end{lem}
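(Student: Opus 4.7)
The plan is to deduce the lemma as a direct specialization of Lemma~\ref{lem:Cartesian diagram of period rings}, applied to the inclusion $f^+\colon \Lambda_{\fkp_0}^+\hookrightarrow \Lambda_{\calK}^+$ induced by our chosen embedding $\overline{R}\hookrightarrow \calO_{\overline{\calK}}$. Three hypotheses must be checked: (a) both $(\Lambda_{\fkp_0},\Lambda_{\fkp_0}^+)$ and $(\Lambda_{\calK},\Lambda_{\calK}^+)$ are perfectoid $C$-Banach pairs whose rings of integral elements are $\calO_C$-algebras; (b) $f^+$ is an injective $\calO_K$-algebra homomorphism; and (c) $\Cok f^+$ is $p$-torsion free.

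Conditions (a) and (b) are immediate from the preceding discussion: the perfectoid structures on both pairs were established just above the lemma statement, and $f^+$ is $\calO_K$-linear and injective by construction (it is induced by $\overline{R}_{\fkp_0}\hookrightarrow \calO_{\overline{\calK}}$, which is an injection of $R$-algebras whose $p$-adic completions are also injective because both source and target embed into the completed algebraic closure).

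For (c), the key observation is that $\Lambda_{\fkp_0}^+$ and $\Lambda_{\calK}^+$ are both rank-one valuation rings carrying the $p$-adic valuation normalized by $|p|=p^{-1}$, and the inclusion $\Lambda_{\fkp_0}\hookrightarrow \Lambda_{\calK}$ is therefore isometric. Consequently, if $x\in \Lambda_{\fkp_0}^+$ factors as $x=p^n y$ with $y\in \Lambda_{\calK}^+$, then $y\in \Lambda_{\fkp_0}$ with $|y|\leq 1$, hence $y\in \Lambda_{\fkp_0}^+$. This shows that the induced maps $\Lambda_{\fkp_0}^+/p^n\Lambda_{\fkp_0}^+\hookrightarrow \Lambda_{\calK}^+/p^n\Lambda_{\calK}^+$ are injective for all $n$, which is exactly the $p$-torsion freeness of $\Cok f^+$.

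With the three hypotheses verified, Lemma~\ref{lem:Cartesian diagram of period rings} yields that the displayed square is Cartesian with injective vertical arrows. The $\widehat{\calG}_{R_K}(\fkp_0)$-equivariance follows from the functoriality of the period ring construction together with the fact that $f^+$ is itself $\widehat{\calG}_{R_K}(\fkp_0)$-equivariant by the choice of embedding. There is no significant obstacle; all the technical content is already packaged into Lemma~\ref{lem:Cartesian diagram of period rings}, and the present statement is simply the specialization of that result to the inclusion of rank-one valuation rings $\Lambda_{\fkp_0}^+\hookrightarrow \Lambda_{\calK}^+$.
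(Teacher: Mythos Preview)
Your proof is correct and follows essentially the same approach as the paper: the paper's proof consists of the single line ``This follows from Lemma~\ref{lem:Cartesian diagram of period rings},'' relying on the sentence just before the lemma which notes that $\Lambda_{\fkp_0}^+\hookrightarrow \Lambda_{\calK}^+$ is a map of complete rank-one valuation rings and hence has $p$-torsion free cokernel. Your verification of hypotheses (a)--(c) simply spells out what the paper leaves implicit.
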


\begin{proof}
This follows from Lemma~\ref{lem:Cartesian diagram of period rings}.
\end{proof}

Let us recall the notion of $G$-regularity.

\begin{defn}[cf.~{\cite[1.4.1]{Fontaine-exposeIII}}]
Let $E$ be a finite extension of $\Q_p$ and $G$ a topological group.
 An $(E,G)$-ring $B$ is called \emph{$G$-regular} if 
the following three conditions hold:
\begin{itemize}
 \item[($G\cdot R_1$)] The ring $B$ is reduced.
 \item[($G\cdot R_2$)] For all $V\in\Rep_E(G)$, the map $\alpha_B(V)$ is injective.
 \item[($G\cdot R_3$)] Every one-dimensional $E$-vector subspace of $B$ that is stable under $G$-action  is generated by an invertible element of $B$ over $E$.
\end{itemize}
\end{defn}

\begin{prop}\label{prop:admissibility criterion}
Let $E$ be a finite extension of $\Q_p$ and $G$ a topological group.
Let $B$ be an $(E, G)$ ring that is $G$-regular.
Then $F:=B^G$ is a field, and the following holds:
\begin{enumerate}
 \item For every $V\in\Rep_E(G)$,
\[
 \dim_FD_B(V)\leq \dim_EV.
\]
 \item Moreover, if $\dim_FD_B(V)= \dim_EV$ holds, then the map $\alpha_B(V)$ is an isomorphism.
\end{enumerate}
\end{prop}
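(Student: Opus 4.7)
The plan is to follow Fontaine's standard strategy (cf.~\cite[\S1.4]{Fontaine-exposeIII}), handling the field assertion, part~(i), and part~(ii) in order. To see that $F = B^G$ is a field: for any nonzero $x \in F$, the $E$-line $E x \subset B$ is $G$-stable (in fact pointwise fixed, because the $G$-action on $B$ is $E$-linear). By $(G\cdot R_3)$, $x$ is a unit in $B$, and then $g(x^{-1}) = g(x)^{-1} = x^{-1}$ by uniqueness of inverses, so $x^{-1} \in F$.

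For part~(i), I would pass to a ring over which free modules have well-defined rank. Since $B$ is reduced by $(G\cdot R_1)$, it embeds in its total ring of fractions $C$, which is also reduced and inherits a $G$-action because $G$ permutes the non-zero-divisors of $B$. For any minimal prime $\mathfrak{p}$ of $C$, the localization $C_\mathfrak{p}$ is a field containing $F$. By $(G\cdot R_2)$, the map $\alpha_B(V)$ is injective, and since $B \to C_\mathfrak{p}$ is a flat localization, the base-changed map $C_\mathfrak{p} \otimes_F D_B(V) \hookrightarrow C_\mathfrak{p} \otimes_E V$ remains injective. Comparing $C_\mathfrak{p}$-dimensions then forces $\dim_F D_B(V) \leq \dim_E V$.

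For part~(ii), suppose $r := \dim_F D_B(V) = d := \dim_E V$. Choose an $F$-basis $d_1, \ldots, d_d$ of $D_B(V)$ and an $E$-basis $e_1, \ldots, e_d$ of $V$, and form the matrix $M = (b_{ij})$ defined by $d_i = \sum_j b_{ij} e_j$ in $B \otimes_E V$. It suffices to show $\det M \in B^\times$, since $\alpha_B(V)$ is represented by $M$ in the chosen bases. From part~(i), the induced map $C_\mathfrak{p} \otimes_F D_B(V) \to C_\mathfrak{p} \otimes_E V$ is an injection of $d$-dimensional $C_\mathfrak{p}$-vector spaces, hence an isomorphism; consequently the image of $\det M$ in each $C_\mathfrak{p}$ is a unit, so $\det M$ is a non-zero-divisor in $B$. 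The $G$-invariance of each $d_i$, combined with the action $g(e_j) = \sum_k a_{kj}(g) e_k$ on $V$, yields the identity $g(\det M) = \chi(g)^{-1} \det M$, where $\chi \colon G \to E^\times$ is the determinant character of $V$. Hence $E \cdot \det M$ is a one-dimensional $G$-stable $E$-subspace of $B$, and $(G\cdot R_3)$ forces $\det M$ to be a unit in $B$.

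The main delicacy is the rank comparison in part~(i): because $B$ is not assumed to be a domain, one has to be careful that localization preserves the injectivity of $\alpha_B(V)$ and that minimal primes of $C$ really do yield fields upon localization. Reducedness, that is, $(G\cdot R_1)$, is precisely what makes both steps go through cleanly.
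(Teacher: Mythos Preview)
Your argument is correct and is precisely Fontaine's proof from \cite[1.4.2]{Fontaine-exposeIII}, which is exactly what the paper invokes; the paper's own proof is the one-line remark that Fontaine's argument goes through verbatim in this setting. The detour through the total ring of fractions $C$ is harmless but unnecessary---you can localize $B$ directly at a minimal prime---and the ``non-zero-divisor'' conclusion, while true for reduced rings, is stronger than what you actually use (you only need $\det M\neq 0$ to apply $(G\cdot R_3)$).
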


\begin{proof}
 The proof of \cite[1.4.2]{Fontaine-exposeIII} works in this setting.
\end{proof}

\begin{lem}\label{lem:regularity of period rings for Lambda calK}
We have
\[
 \bigl(\B_{\st}(\Lambda_{\calK}^+)\bigr)^{\widehat{\calG}_{R_K}(\fkp_0)}=K_0,\quad
\text{and}\quad
\bigl(\B_{\dR}(\Lambda_{\calK}^+)\bigr)^{\widehat{\calG}_{R_K}(\fkp_0)}=K.
\]
Moreover, the $(\Q_p, \widehat{\calG}_R(\fkp_0))$-rings $\B_{\st,K}(\Lambda_{\calK}^+)$ and $\B_{\dR}(\Lambda_{\calK}^+)$ are $\widehat{\calG}_{R_K}(\fkp_0)$-regular.
\end{lem}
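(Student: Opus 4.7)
The plan is to reduce both the invariant computation and the $G$-regularity verification to the corresponding assertions for the complete discretely valued field $\calK$ with imperfect residue field, as established by Ohkubo \cite{Ohkubo} (building on \cite{Brinon-cris, Morita}). The essential observation is that $\Lambda_{\calK}^+=\widehat{\calO_{\overline{\calK}}}$ is precisely the $p$-adic completion of the integral closure of $\calO_{\calK}$ in $\overline{\calK}$, so by construction the horizontal period rings $\B_{\st}(\Lambda_{\calK}^+)$, $\B_{\st,K}(\Lambda_{\calK}^+)$, and $\B_{\dR}(\Lambda_{\calK}^+)$ coincide with Ohkubo's period rings for $\calK$, together with their $\widehat{\calG}_{R_K}(\fkp_0)$-actions.

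For the invariants, I would apply Lemma~\ref{lem:canonical subfield} to identify the canonical subfield of $\calK$ with $K$; Ohkubo's computation then gives $\bigl(\B_{\dR}(\Lambda_{\calK}^+)\bigr)^{\widehat{\calG}_{R_K}(\fkp_0)}=K$ and $\bigl(\B_{\st}(\Lambda_{\calK}^+)\bigr)^{\widehat{\calG}_{R_K}(\fkp_0)}=K_0$. Scalar extension through Corollary~\ref{cor:semistable period ring otimes K} then yields $\bigl(\B_{\st,K}(\Lambda_{\calK}^+)\bigr)^{\widehat{\calG}_{R_K}(\fkp_0)}=K$.

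Turning to regularity, since $\Lambda_{\calK}=\widehat{\overline{\calK}}$ is a complete algebraically closed valuation field, $\B_{\dR}^+(\Lambda_{\calK}^+)$ is a complete discrete valuation ring with uniformizer $t$, so $\B_{\dR}(\Lambda_{\calK}^+)$ is its fraction field. All three regularity axioms for $\B_{\dR}(\Lambda_{\calK}^+)$ are then immediate: $(G\cdot R_1)$ holds because fields are reduced, $(G\cdot R_2)$ is the standard linear-independence argument over a domain with field invariants, and $(G\cdot R_3)$ is vacuous because every nonzero element of a field is invertible. For $\B_{\st,K}(\Lambda_{\calK}^+)$, the embedding into $\B_{\dR}(\Lambda_{\calK}^+)$ supplied by Proposition~\ref{Prop: Colmez's description of semistable period ring} exhibits it as a subdomain of a field, which yields $(G\cdot R_1)$ and $(G\cdot R_2)$ at once.

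The main obstacle will be condition $(G\cdot R_3)$ for $\B_{\st,K}(\Lambda_{\calK}^+)$: given a nonzero semi-invariant $b$ with $g\cdot b=\chi(g)b$ for some character $\chi$, I need to exhibit $b^{-1}$ inside $\B_{\st,K}(\Lambda_{\calK}^+)$ itself. The plan is to exploit the polynomial description $\B_{\st,K}(\Lambda_{\calK}^+)\cong\B_{\max,K}(\Lambda_{\calK}^+)[u]$ with $u=\log[p^\flat]/p$ from Corollary~\ref{cor:semistable period ring otimes K}, together with the cocycle identity $g\cdot u=u+\alpha(g)t$ governing the Galois action on $u$, to perform an induction on $\deg_u b$ that reduces the question to $(G\cdot R_3)$ for the subring $\B_{\max,K}(\Lambda_{\calK}^+)$; here the invertibility of $t$ in $\B_{\st,K}$ is used crucially to rescale at each stage. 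The remaining $\B_{\max,K}(\Lambda_{\calK}^+)$-case is then handled by a classical geometric-series inversion inside the $p$-adically complete ring $\A_{\max,K}(\Lambda_{\calK}^+)$ once one normalises so that the image of $b$ in $\Lambda_{\calK}$ under $\theta$ is nonzero. Both steps are carried out in detail in \cite[\S 4]{Ohkubo}, which I would invoke rather than reproduce here.
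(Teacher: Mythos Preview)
Your overall architecture matches the paper's: invariants via Ohkubo together with the identification $\calK_{\can}=K$ from Lemma~\ref{lem:canonical subfield}; $\B_{\dR}(\Lambda_{\calK}^+)$ is a field, hence trivially regular; and the only substantive point is $(G\cdot R_3)$ for $\B_{\st,K}(\Lambda_{\calK}^+)$. The reduction you sketch---compare top $u$-coefficients to force a semi-invariant $b$ down into $\B_{\max,K}(\Lambda_{\calK}^+)$---is a legitimate alternative to the paper's direct attack and does work: once the leading coefficient $b_n$ is known to be a unit, $b/b_n$ is $G$-invariant, hence lies in $K$ by the first part of the lemma, forcing $\deg_u b=0$.

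The gap is in your proposed treatment of $(G\cdot R_3)$ for $\B_{\max,K}(\Lambda_{\calK}^+)$ itself. A geometric-series inversion in $\A_{\max,K}(\Lambda_{\calK}^+)$ after normalising so that $\theta(b)\neq 0$ does not converge: under Colmez's identification $\A_{\max,K}(\Lambda_{\calK}^+)\cong \Lambda_{\calK}^+\langle X\rangle$ of Lemma~\ref{lem: properties of theta-tilde}(iv), the kernel of $\theta$ is generated by $X$, which has $p$-adic norm $1$, so $\sum_{n\geq 0}(-c)^n$ with $c\in X\Lambda_{\calK}^+\langle X\rangle$ typically diverges (e.g.\ $b=1+X$ is not a unit). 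This is not how Ohkubo or the paper proceeds. The paper, following \cite[Lemma~4.5]{Ohkubo} and \cite[Lemme~5.1.3(ii)]{Fontaine-exposeIII}, instead passes to the $p$-adic completion $\calL$ of the maximal unramified extension of a perfection $\calK^{\pf}$, observes that $\B_{\st,K}(\Lambda_{\calK}^+)$ is an $\calL$-algebra, and uses Sen's theorem on semi-invariant elements of $\Lambda_{\calK}$ to show that any $\Gal(\overline{\calL}/\calL)$-stable $\calL$-line in $\B_{\st,K}(\Lambda_{\calK}^+)$ is $\calL\, t^i$ for some $i\in\Z$. Your citation to \cite[\S4]{Ohkubo} ultimately points at a correct result, but the mechanism you describe for it is wrong; you should either reproduce the Sen-type argument or cite it without the misleading sketch.
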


\begin{proof}
The first assertion follows from \cite[Corollary 4.3, third]{Ohkubo}, Lemma~\ref{lem:canonical subfield}, and the inclusion $K\otimes_{K_0}\B_{\st}^+(\Lambda_{\calK}^+)\subset \B_{\dR}^+(\Lambda_{\calK}^+)$.
Since $\Lambda_{\calK}$ is a field, so is $\B_{\dR}(\Lambda_{\calK}^+)$.
Hence $\B_{\dR}(\Lambda_{\calK}^+)$ is $\widehat{\calG}_{R_K}(\fkp_0)$-regular by \cite[Proposition 1.6.3]{Fontaine-exposeIII}.

By \cite[Proposition 1.6.5]{Fontaine-exposeIII}, it remains to verify that $\B_{\st,K}(\Lambda_{\calK}^+)$ satisfies Condition ($G\cdot R_3$). We follow \cite[Lemma~4.5]{Ohkubo} and \cite[Lemme 5.1.3 (ii)]{Fontaine-exposeIII}.

There exists a complete valuation field $\calK^{\pf}\subset \overline{\calK}$ containing $\calK$ such that the relative ramification index of $\calK^\pf$ over $\calK$ is one and such that the residue field of $\calK^{\pf}$ is the perfect closure of $\calK$ inside the residue field of $\overline{\calK}$ (cf.~\cite[1G]{Ohkubo}).
Let $\calL$ denote the $p$-adic completion of the maximal unramified extension of $\calK^{\pf}$ inside $\Lambda_{\calK}=\widehat{\calO_{\overline{\calK}}}[p^{-1}]$ and let $P_0$ denote the fraction field of the ring of Witt vectors with coefficients in the residue field of $\calL$.
 Then $\calL$ is naturally a subfield of $\Lambda_{\calK}$, and $\B_{\st}(\Lambda_{\calK}^+)$ is a $P_0$-algebra. Note 
\[
 \calL\cong K\otimes_{K_0}P_0 \quad\text{and}\quad \B_{\st,K}(\Lambda_{\calK}^+)=\calL\otimes_{P_0}\B_{\st}(\Lambda_{\calK}^+).
\]
 Let $\overline{\calL}$ denote the algebraic closure of $\calL$ in $\Lambda_{\calK}$.
So $\Lambda_{\calK}$ is the $p$-adic completion of $\overline{\calL}$.

We will show that every one-dimensional $\calL$-vector subspace $\Delta$ of $\B_{\st,K}(\Lambda_{\calK}^+)$ that is stable by $\Gal(\overline{\calL}/\calL)$ is written as $\calL t^i$ for some $i\in\Z$. 
Since $t$ is invertible in $\B_{\st,K}(\Lambda_{\calK}^+)$ and $\Gal(\overline{\calL}/\calL)$ can be identified with a subgroup of $\widehat{\calG}_{R_K}(\fkp_0)$, this will complete the proof.

Let $b\in\Delta$ be a generator. By replacing $b$ by $bt^{-i}$ for some $i\in\Z$, we may assume $b\in \B_{\dR}^+(\Lambda_{\calK}^+)\setminus t\B_{\dR}^+(\Lambda_{\calK}^+)$.
It is enough to show $b\in \calL$.
Consider $\theta=\theta_{\B_{\dR}^+(\Lambda_{\calK}^+)}\colon \B_{\dR}^+(\Lambda_{\calK}^+)\ra \Lambda_{\calK}$.
The image $\theta(\Delta)$ is a one-dimensional $\calL$-vector subspace generated by $\theta(b)$ and stable by $\Gal(\overline{\calL}/\calL)$. By a result of Sen (cf.~ \cite[Remarque 3.3 (ii)]{Fontaine-exposeIII}), we have $\theta(b)\in \overline{\calL}\subset \Lambda_{\calK}$.
In particular, the action of $\Gal(\overline{\calL}/\calL)$ on $\theta(\Delta)$ factors through $\Gal(\calL'/\calL)$ for some finite Galois extension $\calL'$ of $\calL$ inside $\overline{\calL}$ and so does its action on $\Delta$. 

Let $\calL(b)$ denote the $\calL$-subalgebra of $\B_{\st,K}(\Lambda_{\calK}^+)$ generated by $b$. Since $b\in \B_{\dR}^+(\Lambda_{\calK}^+)\setminus t\B_{\dR}^+(\Lambda_{\calK}^+)$, we see that $\calL(b)$ is $\Gal(\overline{\calL}/\calL)$-equivariantly isomorphic to its image $\theta(\calL(b))$. It follows that $\calL(b)$ is isomorphic to a subextension of $\calL$ inside $\calL'$. This implies that
$\calL'\otimes_{\calL}\calL(b)$ is not an integral domain unless $\calL(b)=\calL$.
On the other hand, we know that $\calL'\otimes_{\calL}\B_{\st,K}(\Lambda_{\calK}^+)=\calL'\otimes_{P_0}\B_{\st}(\Lambda_{\calK}^+)$ embeds into $\B_{\dR}^+(\Lambda_{\calK}^+)$
by Corollary~\ref{cor:semistable period ring otimes K} (with respect to $\calL'$ instead of $K$) and thus it is an integral domain. Hence we conclude $b\in \calL$.
\end{proof}

\begin{lem}\label{lem:regularity of period rings for Lambda fkp}
We have
\[
 \bigl(\B_{\st}(\Lambda_{\fkp}^+)\bigr)^{\calG_{R_K}(\fkp)}=K_0,\quad
\text{and}\quad
\bigl(\B_{\dR}(\Lambda_{\fkp}^+)\bigr)^{\calG_{R_K}(\fkp)}=K.
\] 
Moreover, the $(\Q_p, \calG_{R_K}(\fkp))$-rings $\B_{\st,K}(\Lambda_{\fkp}^+)$ and $\B_{\dR}(\Lambda_{\fkp}^+)$ are $\calG_{R_K}(\fkp)$-regular.
\end{lem}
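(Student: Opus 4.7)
The plan is to reduce the statement for a general $\fkp\in T$ to the case $\fkp=\fkp_0$ using transitivity of $\calG_{R_K}$ on $T$: if $\fkp=g(\fkp_0)$ for some $g\in\calG_{R_K}$, then $g$ provides an isomorphism $\widehat{\overline{R}_{\fkp_0}}\stackrel{\cong}{\to}\widehat{\overline{R}_{\fkp}}$ and an isomorphism of decomposition subgroups by conjugation, reducing everything to $\fkp_0$. For $\fkp_0$ itself, the central input is the Cartesian diagram of Lemma~\ref{lem:Cartesian diagram for Lambda fkp and Lambda pi}; since localization at the regular element $t$ is exact, inverting $t$ in each corner yields a $\widehat{\calG}_{R_K}(\fkp_0)$-equivariant Cartesian square
\[
\xymatrix{
\B_{\st,K}(\Lambda_{\fkp_0}^+)\ar@{^{(}->}[d]\ar[r]&\B_{\dR}(\Lambda_{\fkp_0}^+)\ar@{^{(}->}[d]\\
\B_{\st,K}(\Lambda_{\calK}^+)\ar[r]&\B_{\dR}(\Lambda_{\calK}^+)
}
\]
with injective vertical arrows, where the action of $\widehat{\calG}_{R_K}(\fkp_0)$ on the top row factors through its quotient $\calG_{R_K}(\fkp_0)$.

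The fixed-field computations now follow by diagram chase. Any $\calG_{R_K}(\fkp_0)$-invariant element of $\B_{\dR}(\Lambda_{\fkp_0}^+)$ maps into $\B_{\dR}(\Lambda_{\calK}^+)^{\widehat{\calG}_{R_K}(\fkp_0)}=K$ by Lemma~\ref{lem:regularity of period rings for Lambda calK}, and $K\subset \B_{\dR}(\Lambda_{\fkp_0}^+)$ is trivially fixed; the same reasoning applied to the left column gives $\B_{\st,K}(\Lambda_{\fkp_0}^+)^{\calG_{R_K}(\fkp_0)}=K$. For the unindexed ring, Corollary~\ref{cor:semistable period ring otimes K} gives $K\otimes_{K_0}\B_{\st}(\Lambda_{\fkp_0}^+)\cong\B_{\st,K}(\Lambda_{\fkp_0}^+)$; since $K$ is fixed by $\calG_{R_K}(\fkp_0)$ and $K/K_0$ is faithfully flat, taking invariants on both sides yields $\B_{\st}(\Lambda_{\fkp_0}^+)^{\calG_{R_K}(\fkp_0)}=K_0$.

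Turning to regularity: as in the proof of Lemma~\ref{lem:regularity of period rings for Lambda calK}, $\Lambda_{\fkp_0}$ is a complete valuation field, so $\B_{\dR}^+(\Lambda_{\fkp_0}^+)$ is a complete discrete valuation ring with uniformizer $t$, whence $\B_{\dR}(\Lambda_{\fkp_0}^+)$ is a field and hence $\calG_{R_K}(\fkp_0)$-regular by \cite[Proposition 1.6.3]{Fontaine-exposeIII}. For $\B_{\st,K}(\Lambda_{\fkp_0}^+)$, by \cite[Proposition 1.6.5]{Fontaine-exposeIII} it suffices to verify Condition $(G\cdot R_3)$, since $\B_{\st,K}(\Lambda_{\fkp_0}^+)$ embeds in the field $\B_{\dR}(\Lambda_{\fkp_0}^+)$ and has fixed field $K$.

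The main obstacle is the verification of $(G\cdot R_3)$. Let $\Delta\subset \B_{\st,K}(\Lambda_{\fkp_0}^+)$ be a one-dimensional $K$-subspace stable under $\calG_{R_K}(\fkp_0)$, with associated character $\chi\colon\calG_{R_K}(\fkp_0)\to K^\times$, and let $b\in\Delta$ be a generator. Pulling back along $\widehat{\calG}_{R_K}(\fkp_0)\twoheadrightarrow\calG_{R_K}(\fkp_0)$ and using $K\subset\calL$ (so that $\Gal(\overline{\calL}/\calL)$ acts trivially on $K$), the $\calL$-subspace $\calL b\subset\B_{\st,K}(\Lambda_{\calK}^+)$ is stable under $\Gal(\overline{\calL}/\calL)$. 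Applying Lemma~\ref{lem:regularity of period rings for Lambda calK}, we obtain $\calL b=\calL t^i$ for some $i\in\Z$, hence $b=\lambda t^i$ with $\lambda\in\calL^\times$. Since $t$ is invertible in $\B_{\st,K}(\Lambda_{\fkp_0}^+)$, we have $\lambda=bt^{-i}\in\B_{\st,K}(\Lambda_{\fkp_0}^+)$; since $\lambda\neq 0$ and $\B_{\dR}(\Lambda_{\fkp_0}^+)$ is a field, $\lambda^{-1}\in\B_{\dR}(\Lambda_{\fkp_0}^+)$. The Cartesian property then places $\lambda^{-1}$ in $\B_{\st,K}(\Lambda_{\fkp_0}^+)$ (using that $\lambda^{-1}\in\calL^\times\subset\B_{\st,K}(\Lambda_{\calK}^+)$ already), so $b$ is invertible in $\B_{\st,K}(\Lambda_{\fkp_0}^+)$, completing $(G\cdot R_3)$.
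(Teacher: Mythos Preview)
Your proof is correct and follows essentially the same strategy as the paper: reduce to $\fkp_0$, read off the invariants from the Cartesian diagram together with Lemma~\ref{lem:regularity of period rings for Lambda calK}, observe that $\B_{\dR}(\Lambda_{\fkp_0}^+)$ is a field, and deduce $(G\cdot R_3)$ for $\B_{\st,K}(\Lambda_{\fkp_0}^+)$ from the Cartesian diagram. The one difference worth noting is in your verification of $(G\cdot R_3)$: you reach into the \emph{proof} of Lemma~\ref{lem:regularity of period rings for Lambda calK} to use the intermediate claim that any $\Gal(\overline{\calL}/\calL)$-stable $\calL$-line in $\B_{\st,K}(\Lambda_{\calK}^+)$ is $\calL t^i$, whereas the paper argues more directly from the \emph{statement} of that lemma: the image of $b$ is invertible in $\B_{\st,K}(\Lambda_{\calK}^+)$ by $(G\cdot R_3)$ there, and invertible in the field $\B_{\dR}(\Lambda_{\fkp_0}^+)$, so the Cartesian square places $b^{-1}$ back in $\B_{\st,K}(\Lambda_{\fkp_0}^+)$. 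Your route works, but citing the lemma's conclusion rather than an internal step of its proof is cleaner and avoids reintroducing the auxiliary field $\calL$. (Also, for $(G\cdot R_3)$ one should take $\Delta$ to be a $\Q_p$-line rather than a $K$-line, though this is harmless here since $K$ lies in the fixed ring.)
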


\begin{proof}
Without loss of generality we may assume $\fkp=\fkp_0$.
The first assertion follows from Lemmas~\ref{lem:Cartesian diagram for Lambda fkp and Lambda pi} and \ref{lem:regularity of period rings for Lambda calK}. 
Since $\Lambda_{\fkp_0}$ is a field, so is
$\B_{\dR}(\Lambda_{\fkp_0}^+)$.
Hence $\B_{\dR}(\Lambda_{\fkp_0})$ is $\calG_{R_K}(\fkp_0)$-regular.
It remains to verify that $\B_{\st,K}(\Lambda_{\fkp_0}^+)$ satisfies Condition ($G\cdot R_3$) by \cite[Proposition 1.6.5]{Fontaine-exposeIII}.
This follows from the $\widehat{\calG}_{R_K}(\fkp_0)$-equivariant Cartesian diagram in Lemma~\ref{lem:Cartesian diagram for Lambda fkp and Lambda pi} 
and the fact that $\B_{\dR}(\Lambda_{\fkp_0})$, $\B_{\st,K}(\Lambda_{\calK}^+)$, and $\B_{\dR}(\Lambda_{\calK}^+)$ satisfy Condition ($G\cdot R_3$).
\end{proof}

\subsection{Proof of Theorem~\ref{thm:purity for horiztaonlly semistable}}
Let us go back to the setting of Theorem~\ref{thm:purity for horiztaonlly semistable}.
By assumption, $V|_{\widehat{\calG}_R(\fkp_0)}$ is $\B_{\st}(\Lambda_{\calK}^+)$-admissible and thus $\B_{\st,K}(\Lambda_{\calK}^+)$-admissible. By replacing $V$ by its Tate twist $V(-n)$ for $n\gg 0$, we may assume 
\[
 \bigl(\B_{\dR}^+(\Lambda_R^+)\otimes_{\Q_p}V\bigr)^{\calG_{R_K}}\cong \bigl(\B_{\dR}(\Lambda_R^+)\otimes_{\Q_p}V\bigr)^{\calG_{R_K}}
\]
and 
\[
 \bigl(\B_{\st,K}^+(\Lambda_{\calK}^+)\otimes_{\Q_p}V\bigr)^{\widehat{\calG}_{R_K}(\fkp_0)}\!\!\stackrel{\cong}{\lra}\!
\bigl(\B_{\dR}^+(\Lambda_{\calK}^+)\otimes_{\Q_p}V\bigr)^{\widehat{\calG}_{R_K}(\fkp_0)}
\!\!\stackrel{\cong}{\lra}\!
\bigl(\B_{\dR}(\Lambda_{\calK}^+)\otimes_{\Q_p}V\bigr)^{\widehat{\calG}_{R_K}(\fkp_0)}.
\]
In particular, the natural map 
$\bigl(\B_{\dR}^+(\Lambda_R^+)\otimes_{\Q_p}V\bigr)^{\calG_{R_K}}\ra\bigl(\B_{\dR}^+(\Lambda_{\calK}^+)\otimes_{\Q_p}V\bigr)^{\widehat{\calG}_{R_K}(\fkp_0)}$
is an isomorphism.

\begin{lem}
The representation $V|_{\calG_{R_K}(\fkp_0)}$ is $\B_{\st,K}(\Lambda_{\fkp_0}^+)$-admissible.
Moreover, $\bigl(\B_{\st,K}^+(\Lambda_{\fkp_0}^+)\otimes_{\Q_p}V\bigr)^{\calG_{R_K}(\fkp_0)}=\bigl(\B_{\dR}^+(\Lambda_{\fkp_0}^+)\otimes_{\Q_p}V\bigr)^{\calG_{R_K}(\fkp_0)}$.
\end{lem}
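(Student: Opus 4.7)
The plan is to reduce the lemma to the $\calK$-case already established in the reduction preceding the statement, by invoking the Cartesian diagram of period rings from Lemma~\ref{lem:Cartesian diagram for Lambda fkp and Lambda pi}. First I would tensor that Cartesian square with $V$ over $\Q_p$; because $V$ is a $\Q_p$-vector space (hence flat), this preserves both the Cartesian property and the injectivity of the vertical maps. The resulting square is $\widehat{\calG}_{R_K}(\fkp_0)$-equivariant, and the $\widehat{\calG}_{R_K}(\fkp_0)$-action on the two rings $\B_{\st,K}^+(\Lambda_{\fkp_0}^+)$ and $\B_{\dR}^+(\Lambda_{\fkp_0}^+)$ factors through the surjection $\widehat{\calG}_{R_K}(\fkp_0)\twoheadrightarrow \calG_{R_K}(\fkp_0)$, so taking $\widehat{\calG}_{R_K}(\fkp_0)$-invariants on the top row is the same as taking $\calG_{R_K}(\fkp_0)$-invariants. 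Since invariants commute with fibre products, the induced diagram of $K$-vector spaces is again Cartesian.

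The second assertion of the lemma is the statement that the top horizontal arrow of this Cartesian square of invariants is an isomorphism. This is immediate: the bottom horizontal arrow is an isomorphism by the reduction, and the pullback of an isomorphism along any morphism is an isomorphism.

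For admissibility I would run a dimension count. Chaining the natural maps
\[
(\B_{\dR}^+(\Lambda_R^+)\otimes_{\Q_p}V)^{\calG_{R_K}}\lra (\B_{\dR}^+(\Lambda_{\fkp_0}^+)\otimes_{\Q_p}V)^{\calG_{R_K}(\fkp_0)}\hookrightarrow (\B_{\dR}^+(\Lambda_{\calK}^+)\otimes_{\Q_p}V)^{\widehat{\calG}_{R_K}(\fkp_0)},
\]
where the second arrow is injective because $\B_{\dR}^+(\Lambda_{\fkp_0}^+)\hookrightarrow \B_{\dR}^+(\Lambda_{\calK}^+)$ is (one of the vertical injections in Lemma~\ref{lem:Cartesian diagram for Lambda fkp and Lambda pi}), and whose composite is the isomorphism furnished by the reduction, one concludes that all three terms share the same $K$-dimension, namely $\dim_{\Q_p} V$ (the dimension of the leftmost term being $\dim_{\Q_p} V$ by horizontal de Rham-ness of $V$ over $\calG_{R_K}$ combined with the Tate twist performed in the reduction). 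Using the Cartesian isomorphism from the previous paragraph to transport this to the semistable side yields $\dim_K(\B_{\st,K}^+(\Lambda_{\fkp_0}^+)\otimes_{\Q_p}V)^{\calG_{R_K}(\fkp_0)}=\dim_{\Q_p}V$. This space embeds into $D_{\B_{\st,K}(\Lambda_{\fkp_0}^+)}(V|_{\calG_{R_K}(\fkp_0)})$, whose $K$-dimension is at most $\dim_{\Q_p}V$ by Proposition~\ref{prop:admissibility criterion}(i) applied to the $\calG_{R_K}(\fkp_0)$-regular ring $\B_{\st,K}(\Lambda_{\fkp_0}^+)$ (Lemma~\ref{lem:regularity of period rings for Lambda fkp}). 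Hence equality forces this inclusion to be an isomorphism, and Proposition~\ref{prop:admissibility criterion}(ii) delivers $\B_{\st,K}(\Lambda_{\fkp_0}^+)$-admissibility of $V|_{\calG_{R_K}(\fkp_0)}$.

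The main delicacy—not really an obstacle—is the bookkeeping with the three Galois groups $\calG_{R_K}$, $\calG_{R_K}(\fkp_0)$, and $\widehat{\calG}_{R_K}(\fkp_0)$; in particular one must confirm that the action of $\widehat{\calG}_{R_K}(\fkp_0)$ on period rings built from $\Lambda_{\fkp_0}^+$ factors through its quotient $\calG_{R_K}(\fkp_0)$, so that the two flavors of invariants coincide and the Cartesian property descends intact. Once this is in place, everything else is a formal consequence of Lemma~\ref{lem:Cartesian diagram for Lambda fkp and Lambda pi}, the horizontal de Rham dimension count, and the $G$-regularity criterion.
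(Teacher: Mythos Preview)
Your proposal is correct and follows essentially the same approach as the paper: tensor the Cartesian diagram of Lemma~\ref{lem:Cartesian diagram for Lambda fkp and Lambda pi} with $V$, pass to invariants, and combine the reduction-step isomorphisms with the $\calG_{R_K}(\fkp_0)$-regularity of $\B_{\st,K}(\Lambda_{\fkp_0}^+)$ and the dimension criterion of Proposition~\ref{prop:admissibility criterion}. The only cosmetic difference is that the paper invokes the fact that the \emph{right vertical} arrow of the invariants diagram is an isomorphism (deduced from the same factorization $\Lambda_R^+\to\Lambda_{\fkp_0}^+\to\Lambda_{\calK}^+$ you use) to conclude all four corners coincide, whereas you use that the \emph{bottom horizontal} arrow is an isomorphism and pull back; both are valid consequences of the Cartesian property and the reduction.
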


\begin{proof}
Consider the commutative diagram
\[
\xymatrix{
\bigl(\B_{\st,K}^+(\Lambda_{\fkp_0}^+)\otimes_{\Q_p}V\bigr)^{\calG_{R_K}(\fkp_0)}
\ar@{^{(}->}[d]\ar@{^{(}->}[r]
&
\bigl(\B_{\dR}^+(\Lambda_{\fkp_0}^+)\otimes_{\Q_p}V\bigr)^{\calG_{R_K}(\fkp_0)}
\ar@{^{(}->}[d]\\
\bigl(\B_{\st,K}^+(\Lambda_{\calK}^+)\otimes_{\Q_p}V\bigr)^{\widehat{\calG}_{R_K}(\fkp_0)}
\ar[r]^{\cong}
&
\bigl(\B_{\dR}^+(\Lambda_{\calK}^+)\otimes_{\Q_p}V\bigr)^{\widehat{\calG}_{R_K}(\fkp_0)}.
} 
\]
It follows from the remark before the lemma that the right vertical map is an isomorphism of $K$-vector spaces of dimension $\dim_{\Q_p}V$. 
By Lemma~\ref{lem:Cartesian diagram for Lambda fkp and Lambda pi}, we conclude
that all the four $K$-vector spaces in the diagram are equal, and we obtain the second assertion.
We also have
\[
\dim_{\Q_p}V= \dim_K\bigl(\B_{\st,K}^+(\Lambda_{\fkp_0}^+)\otimes_{\Q_p}V\bigr)^{\calG_{R_K}(\fkp_0)}\leq\dim_K\bigl(\B_{\st,K}(\Lambda_{\fkp_0}^+)\otimes_{\Q_p}V\bigr)^{\calG_{R_K}(\fkp_0)}.
\]
By Proposition~\ref{prop:admissibility criterion} and Lemma~\ref{lem:regularity of period rings for Lambda fkp}, we have $\dim_K\bigl(\B_{\st,K}(\Lambda_{\fkp_0}^+)\otimes_{\Q_p}V\bigr)^{\calG_{R_K}(\fkp_0)}=\dim_{\Q_p}V$
and thus
$V|_{\calG_{R_K}(\fkp_0)}$ is $\B_{\st,K}(\Lambda_{\fkp_0}^+)$-admissible.
\end{proof}

Observe that each $\sigma\in\calG_{R_K}$ induces an $R$-algebra isomorphism $\Lambda_{\fkp}^+\ra\Lambda_{\sigma(\fkp)}^+$ for $\fkp\in T$, and $\calG_{R_K}(\sigma(\fkp))=\sigma\, \calG_{R_K}(\fkp)\,\sigma^{-1}$ in $\calG_{R_K}$.

\begin{lem}\label{lem:admissibility for Lambda fkp}
 For $\fkp\in T$, the representation $V|_{\calG_{R_K}(\fkp)}$ is $\B_{\st,K}(\Lambda_{\fkp}^+)$-admissible.
Moreover, $\bigl(\B_{\st,K}^+(\Lambda_{\fkp}^+)\otimes_{\Q_p}V\bigr)^{\calG_{R_K}(\fkp)}=\bigl(\B_{\dR}^+(\Lambda_{\fkp}^+)\otimes_{\Q_p}V\bigr)^{\calG_{R_K}(\fkp)}$.
\end{lem}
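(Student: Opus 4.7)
The plan is to transport the previous lemma (the case $\fkp = \fkp_0$) to an arbitrary $\fkp \in T$ by exploiting the fact, noted just before the statement, that $\calG_{R_K}$ acts transitively on $T$ and that for $\sigma \in \calG_{R_K}$ one has an $R$-algebra isomorphism $\sigma \colon \overline{R}_{\fkp_0} \to \overline{R}_{\sigma(\fkp_0)}$ together with the conjugation identity $\calG_{R_K}(\sigma(\fkp)) = \sigma\, \calG_{R_K}(\fkp)\, \sigma^{-1}$. So first I would pick $\sigma \in \calG_{R_K}$ with $\sigma(\fkp_0) = \fkp$; passing to $p$-adic completions we obtain an $\calO_K$-algebra isomorphism $\sigma \colon \Lambda_{\fkp_0}^+ \xrightarrow{\cong} \Lambda_{\fkp}^+$ of perfectoid $C$-Banach pairs.

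Next, I would apply the functoriality of the horizontal period rings of Section~\ref{section:period rings} to $\sigma$, producing $K$-algebra isomorphisms
\[
\sigma_\ast \colon \B_{\st,K}^+(\Lambda_{\fkp_0}^+) \xrightarrow{\cong} \B_{\st,K}^+(\Lambda_{\fkp}^+),
\qquad
\sigma_\ast \colon \B_{\dR}^+(\Lambda_{\fkp_0}^+) \xrightarrow{\cong} \B_{\dR}^+(\Lambda_{\fkp}^+),
\]
together with their localizations to $\B_{\st,K}$ and $\B_{\dR}$. Functoriality of the construction immediately gives the equivariance identity $\sigma_\ast((\sigma^{-1} g \sigma) \cdot b) = g \cdot \sigma_\ast(b)$ for all $g \in \calG_{R_K}(\fkp)$, matching the group isomorphism $\calG_{R_K}(\fkp_0) \xrightarrow{\cong} \calG_{R_K}(\fkp)$, $h \mapsto \sigma h \sigma^{-1}$.

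Then, letting $\sigma$ act on $V$ through the ambient representation of $\calG_{R_K}$, I would form the $\Q_p$-linear map $\sigma_\ast \otimes \sigma$ from $\B_{?}(\Lambda_{\fkp_0}^+) \otimes_{\Q_p} V$ to $\B_{?}(\Lambda_{\fkp}^+) \otimes_{\Q_p} V$, for $?\in\{\st,K\text{-level}, \dR\}$. A short direct calculation shows that this map is equivariant with respect to the conjugation isomorphism above, so passing to invariants yields $K$-linear isomorphisms
\[
\bigl(\B_{\st,K}^+(\Lambda_{\fkp_0}^+) \otimes_{\Q_p} V\bigr)^{\calG_{R_K}(\fkp_0)} \xrightarrow{\cong} \bigl(\B_{\st,K}^+(\Lambda_{\fkp}^+) \otimes_{\Q_p} V\bigr)^{\calG_{R_K}(\fkp)},
\]
and likewise for $\B_{\st,K}$ in place of $\B_{\st,K}^+$, and for $\B_{\dR}^+$ in place of $\B_{\st,K}^+$.

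Finally, I would combine these transport isomorphisms with the conclusion of the previous lemma (the $\fkp_0$-case) to obtain the $\B_{\st,K}(\Lambda_{\fkp}^+)$-admissibility of $V|_{\calG_{R_K}(\fkp)}$ and the equality $\bigl(\B_{\st,K}^+(\Lambda_{\fkp}^+) \otimes_{\Q_p} V\bigr)^{\calG_{R_K}(\fkp)} = \bigl(\B_{\dR}^+(\Lambda_{\fkp}^+) \otimes_{\Q_p} V\bigr)^{\calG_{R_K}(\fkp)}$. The main content is really in the previous lemma and in Lemma~\ref{lem:Cartesian diagram of period rings}; here there is essentially no obstacle, only the bookkeeping of functoriality and of semilinear action versus conjugation of groups, which needs to be done carefully but presents no substantive difficulty.
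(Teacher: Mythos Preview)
Your proposal is correct and follows essentially the same conjugation-transport argument as the paper: choose $\sigma\in\calG_{R_K}$ with $\sigma(\fkp_0)=\fkp$, use functoriality of the period rings to get the isomorphism $\sigma\colon\B_{\st,K}(\Lambda_{\fkp_0}^+)\stackrel{\cong}{\to}\B_{\st,K}(\Lambda_{\fkp}^+)$ compatible with the conjugation isomorphism $\calG_{R_K}(\fkp_0)\cong\calG_{R_K}(\fkp)$, and apply $\sigma\otimes\sigma$ to transport the invariants and the conclusion of the previous lemma. The paper phrases the admissibility conclusion via a dimension count together with the $\calG_{R_K}(\fkp)$-regularity of $\B_{\st,K}(\Lambda_{\fkp}^+)$ (Lemma~\ref{lem:regularity of period rings for Lambda fkp} and Proposition~\ref{prop:admissibility criterion}), whereas you implicitly transport the comparison isomorphism $\alpha$ directly; both are fine, and the underlying idea is identical.
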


\begin{proof}
Choose $\sigma\in\calG_{R_K}$ with $\sigma(\fkp_0)=\fkp$.
Hence $\calG_{R_K}(\fkp)=\sigma\, \calG_{R_K}(\fkp_0)\,\sigma^{-1}$ in $\calG_{R_K}$, 
and $\sigma$ induces isomorphisms $\sigma\colon \Lambda_{\fkp_0}^+\stackrel{\cong}{\lra}\Lambda_{\fkp}^+$ and $\sigma\colon \B_{\st,K}(\Lambda_{\fkp_0}^+)\stackrel{\cong}{\lra}\B_{\st,K}(\Lambda_{\fkp}^+)$.
These maps are compatible with the group homomorphism $\calG_{R_K}(\fkp_0)\ra \calG_{R_K}(\fkp)$ given by $\sigma$-conjugation.

Hence for every $\tau\in\calG_{R}(\fkp_0)$, we have  a commutative diagram
\[
 \xymatrix{
\B_{\st,K}(\Lambda_{\fkp_0}^+)\otimes_{\Q_p}V
\ar[r]^{\sigma\otimes\sigma}_\cong\ar[d]_{\tau\otimes \tau}
&
\B_{\st,K}(\Lambda_{\fkp}^+)\otimes_{\Q_p}V
\ar[d]^{\sigma\tau\sigma^{-1}\otimes \sigma\tau\sigma^{-1}}\\
\B_{\st,K}(\Lambda_{\fkp_0}^+)\otimes_{\Q_p}V
\ar[r]^{\sigma\otimes\sigma}_\cong
&
\B_{\st,K}(\Lambda_{\fkp}^+)\otimes_{\Q_p}V.
}
\]
In particular, if $x\in \B_{\st,K}(\Lambda_{\fkp_0}^+)\otimes_{\Q_p}V$
is $\calG_{R_K}(\fkp_0)$-invariant, then
$(\sigma\otimes\sigma)(x)\in \B_{\st,K}(\Lambda_{\fkp}^+)\otimes_{\Q_p}V$
is $\calG_{R_K}(\fkp)$-invariant.
It follows that $\sigma\otimes\sigma$ restricts to a $K$-linear isomorphism
$D_{\B_{\st,K}(\Lambda_{\fkp_0})}(V|_{\calG_{R_K}(\fkp_0)})\stackrel{\cong}{\lra} D_{\B_{\st,K}(\Lambda_{\fkp})}(V|_{\calG_{R_K}(\fkp)})$.
Hence we have
\[
 \dim_KD_{\B_{\st,K}(\Lambda_{\fkp})}(V|_{\calG_{R_K}(\fkp)})=\dim_{\Q_p}V.
\]
By Proposition~\ref{prop:admissibility criterion} and Lemma~\ref{lem:regularity of period rings for Lambda fkp}, we conclude that 
$V|_{\calG_{R_K}(\fkp)}$ is $\B_{\st,K}(\Lambda_{\fkp}^+)$-admissible.

By a similar argument, we obtain the commutative diagram with isomorphic vertical maps
\[
\xymatrix{
\bigl(\B_{\st,K}^+(\Lambda_{\fkp_0}^+)\otimes_{\Q_p}V\bigr)^{\calG_{R_K}(\fkp_0)}
\ar[d]^{\cong}_{\sigma\otimes\sigma}\ar@{^{(}->}[r]
&
\bigl(\B_{\dR}^+(\Lambda_{\fkp_0}^+)\otimes_{\Q_p}V\bigr)^{\calG_{R_K}(\fkp_0)}
\ar[d]^{\cong}_{\sigma\otimes\sigma}\\
\bigl(\B_{\st,K}^+(\Lambda_{\fkp}^+)\otimes_{\Q_p}V\bigr)^{\calG_{R_K}(\fkp)}
\ar@{^{(}->}[r]
&
\bigl(\B_{\dR}^+(\Lambda_{\fkp}^+)\otimes_{\Q_p}V\bigr)^{\calG_{R_K}(\fkp)}.
} 
\]
Hence the second assertion follows from the one for $\fkp_0$.
 \end{proof}

Set
\[
 \Lambda_T^+:=\prod_{\fkp\in T}\Lambda_{\fkp}^+=\prod_{\fkp\in T}\widehat{\overline{R}_{\fkp}},\quad\text{and}\quad \Lambda_T:=\Lambda_T[p^{-1}].
\]
Hence $\Lambda_T$ is the product of $\Lambda_{\fkp}$ over $\fkp\in T$ as a $C$-Banach algebra. 

\begin{lem}\label{lem:localization map is injective with p-torsion free cokernel}
The map
\[
 \Lambda_R^+\ra \Lambda_T^+
\]
is injective with $p$-torsion free cokernel.
\end{lem}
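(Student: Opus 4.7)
The approach is to reduce both assertions to the single statement that the reduction modulo $p$,
\[
\overline{R}/p\overline{R} \lra \prod_{\fkp\in T}\overline{R}_\fkp/p\overline{R}_\fkp,
\]
is injective, and then to establish that statement by passing to the finite normal $R$-subalgebras filling up $\overline{R}$. First I would note that $\Lambda_R^+/p = \overline{R}/p\overline{R}$ and $\Lambda_\fkp^+/p = \overline{R}_\fkp/p\overline{R}_\fkp$, and that $\Lambda_R^+$ and each $\Lambda_\fkp^+$ are $p$-torsion free, $p$-adically complete, and separated. The Tor sequence applied to $0\to\Lambda_R^+\to\Lambda_T^+\to\Cok\to 0$ then shows that mod-$p$ injectivity forces $\Cok$ to be $p$-torsion free, which in turn gives injectivity modulo $p^n$ for every $n$; combined with $p$-adic separatedness of $\Lambda_R^+$, this upgrades to injectivity of the map itself. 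The task therefore becomes: if $y\in\overline{R}$ with $y\in p\overline{R}_\fkp$ for every $\fkp\in T$, then $y\in p\overline{R}$.

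For this I would write $\overline{R}$ as the filtered union of its finite normal $R$-subalgebras $R'$ with $R'[p^{-1}]$ \'etale over $R_K$; each such $R'$ is a Noetherian normal domain. Choose $R'$ containing $y$ and set $z=y/p\in\Frac(R')$. Since $R'=\bigcap_{\fkp'}R'_{\fkp'}$ as $\fkp'$ ranges over the height-one primes of $R'$, it suffices to show $z\in R'_{\fkp'}$ for each such $\fkp'$. If $p\notin\fkp'$ this is automatic. Otherwise, the principal ideal theorem applied in $R$ implies that $(\pi)$ is the unique height-one prime of $R$ containing $\pi$, so $\fkp'\cap R = (\pi)$; by lying-over for the integral extension $R'\subset\overline{R}$ I can find $\fkp\in T$ with $\fkp\cap R'=\fkp'$, and the hypothesis then yields $z\in\overline{R}_\fkp$.

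The only non-formal step is the valuation-theoretic identification
\[
\overline{R}_\fkp \cap \Frac(R') \;=\; R'_{\fkp'}.
\]
Here $\overline{R}_\fkp$ is a rank-one valuation ring (recalled in Section~\ref{subsection:statement of purity}) into which the DVR $R'_{\fkp'}$ embeds via $\fkp'\subset\fkp$; restricting the valuation of $\overline{R}_\fkp$ to $\Frac(R')$ gives a non-trivial valuation that is positive on any uniformizer of $R'_{\fkp'}$, and is therefore equivalent up to a positive scalar to the $\fkp'$-adic valuation, which forces the equality. Granted this, $z\in R'_{\fkp'}$ for every height-one prime of $R'$, hence $z\in R'\subset\overline{R}$ and $y\in p\overline{R}$, completing the argument. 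The main technical point is the valuation comparison just described; everything else is routine bookkeeping with finite levels and $p$-adic completion.
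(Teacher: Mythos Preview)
Your argument is correct and follows the same strategy as the paper: both reduce to injectivity modulo the uniformizer (the paper uses $\pi$, you use $p$) and verify this on each finite normal level $R'\subset\overline{R}$ via the identity $R'=\bigcap_{\mathrm{ht}\,\fkp'=1}R'_{\fkp'}$. The paper is simply terser, citing only that $R/\pi R$ is an integral domain and leaving the height-one bookkeeping and the valuation-restriction step $\overline{R}_\fkp\cap\Frac(R')=R'_{\fkp'}$ to the reader.
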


\begin{proof}
Recall that $\overline{R}$ can be written as the union of Noetherian normal domains finite over $R$.
Since $R/\pi R$ is an integral domain, we deduce that the natural map
$\overline{R}/\pi\overline{R}\ra \prod_{\fkp\in T}\overline{R}_{\fkp}/\pi\overline{R}_{\fkp}$ is injective.
It follows that $\overline{R}/\pi^m\overline{R}\ra \prod_{\fkp\in T}\overline{R}_{\fkp}/\pi^m\overline{R}_{\fkp}$ is injective for every $m\in\N$. 
By taking the inverse limit over $m$, we see that 
$\Lambda_R^+\ra \prod_{\fkp\in T}\Lambda_{\fkp}^+=\Lambda_T^+$ is injective with $\pi$-torsion free cokernel. Hence the cokernel is also $p$-torsion free.
\end{proof}

Each $\sigma\in\calG_{R_K}$ induces an $R$-algebra isomorphism $\Lambda_{\fkp}^+\ra\Lambda_{\sigma(\fkp)}^+$ for $\fkp\in T$.
This gives rise to a natural $\calG_{R_K}$-action on $\Lambda_T^+$
making the injection $\Lambda_R^+\ra \Lambda_T^+$ $\calG_{R_K}$-equivariant (cf.~\cite[Remarque 3.3.2]{Brinon-crisdeRham}).

Since products commute with inverse limits and the Witt vector functor, we have
$\A_{\inf}(\Lambda_T^+)=\prod_{\fkp\in T}\A_{\inf}(\Lambda_{\fkp}^+)$.
Since $\calO_K$ is finite free over $W(k)$, we conclude
\[
 \A_{\inf,K}(\Lambda_T^+)=\prod_{\fkp\in T}\A_{\inf,K}(\Lambda_{\fkp}^+),
\]
and $\theta_{\A_{\inf,K}(\Lambda_T^+)}$ is the product of $\theta_{\A_{\inf,K}(\Lambda_{\fkp}^+)}$ over $\fkp\in T$.

\begin{lem}\label{lem:comparing periods of Lambda R and Lambda T}
 The natural commutative diagram
\[
\xymatrix{
 \B_{\st,K}^+(\Lambda_R^+)
\ar[r]\ar[d]
&
\B_{\dR}^+(\Lambda_R^+)
\ar[d]\\
\B_{\st,K}^+(\Lambda_T^+)
\ar[r]
&
\B_{\dR}^+(\Lambda_T^+)
}
\]
is Cartesian.
\end{lem}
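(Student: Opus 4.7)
My plan is to deduce this Cartesian property as a direct application of Lemma~\ref{lem:Cartesian diagram of period rings}, taking the map $f^+$ there to be the inclusion $\Lambda_R^+\hookrightarrow\Lambda_T^+$. The required injectivity and $p$-torsion freeness of the cokernel are exactly the content of Lemma~\ref{lem:localization map is injective with p-torsion free cokernel}, so the essential task is to verify that $(\Lambda_T,\Lambda_T^+)$ satisfies the hypotheses on the target pair, namely that it is a perfectoid $C$-Banach pair whose ring of integral elements is an $\calO_C$-algebra.

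To establish this, I would equip $\Lambda_T^+ = \prod_{\fkp\in T}\Lambda_\fkp^+$ with the $p$-adic topology (rather than the coarser product topology); this is $p$-adically complete because each factor is, and $\Lambda_T = \Lambda_T^+[p^{-1}]$ becomes a $C$-Banach algebra with norm the sup of the $p$-adic norms on the factors, and with $\calO_C\hookrightarrow\Lambda_T^+$ embedded diagonally. Since each $(\Lambda_\fkp,\Lambda_\fkp^+)$ is perfectoid with $\Lambda_\fkp^+=\Lambda_\fkp^\circ$, one has $\Lambda_T^\circ=\prod_\fkp\Lambda_\fkp^\circ=\Lambda_T^+$, so $\Lambda_T$ is uniform and $\Lambda_T^+$ is an open, integrally closed ring of integral elements. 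The identification $\Lambda_T^+/p\Lambda_T^+ = \prod_\fkp \Lambda_\fkp^+/p\Lambda_\fkp^+$ is elementary, and the Frobenius on it is surjective coordinatewise by the surjectivity of Frobenius on each $\Lambda_\fkp^+/p\Lambda_\fkp^+$.

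Granted these verifications, Lemma~\ref{lem:Cartesian diagram of period rings} applies to $f^+\colon\Lambda_R^+\hookrightarrow\Lambda_T^+$ and produces the Cartesian square of the lemma, with the two vertical arrows automatically injective. I do not foresee any nontrivial obstacle here: compatibility of the chosen compatible system $(\pi_n)$ of $p$-power roots of $\pi$ with $f^+$ is automatic, since $(\pi_n)$ is transported diagonally from $\calO_C$ into every factor $\Lambda_\fkp^+$. The only mild point requiring attention is the perfectoidness of the infinite product $(\Lambda_T,\Lambda_T^+)$, but as sketched above this reduces to a coordinatewise check.
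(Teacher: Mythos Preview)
Your proposal is correct and follows exactly the paper's approach: the paper's proof is the single sentence ``This follows from Lemmas~\ref{lem:Cartesian diagram of period rings} and \ref{lem:localization map is injective with p-torsion free cokernel}.'' Your additional care in verifying that $(\Lambda_T,\Lambda_T^+)$ is a perfectoid $C$-Banach pair is a point the paper leaves implicit (it simply asserts that $\Lambda_T$ is the product of the $\Lambda_\fkp$ as a $C$-Banach algebra and computes $\A_{\inf,K}(\Lambda_T^+)=\prod_\fkp\A_{\inf,K}(\Lambda_\fkp^+)$), so your sketch fills in a small gap rather than diverging from the intended argument.
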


\begin{proof}
 This follows from Lemmas~\ref{lem:Cartesian diagram of period rings} and \ref{lem:localization map is injective with p-torsion free cokernel}.
\end{proof}

For each $\fkp\in T$, choose $\sigma_\fkp\in\calG_{R_K}$ such that $\sigma_{\fkp}(\fkp_0)=\fkp$ and such that $\sigma_{\fkp}|_{\overline{K}}=\id$.
This is possible since $R/\pi R$ is geometrically integral over $k$.
We also assume $\sigma_{\fkp_0}=\id$.
We have an isomorphism
\[
 \sigma_{\fkp}\otimes\sigma_{\fkp}\colon
\B_{\st,K}^+(\Lambda_{\fkp_0}^+)\otimes_{\Q_p}V
\stackrel{\cong}{\lra}
\B_{\st,K}^+(\Lambda_{\fkp}^+)\otimes_{\Q_p}V
\]
compatible with the homomorphism $\calG_{R_K}(\fkp_0)\ra \calG_{R_K}(\fkp)$ given by $\sigma_{\fkp}$-conjugation.

\begin{lem}\label{lem:comparing semistable period of Lambda T and prod Lamda fkp}\hfill
\begin{enumerate}
 \item 
 The map
\[
\B_{\st,K}^+(\Lambda_T^+)
\ra \prod_{\fkp\in T}\B_{\st,K}^+(\Lambda_{\fkp}^+)
\]
is injective.
 \item 
If $x=(x_{\fkp})_{\fkp}\in \prod_{\fkp\in T}\bigl(\B_{\st,K}^+(\Lambda_{\fkp}^+)\otimes_{\Q_p}V\bigr)$
satisfies 
\[
 (\sigma_{\fkp}\otimes\sigma_{\fkp})(x_{\fkp_0})=x_{\fkp}
\]
 for every $\fkp\in T$,
then
\[
 x\in \B_{\st,K}^+(\Lambda_T^+)\otimes_{\Q_p}V.
\]
\end{enumerate}
\end{lem}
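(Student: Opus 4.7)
For Part (i), I plan to descend from the de Rham level. Since products commute with Witt vectors and with tensoring by $\calO_K$, we have $\A_{\inf,K}(\Lambda_T^+) = \prod_\fkp \A_{\inf,K}(\Lambda_\fkp^+)$, and $\Ker\theta$ is generated on both sides by the universal element $[\pi^\flat]-\pi$. Each $\A_{\inf,K}(\Lambda_\fkp^+)/(\Ker\theta)^m$ is $p$-torsion free (by induction on $m$, using that the graded pieces are isomorphic to $\Lambda_\fkp^+$), so inverting $p$ on the product is injective; thus $\A_{\inf,K}(\Lambda_T^+)[p^{-1}]/(\Ker\theta)^m \hookrightarrow \prod_\fkp \A_{\inf,K}(\Lambda_\fkp^+)[p^{-1}]/(\Ker\theta)^m$, and taking inverse limits yields $\B_{\dR}^+(\Lambda_T^+)\hookrightarrow \prod_\fkp \B_{\dR}^+(\Lambda_\fkp^+)$. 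Combined with $\B_{\max,K}^+\hookrightarrow \B_{\dR}^+$, this gives Part (i) for $\B_{\max,K}^+$; the case of $\B_{\st,K}^+$ then follows by comparing coefficients of $u^i$ in the polynomial-ring identification $\B_{\st,K}^+(\Lambda^+) = \B_{\max,K}^+(\Lambda^+)[u]$ of Corollary~\ref{cor:semistable period ring otimes K}.

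For Part (ii), I reduce first to a claim about $\B_{\max,K}^+$: since $\sigma_\fkp|_{\overline{K}} = \id$ fixes $[p^\flat]$ and $[\pi^\flat]$, it fixes $u$ and $v$. Writing $x_{\fkp_0} = \sum_{i=0}^N y_i u^i$ with $y_i \in \B_{\max,K}^+(\Lambda_{\fkp_0}^+)\otimes_{\Q_p}V$, the compatibility gives $x_\fkp = \sum_i (\sigma_\fkp\otimes\sigma_\fkp)(y_i) u^i$ of the same bounded degree, so it suffices to show that each tuple $((\sigma_\fkp\otimes\sigma_\fkp)(y_i))_\fkp$ lies in $\B_{\max,K}^+(\Lambda_T^+)\otimes_{\Q_p}V$. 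Invoking that the continuous $\calG_{R_K}$-representation $V$ admits a $\calG_{R_K}$-stable $\Z_p$-lattice (a standard averaging over finitely many cosets of a compact open stabilizer), I fix a $\Z_p$-basis $e_1,\ldots,e_d$, so that the matrices $M_\fkp = (M_\fkp^{kj})$ of $\sigma_\fkp$ on $V$ lie uniformly in $GL_d(\Z_p)$.

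The crux is then a lifting lemma: for every $\widetilde{b}\in \A_{\max,K}(\Lambda_{\fkp_0}^+)$, the tuple $(\sigma_\fkp(\widetilde{b}))_\fkp$ lies in the image of $\A_{\max,K}(\Lambda_T^+)\hookrightarrow \prod_\fkp \A_{\max,K}(\Lambda_\fkp^+)$. Using Colmez's description $\A_{\max,K}(\Lambda^+)\cong \Lambda^+\langle X\rangle$ from Lemma~\ref{lem: properties of theta-tilde}(iv), I write $\widetilde{b} = \sum_n \alpha_n v^n$ with $\alpha_n \in \A_{\inf,K}(\Lambda_{\fkp_0}^+)$ and $\alpha_n \to 0$ $p$-adically. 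Defining $\beta_n \in \A_{\inf,K}(\Lambda_T^+) = \prod_\fkp \A_{\inf,K}(\Lambda_\fkp^+)$ by $(\beta_n)_\fkp := \sigma_\fkp(\alpha_n)$, the critical point is that each $\sigma_\fkp$ is $W(k)$-linear and therefore preserves the $p$-adic filtration: $\alpha_n \in p^m\A_{\inf,K}(\Lambda_{\fkp_0}^+)$ forces $(\beta_n)_\fkp \in p^m\A_{\inf,K}(\Lambda_\fkp^+)$ for every $\fkp$ simultaneously, yielding uniform convergence $\beta_n \to 0$ in $\A_{\inf,K}(\Lambda_T^+)$. Hence $B := \sum_n \beta_n v^n$ lies in $\A_{\max,K}(\Lambda_T^+)$ with $\fkp$-projection $\sigma_\fkp(\widetilde{b})$.

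To conclude, I expand each $y_i = p^{-r}\sum_j \widetilde{b}_{j,i}\otimes e_j$ with $\widetilde{b}_{j,i}\in \A_{\max,K}(\Lambda_{\fkp_0}^+)$ (absorbing a common denominator), apply the lifting lemma to produce $B_{j,i}\in \A_{\max,K}(\Lambda_T^+)$, and multiply by the bounded tuples $\mu^{kj} := (M_\fkp^{kj})_\fkp \in \prod_\fkp \Z_p \subset \A_{\inf,K}(\Lambda_T^+)$. The combination $p^{-r}\sum_{k,j} \mu^{kj} B_{j,i}\otimes e_k \in \B_{\max,K}^+(\Lambda_T^+)\otimes_{\Q_p}V$ projects to $(\sigma_\fkp\otimes\sigma_\fkp)(y_i)$ in each $\fkp$-component, and summing $\sum_i (\cdot) u^i$ produces the required element of $\B_{\st,K}^+(\Lambda_T^+)\otimes_{\Q_p}V$. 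The main obstacle is securing uniform $p$-adic convergence across the possibly infinite set $T$: both the $\calG_{R_K}$-stable lattice (controlling the $M_\fkp^{kj}$ uniformly) and the $W(k)$-linearity of $\sigma_\fkp$ (controlling the $p$-adic valuations of the $\alpha_n$'s) are essential, since without either of them the relevant estimates could degenerate $\fkp$-by-$\fkp$.
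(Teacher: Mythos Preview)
Your argument is correct and follows a genuinely different route from the paper.

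For Part (i), the paper builds compatible Colmez isomorphisms $\widetilde{\theta}_{v,\fkp}$ for every $\fkp\in T$ and for $T$ itself (by defining $s_\fkp:=\sigma_\fkp\circ s_{\fkp_0}\circ\sigma_\fkp^{-1}$), and then reduces to the elementary injectivity $\Lambda_T^+\langle X\rangle\hookrightarrow\prod_\fkp\Lambda_\fkp^+\langle X\rangle$. You instead bypass Colmez entirely: you first show $\B_{\dR}^+(\Lambda_T^+)\hookrightarrow\prod_\fkp\B_{\dR}^+(\Lambda_\fkp^+)$ by controlling $p$-torsion in the finite-level quotients, then pull back via $\B_{\max,K}^+\hookrightarrow\B_{\dR}^+$, and finally handle $\B_{\st,K}^+$ coefficient-by-coefficient in $u$ using Corollary~\ref{cor:semistable period ring otimes K}. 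This is cleaner and more conceptual.

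For Part (ii), the paper translates the whole question to the $\Lambda\langle X\rangle[\log(1+X)]$ side via its compatible Colmez isomorphisms and then uses the lattice $V_{\Z_p}$ and polynomial approximation. Your proof stays on the period-ring side: reducing to $\B_{\max,K}^+$ via the polynomial $u$-expansion, and proving a direct lifting lemma for $\A_{\max,K}$ using the fact that $\sigma_\fkp$ is a $W(k)$-algebra map and hence preserves $p$-adic valuation uniformly in $\fkp$. The argument is correct; note that the expansion $\widetilde{b}=\sum_n\alpha_n v^n$ with $\alpha_n\to 0$ is precisely $\alpha_n=s(c_n)$ for the Colmez section $s$, so you are implicitly invoking a section on $\Lambda_{\fkp_0}^+$ alone rather than a compatible family.

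What the paper's approach buys is the by-product recorded in Remark~\ref{rem:comparing the integral structure of Lambda T and Lambda fkp}: the compatible Colmez framework immediately gives the finer integral statement (about $\A_{\st,K}^{\leq h}(\Lambda_T^+)$) that is used later in the proof of Theorem~\ref{thm:purity for horiztaonlly semistable}. Your lifting lemma already lands in $\A_{\max,K}(\Lambda_T^+)$, so the same integral refinement can be extracted from your argument with essentially no extra work; but it is worth flagging that the downstream application needs this refinement, not just the $\B_{\st,K}^+$-level conclusion.
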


\begin{proof}
Consider $\theta_{\A_{\inf,K}(\Lambda_{\fkp_0}^+)}\colon \A_{\inf,K}(\Lambda_{\fkp_0}^+)\ra \Lambda_{\fkp_0}^+$ and choose an $\calO_K$-linear section $s_{\fkp_0}\colon \Lambda_{\fkp_0}^+\ra \A_{\inf,K}(\Lambda_{\fkp_0}^+)$ of $\theta_{\A_{\inf,K}(\Lambda_{\fkp_0}^+)}$  as in Construction~\ref{const:section of theta map}.
By the argument there, it defines a $K$-linear isomorphism
\[
\widetilde{\theta}_{v,\fkp_0}\colon \B_{\st,K}^+(\Lambda_{\fkp_0}^+)
\stackrel{\cong}{\lra}
\Lambda_{\fkp_0}\langle X\rangle[\log (1+X)].
\]

For each $\fkp\in T$, we set
\[
 s_{\fkp}:=\sigma_{\fkp}\circ s_{\fkp_0}\circ \sigma_{\fkp}^{-1}\colon 
\Lambda_{\fkp}^+\ra \Lambda_{\fkp_0}^+\ra \A_{\inf,K}(\Lambda_{\fkp_0}^+)\ra \A_{\inf,K}(\Lambda_{\fkp}^+).
\]
Then $s_{\fkp}$ is an $\calO_K$-linear section of $\theta_{\A_{\inf,K}(\Lambda_{\fkp}^+)}$ satisfying
$s_{\fkp}\circ \sigma_{\fkp}=\sigma_{\fkp}\circ s_{\fkp_0}$.
It also yields a $K$-linear isomorphism
\[
\widetilde{\theta}_{v,\fkp}\colon \B_{\st,K}^+(\Lambda_{\fkp}^+)
\stackrel{\cong}{\lra}
\Lambda_{\fkp}\langle X\rangle[\log (1+X)].
\]
Let $\sigma_{\fkp}\colon \Lambda_{\fkp_0}\langle X\rangle[\log (1+X)]\ra \Lambda_{\fkp}\langle X\rangle[\log (1+X)]$ denote the continuous $K$-linear map extending $\sigma_{\fkp}\colon \Lambda_{\fkp_0}\ra \Lambda_{\fkp}$ by $\sigma_{\fkp}(X^l(\log (1+X))^m)=X^l(\log (1+X))^m$.
We see $\widetilde{\theta}_{v,\fkp}\circ \sigma_{\fkp}=\sigma_{\fkp}\circ \widetilde{\theta}_{v,\fkp_0}$ 
 by Lemma~\ref{lem: properties of theta-tilde}(ii), Proposition~\ref{Prop: Colmez's description of semistable period ring}, and the assumption $\sigma_{\fkp}|_{\overline{K}}=\id$.

Similarly, 
\[
 \prod_{\fkp\in T}s_{\fkp}\colon \Lambda_T^+=\prod_{\fkp\in T}\Lambda_{\fkp}^+
\ra \prod_{\fkp\in T}\A_{\inf,K}(\Lambda_{\fkp}^+)=\A_{\inf,K}(\Lambda_T^+)
\]
is an $\calO_K$-linear section of $\theta_{\A_{\inf,K}(\Lambda_T^+)}$, and it yields
a $K$-linear isomorphism
\[
\widetilde{\theta}_{v,T}\colon \B_{\st,K}^+(\Lambda_T^+)
\stackrel{\cong}{\lra}
\Lambda_T\langle X\rangle[\log(1+X)].
\]
Then we have a commutative diagram
\[
 \xymatrix{
\B_{\st,K}^+(\Lambda_T^+)\ar[r]\ar[d]_{\widetilde{\theta}_{v,T}}^{\cong}
&\prod_{\fkp\in T}\B_{\st,K}^+(\Lambda_{\fkp}^+)\ar[d]_{\prod\widetilde{\theta}_{v,\fkp}}^{\cong}\\
\Lambda_T\langle X\rangle[\log(1+X)]\ar[r]
&\prod_{\fkp\in T}\Lambda_{\fkp}\langle X\rangle[\log(1+X)].
}
\]

 For $h\in\N$ and $\ast\in\{T,\fkp\}$, let $\A_{\st,K}^{\leq h}(\Lambda_\ast^+)$ denote the $\A_{\max,K}(\Lambda_\ast^+)$-submodule
$\bigoplus_{i=0}^h\A_{\max,K}(\Lambda_\ast^+)(\log[\pi^\flat]/\pi)^i$ of $\B_{\st,K}^+(\Lambda_\ast^+)$. Note $\B_{\st,K}^+(\Lambda_\ast^+)=\bigcup_h\A_{\st,K}^{\leq h}(\Lambda_\ast^+)[p^{-1}]$.
By Lemma~\ref{lem: properties of theta-tilde}(ii) and (iv), we have the functorial identification
\[
 \widetilde{\theta}_{v,\ast}\colon 
\A_{\st,K}^{\leq h}(\Lambda_\ast^+)\stackrel{\cong}{\lra}
\bigoplus_{i=0}^h\Lambda_\ast^+\langle X\rangle(\log(1+X))^i=:\Lambda_\ast^+\langle X\rangle[\log(1+X)]^{\leq h}
\]
as $\calO_K$-modules.

Recall
\[
 \Lambda_\ast^+\langle X\rangle
=\biggl\{\sum_{i=0}^\infty \lambda_iX^i\in \Lambda_{\ast}^+[[X]]\mid
\forall l\in\N, \lambda_i\in p^l\Lambda_{\ast}^+\, \text{for all but finitely many $i$}\biggr\}.
\]
In particular, $ \Lambda_T^+\langle X\rangle\ra \prod_{\fkp\in T}\bigl(\Lambda_{\fkp}^+\langle X\rangle\bigr)$ is injective.

We now prove Part (i). 
By the above remark and the transcendence of $\log (1+X)$ (Lemma~\ref{lem:transcendence of log}), we see that the map
\[
 \Lambda_T^+\langle X\rangle[\log(1+X)]^{\leq h}
\ra \prod_{\fkp\in T}\bigl(\Lambda_{\fkp}^+\langle X\rangle[\log(1+X)]^{\leq h}\bigr)
\]
is injective.
Hence the map $\A_{\st,K}^{\leq h}(\Lambda_T^+)\ra \prod_{\fkp\in T}\A_{\st,K}^{\leq h}(\Lambda_{\fkp}^+)$ is injective and Part (i) follows.

For Part (ii), consider the commutative diagram
\[
 \xymatrix{
\B_{\st,K}^+(\Lambda_{\fkp_0}^+)\otimes_{\Q_p}V
\ar[rr]^-{\widetilde{\theta}_{v,\fkp_0}\otimes\id}_-{\cong}\ar[d]^{\sigma_{\fkp}\otimes \sigma_{\fkp}}
&&
\Lambda_{\fkp_0}\langle X\rangle[\log(1+X)]\otimes_{\Q_p}V
\ar[d]^{\sigma_{\fkp}\otimes \sigma_{\fkp}}\\
\B_{\st,K}^+(\Lambda_{\fkp}^+)\otimes_{\Q_p}V
\ar[rr]^-{\widetilde{\theta}_{v,\fkp}\otimes\id}_-{\cong}
&&
\Lambda_{\fkp}\langle X\rangle[\log(1+X)]\otimes_{\Q_p}V.
}
\]
Then the assertion is equivalent to the following:
\begin{quote}
If $x=(x_{\fkp})_{\fkp}\in \prod_{\fkp\in T}\bigl(\Lambda_{\fkp}\langle X\rangle[\log(1+X)]\otimes_{\Q_p}V\bigr)$
satisfies $(\sigma_{\fkp}\otimes\sigma_{\fkp})(x_{\fkp_0})=x_{\fkp}$
 for every $\fkp\in T$, then
$x\in \Lambda_T\langle X\rangle[\log(1+X)]\otimes_{\Q_p}V$.
\end{quote}

Fix a $\calG_{R_K}$-stable $\Z_p$-lattice $V_{\Z_p}$ of $V$ and choose such $x=(x_{\fkp})_{\fkp}$. There exist $m, h\in\N$ such that $p^mx_{\fkp_0}\in \Lambda_{\fkp_0}^+\langle X\rangle[\log(1+X)]^{\leq h}\otimes_{\Z_p}V_{\Z_p}$.
Then for every $\fkp\in T$, we have
\[
 p^mx_{\fkp}= (\sigma_{\fkp}\otimes\sigma_{\fkp})(p^mx_{\fkp_0})
\in \Lambda_{\fkp}^+\langle X\rangle[\log(1+X)]^{\leq h}\otimes_{\Z_p}V_{\Z_p}.
\]
Moreover, we see that if there exists $l\in\N$ and $f(X)\in \Lambda_{\fkp_0}^+[X][\log(1+X)]^{\leq h}\otimes_{\Z_p}V_{\Z_p}$ (i.e., a polynomial in $X$ and $\log(1+X)$) such that
\[
 p^mx_{\fkp_0}-f(X)\in p^l\bigl(\Lambda_{\fkp_0}^+\langle X\rangle[\log(1+X)]^{\leq h}\otimes_{\Z_p}V_{\Z_p}\bigr), 
\]
then
\[
 p^mx_{\fkp}-(\sigma_{\fkp}\otimes\sigma_{\fkp})(f(X))\in p^l\bigl(\Lambda_{\fkp}^+\langle X\rangle[\log(1+X)]^{\leq h}\otimes_{\Z_p}V_{\Z_p}\bigr).
\]
This implies $p^mx\in \Lambda_T^+\langle X\rangle[\log(1+X)]^{\leq h}\otimes_{\Z_p}V_{\Z_p}$, and thus
$ x\in \Lambda_T\langle X\rangle[\log(1+X)]^{\leq h}\otimes_{\Q_p}V$.
\end{proof}

\begin{rem}\label{rem:comparing the integral structure of Lambda T and Lambda fkp}
With the notation as in the proof, the same argument as in the last part of the proof shows the following:
Let $D_{\calO_K}$ be a finite free $\calO_K$-module and let $W\subset \A_{\st,K}^{\leq h}(\Lambda_{\fkp_0}^+)\otimes_{\calO_K}D_{\calO_K}$ be a finite $\Z_p$-submodule.
Then the submodule $\prod_{\fkp\in T}\bigl((\sigma_{\fkp}\otimes\id)(W)\bigr)$
of $\prod_{\fkp\in T}(\A_{\st,K}^{\leq h}(\Lambda_{\fkp}^+)\otimes_{\calO_K}D_{\calO_K})$
lands in $\A_{\st,K}^{\leq h}(\Lambda_T^+)\otimes_{\calO_K}D_{\calO_K}$.
In particular, if $x=(x_\fkp)\in \prod_{\fkp\in T}\bigl(\A_{\st,K}^{\leq h}(\Lambda)\otimes_{\calO_K}D_{\calO_K}\bigr)$ satisfies $x_{\fkp}\in (\sigma_{\fkp}\otimes\id)(W)\subset \A_{\st,K}^{\leq h}(\Lambda_{\fkp}^+)\otimes_{\calO_K}D_{\calO_K}$ for every $\fkp\in T$, then $x\in \A_{\st,T}^{\leq h}(\Lambda_T^+)\otimes_{\calO_K}D_{\calO_K}$.
\end{rem}

\begin{lem}
 Let $x\in \bigl(\B_{\dR}^+(\Lambda_R^+)\otimes_{\Q_p}V)^{\calG_{R_K}}$ and let
$x_{\fkp}$ denote the image of $x$ in $\bigl(\B_{\dR}^+(\Lambda_{\fkp}^+)\otimes_{\Q_p}V\bigr)^{\calG_{R_K}(\fkp)}$.
Then for each $\fkp\in T$, we have
\[
 (\sigma_{\fkp}\otimes \sigma_{\fkp})(x_{\fkp_0})=x_{\fkp}.
\]
\end{lem}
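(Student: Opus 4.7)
The proof will be short and purely functorial. The plan is to exploit the fact that the element $\sigma_{\fkp}\in\calG_{R_K}$ translates the localization at $\fkp_0$ to the localization at $\fkp$. Concretely, for $a\in\Lambda_R^+$, the two $\calO_K$-algebra homomorphisms from $\Lambda_R^+$ to $\Lambda_{\fkp}^+$ obtained by (i) first localizing at $\fkp_0$ and then applying the induced isomorphism $\sigma_{\fkp}\colon\Lambda_{\fkp_0}^+\stackrel{\cong}{\lra}\Lambda_{\fkp}^+$, versus (ii) first applying $\sigma_{\fkp}\colon\Lambda_R^+\ra\Lambda_R^+$ and then localizing at $\fkp=\sigma_{\fkp}(\fkp_0)$, both send $a$ to $\sigma_{\fkp}(a)/1$. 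Hence they agree.

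By functoriality of the perfectoid period ring constructions $\A_{\inf}$, $\B_{\dR}^+$, this equality of maps lifts to a commutative square
\[
\xymatrix{
\B_{\dR}^+(\Lambda_R^+)\ar[r]^{\sigma_{\fkp}}\ar[d]
& \B_{\dR}^+(\Lambda_R^+)\ar[d]\\
\B_{\dR}^+(\Lambda_{\fkp_0}^+)\ar[r]^{\sigma_{\fkp}}
& \B_{\dR}^+(\Lambda_{\fkp}^+),
}
\]
where the vertical arrows are the maps induced by the respective localizations. Tensoring with $V$ (and pairing the horizontal $\sigma_{\fkp}$ with the action of $\sigma_{\fkp}$ on $V$) gives the same diagram for $\B_{\dR}^+(-)\otimes_{\Q_p}V$.

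Starting with $x\in\bigl(\B_{\dR}^+(\Lambda_R^+)\otimes_{\Q_p}V\bigr)^{\calG_{R_K}}$ in the top left, the ``down then right'' path produces $(\sigma_{\fkp}\otimes\sigma_{\fkp})(x_{\fkp_0})$, whereas the ``right then down'' path produces the image of $(\sigma_{\fkp}\otimes\sigma_{\fkp})(x)$ in $\B_{\dR}^+(\Lambda_{\fkp}^+)\otimes_{\Q_p}V$. By the $\calG_{R_K}$-invariance of $x$, the latter equals the image of $x$, which is precisely $x_{\fkp}$. Commutativity of the square then yields $(\sigma_{\fkp}\otimes\sigma_{\fkp})(x_{\fkp_0})=x_{\fkp}$, as required. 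There is no genuine obstacle in this argument; the only verification needed is the equality of the two localization-and-$\sigma_{\fkp}$ compositions at the level of $\Lambda_R^+$, which is immediate from $\sigma_{\fkp}(\fkp_0)=\fkp$.
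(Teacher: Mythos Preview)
Your proof is correct and follows essentially the same approach as the paper: both arguments use the commutative square
\[
\xymatrix{
\B_{\dR}^+(\Lambda_R^+)\otimes_{\Q_p}V\ar[r]^{\sigma_{\fkp}\otimes\sigma_{\fkp}}\ar[d]
& \B_{\dR}^+(\Lambda_R^+)\otimes_{\Q_p}V\ar[d]\\
\B_{\dR}^+(\Lambda_{\fkp_0}^+)\otimes_{\Q_p}V\ar[r]^{\sigma_{\fkp}\otimes\sigma_{\fkp}}
& \B_{\dR}^+(\Lambda_{\fkp}^+)\otimes_{\Q_p}V
}
\]
and the $\calG_{R_K}$-invariance of $x$ to conclude $(\sigma_{\fkp}\otimes\sigma_{\fkp})(x_{\fkp_0})=\bigl((\sigma_{\fkp}\otimes\sigma_{\fkp})(x)\bigr)_{\fkp}=x_{\fkp}$. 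You simply spell out the verification at the level of $\Lambda_R^+$ that the paper leaves implicit.
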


\begin{proof}
 This follows from the commutative diagram
\[
 \xymatrix{
\B_{\dR}^+(\Lambda_R^+)\otimes_{\Q_p}V
\ar[r]^{\sigma_{\fkp}\otimes\sigma_{\fkp}}\ar[d]
&
\B_{\dR}^+(\Lambda_R^+)\otimes_{\Q_p}V
\ar[d]\\
\B_{\dR}^+(\Lambda_{\fkp_0}^+)\otimes_{\Q_p}V
\ar[r]^{\sigma_{\fkp}\otimes\sigma_{\fkp}}
&
\B_{\dR}^+(\Lambda_{\fkp}^+)\otimes_{\Q_p}V.
} 
\]
Namely, we have
$(\sigma_{\fkp}\otimes \sigma_{\fkp})(x_{\fkp_0})=\bigl((\sigma_{\fkp}\otimes\sigma_{\fkp})(x)\bigr)_{\fkp}=x_{\fkp}$.
\end{proof}

\begin{lem}
The inclusion
\[
 (\B_{\st,K}^+(\Lambda_R)\otimes_{\Q_p}V)^{\calG_{R_K}}
\subset\bigl(\B_{\dR}^+(\Lambda_R)\otimes_{\Q_p}V)^{\calG_{R_K}}.
\] 
is an equality.
\end{lem}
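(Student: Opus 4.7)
The plan is to use the Cartesian diagram of Lemma~\ref{lem:comparing periods of Lambda R and Lambda T} to pass from $\Lambda_R^+$ to $\Lambda_T^+=\prod_{\fkp\in T}\Lambda_\fkp^+$, and then to detect membership in $\B_{\st,K}^+(\Lambda_T^+)\otimes V$ factor-by-factor via Lemma~\ref{lem:comparing semistable period of Lambda T and prod Lamda fkp}. Since $\otimes_{\Q_p}V$ and the formation of $\calG_{R_K}$-invariants both preserve pullbacks (flatness of the former, left exactness of the latter), the diagram remains Cartesian after applying both. Thus for $x\in (\B_{\dR}^+(\Lambda_R^+)\otimes_{\Q_p}V)^{\calG_{R_K}}$, it suffices to show that its image $y$ in $\B_{\dR}^+(\Lambda_T^+)\otimes_{\Q_p}V$ lies in the subspace $\B_{\st,K}^+(\Lambda_T^+)\otimes_{\Q_p}V$; the $\calG_{R_K}$-invariance of the resulting preimage in $\B_{\st,K}^+(\Lambda_R^+)\otimes_{\Q_p}V$ is then automatic from that of $x$.

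To carry this out, for each $\fkp\in T$ let $x_\fkp$ denote the image of $x$ in $\B_{\dR}^+(\Lambda_\fkp^+)\otimes_{\Q_p}V$. By naturality and $\calG_{R_K}$-invariance of $x$, the element $x_\fkp$ is $\calG_{R_K}(\fkp)$-invariant, so Lemma~\ref{lem:admissibility for Lambda fkp} places it already inside $\B_{\st,K}^+(\Lambda_\fkp^+)\otimes_{\Q_p}V$. The preceding lemma yields the transport relation $(\sigma_\fkp\otimes\sigma_\fkp)(x_{\fkp_0})=x_\fkp$, and hence the tuple $(x_\fkp)_{\fkp\in T}$ meets the hypothesis of Lemma~\ref{lem:comparing semistable period of Lambda T and prod Lamda fkp}(ii). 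That lemma produces an element $z\in \B_{\st,K}^+(\Lambda_T^+)\otimes_{\Q_p}V$ whose image in $\prod_{\fkp}\B_{\st,K}^+(\Lambda_\fkp^+)\otimes_{\Q_p}V$ coincides with $(x_\fkp)_{\fkp}$. The remaining task is to identify $z$ with $y$ inside $\B_{\dR}^+(\Lambda_T^+)\otimes_{\Q_p}V$: both project to the same tuple $(x_\fkp)_\fkp$ in $\prod_\fkp \B_{\dR}^+(\Lambda_\fkp^+)\otimes_{\Q_p}V$, and hence the desired equality $y=z$ follows once one knows that the natural map
\[
\B_{\dR}^+(\Lambda_T^+)\lra \prod_{\fkp\in T}\B_{\dR}^+(\Lambda_\fkp^+)
\]
is injective.

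The main technical step is this last injectivity, which is not singled out as a lemma but is a mild variant of Lemma~\ref{lem:comparing semistable period of Lambda T and prod Lamda fkp}(i). I would establish it by the same $\widetilde{\theta}_{s,v}$-framework: choose compatible $\calO_K$-linear sections $s_T=\prod_\fkp s_\fkp$ of $\theta$ over $\Lambda_T^+$ and the $\Lambda_\fkp^+$. The $K$-linear isomorphisms $\widetilde{\theta}_{s,v}\colon \B_{\dR}^+(\Lambda^+)\cong \Lambda[[X]]$ from Proposition~\ref{Prop: Colmez's description of semistable period ring} fit into a commutative square, reducing the injectivity to that of $\Lambda_T[[X]]\to\prod_{\fkp}\Lambda_\fkp[[X]]$; this in turn follows from the injectivity of $\Lambda_T=(\prod_\fkp\Lambda_\fkp^+)[p^{-1}]\hookrightarrow \prod_\fkp\Lambda_\fkp$, which holds because each $\Lambda_\fkp^+$ is $p$-torsion free. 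With $y=z\in\B_{\st,K}^+(\Lambda_T^+)\otimes_{\Q_p}V$, the Cartesian property furnishes the required preimage in $\B_{\st,K}^+(\Lambda_R^+)\otimes_{\Q_p}V$, completing the argument.
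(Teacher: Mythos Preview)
Your argument is correct and follows the same route as the paper's proof: push $x$ down to each $\Lambda_\fkp^+$, use Lemma~\ref{lem:admissibility for Lambda fkp} and the transport relation to invoke Lemma~\ref{lem:comparing semistable period of Lambda T and prod Lamda fkp}(ii), and then pull back via the Cartesian square of Lemma~\ref{lem:comparing periods of Lambda R and Lambda T}. The paper simply writes ``hence $x\in \B_{\st,K}^+(\Lambda_T^+)\otimes_{\Q_p}V$'' at the key step, implicitly identifying the element produced by Lemma~\ref{lem:comparing semistable period of Lambda T and prod Lamda fkp}(ii) with the image of $x$ in $\B_{\dR}^+(\Lambda_T^+)\otimes_{\Q_p}V$; you make this identification explicit by checking the injectivity of $\B_{\dR}^+(\Lambda_T^+)\to\prod_{\fkp}\B_{\dR}^+(\Lambda_\fkp^+)$ via the $\widetilde{\theta}_{s,v}$-description, which is a welcome clarification.
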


\begin{proof}
Let $x\in \bigl(\B_{\dR}^+(\Lambda_R^+)\otimes_{\Q_p}V)^{\calG_{R_K}}$.
For each $\fkp\in T$, let $x_{\fkp}$ denote the image of $x$ in $\bigl(\B_{\dR}^+(\Lambda_{\fkp}^+)\otimes_{\Q_p}V\bigr)^{\calG_{R_K}(\fkp)}$. Then we have
$ (\sigma_{\fkp}\otimes \sigma_{\fkp})(x_{\fkp_0})=x_{\fkp}$ by the previous lemma.
Since $\bigl(\B_{\st,K}^+(\Lambda_{\fkp}^+)\otimes_{\Q_p}V\bigr)^{\calG_{R_K}(\fkp)}=\bigl(\B_{\dR}^+(\Lambda_{\fkp}^+)\otimes_{\Q_p}V\bigr)^{\calG_{R_K}(\fkp)}$
by Lemma~\ref{lem:admissibility for Lambda fkp}, 
we have $(x_{\fkp})_{\fkp}\in \prod_{\fkp\in T}\bigl(\B_{\st,K}^+(\Lambda_{\fkp}^+)\otimes_{\Q_p}V\bigr)$.
Hence we get 
 $x\in \B_{\st,K}^+(\Lambda_T^+)\otimes_{\Q_p}V$ by Lemma~\ref{lem:comparing semistable period of Lambda T and prod Lamda fkp} (ii).
Since the commutative diagram
\[
\xymatrix{
 \B_{\st,K}^+(\Lambda_R^+)\otimes_{\Q_p}V
\ar[r]\ar[d]
&
\B_{\dR}^+(\Lambda_R^+)\otimes_{\Q_p}V
\ar[d]\\
 \B_{\st,K}^+(\Lambda_T^+)\otimes_{\Q_p}V
\ar[r]
&
\B_{\dR}^+(\Lambda_T^+)\otimes_{\Q_p}V
}
\]
is Cartesian by Lemma~\ref{lem:comparing periods of Lambda R and Lambda T},  we conclude
$x\in  \B_{\st,K}^+(\Lambda_R^+)\otimes_{\Q_p}V$.
\end{proof}

Let us complete the proof of Theorem~\ref{thm:purity for horiztaonlly semistable}.
Set 
\[
D:= (\B_{\st,K}^+(\Lambda_R^+)\otimes_{\Q_p}V)^{\calG_{R_K}}
=\bigl(\B_{\dR}^+(\Lambda_R^+)\otimes_{\Q_p}V)^{\calG_{R_K}}
=\bigl(\B_{\dR}(\Lambda_R^+)\otimes_{\Q_p}V)^{\calG_{R_K}}.
\] 
This is a $K$-vector space and $\dim_KD=\dim_{\Q_p}V$.
Consider the commutative diagram
\[
\xymatrix{
 \B_{\st,K}(\Lambda_R^+)\otimes_{K}D
\ar@{^{(}->}[rr]^{\alpha_{\st,K}^{\nabla}(V)}\ar@{^{(}->}[d]
&&
 \B_{\st,K}(\Lambda_R^+)\otimes_{\Q_p}V
\ar@{^{(}->}[d]\\
\B_{\dR}(\Lambda_R^+)\otimes_{K}D
\ar[rr]^{\alpha_{\dR}^{\nabla}(V)}_{\cong}
&&
\B_{\dR}(\Lambda_R^+)\otimes_{\Q_p}V.
}
\]
We need to show that $\alpha_{\st,K}^{\nabla}(V)$ is surjective.
Choose $m\in\N$ such that the $\Q_p$-vector subspace $\Q_pt^m\otimes_{\Q_p}V$ of
$\B_{\dR}(\Lambda_R^+)\otimes_{\Q_p}V$ is contained in
the image of $\B_{\dR}^+(\Lambda_R^+)\otimes_{K}D$ under $\alpha_{\dR}^{\nabla}(V)$.
Fix a $\calG_{R_K}$-stable $\Z_p$-lattice $V_{\Z_p}$ of $V$ and set
$V_{\Z_p}':=\Z_pt^m\otimes_{\Z_p}V_{\Z_p}$.
It suffices to prove that $V_{\Z_p}'$, as a submodule of $\B_{\st,K}(\Lambda_R^+)\otimes_{\Q_p}V$, is contained in the image of $\alpha_{\st,K}^{\nabla}(V)$.

For $h\in \N$ and $\ast\in\{R,T,\fkp\}$, consider the $\A_{\max,K}(\Lambda_\ast^+)$-submodule
\[
 \A_{\st,K}^{\leq h}(\Lambda_\ast^+):=\bigoplus_{i=0}^h\A_{\max,K}(\Lambda_\ast^+)\biggl(\log\frac{[\pi^\flat]}{\pi}\biggr)^i
\subset \B_{\st,K}^+(\Lambda_\ast^+).
\]
Note $\B_{\st,K}^+(\Lambda_\ast^+)=\bigcup_h\A_{\st,K}^{\leq h}(\Lambda_\ast^+)[p^{-1}]$.
Observe that each $\A_{\st,K}^{\leq h}(\Lambda_R^+)[p^{-1}]$ is stable under the action of $\calG_{R_K}$
and  
\[
\bigl(\A_{\st,K}^{\leq h}(\Lambda_R^+)[p^{-1}]\bigr)^{\calG_{R_K}}
=\bigl(\A_{\st,K}^{\leq h}(\Lambda_{\fkp}^+)[p^{-1}]\bigr)^{\calG_{R_K}(\fkp)}
=K.
\]

Since $D$ is finite-dimensional over $K$, there exists $h\in\N$ such that
\[
 D=\bigl(\A_{\st,K}^{\leq h}(\Lambda_R^+)[p^{-1}]\otimes_{\Q_p}V\bigr)^{\calG_{R_K}}.
\]
It follows that for each $\fkp\in T$, we have the isomorphism
\[
 D\stackrel{\cong}{\lra}
(\A_{\st,K}^{\leq h}(\Lambda_{\fkp}^+)[p^{-1}]\otimes_{\Q_p}V)^{\calG_{R_K}(\fkp)}
=\bigl(\B_{\dR}^+(\Lambda_{\fkp}^+)\otimes_{\Q_p}V)^{\calG_{R_K}(\fkp)}.
\]

By increasing $h$ if necessary, we may further assume that
$V_{\Z_p}'$, as a submodule of $\B_{\st,K}(\Lambda_{\fkp_0}^+)\otimes_{\Q_p}V$, 
is contained in the image of $\A_{\st,K}^{\leq h}(\Lambda_{\fkp_0}^+)[p^{-1}]\otimes_KD$
under the isomorphism 
\[
 \alpha_{\B_{\st,K}(\Lambda_{\fkp_0}^+)}(V|_{\calG_{R_K}(\fkp_0)})
\colon \B_{\st,K}(\Lambda_{\fkp_0}^+)\otimes_KD
\stackrel{\cong}{\lra}\B_{\st,K}(\Lambda_{\fkp_0}^+)\otimes_{\Q_p}V.
\]
Choose an $\calO_K$-lattice $D_{\calO_K}$ of $D$ such that
$V_{\Z_p}'$, as a submodule of $\B_{\st,K}(\Lambda_{\fkp_0}^+)\otimes_{\Q_p}V$, 
is contained in the image of $\A_{\st,K}^{\leq h}(\Lambda_{\fkp_0}^+)\otimes_{\calO_K}D_{\calO_k}$
under $\alpha_{\B_{\st,K}(\Lambda_{\fkp_0}^+)}(V|_{\calG_{R_K}(\fkp_0)})$.

For each $\fkp\in T$, the functoriality of the construction gives a commutative diagram
\[
 \xymatrix{
\A_{\st,K}^{\leq h}(\Lambda_{\fkp_0}^+)\otimes_{\calO_K}D_{\calO_K}
\ar[rrr]^{\alpha_{\B_{\st,K}(\Lambda_{\fkp_0}^+)}(V|_{\calG_{R_K}(\fkp_0)})}\ar[d]^{\sigma_{\fkp}\otimes \id}
&&&
\B_{\st,K}(\Lambda_{\fkp_0}^+)\otimes_{\Q_p}V
\ar[d]^{\sigma_{\fkp}\otimes \sigma_{\fkp}}\\
\A_{\st,K}^{\leq h}(\Lambda_{\fkp}^+)\otimes_{\calO_K}D_{\calO_K}
\ar[rrr]^{\alpha_{\B_{\st,K}(\Lambda_{\fkp}^+)}(V|_{\calG_{R_K}(\fkp)})}
&&&
\B_{\st,K}(\Lambda_{\fkp}^+)\otimes_{\Q_p}V.
}
\]
Since $V_{\Z_p}'$ is $\calG_{R_K}$-stable, we conclude that 
$V_{\Z_p}'$, as a submodule of $\B_{\st,K}^+(\Lambda_{\fkp}^+)\otimes_{\Q_p}V$, 
is contained in the image of $\A_{\st,K}^{\leq h}(\Lambda_{\fkp}^+)\otimes_{\calO_K}D_{\calO_k}$
under $\alpha_{\B_{\st,K}(\Lambda_{\fkp}^+)}(V|_{\calG_{R_K}(\fkp)})$.

Consider the isomorphism
\[
 \alpha_{\dR}^{\nabla}(V)\colon
\B_{\dR}(\Lambda_R^+)\otimes_KD\stackrel{\cong}{\lra}
\B_{\dR}(\Lambda_R^+)\otimes_{\Q_p}V
\]
and let $W\subset \B_{\dR}(\Lambda_R^+)\otimes_KD$ denote
 the inverse image of $V_{\Z_p}'$, as a submodule of $\B_{\dR}(\Lambda_R^+)\otimes_{\Q_p}V$, under $\alpha_{\dR}^{\nabla}(V)$.
Observe $W\subset \B_{\dR}^+(\Lambda_R^+)\otimes_KD$.
By the above discussion, the image of $W$ under the map
\[
 \B_{\dR}^+(\Lambda_R^+)\otimes_KD
\ra \prod_{\fkp\in T}(\B_{\dR}^+(\Lambda_{\fkp}^+)\otimes_KD)
\]
lands in the subspace $\prod_{\fkp\in T}\bigl(\A_{\st,K}^{\leq h}(\Lambda_{\fkp}^+)\otimes_{\calO_K}D_{\calO_K}\bigr)$.
It follows from Remark~\ref{rem:comparing the integral structure of Lambda T and Lambda fkp} that the image of $W$ under the map
\[
 \B_{\dR}^+(\Lambda_R^+)\otimes_KD
\ra  \B_{\dR}^+(\Lambda_T^+)\otimes_KD
\]
lands in the subspace $\A_{\st,K}^{\leq h}(\Lambda_T^+)\otimes_{\calO_K}D_{\calO_K}$ and thus in $\B_{\st,K}^+(\Lambda_T^+)\otimes_KD$.
By Lemma~\ref{lem:comparing periods of Lambda R and Lambda T}, we conclude that
$W$ lands in the subspace
\[
 \B_{\st,K}^+(\Lambda_R^+)\otimes_KD \subset \B_{\dR}^+(\Lambda_R^+)\otimes_KD.
\]
This implies that
 $V_{\Z_p}'$, as a submodule of $\B_{\st,K}(\Lambda_R^+)\otimes_{\Q_p}V$,
is contained in the image of $\alpha_{\st,K}^{\nabla}(V)$.
Hence $V$ is a horizontal semistable representation of $\calG_{R_K}$.
This completes the proof of Theorem~\ref{thm:purity for horiztaonlly semistable}.

\section{$p$-adic monodromy theorem for horizontal de Rham representations}\label{section:p-adic monodromy theorem for horizontal de Rham representations}

\subsection{Horizontal version of $p$-adic monodromy theorem for Galois representations}
Let us recall a result of Ohkubo.
We use a slightly different notation from \cite{Ohkubo}.
Let $\calK$ be a complete discrete valuation field of mixed characteristic $(0,p)$
and let $k_{\calK}$ denote the residue field of $\calK$\footnote{\cite{Ohkubo} uses $K$ for our $\calK$. In our paper, $K$ is a finite extension of $\Q_p$.}.
Fix an algebraic closure $\overline{\calK}$ of $\calK$.
Define the subfield $\calK_{\can}$ of $\calK$ as the algebraic closure of
$W(k_{\calK}^{p^\infty})[p^{-1}]$ in $\calK$, 
where $k_{\calK}^{p^\infty}:=\bigcap_{m\in\N}k_{\calK}^{p^m}$
Note that $\calK_{\can}$ coincides with the kernel of the canonical derivation
$d\colon \calK\ra \calK\otimes_{\calO_{\calK}}\widehat{\Omega}_{\calO_{\calK}}$ (cf.~\cite[Remark 1.4 (i), Proposition 1.13]{Ohkubo}).
We use a similar notation for finite extensions of $\calK$.

Ohkubo proves the $p$-adic monodromy theorem for horizontal de Rham representations of $\calK$:

\begin{thm}[{\cite[Theorem 7.4]{Ohkubo}}]
\label{thm:horizontal p-adic monodromy}
 Let $V_{\calK}$ be a horizontal de Rham representation of $\Gal(\overline{\calK}/\calK)$.
Then there exists a finite extension $L$ of $\calK_{\can}$ such that
$V_{\calK}|_{\Gal(\overline{\calK}/\calK L)}$ is horizontal semistable, where
$\calK L$ is the composite field of $\calK$ and $L$ in $\overline{\calK}$.
\end{thm}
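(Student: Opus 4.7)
The plan is to follow Berger's strategy for the classical $p$-adic monodromy theorem, suitably extended to handle the imperfect residue field of $\calK$, building on Ohkubo's theory of $(\varphi,\Gamma)$-modules in this setting. The guiding principle is that the ``horizontal'' direction should effectively reduce the problem to a differential-equation statement amenable to the Andre--Kedlaya--Mebkhout local monodromy theorem.

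First, I would attach to $V_{\calK}$ an overconvergent $(\varphi,\Gamma)$-module over an appropriate Robba-type ring $\calR_{\calK}$, extending Cherbonnier--Colmez's theorem to this setting. In the imperfect residue field case, the group $\Gamma$ is larger than in the classical situation: beyond the cyclotomic direction it also carries Lie-algebra directions corresponding to the nontrivial part of $\widehat{\Omega}_{\calO_{\calK}}$. The horizontal de Rham hypothesis on $V_{\calK}$ should translate into the statement that the associated $(\varphi,\Gamma)$-module is killed by the non-cyclotomic Lie-algebra directions on the level of $\B_{\dR}$-coefficients, so that the de Rham data lives on the $\calK_{\can}$-part of the Robba ring.

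Next, one produces from this data a $p$-adic differential equation with Frobenius structure over $\calR_{\calK_{\can}}$ (the Robba ring associated to the perfect residue field subfield): concretely, the horizontal de Rham condition singles out a canonical $\B_{\dR}^+$-lattice stabilized by $\Gamma$ on which the cyclotomic direction acts through a nilpotent infinitesimal action, giving an $N_\nabla$-operator. Applying the Andre--Kedlaya--Mebkhout theorem to this differential module yields that, after a finite extension $L$ of $\calK_{\can}$, the module becomes unipotent, i.e., semistable.

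Finally, one translates unipotence back through the dictionary into the assertion that $V_{\calK}|_{\Gal(\overline{\calK}/\calK L)}$ is horizontal semistable, by checking compatibility with the horizontal semistable period ring $\rmB_{\st}(\widehat{\calO_{\overline{\calK}}})$ and the monodromy operator $N$ constructed on it. The main obstacle I expect is precisely the dictionary step: ensuring that the horizontal de Rham property really does produce a genuine differential equation over $\calR_{\calK_{\can}}$ (as opposed to a more complicated object over $\calR_{\calK}$), so that the quasi-unipotence conclusion of Andre--Kedlaya--Mebkhout transfers cleanly to a finite extension of $\calK_{\can}$ rather than one of $\calK$. All the heavy analytic work is packaged in the local monodromy theorem once this reduction is secure.
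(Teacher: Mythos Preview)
The paper does not give its own proof of this statement; it simply cites Ohkubo's result \cite[Theorem~7.4]{Ohkubo} as a black box. The remark immediately following the statement explains Ohkubo's route: he first proves the \emph{non-horizontal} $p$-adic monodromy theorem in the imperfect residue field case (every de Rham representation of $\Gal(\overline{\calK}/\calK)$ becomes semistable over a finite extension of $\calK$), and then deduces the horizontal refinement---that the extension can be taken of the form $\calK L$ with $L/\calK_{\can}$ finite---as a consequence. So the argument does not attempt to descend the $(\varphi,\Gamma)$-module or the differential equation to $\calK_{\can}$ from the outset.

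Your proposal goes the other way: use the horizontal de Rham hypothesis to produce a differential equation directly over $\calR_{\calK_{\can}}$, then invoke Andr\'e--Kedlaya--Mebkhout there. The obstacle you flag is genuine, and it is closely related to the warning in the paper's second remark after the theorem: Ohkubo's claim \cite[Theorem~7.6]{Ohkubo} that every horizontal de Rham representation of $\Gal(\overline{\calK}/\calK)$ is pulled back from a de Rham representation of $\Gal(\overline{\calK_{\can}}/\calK_{\can})$ is \emph{incorrect} (cf.~Remark~\ref{rem:horizontal comparison does not preserve filtrations} and Example~\ref{example: non-compatibility of filtrations}). So the naive descent of the full Galois-theoretic data to $\calK_{\can}$ fails, and one cannot simply say ``horizontal de Rham means everything lives over $\calK_{\can}$.'' Your weaker hope---that only the differential module, not the representation, descends---is more plausible but would require substantial new work to justify, and neither the paper nor Ohkubo supplies that argument. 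Ohkubo's actual route sidesteps the issue by first establishing potential semistability over an a priori uncontrolled finite extension of $\calK$, and only afterward arguing that the extension can be refined to one coming from $\calK_{\can}$.
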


\begin{rem}
 When $k_{\calK}$ is perfect, the above theorem is the usual $p$-adic monodromy theorem, and it was first proved by Berger \cite[Th\'eor\`eme 0.7] {Berger}.
When $k_{\calK}$ is imperfect, we can also consider the statement that a de Rham representation is potentially semistable.
This statement is proved by Morita when $k_{\calK}$ admits a finite $p$-basis \cite[Corollary 1.2]{Morita} and by Ohkubo in general \cite[Main Theorem]{Ohkubo}.
Ohkubo deduces Theorem~\ref{thm:horizontal p-adic monodromy} from this version of $p$-adic monodromy theorem.
\end{rem}

\begin{rem}
\cite[Theorem 7.6]{Ohkubo} claims that every horizontal de Rham representation of $\Gal(\overline{\calK}/\calK)$ comes from a de Rham representation of $\Gal(\overline{\calK_{\can}}/\calK_{\can})$ via the pullback along $\Gal(\overline{\calK}/\calK)\ra \Gal(\overline{\calK_{\can}}/\calK_{\can})$, which is not correct. The error comes from an incorrect argument in the proof of \cite[Proposition 7.3]{Ohkubo} (cf.~Remark~\ref{rem:horizontal comparison does not preserve filtrations} and Example~\ref{example: non-compatibility of filtrations}).
However, these errors are irrelevant to the proofs of the $p$-adic monodromy theorem and the horizontal $p$-adic monodromy theorem \cite[Main Theorem, Theorem 7.4]{Ohkubo}, and the latter theorems are correct.
\end{rem}

\subsection{$p$-adic monodromy theorem for horizontal de Rham representations}

Let $K$ be a complete discrete valuation field of characteristic zero with perfect residue field $k$ of characteristic $p$. Fix a uniformizer $\pi$ of $K$.
 Let $R$ be an $\calO_K$-algebra satisfying the conditions in Set-up~\ref{set-up: Brinon's rings}. We further assume that $R$ satisfies Condition (BR).
For each finite extension $L$ of $K$,
we denote $\calO_L\otimes_{\calO_K}R$ by $R_{\calO_L}$.
Then $R_{\calO_L}$ also satisfies the conditions in Set-up~\ref{set-up: Brinon's rings} and Condition (BR) with respect to $L$.
As before, we write $R_K$ (resp. $R_L$) for $R[p^{-1}]$ (resp. $R_{\calO_L}[p^{-1}]$).

Theorem~\ref{thm:p-adic monodromy for horizontal de Rham representations in intro} in the introduction
is a special case of the following $p$-adic monodromy theorem for horizontal de Rham representations:
\begin{thm}\label{thm:p-adic monodromy for horizontal de Rham representations}
Assume that $R$ satisfies Condition (BR). Let $V\in \Rep_{\Q_p}(\calG_{R_K})$.
If $V$ is horizontal de Rham, then there exists a finite extension $L$ of $K$
such that 
$V|_{\calG_{R_L}}$ is horizontal semistable.
\end{thm}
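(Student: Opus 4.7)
The plan is to reduce to Ohkubo's horizontal $p$-adic monodromy theorem via the purity theorem of the previous section. Fix an $R$-algebra embedding $\overline{R}\hra \calO_{\overline{\calK}}$ corresponding to $\fkp_0\in T$, inducing $\widehat{\calG}_{R_K}(\fkp_0)=\Gal(\overline{\calK}/\calK)\to \calG_{R_K}$. Functoriality of the period rings on perfectoid $C$-Banach pairs produces a $\Gal(\overline{\calK}/\calK)$-equivariant ring homomorphism $\rmB_{\dR}^\nabla(R)\to \B_{\dR}(\Lambda_{\calK}^+)$; since $\B_{\dR}(\Lambda_{\calK}^+)$ is a field and $\Gal(\overline{\calK}/\calK)$-regular (Lemma~\ref{lem:regularity of period rings for Lambda calK}), the map $\alpha_{\B_{\dR}(\Lambda_{\calK}^+)}(V|_{\Gal(\overline{\calK}/\calK)})$ is injective. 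Lemma~\ref{lem: admissiblity of two period rings} therefore implies that $V|_{\Gal(\overline{\calK}/\calK)}$ is horizontal de Rham in the sense of \cite{Ohkubo}.

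The essential observation is that the canonical subfield $\calK_{\can}$ of $\calK$ equals $K$ by Lemma~\ref{lem:canonical subfield}. Applying Ohkubo's Theorem~\ref{thm:horizontal p-adic monodromy} therefore yields a \emph{finite} extension $L$ of $K$ such that $V|_{\Gal(\overline{\calK}/\calK L)}$ is horizontal semistable as a representation of $\Gal(\overline{\calK}/\calK L)$.

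It remains to globalize this by base change. The ring $R_{\calO_L}=R\otimes_{\calO_K}\calO_L$ satisfies Set-up~\ref{set-up: Brinon's rings} and Condition (BR) with respect to $L$, and $\overline{R_{\calO_L}}=\overline{R}$, so the period rings do not change. The analog $\calK_L$ of $\calK$ for $R_{\calO_L}$ is, via the induced embedding $\overline{R_{\calO_L}}\hra\calO_{\overline{\calK}}$, a subfield of $\overline{\calK}$ of degree $[L:K]$ over $\calK$ containing $\calK L$, and hence coincides with $\calK L$; consequently $\widehat{\calG}_{R_L}(\fkp_0)=\Gal(\overline{\calK}/\calK L)$, so $V|_{\widehat{\calG}_{R_L}(\fkp_0)}$ is horizontal semistable. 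Since $V|_{\calG_{R_L}}$ is horizontal de Rham by Lemma~\ref{lem:potentially horizontal de Rham is horizontal de Rham}(ii), the purity theorem (Theorem~\ref{thm:purity for horiztaonlly semistable}), applied to $R_{\calO_L}$ and $V|_{\calG_{R_L}}$, concludes that $V|_{\calG_{R_L}}$ is horizontal semistable. The serious work has been absorbed into the purity theorem and Ohkubo's result; the only residual content of the present argument is the functorial descent producing a horizontal de Rham representation of $\Gal(\overline{\calK}/\calK)$ and the compatibility $\calK_L=\calK L$, which together convert Ohkubo's local conclusion into a $p$-adic monodromy statement over $\calG_{R_K}$.
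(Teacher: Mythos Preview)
Your proof is correct and follows essentially the same route as the paper: pass to $\calK$, invoke $\calK_{\can}=K$ and Ohkubo's theorem to obtain $L$, identify $\calK L$ with the field attached to $R_{\calO_L}$, and then apply the purity theorem. The only cosmetic differences are that you justify the horizontal de Rham property of $V|_{\Gal(\overline{\calK}/\calK)}$ via Lemma~\ref{lem: admissiblity of two period rings} and regularity (the paper leaves this implicit) and argue $\calK_L=\calK L$ by a degree count rather than the paper's direct identification of rings of integers using geometric integrality of $R/\pi R$.
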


\begin{proof}
We use the notation in Subsection~\ref{subsection:statement of purity}.
Consider the complete discrete valuation field $\calK:=\widehat{R_{(\pi)}}[p^{-1}]$.
Fix an algebraic closure $\overline{\calK}$ of $\calK$ and an $R$-algebra embedding
$\overline{R}\hra \overline{\calK}$.
Note $\calK_{\can}=K$ by Lemma~\ref{lem:canonical subfield}.

Since $V$ is a horizontal de Rham representation of $\calG_{R_K}$,
$V_{\calK}:=V|_{\Gal(\overline{\calK}/\calK)}$ is a horizontal de Rham representation of $\Gal(\overline{\calK}/\calK)$.
Hence by Theorem~\ref{thm:horizontal p-adic monodromy},
there exists a finite extension $L$ of $K$ such that
$V_{\calK}|_{\Gal(\overline{\calK}/L\calK)}$ is horizontal semistable.

We prove that $V|_{\calG_{R_L}}$ is horizontal semistable.
Let $\pi_L$ denote a uniformizer of $L$.
Since $R/\pi R$ is geometrically integral over $k$, 
$(\pi_L)\subset R_{\calO_L}$ is a prime ideal and thus
the ring of integers of $\calK L$ coincides with the $p$-adic completion $\widehat{(R_{\calO_L})_{(\pi_L)}}$ of the localization of $R_{\calO_L}$ at $(\pi_L)$. Hence
$\calK L=\widehat{(R_{\calO_L})_{(\pi_L)}}[p^{-1}]$.

We know that $V|_{\calG_{R_L}}|_{\Gal(\overline{\calK}/L\calK)}=V_{\calK}|_{\Gal(\overline{\calK}/L\calK)}$
is a horizontal semistable representation of $\Gal(\overline{\calK}/L\calK)$.
The embedding 
\[
\overline{R_{\calO_L}}= \overline{R}\hra \overline{\calK}
\]
determines a prime ideal $\fkp_0$ of $\overline{R_{\calO_L}}$ above $(\pi_L)\subset R_{\calO_L}$,
and $\Gal(\overline{\calK}/L\calK)$ coincides with $\widehat{\calG}_{R_L}(\fkp_0)$.
Therefore $V|_{\calG_{R_L}}$ is horizontal semistable by Theorem~\ref{thm:purity for horiztaonlly semistable}.
\end{proof}

\begin{thm}\label{thm:horizontal de Rham and one point crystalline}
Assume $R$ satisfies Condition (BR).
 Let $V$ be a horizontal de Rham representation of $\calG_{R_K}$.
Assume that there exists a finite extension $K'$ of $K$ and a map $f\colon R\ra K'$
such that $f^\ast V$ is a potentially crystalline representation of $\Gal(\overline{K}/K')$.
Then there exists a finite extension $L$ of $K$ such that
$V|_{\calG_{R_L}}$ is horizontal crystalline.
\end{thm}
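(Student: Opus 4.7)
The plan is to combine the $p$-adic monodromy theorem for horizontal de Rham representations (Theorem~\ref{thm:p-adic monodromy for horizontal de Rham representations}) with the pointwise-upgrade principle of Proposition~\ref{prop:horizontal semistable plus crystalline at one point}. The idea is that the former supplies horizontal semistability after a finite base change, while the latter promotes horizontal semistability to horizontal crystallinity as soon as crystallinity is known at a single classical point.

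First I would apply Theorem~\ref{thm:p-adic monodromy for horizontal de Rham representations} to the horizontal de Rham representation $V$ to obtain a finite extension $L_1/K$ such that $V|_{\calG_{R_{L_1}}}$ is horizontal semistable. Next, the hypothesis that $f^\ast V$ is potentially crystalline as a representation of $\Gal(\overline{K}/K')$ produces a finite extension $K''/K'$ inside $\overline{K}$ with $f^\ast V|_{\Gal(\overline{K}/K'')}$ crystalline. Take $L$ to be any finite extension of $K$ inside $\overline{K}$ containing both $L_1$ and $K''$ (for instance, their compositum).

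With this choice of $L$, the representation $V|_{\calG_{R_L}}$ is still horizontal semistable by Lemma~\ref{lem:potentially horizontal de Rham is horizontal de Rham}(i). The given map $f\colon R\to K'\hookrightarrow L$ extends $\calO_L$-linearly to an $\calO_K$-algebra homomorphism $f_L\colon R_{\calO_L}\to\calO_L$, and $\calO_L$ trivially satisfies Set-up~\ref{set-up: Brinon's rings} (the case $n=0$ with $R=\widetilde{R}$) together with Condition (BR), viewed with respect to the new base field $L$. Choosing a path on $\Spec R_L$ compatible with the one used to define $f^\ast V$, the pullback $f_L^\ast(V|_{\calG_{R_L}})$ is precisely the restriction $V|_{\Gal(\overline{K}/L)}$, which is crystalline since $L\supset K''$ and crystallinity is preserved under restriction to finite extensions. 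Proposition~\ref{prop:horizontal semistable plus crystalline at one point}, applied with base field $L$, algebra $R_{\calO_L}$, target $\calO_L$, and map $f_L$, then forces $V|_{\calG_{R_L}}$ to be horizontal crystalline, which is the desired conclusion. Since both main ingredients are already in place, there is no real technical obstacle; the only work is the bookkeeping for field extensions and the choice of compatible geometric points defining the pullback functors.
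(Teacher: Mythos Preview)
Your proof is correct and follows essentially the same approach as the paper: apply Theorem~\ref{thm:p-adic monodromy for horizontal de Rham representations} to obtain horizontal semistability over some finite extension, enlarge this extension so that the pullback along $f$ is genuinely crystalline, and then invoke Proposition~\ref{prop:horizontal semistable plus crystalline at one point}. The paper's proof is slightly terser (it first enlarges $K'$ to make $f^\ast V$ crystalline and then takes $L$ containing $K'$), but the logic is the same.
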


\begin{proof}
By extending $K'$ if necessary, we may assume that the representation $f^\ast V$ of $\Gal(\overline{K}/K')$ is crystalline (equivalently, horizontal crystalline). 
By Theorem~\ref{thm:p-adic monodromy for horizontal de Rham representations},
there exists a finite extension $L$ of $K'$ such that $V|_{\calG_{R_L}}$ is horizontal semistable. Then the assertion follows from Proposition~\ref{prop:horizontal semistable plus crystalline at one point}.
\end{proof}

\section{$p$-adic Hodge theory for rigid analytic varieties}\label{section:p-adic Hodge theory for rigid analytic varieties}

In this section, we review $p$-adic local systems and period sheaves.
Let $K$ be a complete discrete valuation field of characteristic zero with perfect residue field of characteristic $p$.

We denote the $n$-dimensional unit polydisk $\Spa\bigl(K\langle T_1,\ldots,T_n\rangle,\calO_K\langle T_1,\ldots,T_n\rangle\bigr)$ by $\bB_K^n$.
We also write $\bT_K^n$ for $\Spa(K\langle T_1^{\pm 1},\ldots,T_n^{\pm 1}\rangle,\calO_K\langle T_1^{\pm 1},\ldots,T_n^{\pm 1}\rangle)$, the $n$-dimensional rigid torus.

Let $X$ be a smooth adic space over $\Spa(K,\calO_K)$. 
We call a point $x$ of $X$ \emph{classical} if the residue class field $k(x)$ of $x$ is finite over $K$.

We work on the \'etale site $X_{\et}$ (cf.~\cite[Definition 2.3.1]{Huber-etale}) and the pro-\'etale site $X_{\proet}$. For the pro-\'etale site, we use the one defined in \cite{Scholze-p-adicHodge, Scholze-p-adicHodgeerrata}, and thus we have the projection 
\[
 \nu\colon X_{\proet}\ra X_{\et}.
\]
Recall that $U\in X_{\proet}$ is called \emph{affinoid perfectoid} if $U$ has a pro-\'etale presentation $U=\varprojlim U_i\ra X$ by affinoid $U_i=\Spa(A_i,A_i^+)$ such that, denoting by $A^+$ the $p$-adic completion of $\varinjlim A_i^+$, and $A=A^+[p^{-1}]$, the pair $(A,A^+)$ is a perfectoid affinoid $(L,L^+)$-algebra for a perfectoid field $L$ over $K$ and an open and bounded valuation subring $L^+\subset L$ (cf.~\cite[Definition 4.3 (i)]{Scholze-p-adicHodge}). In this case, we write $\widehat{U}$ for $\Spa(A,A^+)$, which is independent of the pro-\'etale presentation $U=\varprojlim U_i$.

\subsection{$p$-adic local systems on $X_{\et}$}\label{subsection:local systems}

Let us clarify our terminology of local systems.

\begin{defn}[cf.~{\cite[Definition 8.1]{Scholze-p-adicHodge}, \cite[\S 1.4, \S 8.4]{KL-I}}]
A \emph{lisse $\Z_p$-sheaf} $\bL=(\bL_n)$ on $X_{\et}$ is an inverse system of sheaves of $\Z/p^n$-modules $\bL_n$ on $X_{\et}$ such that each $\bL_n$ is locally on $X_{\et}$ a constant sheaf associated with a finitely generated flat $\Z/p^n$-module and such that this inverse system is isomorphic in the procategory to an inverse system for which $\bL_{n+1}/p^n\cong \bL_n$.
We also call a lisse $\Z_p$-sheaf on $X_{\et}$ an \emph{\'etale $\Z_p$-local system} on $X$ or a $\Z_p$-local system on $X_{\et}$.
\end{defn}

\begin{defn}[cf.~{\cite[\S 1.4, \S 8.4]{KL-I}}]
 An \emph{isogeny $\Z_p$-local system} on $X_{\et}$ is an object of the isogeny category of $\Z_p$-local systems on $X_{\et}$.
A \emph{$\Q_p$-local system} on $X_{\et}$ is an object of the stack associated to the fibered category of isogeny $\Z_p$-local systems on $X_{\et}$.
We also call a $\Q_p$-local system on $X_{\et}$ an \emph{\'etale $\Q_p$-local system} on $X$.
\end{defn}

\begin{rem}
Assume that $X=\Spa (A,A^+)$ is an affinoid.
In this case, the category of adic spaces which are finite and \'etale over $\Spa(A,A^+)$ is canonically equivalent to the category of schemes which are finite and \'etale over $\Spec A$
by \cite[Example 1.6.6 ii)]{Huber-etale}.
The natural functor from the category of lisse $\Z_p$-sheaves on the \'etale site $(\Spec A)_{\et}$ to the category of $\Z_p$-local systems on $\Spa(A,A^+)_{\et}$ is an equivalence of categories (cf.~\cite[Remark 8.4.5]{KL-I}).

Moreover, if $\Spec A$ is connected and $\overline{x}$ is a geometric point of $\Spec A$, we have the \'etale fundamental group $\pi_1^{\et}(\Spec A,\overline{x})$.
Then the category of continuous representations of $\pi_1^{\et}(\Spec A,\overline{x})$ in finite free $\Z_p$-modules is equivalent to the category of $\Z_p$-local systems on $\Spa(A,A^+)_{\et}$. Similarly, the category of continuous representations of $\pi_1^{\et}(\Spec A,\overline{x})$ in finite-dimensional $\Q_p$-vector spaces is equivalent to the category of isogeny $\Z_p$-local systems on $\Spa(A,A^+)_{\et}$ (cf.~\cite[Remark 1.4.4]{KL-I}).

In particular, if $A^+$ satisfies conditions in Set-up~\ref{set-up: Brinon's rings} relative to $K$, then the category $\Rep_{\Q_p}(\calG_A)$ is equivalent to the category of isogeny $\Z_p$-local systems on $\Spa(A,A^+)_{\et}$.
\end{rem}

\begin{defn}[{\cite[Definition 8.1]{Scholze-p-adicHodge}}]
Let $\widehat{\Z}_p=\varprojlim \Z/p^n$ as sheaves on $X_{\proet}$. Then a lisse $\widehat{\Z}_p$-sheaf on $X_{\proet}$ is a sheaf of $\widehat{\Z}_p$-modules on $X_{\proet}$ such that locally in $X_{\proet}$, it is isomorphic to $\widehat{\Z}_p\otimes_{\Z_p}M$, where $M$ is a finitely generated $\Z_p$-module.

Similarly, set $\widehat{\Q}_p=\widehat{\Z}_p[p^{-1}]$. 
A lisse $\widehat{\Q}_p$-sheaf on $X_{\proet}$ is a sheaf of $\widehat{\Q}_p$-modules on $X_{\proet}$ such that locally in $X_{\proet}$, it is isomorphic to $\widehat{\Q}_p\otimes_{\Q_p}M$, where $M$ is a finite-dimensional $\Q_p$-vector space.
\end{defn}

For a $\Z_p$-local system $\bL=(\bL_n)$ on $X_{\et}$, we set 
\[
 \widehat{\bL}=\varprojlim \nu^\ast\bL_n.
\]
Then $\widehat{\bL}$ is a lisse $\widehat{\Z}_p$-sheaf on $X_{\proet}$.
This functor is an equivalence of categories. Moreover, $R^j\varprojlim\nu^\ast\bL_n=0$ for $j>0$ by \cite[Proposition 8.2]{Scholze-p-adicHodge}.
Similarly, we can naturally associate to every $\Q_p$-local system $\bL$ on $X_{\et}$
a lisse $\widehat{\Q}_p$-sheaf on $X_{\proet}$, which we denote by $\widehat{\bL}$.

\subsection{Period sheaves and the Riemann--Hilbert functor $D_{\dR}$}

We will use the following sheaves on $X_{\proet}$:
\begin{itemize}
 \item The de Rham sheaf $\B_{\dR}$ (cf.~\cite[Definition 6.1(iii)]{Scholze-p-adicHodge}).
By \cite[Theorem 6.5(i)]{Scholze-p-adicHodge}, for an affinoid perfectoid $U\in X_{\proet}$ with $\widehat{U}=\Spa (\Lambda,\Lambda^+)$, we have
\[
 \B_{\dR}(U)=\B_{\dR}(\Lambda,\Lambda^+),
\]
where the latter is the period ring defined in Section~\ref{section:period rings}, which agrees with the one defined in \cite[page 49]{Scholze-p-adicHodge}.
 \item The structural de Rham sheaf $\OB_{\dR}$ (cf.~\cite{Scholze-p-adicHodge, Scholze-p-adicHodgeerrata}). 
This sheaf is a $\B_{\dR}$-algebra equipped with a filtration and a $\B_{\dR}$-linear connection
\[
 \nabla\colon \OB_{\dR}\ra \OB_{\dR}\otimes_{\calO_X}\Omega^1_X
\]
satisfying the Griffiths transversality and $(\OB_{\dR})^{\nabla=0}=\B_{\dR}$.
\end{itemize}

We have the following description of $\OB_{\dR}$:
Let $\Spa(A,A^+)$ be an affinoid admitting standard \'etale maps to both $X$ and $\bT^n_L$ for some finite extension $L$ over $K$.
If we replace $L$ with its algebraic closure in $A$, then $R=A^+$ satisfies conditions in Set-up~\ref{set-up: Brinon's rings} relative to $L$.
Write $\overline{R}=\varinjlim_{i\in I}A_i^+$ where $A_i^+$ is a finite normal $R$-subalgebra of $\overline{\Frac R}$ such that $A_i=A_i^+[p^{-1}]$ is \'etale over $A=R_K$.
Set $U:=\varprojlim_i\Spa(A_i,A_i^+)\in X_{\proet}$; this is affinoid perfectoid with $\widehat{U}=\Spa (\widehat{\overline{R}}[p^{-1}],\widehat{\overline{R}})$. 
In this setting, $\B_{\dR}(U)=\B_{\dR}(\widehat{\overline{R}})=\rmB_{\dR}^\nabla(R)$, and
there is a natural $\rmB_{\dR}^\nabla(R)$-linear isomorphism
\[
 \OB_{\dR}(U)\cong \rmB_{\dR}(R),
\]
compatible with filtrations and connections, 
where the right hand side is the de Rham period ring recalled in Subsection~\ref{subsection:de Rham representations}. To see this, note that $\OB_{\dR}$ is defined to be the sheafification of the presheaf defined in \cite[(3)]{Scholze-p-adicHodgeerrata}. From the definitions, we have a $\rmB_{\dR}^\nabla(R)$-algebra homomorphism $\rmB_{\dR}(R)\ra \OB_{\dR}(U)$. By comparing \cite[Proposition 6.10]{Scholze-p-adicHodge}, \cite[(3)]{Scholze-p-adicHodgeerrata}, and \cite[Proposition 5.2.2]{Brinon-crisdeRham}, we conclude that this map is an isomorphism compatible with filtrations and connections.

\begin{lem}\label{lem:affinoid perfectoid vanishing of structural de Rham sheaf}
Let $U\in X_{\proet}$ be affinoid perfectoid.
Then $H^q(U,\OB_{\dR})=0$ for every $q>0$.
\end{lem}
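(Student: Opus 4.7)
The plan is to deduce the higher cohomology vanishing for $\OB_{\dR}$ from the already known vanishing for the completed structure sheaf $\widehat{\calO}_X$ on affinoid perfectoids, by exploiting the natural filtration on the integral subsheaf $\OB_{\dR}^+$.

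First, since the assertion is local on $X_{\et}$, I may assume $X=\Spa(A,A^+)$ admits a standard \'etale map to a rigid torus $\bT_L^n$ for some finite extension $L/K$, just as in the set-up preceding the lemma. In this situation Scholze's local description \cite[Proposition~6.10]{Scholze-p-adicHodge}, as corrected in \cite{Scholze-p-adicHodgeerrata}, identifies the restriction $\OB_{\dR}^+|_U$ with a power series ring in finitely many variables over $\B_{\dR}^+|_U$, and endows $\OB_{\dR}^+$ with a decreasing, separated, exhaustive filtration $\Fil^\bullet\OB_{\dR}^+$ whose successive graded pieces $\gr^m\OB_{\dR}^+$ are locally finite free $\widehat{\calO}_X$-modules, up to Tate twists by powers of $t$. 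Since $\OB_{\dR}=\OB_{\dR}^+[t^{-1}]$, it suffices to prove $H^q(U,\OB_{\dR}^+)=0$ for $q>0$ and then obtain the conclusion for $\OB_{\dR}$ by passing to the filtered colimit along multiplication by $t^{-n}$, using that $U$ is a qcqs object of $X_{\proet}$.

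Next I will invoke Scholze's vanishing: for any affinoid perfectoid $V\in X_{\proet}$ one has $H^q(V,\widehat{\calO}_X)=0$ for $q>0$ (cf.~\cite[Lemma~4.10]{Scholze-p-adicHodge}), and the same holds for Tate twists $\widehat{\calO}_X\cdot t^r$. Combining this input with the short exact sequences
\[
0\lra \gr^m\OB_{\dR}^+\lra \OB_{\dR}^+/\Fil^{m+1}\OB_{\dR}^+\lra \OB_{\dR}^+/\Fil^m\OB_{\dR}^+\lra 0,
\]
an induction on $m$ yields both $H^q(U,\OB_{\dR}^+/\Fil^m\OB_{\dR}^+)=0$ for all $q>0$ and $m\in\N$, and the surjectivity of the transition maps on global sections.

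Finally I will pass to the limit. The sheaf-theoretic identification $\OB_{\dR}^+\cong\varprojlim_m\OB_{\dR}^+/\Fil^m\OB_{\dR}^+$, together with the Mittag-Leffler property just established on global sections, gives $H^q(U,\OB_{\dR}^+)=0$ for $q>0$ via the inverse-limit spectral sequence computed on the slice topos $X_{\proet}/U$. The step I expect to be the main obstacle is precisely this last one: one needs the derived-limit compatibility $R\Gamma(U,\OB_{\dR}^+)\simeq R\varprojlim_m R\Gamma(U,\OB_{\dR}^+/\Fil^m)$ in order to extract the vanishing on $\OB_{\dR}^+$ from that on its truncations. Justifying this requires an appropriate finiteness statement on $X_{\proet}/U$ (for instance, that Mittag-Leffler systems of sheaves induce Mittag-Leffler systems on cohomology in each degree), which is standard in Scholze's pro-\'etale framework but nontrivial to verify carefully.
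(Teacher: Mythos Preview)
Your proposal is correct and follows essentially the same route as the paper: reduce to $\OB_{\dR}^+$, use the filtration $\Fil^\bullet\OB_{\dR}^+$ whose graded pieces are finite free $\widehat{\calO}_X$-modules (\cite[Proposition~6.10]{Scholze-p-adicHodge}), invoke the vanishing $H^q(U,\widehat{\calO}_X)=0$ from \cite[Lemma~4.10]{Scholze-p-adicHodge}, and then pass first to the inverse limit and afterwards to the direct limit over multiplication by $t$ using coherence of $U$. The only point where the paper is more precise than you is exactly the one you flag as the main obstacle: for the identification $R\Gamma(U,\OB_{\dR}^+)\simeq R\varprojlim_m R\Gamma(U,\OB_{\dR}^+/\Fil^m)$ the paper simply cites \cite[Lemma~3.18, Proposition~4.8]{Scholze-p-adicHodge}, which supply the required compatibility on the pro-\'etale site and make your Mittag-Leffler argument go through.
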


\begin{proof}
Consider the positive structural de Rham sheaf $\OB_{\dR}^+$ (\cite[Definition 6.8 (iii)]{Scholze-p-adicHodge}).
It is equipped with a decreasing filtration $\Fil^\bullet \OB_{\dR}^+$ of $\OB_{\dR}^+$ defined by $\Fil^i\OB_{\dR}^+=(\Ker\theta)^i$.
Then $\OB_{\dR}^+=\varprojlim_i \OB_{\dR}^+/\Fil^i\OB_{\dR}^+$
and $\gr^\bullet\OB_{\dR}^+$ is naturally a polynomial algebra over $\widehat{\calO}_X$ by \cite[Proposition 6.10]{Scholze-p-adicHodge}.

For every affinoid perfectoid $U$, we have
$H^q(U,\widehat{\calO}_X)=0$ for $q>0$ by \cite[Lemma 4.10 (v)]{Scholze-p-adicHodge}.
Hence $H^q(U,\Fil^i\OB_{\dR}^+/\Fil^{i+1}\OB_{\dR}^+)=0$ for $q>0$ by coherence of $U$ (\cite[Proposition 3.12 (iii)]{Scholze-p-adicHodge}). It follows that $H^0(U,\OB_{\dR}^+/\Fil^{i+1}\OB_{\dR}^+)\ra H^0(U,\OB_{\dR}^+/\Fil^i\OB_{\dR}^+)$ is surjective.
By \cite[Lemma 3.18, Proposition 4.8]{Scholze-p-adicHodge}, we have
$H^q(U,\OB_{\dR}^+)=0$ for every $q>0$.
Since $\OB_{\dR}=\OB_{\dR}^+[t^{-1}]$ is written as $\varinjlim\OB_{\dR}^+$ with multiplication-by-$t$ transition maps,  
we conclude $H^q(U,\OB_{\dR})=0$ for every $q>0$ by coherence of $U$.
\end{proof}

\begin{defn}[cf.~{\cite[Definition 8.3]{Scholze-p-adicHodge}}]
 Let $\bL$ be a $\Q_p$-local system on $X_{\et}$.
We say that $\bL$ is \emph{de Rham} if there exits a filtered $\calO_X$-module $\calE$ with an integrable connection such that there is an isomorphism of sheaves on $X_{\proet}$
\[
 \widehat{\bL}\otimes_{\widehat{\Q}_p}\OB_{\dR}\cong \calE\otimes_{\calO_X}\OB_{\dR}
\]
compatible with filtrations and connections.
Similarly, we say that a(n isogeny) $\Z_p$-local system on $X_{\et}$ is de Rham if the associated $\Q_p$-local system is de Rham.
\end{defn}

Based on the works of Kedlaya--Liu and Scholze, Liu and Zhu proved the following theorem:

\begin{thm}[{\cite[Theorem 3.9]{Liu-Zhu}}]
For a $\Q_p$-local system $\bL$ on $X_{\et}$, set
\[
 D_{\dR}(\bL):=\nu_{\ast}(\widehat{\bL}\otimes_{\widehat{\Q}_p}\OB_{\dR}).
\]
\begin{enumerate}
 \item $D_{\dR}(\bL)$ is a vector bundle on $X_{\et}$ equipped with an integrable connection
\[
 \nabla_{\bL}:=\nu_\ast(\id\otimes \nabla)\colon
D_{\dR}(\bL)\ra D_{\dR}(\bL)\otimes_{\calO_{X_{\et}}}\Omega_{X_{\et}}
\]
and a decreasing filtration $\Fil^\bullet$ satisfying the Griffiths transversality.
 \item  The functor $\bL\mapsto (D_{\dR}(\bL),\nabla_{\bL},\Fil^\bullet)$
commutes with pullbacks.
 \item The following conditions are equivalent for $\bL$:
\begin{enumerate}
 \item $\bL$ is de Rham;
 \item $\rank D_{\dR}(\bL)=\rank \bL$;
 \item for each connected component, there exists a classical point $x$ of $X$ and a geometric point $\overline{x}$ above $x$ such that the stalk $\bL_{\overline{x}}$ is a de Rham representation of the absolute Galois group of $k(x)$.
\end{enumerate}
In this case, the natural map
$\nu^\ast D_{\dR}(\bL)\otimes_{\calO_X}\OB_{\dR}\ra \widehat{\bL}\otimes_{\widehat{\Q}_p}\OB_{\dR}$ is an isomorphism of sheaves on $X_{\proet}$ compatible with filtrations and connections.
\end{enumerate}
\end{thm}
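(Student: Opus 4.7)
The plan is to establish the theorem by reducing to the pointwise (Brinon) setting via an affinoid perfectoid pro-\'etale cover, then exploit injectivity of the comparison morphism and the vector-bundle structure to upgrade pointwise information to global information on connected components.

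First, I would carry out the local construction. Working \'etale-locally on $X$, I may shrink to an affinoid $X=\Spa(A,A^+)$ admitting a composition of rational localizations and finite \'etale maps from some $\bT^n_L$; after replacing $K$ by the algebraic closure of $K$ in $A$, the ring $R:=A^+$ satisfies Set-up~\ref{set-up: Brinon's rings} and Condition (BR). I then build the affinoid perfectoid cover $\widetilde X = \varprojlim_i \Spa(A_i,A_i^+)\to X$, where $A_i^+$ ranges over finite normal $R$-subalgebras of $\overline{\Frac R}$ with $A_i$ \'etale over $R_K$, so that $\widehat{\widetilde X}=\Spa(\widehat{\overline{R}}[p^{-1}],\widehat{\overline{R}})$ and $\bL$ is classified by a continuous $\Q_p$-representation $V$ of $\calG_{R_K}$. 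Using the explicit description $\OB_{\dR}(\widetilde X)=\rmB_{\dR}(R)$ recalled above, combined with Lemma~\ref{lem:affinoid perfectoid vanishing of structural de Rham sheaf} and a \v{C}ech-to-derived-functor spectral sequence descending along the Galois action of $\calG_{R_K}$, I identify
\[
 D_{\dR}(\bL)(X)=(V\otimes_{\Q_p}\rmB_{\dR}(R))^{\calG_{R_K}}=D_{\dR}(V)
\]
with Brinon's module.

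Next, Parts (i) and (ii) fall out essentially by citation. Brinon's Proposition~8.3.1 (quoted in Subsection~\ref{subsection:de Rham representations}) asserts $D_{\dR}(V)$ is a projective $R_K$-module of rank at most $\dim_{\Q_p}V$, equipped with the inherited integrable connection and decreasing filtration obeying Griffiths transversality; sheafifying with respect to the toric charts proves Part~(i). Functoriality of the perfectoid cover $\widetilde X$ under \'etale localization and under base change of the ground field, combined with the evident functoriality of $\OB_{\dR}$ and of the $\calG_{R_K}$-invariants, gives Part~(ii). Injectivity of $\alpha_{\dR}(V)$ in Brinon then upgrades to an injective $\OB_{\dR}$-linear morphism
\[
 \nu^{\ast}D_{\dR}(\bL)\otimes_{\calO_X}\OB_{\dR}\hookrightarrow \widehat{\bL}\otimes_{\widehat{\Q}_p}\OB_{\dR}
\]
preserving filtrations and connections. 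Both sides are locally free $\OB_{\dR}$-modules of ranks $\rank D_{\dR}(\bL)$ and $\rank \bL$, so this morphism is an isomorphism exactly when the ranks match; this yields (a)$\Leftrightarrow$(b) and the final "moreover" statement for free.

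For Part~(iii), the implication (a)$\Rightarrow$(c) reduces by Part~(ii) to the statement that pulling back along a classical point $x\in X$ recovers Fontaine's $D_{\dR}$ of the Galois representation $\bL_{\overline x}$, after which the classical theory over $k(x)$ provides what is needed. For the converse (c)$\Rightarrow$(b), I use that by Part~(i), $\rank D_{\dR}(\bL)$ is locally constant on $X$, hence constant on each connected component. At a classical point $x$ where $\bL_{\overline x}$ is de~Rham, Part~(ii) and Fontaine's theorem give
\[
 \dim_{k(x)} D_{\dR}(\bL)_{x}=\dim_{\Q_p}\bL_{\overline x}=\rank\bL,
\]
forcing $\rank D_{\dR}(\bL)=\rank \bL$ on the connected component of~$x$.

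The main obstacle is the apparently innocuous Part~(ii) in the case of pullback along a classical point, which is not \'etale: one must check that the two different descriptions of $\OB_{\dR}$ (the pro-\'etale one on $X$ and the classical de Rham period ring at $x$) are genuinely compatible with passage to the stalk. This is handled by embedding a small affinoid neighborhood of $x$ into a toric chart, reducing to the setting of Section~\ref{subsection:functoriality}, and using that the identification $\OB_{\dR}(\widetilde X)\cong\rmB_{\dR}(R)$ is natural in $R$, so that specializing $R\to k(x)$ recovers Fontaine's $\B_{\dR}$; the vanishing in Lemma~\ref{lem:affinoid perfectoid vanishing of structural de Rham sheaf} ensures the comparison is an honest equality of global sections rather than only of underived push-forwards.
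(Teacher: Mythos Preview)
The paper does not prove this theorem; it is quoted as a black box from \cite[Theorem 3.9]{Liu-Zhu}, with the paper only remarking that the proof is ``based on the works of Kedlaya--Liu and Scholze''. The paper's own contribution in this direction is the local identification in Lemma~\ref{lem:de Rham local system vs representation}, which matches the first step of your proposal (the Cartan--Leray computation giving $D_{\dR}(\bL)(X)=D_{\dR}(V)$), but the paper explicitly \emph{uses} the Liu--Zhu theorem as input rather than deriving it.

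Your sketch has a genuine gap at exactly the point you flag as the main obstacle. Part~(ii) asserts compatibility with \emph{arbitrary} pullbacks, in particular with the non-\'etale specialization $R\to k(x)$ at a classical point. Your argument for Part~(ii) appeals only to ``evident functoriality'' and to the discussion in Subsection~\ref{subsection:functoriality}, but what is proved there (and in Brinon) is Lemma~\ref{lem: admissiblity of two period rings}: if $V$ is already $B$-admissible, then the comparison map after base change is an isomorphism. It says nothing about the base change of $D_{\dR}(V)$ when $V$ is \emph{not} known to be de Rham. Thus in your implication (c)$\Rightarrow$(b), when you write $\dim_{k(x)}D_{\dR}(\bL)_x=\dim_{\Q_p}\bL_{\overline x}$, you are invoking the very statement whose proof is missing: a priori there is only an injective map $D_{\dR}(\bL)_x\hookrightarrow D_{\dR}(\bL_{\overline x})$, and surjectivity is the content. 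Liu--Zhu establish this unconditional base change via a decompletion argument (their ``$p$-adic Riemann--Hilbert'' functor built from Kedlaya--Liu's relative $(\varphi,\Gamma)$-modules and Scholze's results), which is not captured by the Brinon-style reasoning you outline.
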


\begin{rem}
The category of vector bundles on $X$ (with the analytic topology) is equivalent to the category of vector bundles on $X_{\et}$ by \cite[Lemma 7.3]{Scholze-p-adicHodge}.
Hence we also regard $D_{\dR}(\bL)$ as a vector bundle on $X$.
\end{rem}

\begin{lem}\label{lem:de Rham local system vs representation}
 Assume that $X=\Spa(A,A^+)$ is affinoid  such that $R=A^+$ satisfies conditions in Set-up~\ref{set-up: Brinon's rings} relative to $K$ and it is equipped with the $p$-adic topology.
Let $\bL$ be an  isogeny $\Z_p$-local system on $X_{\et}$ and let 
$V\in \Rep_{\Q_p}(\calG_A)$ be the corresponding representation.
Then the vector bundle $D_{\dR}(\bL)$ is associated to the $A$-module $D_{\dR}(V)$
and this identification is compatible with connections and filtrations.
Moreover, $\bL$ is de Rham if and only if $V$ is a de Rham representation.
\end{lem}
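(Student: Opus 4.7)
The plan is to compute the global sections $D_{\dR}(\bL)(X) = (\widehat{\bL}\otimes_{\widehat{\Q}_p}\OB_{\dR})(X)$ explicitly by descending along an affinoid perfectoid pro-\'etale cover of $X$ that trivializes $\widehat{\bL}$. Concretely, first I would take the pro-\'etale presentation $U=\varprojlim_i\Spa(A_i,A_i^+)\to X$ with $A_i^+$ ranging over finite normal $R$-subalgebras of $\overline{\Frac R}$ such that $A_i$ is \'etale over $A$; as noted in the paragraph before Lemma~\ref{lem:affinoid perfectoid vanishing of structural de Rham sheaf}, $U$ is affinoid perfectoid with $\widehat{U}=\Spa(\widehat{\overline{R}}[p^{-1}],\widehat{\overline{R}})$, and $\OB_{\dR}(U)\cong \rmB_{\dR}(R)$ together with $\B_{\dR}(U)=\rmB_{\dR}^\nabla(R)$, in a manner compatible with filtrations and connections.

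Next I would identify $(\widehat{\bL}\otimes_{\widehat{\Q}_p}\OB_{\dR})(U)$ with $V\otimes_{\Q_p}\rmB_{\dR}(R)$. The cover $U\to X$ is a pro-\'etale Galois cover with group $\calG_A=\calG_{R_K}$, and by construction the pullback $\widehat{\bL}|_U$ is the constant sheaf on $V$ (this is the content of the equivalence between isogeny $\Z_p$-local systems on $X_{\et}$ and $\Q_p$-representations of $\calG_A$ recalled in Subsection~\ref{subsection:local systems}). Tensoring, we obtain $V\otimes_{\Q_p}\OB_{\dR}(U)$, and then $\calG_A$ acts diagonally.

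The main step is then descent: since affinoid perfectoid objects form a basis of $X_{\proet}$ and $H^q(U,\OB_{\dR})=0$ for $q>0$ by Lemma~\ref{lem:affinoid perfectoid vanishing of structural de Rham sheaf}, the Cartan--Leray / \v{C}ech spectral sequence for the pro-\'etale cover $U\to X$ collapses and yields
\[
 D_{\dR}(\bL)(X) = \bigl(V\otimes_{\Q_p}\rmB_{\dR}(R)\bigr)^{\calG_A} = D_{\dR}(V),
\]
where the second equality is the definition of $D_{\dR}(V)$ from Subsection~\ref{subsection:de Rham representations}. Compatibility with the connection follows because the isomorphism $\OB_{\dR}(U)\cong \rmB_{\dR}(R)$ intertwines $\nabla$ on $\OB_{\dR}$ with the connection on $\rmB_{\dR}(R)$, and because $\nabla_{\bL}=\nu_\ast(\id\otimes\nabla)$; compatibility with the filtration follows similarly from the identification of $\Fil^\bullet\OB_{\dR}(U)=\Fil^\bullet\rmB_{\dR}(R)$.

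For the final equivalence, the characterization in Liu--Zhu's theorem says $\bL$ is de Rham iff $\rank_A D_{\dR}(\bL)=\rank\bL=\dim_{\Q_p}V$, while by \cite[Proposition 8.3.1]{Brinon-crisdeRham} $D_{\dR}(V)$ is projective of rank $\leq\dim_{\Q_p}V$ with equality precisely when $\alpha_{\dR}(V)$ is an isomorphism, i.e.\ when $V$ is de Rham. Combining with the identification just established, the two notions coincide. The main obstacle I anticipate is the bookkeeping of the $\calG_A$-equivariant descent from $U$ down to $X$ at the level of sheaves rather than just sections, specifically verifying that vanishing of higher cohomology of $\OB_{\dR}$ on affinoid perfectoids suffices to identify $\nu_\ast$ with the naive $\calG_A$-invariants on global sections; this is standard but requires an appeal to the description of $\nu_\ast$ via the basis of affinoid perfectoids together with the fact that $U\to X$ is a pro-\'etale covering in the sense of Scholze's site.
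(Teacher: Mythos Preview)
Your proposal is correct and follows essentially the same route as the paper: construct the affinoid perfectoid cover $U\to X$ with $\widehat{U}=\Spa(\widehat{\overline{R}}[p^{-1}],\widehat{\overline{R}})$, trivialize $\widehat{\bL}$ over $U$, invoke the vanishing of $H^q(U^{i/X},\OB_{\dR})$ for $q>0$ from Lemma~\ref{lem:affinoid perfectoid vanishing of structural de Rham sheaf} (the paper explicitly notes that the fiber products $U^{i/X}$ are again affinoid perfectoid, which is needed for Cartan--Leray), and conclude $\Gamma(X,D_{\dR}(\bL))=(V\otimes_{\Q_p}\rmB_{\dR}(R))^{\calG_A}=D_{\dR}(V)$.

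The only point where you diverge is the converse direction of the final equivalence. You argue via the rank criterion: $\bL$ de Rham gives $\rank D_{\dR}(V)=\dim_{\Q_p}V$, and then you appeal to Brinon to conclude $\alpha_{\dR}(V)$ is an isomorphism. The paper instead evaluates the sheaf isomorphism $\nu^\ast D_{\dR}(\bL)\otimes_{\calO_X}\OB_{\dR}\cong\widehat{\bL}\otimes_{\widehat{\Q}_p}\OB_{\dR}$ (furnished by Liu--Zhu's theorem part (iii)) at $U$, which directly yields $\alpha_{\dR}(V)$. This is slightly more direct and avoids needing the ``rank equality implies admissibility'' step for $\rmB_{\dR}(R)$; your route is fine too, but be aware that \cite[Proposition~8.3.1]{Brinon-crisdeRham} as cited in the paper only gives the rank bound, so you should point to the place in Brinon where the converse is established.
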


\begin{proof}
 Since $D_{\dR}(\bL)$ is a coherent sheaf on $X$, it suffices to show that
$\Gamma(X,D_{\dR}(\bL))$ is naturally identified with $D_{\dR}(V)$.
By definition, we have
\[
 \Gamma(X,D_{\dR}(\bL))=\Gamma(X_{\proet}, \widehat{\bL}\otimes_{\widehat{\Q}_p}\OB_{\dR}).
\]

Write $\overline{R}=\varinjlim_{i\in I}A_i^+$ where $A_i^+$ is a finite normal $R$-subalgebra of $\overline{\Frac R}$ such that $A_i=A_i^+[p^{-1}]$ is \'etale over $A=R_K$.
Set $U:=\varprojlim_i\Spa(A_i,A_i^+)\in X_{\proet}$; this is affinoid perfectoid with $\widehat{U}=\Spa (\widehat{\overline{R}}[p^{-1}],\widehat{\overline{R}})$. 
Since every finite \'etale $A$-algebra splits over $\overline{R}[p^{-1}]$, the lisse $\widehat{\Q}_p$-sheaf $\widehat{\bL}$ is trivial over $U$ and naturally identified with $\widehat{\Q}_p\otimes_{\Q_p}V$.

For each $i\in\N$, the $i$-fold fiber product $U^{i/X}:=U\times_XU\times\cdots\times_XU$ of $U$ over $X$ exists and is affinoid perfectoid. 
By Lemma~\ref{lem:affinoid perfectoid vanishing of structural de Rham sheaf}, we have $H^q(U^{i/X},V\otimes_{\Q_p}\OB_{\dR})=0$ for every $q>0$ and $i\in\N$.
Applying the Cartan--Leray spectral sequence to the cover $U\ra X$ in $X_{\proet}$, we compute
\begin{align*}
 \Gamma(X,D_{\dR}(\bL))
&=H^0_{\cont}(\calG_A, \Gamma(U, \widehat{\bL}\otimes_{\widehat{\Q}_p}\OB_{\dR}))
=H^0_{\cont}(\calG_A, \Gamma(U, V\otimes_{\Q_p}\rmB_{\dR}(R)))\\
&=D_{\dR}(V).
\end{align*}
It is easy to see that this identification is compatible with connections and filtrations.

If $V$ is a de Rham representation, then $D_{\dR}(V)$ is a finite projective $A$-module of rank equal to $\dim_{\Q_p}V=\rank \bL$. Hence $\bL$ is de Rham.
Conversely, if $\bL$ is de Rham, we have an isomorphism 
$\nu^\ast D_{\dR}(\bL)\otimes_{\calO_X}\OB_{\dR}\ra \widehat{\bL}\otimes_{\widehat{\Q}_p}\OB_{\dR}$ of sheaves on $X_{\proet}$.
Evaluating both sides at $U\in X_{\proet}$ yields
$D_{\dR}(V)\otimes_A\rmB_{\dR}(R)\stackrel{\cong}{\lra}V\otimes_{\Q_p}\rmB_{\dR}(R)$
and this is identified with $\alpha_{\dR}(V)$. Hence $V$ is a de Rham representation.
\end{proof}

\section{A $p$-adic monodromy theorem for de Rham local systems}\label{section:p-adic monodromy theorem for de Rham local systems}

Let $K$ be a complete discrete valuation field of characteristic zero with perfect residue field of characteristic $p$.
Recall that a rigid analytic variety over $K$ refers to a quasi-separated adic space locally of finite type over $\Spa(K,\calO_K)$ (cf.~ \cite[Proposition 4.5(iv)]{Huber-gen}).
For a rigid analytic variety $X$ over $K$ and a complete extension $L$ of $K$, we write $X_L$ for $X\times_{\Spa(K,\calO_K)}\Spa(L,\calO_L)$.

\subsection{Statement of the theorem}
\begin{defn}
 Let $X$ be a smooth rigid analytic variety over $K$, and let $\bL$ be an isogeny $\Z_p$-local system on $X_{\et}$.
\begin{enumerate}
 \item We say that $\bL$ is \emph{horizontal de Rham} if $\bL$ is de Rham and 
$(D_{\dR}(\bL),\nabla)$ has a full set of solutions, i.e., $(D_{\dR}(\bL),\nabla)\cong (\calO_X,d)^{\rank \bL}$.
 \item Let $x$ be a classical point of $X$. We say that $\bL$ is \emph{semistable} (resp.~\emph{(potentially) crystalline}) at $x$ if there exists a geometric point $\overline{x}$ above $x$ such that the stalk $\bL_{\overline{x}}$ is a semistable (resp.~(potentially) crystalline) representation of the absolute Galois group of $k(x)$. This is equivalent to the condition that the stalk $\bL_{\overline{x}}$ is a semistable (resp.~(potentially) crystalline) representation for every geometric point $\overline{x}$ above $x$.
\end{enumerate}
\end{defn}

Theorem~\ref{thm:main thm in introduction} in the introduction is a special case of the following $p$-adic monodromy theorem for de Rham local systems:

\begin{thm}\label{thm:p-adic monodromy}
 Let $X$ be a smooth rigid analytic variety over $K$, and let $\bL$ be a de Rham isogeny $\Z_p$-local system on $X_{\et}$.
Then for every classical point $x\in X$, there exists an open neighborhood $U\subset X_{k(x)}$ of $x$ and a finite extension $L$ of $k(x)$
such that $\bL|_{U_L}$ is horizontal de Rham and semistable at every classical point.
Moreover, if $\bL_{\overline{x}}$ is potentially crystalline, then $U$ and $L$ can be chosen in such a way that $\bL|_{U_L}$ is horizontal de Rham and crystalline at every classical point.
\end{thm}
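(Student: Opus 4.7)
The plan is to reduce Theorem~\ref{thm:p-adic monodromy} to the representation-theoretic $p$-adic monodromy theorem, Theorem~\ref{thm:p-adic monodromy for horizontal de Rham representations}, via \'etale-local analysis at the classical point $x$. First, since the conclusion permits a further finite extension of the base field, I replace $K$ by $k(x)$ and $X$ by $X_{k(x)}$, so that $x$ becomes $K$-rational. Because $X$ is smooth of dimension $n$ at $x$, I can choose \'etale coordinates around $x$, and after a suitable translation and restriction, find an affinoid neighborhood $U = \Spa(A, A^\circ)$ of $x$ such that $A^\circ$ is obtained from $\calO_K\langle T_1^{\pm 1}, \ldots, T_n^{\pm 1}\rangle$ by $p$-adic completion of an \'etale extension. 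Such an $A^\circ$ satisfies Set-up~\ref{set-up: Brinon's rings} and Condition (BR), so the entire machinery of Sections~\ref{section:base rings}--\ref{section:p-adic monodromy theorem for horizontal de Rham representations} applies. Since $(D_{\dR}(\bL)|_U, \nabla)$ is a vector bundle with integrable connection on $U$, the theory of $p$-adic differential equations on a polydisk (Theorem~\ref{thm:pDF on polydisk}) allows me to shrink $U$ further so that this connection has a full set of solutions, i.e., $(D_{\dR}(\bL)|_U, \nabla) \cong (\calO_U, d)^{\rank \bL}$.

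Next, by Lemma~\ref{lem:de Rham local system vs representation}, the isogeny $\Z_p$-local system $\bL|_U$ corresponds to a $\Q_p$-representation $V$ of $\calG_A$, compatibly with $D_{\dR}$: the vector bundle $D_{\dR}(\bL|_U)$ is associated to the $A$-module $D_{\dR}(V)$, together with its connection and filtration. Triviality of the connection forces the comparison map $\beta_{\dR}(V)\colon A \otimes_K D_{\dR}^\nabla(V) \to D_{\dR}(V)$ to be an isomorphism (the horizontal sections span an $(\rank \bL)$-dimensional $K$-subspace), so $V$ is horizontal de Rham. Theorem~\ref{thm:p-adic monodromy for horizontal de Rham representations} then produces a finite extension $L/K$ such that $V|_{\calG_{A_L}}$ is horizontal semistable; note that $A_L^\circ = \calO_L \otimes_{\calO_K} A^\circ$ still satisfies Set-up~\ref{set-up: Brinon's rings} and (BR), and $\bL|_{U_L}$ corresponds to $V|_{\calG_{A_L}}$. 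For each classical point $y \in U_L$ with a geometric point $\overline{y}$ above it, the evaluation map $A_L \to k(y)$ together with a choice of path (Subsection~\ref{subsection:functoriality}) gives a homomorphism $\calG_{k(y)} \to \calG_{A_L}$ under which $\bL_{\overline{y}}$ identifies with $V|_{\calG_{k(y)}}$. Lemma~\ref{lem: admissiblity of two period rings} then implies this restriction is horizontal semistable as a representation of $\calG_{k(y)}$; since $k(y)$ is a finite extension of $K$, horizontal semistable coincides with the usual notion of semistable, so $\bL|_{U_L}$ is semistable at every classical point.

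For the crystalline refinement, suppose $\bL_{\overline{x}}$ is potentially crystalline. The specialization of $V$ at $x$ via the $\calO_K$-algebra map $A^\circ \to \calO_{k(x)}$ determined by $x$ is then potentially crystalline, and Theorem~\ref{thm:horizontal de Rham and one point crystalline} yields a (possibly larger) finite extension $L'/K$ such that $V|_{\calG_{A_{L'}}}$ is horizontal crystalline. The same functoriality argument as above shows that $\bL|_{U_{L'}}$ is crystalline at every classical point. The main technical obstacle lies in the geometric setup: simultaneously arranging that $U$ satisfies Set-up~\ref{set-up: Brinon's rings} with Condition (BR), contains $x$, and trivializes the Gauss--Manin-type connection on $D_{\dR}(\bL)$ requires a delicate combination of \'etale-coordinate choice and the $p$-adic differential equation theorem. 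The remainder of the argument is essentially bookkeeping in the functoriality formalism, with the true analytic content carried by Theorems~\ref{thm:p-adic monodromy for horizontal de Rham representations} and~\ref{thm:horizontal de Rham and one point crystalline}.
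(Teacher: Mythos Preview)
Your approach is essentially the same as the paper's: reduce to $k(x)=K$, find a small neighborhood on which $(D_{\dR}(\bL),\nabla)$ trivializes via Theorem~\ref{thm:pDF on polydisk}, translate to a horizontal de Rham representation $V$ of $\calG_A$ via Lemma~\ref{lem:de Rham local system vs representation}, and then invoke Theorems~\ref{thm:p-adic monodromy for horizontal de Rham representations} and~\ref{thm:horizontal de Rham and one point crystalline}. The only wrinkle is in the ordering of your geometric setup: you first pick $U$ \'etale over a torus and then try to apply Theorem~\ref{thm:pDF on polydisk}, but that theorem is stated for the polydisk $\bB_K^n$, not for arbitrary \'etale-over-torus affinoids, and after shrinking to a small polydisk you lose the Set-up~\ref{set-up: Brinon's rings} structure. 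The paper resolves exactly the ``technical obstacle'' you flag by doing the steps in a different order: use Proposition~\ref{prop:local structure of smooth rigid analytic varieties} to embed a polydisk $\bB^n_{k(x)}\hookrightarrow X_{k(x)}$ sending $0$ to $x$, apply Theorem~\ref{thm:pDF on polydisk} on that polydisk, and only then shrink once more to a copy of the torus $\bT^n_{k(x)}=\Spa(A,A^\circ)$ with $A^\circ=\calO_{k(x)}\langle T_1^{\pm1},\ldots,T_n^{\pm1}\rangle$, which tautologically satisfies Set-up~\ref{set-up: Brinon's rings} and Condition (BR). Once you adopt that order, your argument and the paper's coincide.
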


Let us first make a remark and discuss its consequence.

\begin{rem}
 If $x$ runs over all the classical points of $X$, the images in $X$ of $U$'s in the theorem form a collection of open subsets of $X$. In general, the union of such open subsets may not be the entire $X$.
\end{rem}

\begin{cor}\label{cor: bound of potential degree}
 Let $X$ be a quasi-compact smooth rigid analytic variety over $K$, and let $\bL$ be a de Rham $\Q_p$-local system on $X_{\et}$.
Then there exists a finite extension $L$ of $K$ such that for every $K$-rational point $x\in X$ and every geometric point $\overline{x}$ above $x$,
the representation $\bL_{\overline{x}}$ of the absolute Galois group of $k(x)$ becomes semistable over $L$.
\end{cor}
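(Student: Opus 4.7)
The strategy is to apply Theorem~\ref{thm:p-adic monodromy} pointwise and then exploit quasi-compactness of $X$ to reduce to finitely many applications. For each $K$-rational point $x\in X$, Theorem~\ref{thm:p-adic monodromy} produces an analytic open neighborhood $U_x\subset X$ and a finite extension $L_x/K$ such that $\bL|_{(U_x)_{L_x}}$ is semistable at every classical point of $(U_x)_{L_x}$; unwinding this at $K$-rational points $y\in U_x$, it says that $\bL_{\overline y}$ becomes semistable as a representation of $\Gal(\overline K/L_x)$. Hence $L_x$ serves uniformly for every $K$-rational point in $U_x$, and it suffices to exhibit finitely many $x_1,\ldots,x_N\in X(K)$ with $X(K)\subset\bigcup_{i=1}^N U_{x_i}$; then $L:=L_{x_1}\cdots L_{x_N}$ is the desired finite extension.

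To extract such a finite subfamily I will combine quasi-compactness of $X$ with the $p$-adic differential equation theory on polydisks. By smoothness and quasi-compactness, $X$ admits a finite cover by affinoid opens $V_1,\ldots,V_n$, each of which is \'etale over a rigid torus $\bT_K^{d_i}$. On each $V_i$, the filtered vector bundle with integrable connection $D_{\dR}(\bL)|_{V_i}$ is a $p$-adic differential equation, and Theorem~\ref{thm:pDF on polydisk} should provide a uniform radius $r_i>0$ such that $\nabla$ has a full set of solutions on any polydisk of radius $\leq r_i$ contained in $V_i$. Quasi-compactness of $V_i$ then lets me cover it by finitely many such polydisks $U_{i,1},\ldots,U_{i,m_i}$, on each of which $\bL$ is horizontal de Rham and thus corresponds to a horizontal de Rham representation of the relevant $\calG_{A_{i,j}}$. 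Applying Theorem~\ref{thm:p-adic monodromy for horizontal de Rham representations in intro} to each such representation produces a finite extension $L_{i,j}/K$ with $\bL|_{(U_{i,j})_{L_{i,j}}}$ horizontal semistable, and the compositum $L$ of the finitely many $L_{i,j}$'s completes the argument.

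The main obstacle is the uniform-radius step: obtaining a positive lower bound for the radius of convergence of solutions to $(D_{\dR}(\bL),\nabla)$ that is uniform across each quasi-compact piece $V_i$. This is precisely the substance of the $p$-adic differential equations theory on polydisks (Theorem~\ref{thm:pDF on polydisk}), which is what enables the passage from the essentially pointwise statement of Theorem~\ref{thm:p-adic monodromy} to the uniform conclusion asserted in the corollary. A secondary point is to check that the $U_{i,j}$'s can be chosen so that their integral models satisfy the conditions of Set-up~\ref{set-up: Brinon's rings}, but once one works with polydisk charts inside an \'etale neighborhood of $\bT_K^{d_i}$ this is immediate.
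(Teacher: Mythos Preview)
Your first paragraph is exactly the reduction the paper makes: apply Theorem~\ref{thm:p-adic monodromy} at each $K$-rational point to get $(U_x,L_x)$, and observe that a finite subfamily covering $X(K)$ finishes the argument via the compositum. The divergence is in how you extract that finite subfamily.

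The paper's argument here is one line: the set $X(K)$ is \emph{compact} in the analytic topology (reduce to $X=\bB_K^n$, where $X(K)=\calO_K^n$ is compact since $\calO_K$ is), so the open cover $\{U_x\}_{x\in X(K)}$ of $X(K)$ has a finite subcover. There is also a small preliminary step you omitted: Theorem~\ref{thm:p-adic monodromy} is stated for isogeny $\Z_p$-local systems, so one first passes to an \'etale cover on which $\bL$ is such; the images of the resulting opens are still open in $X$.

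Your second paragraph instead tries to cover each quasi-compact piece $V_i$ by finitely many small polydisks on which $\nabla$ trivializes, using a uniform radius. This runs into two problems. First, Theorem~\ref{thm:pDF on polydisk} only produces a single smaller polydisk centered at the origin; it does not assert a radius uniform over all centers in $V_i$. One can prove such uniformity via continuity of the generic radius of convergence, but that is genuinely additional input beyond what the paper provides. Second, and more seriously, quasi-compactness of $V_i$ as an adic space does not let you pass to a finite subcover: open polydisks of radius $<1$ never cover $V_i$, since they miss higher-rank and Gauss-type points. What you actually need is that $V_i(K)$ is compact and is covered by the $U_x$'s --- but that is precisely the paper's observation, and once you use it the uniform-radius detour becomes unnecessary.
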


\begin{proof}[Proof of Corollary~\ref{cor: bound of potential degree}]
Note that the set of $K$-valued points of $X$ is compact with respect to the analytic topology;
the general case is reduced to the case where $X=\bB^n_K$, and the assertion is obvious in this case.
Since $\bL$ is \'etale locally an isogeny $\Z_p$-local system and \'etale maps are open,
 the corollary follows from Theorem~\ref{thm:p-adic monodromy} and the compactness of the set of $K$-valued points of $X$.
\end{proof}

We prove Theorem~\ref{thm:p-adic monodromy} in the rest of this section.
For this, we show that $(D_{\dR}(\bL),\nabla)$ is locally horizontal de Rham.
Then the assertion follows from Theorems~\ref{thm:p-adic monodromy for horizontal de Rham representations} and \ref{thm:horizontal de Rham and one point crystalline}.

\subsection{Preliminaries on smooth rigid analytic varieties}

The following lemma is easy to prove and we omit the proof.

\begin{lem}\label{lem:basics on polydisks}
 Let $x\in \bB^n_K$ be a point with residue class field $K$.
\begin{enumerate}
 \item 
There exists a $\Spa(K,\calO_K)$-isomorphism 
\[
 \bB^n_K\ra \bB^n_K
\]
sending $x$ to $0$.
 \item For every open neighborhood $V$ of $x$ in $\bB^n_K$,
there exists an open immersion of the form
\[
 \bB_K^n \hra V
\]
sending $x$ to $0$.
\end{enumerate}
\end{lem}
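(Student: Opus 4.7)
The plan is to handle part (i) by an explicit translation automorphism and then reduce part (ii) to the case $x=0$ via part (i), at which point it becomes a standard fact about the neighborhood basis of the origin in $\bB_K^n$.

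For part (i), I would start by observing that a point $x \in \bB_K^n$ with residue class field $K$ is precisely a $K$-rational point, so it corresponds to an $n$-tuple $(x_1,\ldots,x_n) \in \calO_K^n$ (the entries lie in $\calO_K$ because $|T_i(x)| \le 1$). Then I would consider the $K$-algebra endomorphism of $K\langle T_1,\ldots,T_n\rangle$ sending $T_i \mapsto T_i - x_i$. Since each $x_i \in \calO_K$, this map preserves the unit ball $\calO_K\langle T_1,\ldots,T_n\rangle$, and its obvious inverse $T_i \mapsto T_i + x_i$ does as well, so it is an isomorphism of $K$-Banach pairs. The induced $\Spa(K,\calO_K)$-isomorphism $\bB_K^n \to \bB_K^n$ carries $x$ to $0$.

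For part (ii), using part (i) I would reduce to $x = 0$. Then the key observation is that the rational open subsets
\[
U_m := \{ y \in \bB_K^n \mid |T_i(y)| \le |\pi|^m \text{ for } i=1,\ldots,n\}, \quad m \in \N,
\]
form a neighborhood basis of $0$ in $\bB_K^n$. Indeed, these are the standard rational subsets cutting out the closed polydisk of radius $|\pi|^m$, and any open neighborhood of $0$ in the adic space $\bB_K^n$ contains a rational open containing $0$, which one checks must contain some $U_m$. Hence I can choose $m$ with $U_m \subset V$. Finally, the map $\bB_K^n \to \bB_K^n$ given by $T_i \mapsto \pi^m T_i$ factors as an isomorphism $\bB_K^n \xrightarrow{\sim} U_m$ of adic spaces (this is immediate: inverting the substitution on the coordinate ring identifies $\calO_K\langle T_1,\ldots,T_n\rangle$ with the unit ball of the affinoid algebra of $U_m$), and composing with the open immersion $U_m \hookrightarrow V$ yields the required open immersion $\bB_K^n \hookrightarrow V$ sending $0$ to $0$.

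There is no real obstacle here; the only step that requires a moment of care is verifying that the $U_m$ form a neighborhood basis of $0$ in the adic topology (as opposed to merely in the classical rigid-analytic topology), but this follows directly from the description of the topology of $\Spa$ of an affinoid and the fact that any rational subset containing $0$ must contain the points where all $T_i$ are sufficiently small.
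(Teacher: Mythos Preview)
Your proposal is correct and entirely standard; the paper in fact omits the proof of this lemma, so there is nothing to compare against beyond noting that your argument is exactly the expected one.
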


\begin{prop}\label{prop:local structure of smooth rigid analytic varieties}
 Let $X$ be an $n$-dimensional smooth rigid analytic variety over $K$.
Let $L$ be a finite extension of $K$ and $x\in X$ a classical point with residue class field $L$.
Then there exists an open immersion 
\[
 \bB^n_L\ra X_L
\]
sending $0\in \bB^n_L$ to $x\in X_L$.
\end{prop}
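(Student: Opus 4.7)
The plan is to reduce the statement to the standard fact that an étale morphism between smooth rigid analytic varieties that sends one rational point to another is a local isomorphism, and then transport a polydisk through this local isomorphism using the preceding lemma on polydisks.

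First, I would pass to $X_L$ and lift $x$ to an $L$-rational point $\tilde{x} \in X_L$, which exists because $x$ has residue class field $L$. Then, by the local structure theorem for smooth rigid analytic varieties of dimension $n$, I would find an affinoid open neighborhood $V$ of $\tilde{x}$ in $X_L$ together with an étale morphism $\varphi \colon V \to \bB_L^n$. After composing with a translation automorphism of $\bB_L^n$ provided by Lemma~\ref{lem:basics on polydisks}(i), I may assume $\varphi(\tilde{x}) = 0$.

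The key step will be to show that $\varphi$ is an isomorphism on some affinoid open neighborhood of $\tilde{x}$. Since $\varphi$ is étale and $\dim V = n = \dim \bB_L^n$, the fiber $\varphi^{-1}(0)$ is discrete, and $\tilde{x}$ is an isolated $L$-rational point of this fiber. Because both $\tilde{x}$ and $0$ are $L$-rational, the induced map on residue class fields is the identity, so the induced map $\widehat{\calO}_{\bB_L^n,0} \to \widehat{\calO}_{V,\tilde{x}}$ on completed local rings is an étale map $L[[T_1,\ldots,T_n]] \to L[[T_1,\ldots,T_n]]$ of $n$-dimensional regular local rings inducing the identity on residue fields, hence an isomorphism. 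By the standard local behaviour of étale morphisms of adic spaces (see e.g.\ \cite{Huber-etale}), there is then an affinoid open $V' \subset V$ containing $\tilde{x}$ and an affinoid open $W \subset \bB_L^n$ containing $0$ such that $\varphi$ restricts to an isomorphism $V' \stackrel{\cong}{\to} W$.

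Finally, I would apply Lemma~\ref{lem:basics on polydisks}(ii) to the open neighborhood $W$ of $0 \in \bB_L^n$ to obtain an open immersion $\bB_L^n \hookrightarrow W$ sending $0$ to $0$. Composing with the inverse of $\varphi|_{V'}$ and the inclusion $V' \hookrightarrow X_L$ yields the desired open immersion $\bB_L^n \hookrightarrow X_L$ sending $0$ to $\tilde{x}$. The main technical point is the passage from étale-plus-isomorphism-on-completions to an honest open-immersion statement; once this is granted, the rest is a routine combination of the local structure of smooth rigid spaces with the translation and nesting properties of polydisks recorded in Lemma~\ref{lem:basics on polydisks}.
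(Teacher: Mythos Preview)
Your proposal is correct and takes essentially the same approach as the paper: reduce to an $L$-rational point, take an \'etale chart to $\bB^n_L$ sending the point to $0$, show this chart is a local isomorphism at the point, and pull back a small polydisk via Lemma~\ref{lem:basics on polydisks}(ii). The only minor difference lies in the justification of the local-isomorphism step: where you compare completed local rings and appeal to a general fact about \'etale morphisms, the paper instead uses that the analytic local ring $\calO_{\bB^n_K,0}$ is Henselian (citing \cite{Houzel}, \cite{Berkovich-etale}) to obtain an isomorphism of the uncompleted local rings, then builds an explicit section and applies \cite[7.3.3 Proposition 5]{BGR} to promote it to an isomorphism of affinoid neighborhoods.
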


\begin{proof}
By replacing $X$ by $X_L$, we may assume $L=K$.
By shrinking $X$ if necessary, we may assume that the structure morphism $X\ra \Spa(K,\calO_K)$ factors as
\[
 X\stackrel{f}{\lra}\bB^n_K\ra \Spa(K,\calO_K),
\]
where $f$ is \'etale and the second is the structure morphism (\cite[Corollary 1.6.10]{Huber-etale}).
We may further assume that $f(x)=0$ by Lemma~\ref{lem:basics on polydisks}(i).

By \cite[Corollary 1.7.4, Lemma 1.6.4]{Huber-etale}, the map of local rings
\[
 f^\ast\colon\calO_{\bB^n_K,0}\ra \calO_{X,x}
\]
is finite with the same residue class field $K$.
Recall that $\calO_{\bB^n_K,0}$ is Henselian (cf.~\cite[p.9, Corollaire 2]{Houzel}, \cite[Theorem 2.1.5]{Berkovich-etale}).
Hence we conclude that $\calO_{\bB^n_K,0}\cong \calO_{X,x}$.

It follows that there exist an affinoid open neighborhood $U$ of $x\in X$,  an affinoid open neighborhood $V$ of $0\in \bB^n_K$ and a morphism $g\colon V\ra U$ such that $f\circ g$ is the inclusion $V\subset \bB^n_K$.
Note that $g$ is injective and that $g^\ast\colon \calO_{U,x}\ra \calO_{V,0}$ is an isomorphism. After shrinking $V$ and $U$ if necessary, we may further assume that $g$ is an isomorphism by \cite[7.3.3 Proposition 5]{BGR}.
By Lemma~\ref{lem:basics on polydisks}(ii),
we get an open immersion
\[
 \bB^n_K\ra V\stackrel{g}{\lra} U \ra X
\]
sending $0$ to $x$. 
\end{proof}

\subsection{Vector bundles with integrable connections on the polydisk $\bB_K^n$}

Let $A:=K\langle T_1,\ldots,T_n\rangle$ and let $d\colon A\ra \Omega_A$ denote the universal $K$-derivation of $A$ (\cite[Definition 1.6.1]{Huber-etale}).
Concretely, $\Omega_A$ is a finite free $A$-module of rank $n$ with generators $dT_1,\ldots,dT_n$, and $d\colon A\ra \Omega_A$ sends $f$ to $df=\sum_{i=1}^n\frac{\partial f}{\partial T_i}\,dT_i$ for $f\in A$ (\cite[1.6.2]{Huber-etale}). In other words, $\Omega_A=A\otimes_{A^\circ}\widehat{\Omega}_{A^{\circ}}$ (cf. Subsection~\ref{subsection:de Rham representations}).

The following result will be well known to experts.
For example, see \cite[Appendix III]{DGS}, \cite[Proposition 9.3.3]{Kedlaya-PDE} for the case $n=1$.

\begin{thm}\label{thm:pDF on polydisk}
Let $(M,\nabla)$ be a finite free $A$-module equipped with an integrable connection.
Then there exists $l\in\N$ such that the induced connection on
\[
 M\otimes_A K\langle T_1/p^l,\ldots,T_n/p^l\rangle
\]
has a full set of solutions, i.e., it is isomorphic to $(K\langle T_1/p^l,\ldots,T_n/p^l\rangle, d)^{\rank_A M}$.
\end{thm}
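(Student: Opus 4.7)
The plan is to reduce to an explicit power series construction by choosing an $A$-basis and then show convergence on a suitably small polydisk. First, I would fix a basis $e_1,\ldots,e_r$ of $M$ and write the connection as $\nabla = d + \sum_{k=1}^n \Gamma_k\, dT_k$, where each $\Gamma_k \in M_r(A)$ is the connection matrix in this basis (acting on column vectors). Expanding $\Gamma_k = \sum_{\beta \in \N^n} \Gamma_{k,\beta} T^\beta$ with $\Gamma_{k,\beta} \in M_r(K)$, the fact that $\Gamma_k \in A = K\langle T_1,\ldots,T_n\rangle$ gives a uniform bound $|\Gamma_{k,\beta}| \leq C$ for some $C > 0$.

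Next, for any $v_0 \in K^r$ I would attempt to build a formal horizontal section $s = \sum_\alpha v_\alpha T^\alpha$ with $v_\alpha \in K^r$ by imposing the system $\partial_k s = -\Gamma_k s$ coefficient-by-coefficient, yielding the recursion
\[
(\alpha_k + 1)\, v_{\alpha + e_k} = -\sum_{\beta + \gamma = \alpha} \Gamma_{k,\beta}\, v_\gamma \qquad (k = 1,\ldots,n).
\]
The consistency of this recursion (independence from the order in which coordinates are incremented) is exactly the integrability relation $\partial_k \Gamma_l - \partial_l \Gamma_k + [\Gamma_k, \Gamma_l] = 0$ applied term by term; I would verify this by a direct induction on $|\alpha|$. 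Iterating the recursion produces divisors $\alpha_1!\cdots \alpha_n! = \alpha!$, giving the standard bound $|v_\alpha| \leq C^{|\alpha|}\, |\alpha!|^{-1}\, |v_0|$ by induction on $|\alpha|$ (using the ultrametric inequality on the sum in the recursion).

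Combining this with the classical estimate $|m!|^{-1} \leq p^{m/(p-1)}$ gives $|v_\alpha| \leq (C p^{1/(p-1)})^{|\alpha|}\, |v_0|$. Choosing $l \in \N$ with $p^l > C p^{1/(p-1)}$, the series $\sum_\alpha v_\alpha T^\alpha$ then lies in $A_l := K\langle T_1/p^l,\ldots,T_n/p^l\rangle$. This construction is $K$-linear in $v_0$, so letting $(v_0^{(1)},\ldots,v_0^{(r)})$ run over the standard basis of $K^r$ produces $r$ horizontal sections $s_1,\ldots,s_r$ of $M \otimes_A A_l$. By construction each $s_i$ specializes to $v_0^{(i)}$ at the origin, hence the matrix $(s_1 | \cdots | s_r)$ is the identity modulo $(T_1,\ldots,T_n)$ and in particular lies in $\GL_r(A_l)$ (as this condition defines an open subset containing the identity in the Tate algebra topology, after possibly enlarging $l$). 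Thus the $s_i$ form a basis of $M \otimes_A A_l$ over $A_l$, exhibiting the desired isomorphism $(M \otimes_A A_l, \nabla) \cong (A_l, d)^r$.

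The main obstacle will be controlling the growth of $|v_\alpha|$ uniformly in $\alpha$: the recursion expresses $v_{\alpha + e_k}$ as a sum involving all $v_\gamma$ with $\gamma \leq \alpha$, so a careless induction loses the $\alpha!^{-1}$ factor. The correct bookkeeping — bounding $|v_\alpha|$ by $C^{|\alpha|} |\alpha!|^{-1} |v_0|$ via ultrametric dominance of the recursion sum — is the crux, together with checking that the integrability hypothesis really does make the multi-dimensional recursion well-posed (the one-dimensional case in \cite{DGS, Kedlaya-PDE} serves as the model, and the higher-dimensional extension is purely formal given integrability).
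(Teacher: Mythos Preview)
Your proposal is correct and follows essentially the same strategy as the paper: construct formal horizontal sections in $M\otimes_A K[[T_1,\ldots,T_n]]$, establish the coefficient bound $|v_\alpha|\leq C^{|\alpha|}\,|\alpha!|^{-1}\,|v_0|$ (with $C\geq 1$) to force convergence on a polydisk of radius $p^{-l}$ with $p^{-l}Cp^{1/(p-1)}<1$, and then shrink once more to make the determinant of the fundamental matrix a unit. The only cosmetic difference is that the paper packages the solution map as the closed-form Taylor projector $P(f)=\sum_{\underline j}\frac{(-1)^{|\underline j|}}{\underline j!}T^{\underline j}D_{\underline j}(f)$ and estimates its coefficient functionals directly, whereas you obtain the same bound by induction on your recursion; both routes yield the identical estimate and the identical two-step shrinking of $l$.
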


\begin{proof}
 Consider the natural inclusion
\[
 M\subset M\otimes_A K[[T_1,\ldots,T_n]]
\]
and extend the integrable connection to the latter.
By a classical result (cf.~\cite[Proposition 8.9]{Katz_Nilpotent}),
we know that $M\otimes_A K[[T_1,\ldots,T_n]]$ has a full set of solutions.
In fact, we can construct an explicit $K$-linear surjection
\[
P\colon M \ra \bigl(M\otimes_A K[[T_1,\ldots,T_n]]\bigr)^{\nabla=0}
\]
using the Taylor expansion, which we now explain.

For each $i=1,\ldots,n$, we set $D_i:=\nabla_{\frac{\partial}{\partial T_i}}\colon M\ra M$.
Then the integrability of $\nabla$ is equivalent to $D_i\circ D_{i'}=D_{i'}\circ D_i$ for every $i,i'=1,\ldots,n$.
For each $\underline{j}=(j_1,\ldots,j_n)\in \N^n$, we set
\[
 \lvert \underline{j}\rvert=j_1+\cdots+j_n,\quad
\underline{j}!:=(j_1!)\cdots (j_n!),\quad
T^{\underline{j}}:=T_1^{j_1}\cdots T_n^{j_n},
\]
and
\[
 D_{\underline{j}}:=\prod_{i=1}^n\bigl(D_i\bigr)^{j_i}\colon M\ra M.
\]
We define $P\colon M \ra M\otimes_A K[[T_1,\ldots,T_n]]$
by
\[
 P(f):=\sum_{\underline{j}\in\N^n}\frac{(-1)^{\lvert \underline{j}\rvert}}{\underline{j}!}T^{\underline{j}}D_{\underline{j}}(f).
\]
Then we can check $P(f)(0,\ldots,0)=f(0,\ldots,0)$, 
$ \Image P\subset \bigl(M\otimes_A K[[T_1,\ldots,T_n]]\bigr)^{\nabla=0}$, 
$\Ker P=(T_1,\ldots,T_n)M$, and that $P$ induces an isomorphism
\[
P\colon M/(T_1,\ldots,T_n) \stackrel{\cong}{\lra} \bigl(M\otimes_A K[[T_1,\ldots,T_n]]\bigr)^{\nabla=0}.
\]

Let $r=\rank_AM$ and choose an $A$-basis of $M$ so that we identify $M=A^r$.
For each $f\in A^r$, we can uniquely write $P(f)$ as
\[
 P(f)=\sum_{\underline{j}\in \N^n}A_{\underline{j}}(f)T^{\underline{j}},\quad A_{\underline{j}}(f)\in K^r.
\]
More concretely, we see
\[
 A_{\underline{j}}(f)
=\biggl(\biggl(\frac{\partial}{\partial T}\biggr)^{\underline{j}}(P(f))\biggr)\biggm|_{(0,\ldots,0)}
=\biggl(\biggl(\frac{\partial}{\partial T}\biggr)^{\underline{j}}\biggl(
\sum_{\underline{j}'\leq \underline{j}}\frac{(-1)^{\lvert \underline{j}'\rvert}}{\underline{j}'!}T^{\underline{j}'}D_{\underline{j}'}(f)
\biggr)\biggr)\biggm|_{(0,\ldots,0)},
\]
where 
$\bigl(\frac{\partial}{\partial T}\bigr)^{\underline{j}}:=\bigl(\frac{\partial}{\partial T_1}\bigr)^{j_1}\cdots\bigl(\frac{\partial}{\partial T_n}\bigr)^{j_n}$
for $\underline{j}=(j_1,\ldots,j_n)$, and $\underline{j}'\leq \underline{j}$ refers to the inequality entrywise.
In this way, we get a $K$-linear map $A_{\underline{j}}\colon A^r\ra K^r$.

Consider the Gauss norm on $A$ and equip $A^r(=M), M_r(A)$ with the norm defined by the maximum of the Gauss norm of each entry. Similarly, equip $K^r$ with the $p$-adic norm.
We normalize these norms so that the norm of $p$ is $p^{-1}$.
We will estimate the operator norm $\lVert A_{\underline{j}}\rVert$.
For each $i=1,\ldots,n$, write
\[
 D_i=\frac{\partial}{\partial T_i}+B_i, \quad B_i\in M_r(A).
\]
Choose $C'>0$ such that $\lVert B_i\rVert\leq C'$ for $i=1,\ldots,n$.
Set $C=\max\{1,C'\}$.
Note that both $\frac{\partial}{\partial T_i}\colon A^r\ra A^r$ and the multiplication by $T_i$ on $A^r$ have operator norm $1$ and that
the $p$-adic norm of $\underline{j}!$ is greater than or equal to $\bigl(\frac{1}{p^{1/(p-1)}}\bigr)^{\lvert \underline{j}\rvert}$.
It follows that
\[
 \biggl\lVert \frac{(-1)^{\lvert \underline{j}\rvert}}{\underline{j}!}T^{\underline{j}}D_{\underline{j}}\biggr\rVert
\leq C^{\lvert \underline{j}\rvert}p^{\lvert \underline{j}\rvert/(p-1)},
\]
and thus
\[
 \lVert A_{\underline{j}}\rVert
=\biggl\lVert \biggl(\biggl(\frac{\partial}{\partial T}\biggr)^{\underline{j}}\biggl(
\sum_{\underline{j}'\leq \underline{j}}\frac{(-1)^{\lvert \underline{j}'\rvert}}{\underline{j}'!}T^{\underline{j}'}D_{\underline{j}'}
\biggr)\biggr)\biggm|_{(0,\ldots,0)}\biggr\rVert
\leq C^{\lvert \underline{j}\rvert}p^{\lvert \underline{j}\rvert/(p-1)}.
\]

Choose $l\in\N$ such that
\[
 Cp^{1/(p-1)}p^{-l}<1.
\]
By the estimate on $\lVert A_{\underline{j}}\rVert$, we have
$\Image P\subset M\otimes_AK\langle T_1/p^l,\ldots,T_n/p^l\rangle$.
In other words, we have
\[
 \bigl(M\otimes_A K[[T_1,\ldots,T_n]]\bigr)^{\nabla=0}\subset M\otimes_AK\langle T_1/p^l,\ldots,T_n/p^l\rangle.
\]

We again identify $M\!=\!A^r$ and $M\otimes_AK\langle T_1/p^l,\ldots,T_n/p^l\rangle\!=\!K\langle T_1/p^l,\ldots,T_n/p^l\rangle^r$. Choose 
\[
 s_1=(s_{11},\ldots,s_{1r}),\ldots, s_r=(s_{r1},\ldots,s_{rr})\in M\otimes_AK\langle T_1/p^l,\ldots,T_n/p^l\rangle
\]
such that $s_1,\ldots,s_r$ form a $K$-basis of 
$\bigl(M\otimes_A K[[T_1,\ldots,T_n]]\bigr)^{\nabla=0}$ and such that
the values at $(T_1,\ldots,T_n)=(0,\ldots,0)$ satisfy
\[
 s_1(0,\ldots,0)=(1,0,\ldots,0),\ldots,s_r(0,\ldots,0)=(0,\ldots,0,1).
\]
Set 
\[
 s:=\det(s_{ij})_{i,j=1,\ldots,r}\in K\langle T_1/p^l,\ldots,T_n/p^l\rangle.
\]
Then $s$ satisfies $s(0,\ldots,0)=1\neq 0$.
In particular, there exists $l'\geq l$ such that $s$ is invertible in $K\langle T_1/p^{l'},\ldots,T_n/p^{l'}\rangle$.

Let us consider the induced connection $\nabla$ on $M\otimes_AK\langle T_1/p^{l'},\ldots,T_n/p^{l'}\rangle$.
Since $s_1,\ldots,s_r\in M\otimes_AK\langle T_1/p^{l'},\ldots,T_n/p^{l'}\rangle$, we have
\[
 \bigl(M\otimes_A K\langle T_1/p^{l'},\ldots,T_n/p^{l'}\rangle\bigr)^{\nabla=0}\subset M\otimes_AK\langle T_1/p^{l'},\ldots,T_n/p^{l'}\rangle.
\]
Moreover, since $s$ is invertible in $K\langle T_1/p^{l'},\ldots,T_n/p^{l'}\rangle$, we conclude
that the natural map
\[
 \bigl(M\otimes_A K\langle T_1/p^{l'}\!\!,\ldots,\!T_n/p^{l'}\!\rangle\bigr)\!^{\nabla=0}\!
\otimes_KK\langle T_1/p^{l'}\!\!,\ldots,\!T_n/p^{l'}\!\rangle
\!\ra\! M\otimes_AK\langle T_1/p^{l'}\!\!,\ldots,\!T_n/p^{l'}\!\rangle
\]
is an isomorphism.
This means that $(M\otimes_A K\langle T_1/p^{l'},\ldots,T_n/p^{l'}\rangle,\nabla)$ has a full set of solutions.
\end{proof}

\begin{rem}
Note that $A$ is a UFD by \cite[5.2.6 Theorem 1]{BGR}.
Hence any finite projective $A$-module is finite free.
\end{rem}

\subsection{Proof of Theorem~\ref{thm:p-adic monodromy}}

We prove Theorem~\ref{thm:p-adic monodromy}.
By Proposition~\ref{prop:local structure of smooth rigid analytic varieties} and Theorem~\ref{thm:pDF on polydisk}, 
there exists an open neighborhood $U\subset X_{k(x)}$ of $x$
such that $U\cong \bB^n_{k(x)}$ and $(D_{\dR}(\bL)|_U,\nabla)$ has a full set of solutions.
Hence $\bL|_U$ is horizontal de Rham because $D_{\dR}(\bL)|_U=D_{\dR}(\bL|_U)$.
By shrinking $U$ again, we may assume that $U$ is isomorphic to $\bT_{k(x)}^n$.
Write $\bT^n_{k(x)}=\Spa(A,A^\circ)$. Then $R=A^\circ$ satisfies the conditions in Set-up~\ref{set-up: Brinon's rings} relative to $k(x)$.
Let $V\in \Rep_{\Q_p}(\calG_A)$ be the representation corresponding to $\bL|_U$.
Then $V$ is horizontal de Rham by Lemma~\ref{lem:de Rham local system vs representation}.
By Theorem~\ref{thm:p-adic monodromy for horizontal de Rham representations}, there exists a finite extension $L$ of $k(x)$ such that $V|_{\calG_{A_L}}$ is horizontal semistable.
In particular, $\bL|_{U_L}$ is horizontal de Rham and semistable at every classical point.
The second assertion follows from Theorem~\ref{thm:horizontal de Rham and one point crystalline}.

\section{Applications: potentially crystalline loci and potentially good reduction loci }\label{section:potentially crystalline loci and potentially good reduction loci}

As an application of our $p$-adic monodromy theorem, we discuss potentially crystalline loci of de Rham local systems and cohomologically potentially good reduction loci of smooth proper families of relative dimension at most two.

Let $K$ be a complete discrete valuation field of characteristic zero with perfect residue field of characteristic $p$.
In Subsections~\ref{subsection:monodromy weight} and \ref{subsection:potentially good reduction locus}, we will further assume that the residue field of $K$ is finite.

\subsection{Potentially crystalline loci}
\begin{defn}
 Let $X$ be a smooth rigid analytic variety over $K$, and let $\bL$ be a de Rham $\Q_p$-local system on $X_{\et}$.
We say that an open subset $U$ of $X$ is \emph{a potentially crystalline locus} of $\bL$ if
$U$ satisfies the following property: for every classical point $x$ of $X$,
\begin{enumerate}
 \item if $x\in U$, then $\bL$ is potentially crystalline at $x$;
 \item if $x\not\in U$, then $\bL$ is not potentially crystalline at $x$.
\end{enumerate}
\end{defn}

\begin{thm}\label{thm:potentially crystalline loci}
For every smooth rigid analytic variety $X$ over $K$ and every de Rham $\Q_p$-local system $\bL$ on $X_{\et}$,  there exists a potentially crystalline locus of $\bL$.
\end{thm}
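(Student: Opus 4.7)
The plan is to construct $U$ as a union of open neighborhoods supplied by the $p$-adic monodromy theorem (Theorem~\ref{thm:p-adic monodromy}), one per classical point at which $\bL$ is potentially crystalline. For each such classical point $x \in X$, I would apply the ``moreover'' clause of Theorem~\ref{thm:p-adic monodromy}: since $\bL_{\overline{x}}$ is potentially crystalline, there exist an open neighborhood $U_x \subset X_{k(x)}$ of the canonical classical point above $x$ and a finite extension $L_x$ of $k(x)$ such that $\bL|_{U_{x,L_x}}$ is horizontal de Rham and crystalline at every classical point. Since $K$ has characteristic zero, $k(x)/K$ is separable, so the projection $\pi_x \colon X_{k(x)} \to X$ is finite \'etale and in particular open; its image $V_x := \pi_x(U_x)$ is therefore an open neighborhood of $x$ in $X$. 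I would then set
\[
U := \bigcup_{x} V_x,
\]
the union being taken over all classical $x \in X$ at which $\bL$ is potentially crystalline.

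Condition (ii) in the definition is immediate from the construction: if $y \in X$ is classical and $\bL$ is potentially crystalline at $y$, then $y$ is one of the indices appearing in the union, so $y \in V_y \subset U$. For condition (i), given a classical $y \in U$, I would pick some $x$ with $y \in V_x$, lift $y$ to a classical point $y' \in U_x$, and then lift $y'$ to a classical point $y'' \in U_{x,L_x}$; such lifts exist because the projections $X_{k(x)} \to X$ and $U_{x,L_x} \to U_x$ are finite \'etale. At $y''$ the local system $\bL|_{U_{x,L_x}}$ is crystalline, and its stalk at $\overline{y''}$ as a $\Gal(\overline{K}/k(y''))$-representation coincides with the restriction of $\bL_{\overline{y}}$ along the finite-index inclusion $\Gal(\overline{K}/k(y'')) \hookrightarrow \Gal(\overline{K}/k(y))$. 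Hence $\bL_{\overline{y}}$ becomes crystalline over the finite extension $k(y'')/k(y)$, which is exactly the statement that $\bL$ is potentially crystalline at $y$.

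There is no substantive obstacle: the argument is a formal gluing once the pointwise input of Theorem~\ref{thm:p-adic monodromy} is granted. The only routine checks are that finite \'etale projections are open (automatic in characteristic zero) and that classical points lift/descend along $X_{k(x)} \to X$ and $U_{x,L_x} \to U_x$, together with the standard fact that restriction of a crystalline representation to an open subgroup of finite index is again crystalline. As a byproduct, the same reasoning shows that the union of any family of potentially crystalline loci is again one, so the $U$ built above is in fact the (unique) maximal potentially crystalline locus of $\bL$.
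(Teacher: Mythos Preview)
Your argument is correct and follows essentially the same route as the paper: for each classical point $x$ where $\bL$ is potentially crystalline, invoke Theorem~\ref{thm:p-adic monodromy} to obtain an open $U_x \subset X_{k(x)}$ on which $\bL$ is potentially crystalline at every classical point, push $U_x$ down to $X$ via the finite \'etale projection, and take the union. Your verification of conditions (i) and (ii) via lifting classical points along the finite \'etale maps $X_{k(x)} \to X$ and $U_{x,L_x} \to U_x$ is more explicit than the paper's, which simply asserts that the union ``satisfies the desired properties.''

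One small point you gloss over: Theorem~\ref{thm:p-adic monodromy} is stated for de Rham \emph{isogeny $\Z_p$-local systems}, whereas the present theorem concerns $\Q_p$-local systems, which are only \'etale-locally isogeny $\Z_p$-local systems. The paper handles this by a preliminary reduction: if $\{X_i\}$ is an \'etale cover of $X$ and each $\bL|_{X_i}$ admits a potentially crystalline locus $U_i$, then the union of the images of the $U_i$ in $X$ is a potentially crystalline locus for $\bL$; one may therefore pass to an \'etale cover and assume $\bL$ is an isogeny $\Z_p$-local system. Your argument would go through verbatim once this reduction is inserted (or, equivalently, once you note that near each classical point one can replace $X$ by an \'etale neighborhood on which $\bL$ is an isogeny $\Z_p$-local system before invoking Theorem~\ref{thm:p-adic monodromy}).
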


\begin{proof}
Observe that if there exists an \'etale covering $\{X_i\}_i$ of $X$ such that each $\bL|_{X_i}$ admits a potentially crystalline locus $U_i\subset X_i$, then the union of the images of $U_i$'s in $X$ is a potentially crystalline locus of $X$.
By passing to an \'etale cover of $X$, we may assume that $\bL$ is a de Rham isogeny $\Z_p$-local system on $X_{\et}$.

 Let $x$ be a classical point of $X$ such that $\bL$ is potentially crystalline at $x$.
By Theorem~\ref{thm:p-adic monodromy}, there exists an open neighborhood $U'_x\subset X_{k(x)}$ of $x$ such that $\bL|_{U'_x}$ is potentially crystalline at every classical point. Let $U_x$ denote the image of $U'_x$ in $X$; it is an open subset of $X$.
Then the union of such $U_x$'s satisfies the desired properties.
\end{proof}

\begin{rem}
 Our definition takes only classical rank $1$ points into consideration, and thus a potentially crystalline locus of $\bL$ is not unique.
On the other hand, the union of potentially crystalline loci is also a potentially crystalline locus and thus we have the largest potentially crystalline locus.
\end{rem}

\subsection{The monodromy-weight conjecture and $\ell$-independence}\label{subsection:monodromy weight}

We will discuss cohomologically potentially good reduction loci of smooth proper maps in the next subsection.
For this, let us briefly review basic results and conjectures on cohomologies of smooth proper varieties over $p$-adic fields.

Let $K$ be a complete discrete valuation field of characteristic zero with \emph{finite} residue field $k$ of characteristic $p$. Set $\Gal_K:=\Gal(\overline{K}/K)$. Let $q=p^h$ denote the cardinality of $k$.
Set $P_0:=W(\overline{k})[p^{-1}]$. It is equipped with an action of $\Gal_K$. Write $\sigma\colon P_0\ra P_0$ for the Witt vector Frobenius lifting $x\mapsto x^p$.

Let $I_K$ denote the inertia subgroup of $\Gal_K$ and let $W_K$ denote the Weil group of $K$; they fit in the following exact sequence:
\[
 0\lra I_K\lra W_K\stackrel{v}{\lra} h\Z\lra 0,
\]
where $v$ is normalized so that $h\in h\Z\subset h\widehat{\Z}=\Gal_k$ is the geometric Frobenius $a\mapsto a^{-q}$ of $k$. Set $W_K^+:=\{g\in W_K\mid v(g)\geq 0\}$.

Let $\ell$ be a prime.
For $\ell\neq p$, let $t_\ell\colon I_K\ra \Z_\ell(1)$ be the map sending $g\in I_K$ to $(g(\pi^{\ell^{-m}})/\pi^{\ell^{-m}})_m\in\Z_\ell(1)$, where $\pi$ is a uniformizer of $K$; the map $t_\ell$ is independent of the choice of a uniformizer $\pi$ nor its $\ell$-power roots.

Let $X$ be a smooth proper scheme purely of dimension $n$ over $K$.
For each $\ell$, the $\ell$-adic \'etale cohomology $H^m_{\et}(X_{\overline{K}},\Q_\ell)$ is an $\ell$-adic representation of $\Gal_K$. 

Let us first assume $\ell\neq p$.
By Grothendieck's monodromy theorem, there exist a nilpotent endomorphism $N\in \End(H^m_{\et}(X_{\overline{K}},\Q_\ell))(-1)$
and an open subgroup $I\subset I_K$ such that for $g\in I$, the action of $g$ on $H^m_{\et}(X_{\overline{K}},\Q_\ell)$ is given by $\exp(t_\ell(g)N)$.
The endomorphism $N$ defines the monodromy filtration $\Fil^N_\bullet$ on $H^m_{\et}(X_{\overline{K}},\Q_\ell)$; it is the unique separated and exhaustive increasing filtration such that
$N(\Fil^N_s)\subset\Fil^N_{s-2}(-1)$ for $s\in\Z$ and such that 
the induced map $N^s\colon \gr_s^NH^m_{\et}(X_{\overline{K}},\Q_\ell)\ra \gr_{-s}^NH^m_{\et}(X_{\overline{K}},\Q_\ell)(-s)$ is an isomorphism for $s\geq 0$.

\begin{conj}[$\ell$-adic monodromy-weight conjecture for $\ell\neq p$]
 For $s\in\Z$ and $g\in W_K$, the eigenvalues of $g$ on $\gr_s^NH^m_{\et}(X_{\overline{K}},\Q_\ell)$ are algebraic numbers and the complex absolute values of their conjugates are
$p^{(m+s)v(g)/2}$.
\end{conj}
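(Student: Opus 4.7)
The statement is the classical $\ell$-adic monodromy--weight conjecture of Deligne, which is a major open problem in the mixed characteristic case and which the subsequent subsection will invoke only in known ranges (relative dimension at most two). My plan for how one would attempt to prove it proceeds as follows. First, by Grothendieck's monodromy theorem the action of $I_K$ is quasi-unipotent, so after replacing $K$ by a finite extension I may assume the action of $I_K$ on $H^m_{\et}(X_{\overline{K}},\Q_\ell)$ is unipotent. Then, using de Jong's theorem on alterations, I would find a proper surjective, generically finite morphism $X' \to X$ (after a further finite extension of $K$ if necessary) with $X'$ strictly semistable over $\calO_K$; since $H^m_{\et}(X_{\overline{K}},\Q_\ell)$ is a $W_K$-equivariant direct summand of $H^m_{\et}(X'_{\overline{K}},\Q_\ell)$, and since the algebraicity and weight statements pass to direct summands, this reduces the problem to the strictly semistable case.

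Assuming strict semistable reduction, I would analyze $H^m_{\et}(X_{\overline{K}},\Q_\ell)$ via the Rapoport--Zink weight spectral sequence, whose $E_1$-terms are direct sums of $\ell$-adic cohomologies of intersections of components of the special fiber, each smooth and proper over the finite residue field $k$. By Deligne's Weil II, these cohomologies are pure of explicit weights, and the spectral sequence then induces a ``weight filtration'' on the abutment whose graded pieces $\gr_s^W H^m_{\et}$ have eigenvalues of the conjectured absolute values. This gives the algebraicity and the numerical weight bound, provided one works with the weight filtration. The remaining step is to identify this weight filtration with the monodromy filtration $\Fil^N_\bullet$ defined via the nilpotent operator $N$; equivalently, one must show that $N^s$ induces isomorphisms between the $s$-th and $(-s)$-th graded pieces of the weight filtration, so that uniqueness of the monodromy filtration forces the coincidence.

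The identification of the weight and monodromy filtrations is exactly the essential content of the conjecture, and this is where the main obstacle lies: in equal characteristic it is Deligne's theorem via Weil II, but in mixed characteristic no general proof is known. Full results are available only in restricted contexts, including $m \leq 2$ or $\dim X \leq 2$ by Rapoport--Zink, abelian varieties, varieties admitting $p$-adic uniformization by Ito, and certain Shimura varieties. Since Theorem~\ref{thm:potentially good reduction locus} only requires the case of relative dimension at most two, the realistic plan is to invoke the Rapoport--Zink result as a black box in that range, together with the known $\ell$-independence statements for $H^m$ with $m \leq 4$, rather than to attempt the conjecture in full generality.
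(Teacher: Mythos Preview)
Your assessment is correct: the statement is indeed an open conjecture, and the paper does not prove it. The paper states it as a conjecture and then, in the subsequent remark, records exactly the known cases you identify (good reduction via Deligne's Weil~II; $m\leq 2$ via Rapoport--Zink in the strictly semistable surface case, combined with de~Jong's alterations to reduce to that case). Your outlined strategy---reduce to strict semistable reduction by alterations, run the Rapoport--Zink weight spectral sequence with $E_1$-terms pure by Weil~II, and then attempt to identify the resulting weight filtration with the monodromy filtration---is precisely the standard approach, and you have correctly located the open step. Two minor corrections: the paper's later $\ell$-independence input is Ochiai's result on the alternating sum of traces, not a statement for individual $H^m$ with $m\leq 4$; and the direct-summand argument for $H^m_{\et}(X_{\overline K},\Q_\ell)$ inside $H^m_{\et}(X'_{\overline K},\Q_\ell)$ is more delicately an injection (via a trace argument after making the alteration generically \'etale), which suffices since the weight condition passes to subrepresentations.
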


\begin{rem}\label{rem:known cases of monodromy-weight conjecture}
 When $X$ has good reduction, the conjecture follows from Deligne's purity theorem on weights in \cite{WeilII}.
The conjecture is also known for $m\leq 2$; 
When $X$ has a strictly semistable model and $\dim X=2$, it is 
\cite[Satz 2.13]{Rapoport-Zink-local-zeta}.
Using \cite{deJong-alteration}, we can reduce the general case to this case (see \cite[Lemma 3.9]{Saito-weightSS} or the proof of Theorem~\ref{thm:p-adic monodromy weight} below). 
\end{rem}

We next discuss the case $\ell=p$.
For this we need to recall the general recipe of Fontaine \cite{Fontaine-exposeVIII}.

Let $V$ be a de Rham $\Q_p$-representation of $\Gal_K$.
By the $p$-adic monodromy theorem \cite[Th\'eor\`eme 0.7] {Berger}, 
$V$ is potentially semistable.
We set 
\[
 \widehat{D}_{\pst}(V):=\varprojlim_{H\subset I_K}(B_{\st}\otimes_{\Q_p}V)^H,
\]
where $H$ runs over open subgroups of $I_K$.
It is a $P_0$-vector space of dimension $\dim_{\Q_p}V$ equipped with a semilinear action of $\Gal_K$, a Frobenius $\varphi\colon \widehat{D}_{\pst}(V)\ra \widehat{D}_{\pst}(V)$ (i.e., a $\sigma$-semilinear injective map commuting with $\Gal_K$-action), and a monodromy operator $N\colon \widehat{D}_{\pst}(V)\ra \widehat{D}_{\pst}(V)$ (i.e., a $P_0$-linear map commuting with $\Gal_K$-action) satisfying $N\varphi=p\varphi N$. Such an object is called a \emph{$p$-module of Deligne} in \cite[1.1]{Fontaine-exposeVIII}.
It follows that $\varphi$ is bijective and that $N$ is nilpotent.

We associate to $\widehat{D}_{\pst}(V)$ a Weil--Deligne representation $W\widehat{D}_{\pst}(V)$ over $P_0$ as follows: Set $W\widehat{D}_{\pst}(V):=\widehat{D}_{\pst}(V)$ as a $P_0$-vector space equipped with the monodromy operator $N$. Define a $P_0$-linear action of $W_K$ on $W\widehat{D}_{\pst}(V)$ by letting $g\in W_K$ act as $g\cdot \varphi^{n(g)}$.
Since $N$ acts nilpotently, 
there exists a unique separated and exhaustive increasing filtration $\Fil^N_\bullet$ on $W\widehat{D}_{\pst}(V)$ such that $N(\Fil^N_s)\subset \Fil^N_{s-2}$ for $s\in\Z$ and such that
$N^s\colon \gr^N_s\ra\gr^N_{-s}$ is an isomorphism for every $s\geq 0$.

Let us return to the $p$-adic representation $H^m_{\et}(X_{\overline{K}},\Q_p)$.
It is de Rham by \cite{Tsuji-Cst} and \cite{deJong-alteration} (cf.~\cite[Theorem A1]{Tsuji-survey}). We denote by $H_{\pst,p}^m(X)$ the associated Weil--Deligne representation 
$W\widehat{D}_{\pst}(H^m_{\et}(X_{\overline{K}},\Q_p))$ of $K$.

\begin{conj}[$p$-adic monodromy-weight conjecture]\label{conj:p-adic monodromy weight}
 For $s\in\Z$ and $g\in W_K$, the eigenvalues of $g$ on $\gr_s^NH_{\pst,p}^m(X)$ are algebraic numbers and the complex absolute values of their conjugates are
$p^{(m+s)v(g)/2}$.
\end{conj}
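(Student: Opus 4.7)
The plan is to reduce Conjecture~\ref{conj:p-adic monodromy weight} to its $\ell$-adic counterpart (for some $\ell\neq p$) via $p$-adic comparison theorems. First, by de Jong's theorem on alterations, I would find a finite extension $L/K$ and a proper surjective generically \'etale morphism $X'\to X_L$ such that $X'$ admits a strictly semistable model $\fkX'$ over $\calO_L$. Using the trace map, $H^m_{\et}(X_{\overline{K}},\Q_p)$ sits as a direct summand of $H^m_{\et}(X'_{\overline{K}},\Q_p)$ in the category of $\Gal_K$-representations, and the formation of $\widehat{D}_{\pst}$, of the associated Weil--Deligne representation, and of the monodromy filtration is compatible with direct summands; hence one may assume from the outset that $X$ itself admits a strictly semistable model $\fkX$ over $\calO_L$.

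Next I would apply the semistable comparison theorem $C_{\st}$ of Tsuji to obtain a canonical $(\varphi,N,\Gal_L)$-equivariant isomorphism
\[
H^m_{\et}(X_{\overline{K}},\Q_p)\otimes_{\Q_p}\B_{\st}
\;\cong\; H^m_{\logcris}(\fkX_s/W(k_L))[p^{-1}]\otimes_{L_0}\B_{\st},
\]
where $L_0=W(k_L)[p^{-1}]$. This identifies $\widehat{D}_{\pst}(H^m_{\et}(X_{\overline{K}},\Q_p))$ with the log-crystalline cohomology of the special fiber (base-changed to $P_0$), carrying its Frobenius and Hyodo--Kato monodromy. Via Mokrane's weight spectral sequence for log-crystalline cohomology, together with the Rapoport--Zink weight spectral sequence on the $\ell$-adic side, I would match the Weil--Deligne representation $H^m_{\pst,p}(X)$ with the Weil--Deligne representation associated to $H^m_{\et}(X_{\overline{K}},\Q_\ell)$ for any $\ell\neq p$: both weight-graded pieces are computed by cohomology of the dual complex of $\fkX_s$ with coefficients in cohomology of strata, and the Frobenius eigenvalues agree by the $\ell$-independence results of Ito and Ochiai in the semistable case.

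Having effected this reduction, the $p$-adic conjecture follows from the classical $\ell$-adic monodromy-weight conjecture for $X$. By Rapoport--Zink the latter is known in the strictly semistable case when $\dim X\leq 2$, and for arbitrary dimension when $X$ has good reduction (via Deligne's Weil II and Katz--Messing). Combining with the reduction via de Jong's alterations yields Conjecture~\ref{conj:p-adic monodromy weight} in the cases recorded in Remark~\ref{rem:known cases of monodromy-weight conjecture}, namely $m\leq 2$ and potentially good reduction.

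The main obstacle is that the general $\ell$-adic monodromy-weight conjecture is itself open: this strategy reduces the $p$-adic statement to the $\ell$-adic one but cannot resolve either in higher dimensions where the weight spectral sequence arguments break down. A secondary technical difficulty worth flagging is the $p$-$\ell$-compatibility step identifying the two Weil--Deligne representations; even in the strictly semistable case this needs a careful match of Frobenius eigenvalues (in particular, knowing they are rational and $\ell$-independent) and of the monodromy filtrations, passing through the explicit comparison of the Rapoport--Zink and Hyodo--Kato--Mokrane weight filtrations.
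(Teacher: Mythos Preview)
The statement you are addressing is a \emph{conjecture}; the paper does not prove it in general. What the paper does prove is the case $m\leq 2$, recorded separately as Theorem~\ref{thm:p-adic monodromy weight}. Your proposal ultimately acknowledges this, so the meaningful comparison is between your route to the known cases and the paper's proof of that theorem.

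Your reduction to the $\ell$-adic side has a genuine gap, and it is not a secondary technicality but the heart of the matter. The $\ell$-independence results of Ochiai (and Ito) control traces or characteristic polynomials of Frobenius; they do not identify the monodromy operators $N$, nor the monodromy filtrations, across $\ell$ and $p$. The full identification of Weil--Deligne representations you invoke is precisely Fontaine's compatibility conjecture \cite[2.4.2]{Fontaine-exposeVIII}, which is itself open. Even in the strictly semistable case, matching the $E_1$-terms of the Rapoport--Zink and Mokrane weight spectral sequences via Katz--Messing only matches the \emph{weight} filtrations; it does not let you transport the equality ``monodromy $=$ weight'' from the $\ell$-adic side to the $p$-adic side. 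That equality on the $p$-adic side is exactly what has to be proved, so your argument is circular at this step.

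The paper avoids this by working entirely on the $p$-adic/crystalline side. After the same de~Jong reductions (together with a hyperplane-section step to force $\dim X\leq 2$, which your sketch omits), it applies $C_{\st}$ to pass to log-crystalline cohomology, uses the $p$-adic weight spectral sequence of Mokrane--Nakkajima with Katz--Messing purity on the strata to obtain a weight filtration with the correct Frobenius eigenvalues, and then invokes Mokrane's theorem (with Nakkajima's sign corrections) that in dimension at most two the crystalline monodromy filtration coincides with this weight filtration. No $\ell$-adic input is used.
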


\begin{rem}
This is part of the conjecture stated in \cite[p.347]{Jannsen-FontaineJannsenConjecture} and \cite[Conjecture 5.1]{Jannsen-survey}.
\end{rem}

The following result will be well known to experts, but we give a proof for completeness.

\begin{thm}\label{thm:p-adic monodromy weight}
 Conjecture~\ref{conj:p-adic monodromy weight} holds for $m\leq 2$.
\end{thm}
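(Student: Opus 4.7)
The plan is to reduce to the case of strictly semistable reduction via de~Jong's alteration theorem, identify $H_{\pst,p}^m(X')$ with log-crystalline cohomology equipped with its Mokrane weight spectral sequence, and then apply Katz--Messing (i.e., Deligne's Weil~II for crystalline cohomology) to extract the required weight bounds on the monodromy-graded pieces.

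First I would invoke de~Jong's theorem to find a finite extension $L/K$ and a projective strictly semistable scheme $X'$ over $\calO_L$ together with an alteration $X'_L\to X_L$; the trace map exhibits $H^m_{\et}(X_{\overline{K}},\Q_p)$ as a direct summand of $H^m_{\et}(X'_{\overline{K}},\Q_p)$ compatibly with the action of $W_L$, the monodromy operator, and $\widehat{D}_{\pst}$. Since the monodromy-weight property passes to direct summands in the category of Weil--Deligne representations and is insensitive to replacing $K$ by a finite extension, it is enough to prove the theorem for $X'$ over $L$.

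Next I would apply Tsuji's $C_{\st}$-theorem: since $X'$ has strictly semistable reduction, $H^m_{\et}(X'_{\overline{K}},\Q_p)$ is a semistable Galois representation, and its associated filtered $(\varphi,N)$-module is canonically isomorphic to the log-crystalline cohomology $H^m_{\logcris}$ of the log special fiber, with its Frobenius and monodromy operator. Mokrane's $p$-adic weight spectral sequence then expresses the associated graded for the monodromy filtration on this log-crystalline cohomology in terms of ordinary crystalline cohomologies of the smooth proper intersections of irreducible components of the special fiber. Katz--Messing supplies the Weil bounds on Frobenius eigenvalues on these $E_1$ terms, which, combined with an identification of Mokrane's monodromy filtration with the filtration on $W\widehat{D}_{\pst}$, gives the desired bounds on $\gr^N_s H_{\pst,p}^m(X')$.

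For $m=0,1$ there is no nontrivial monodromy mixing and the theorem follows at once from the Katz--Messing bounds. For $m=2$ one proceeds as in Rapoport--Zink \cite{Rapoport-Zink-local-zeta}: monodromy-weight for strictly semistable surfaces follows from degeneration of the weight spectral sequence at $E_2$ in low degrees together with a hard-Lefschetz-type argument on the components, and this transplants from the $\ell$-adic setting of \cite{Saito-weightSS} to the log-crystalline setting without essential change. The main obstacle will be the bookkeeping needed to identify Mokrane's monodromy filtration on log-crystalline cohomology with the one intrinsic to the Weil--Deligne representation $W\widehat{D}_{\pst}(H^m_{\et}(X'_{\overline{K}},\Q_p))$, and to match the Frobenius on log-crystalline cohomology with the Weil-group normalization used in $W\widehat{D}_{\pst}$; once this compatibility is in place, the proof runs in close parallel with the known $\ell$-adic argument for $m\leq 2$.
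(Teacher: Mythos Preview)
Your overall strategy matches the paper's: reduce via de~Jong's alterations and Tsuji's $C_{\st}$ to log-crystalline cohomology of a strictly semistable model, then use the $p$-adic weight spectral sequence together with Katz--Messing. However, there is a genuine gap.

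You never reduce to $\dim X' \leq 2$. After one application of de~Jong, your $X'$ still has the same dimension as $X$, which is arbitrary. The key input---that the monodromy filtration on $H^m_{\logcris}$ coincides with the weight filtration coming from the spectral sequence---is a theorem of Mokrane and Nakkajima proved \emph{only for surfaces}, not for $m\leq 2$ on a higher-dimensional semistable scheme. (In general, this coincidence \emph{is} the $p$-adic monodromy-weight conjecture.) Your sentence ``Mokrane's $p$-adic weight spectral sequence then expresses the associated graded for the monodromy filtration'' already presupposes exactly this, and your later remark that the remaining issue is merely ``bookkeeping'' understates what is needed. The paper fixes this by applying de~Jong once to make $X$ projective, then taking hyperplane sections (weak Lefschetz gives injections on $H^m$ for $m\leq 2$ when $\dim X\geq 3$) to cut down to $\dim X\leq 2$, and only then applying de~Jong a second time to obtain a strictly semistable model of a surface, to which \cite[Th\'eor\`eme 5.3, Corollaire 6.2.3]{Mokrane} and \cite[Theorem 8.3, Corollary 8.4]{Nakkajima-signs} apply.

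A minor point: your claim that for $m=1$ ``there is no nontrivial monodromy mixing'' is false---$H^1$ of an abelian variety with semistable bad reduction (nonzero toric part) has nontrivial $N$. The case $m=1$ still requires the spectral-sequence/weight analysis, though it is indeed easier than $m=2$.
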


\begin{proof}
 We follow the proof of \cite[Lemma 3.9]{Saito-weightSS}.
Note that for a finite extension $L$ of $K$, $H_{\pst,p}^m(X)$ and $H_{\pst,p}^m(X_L)$
are isomorphic as Weil--Deligne representations of $L$ (after rescaling the monodromy operator; see \cite[2.2.6]{Fontaine-exposeVIII}).
It follows that the assertion for $H_{\pst,p}^m(X)$ is equivalent to the one for $H_{\pst,p}^m(X_L)$.

We may assume that $X$ is geometrically connected over $K$.
By \cite[Theorem 6.5]{deJong-alteration}, there exist a finite extension $L$ of $K$ inside $\overline{K}$, a smooth projective scheme $X'$ over $L$, and a proper surjective and generically finite morphism $f\colon X'\ra X$. The morphism $f$ induces a $\Gal_L$-equivariant injection $f^\ast\colon H^m_{\et}(X_{\overline{K}},\Q_p)\ra H^m_{\et}(X'_{\overline{K}},\Q_p)$ and thus an injective map
$f^\ast\colon H^m_{\pst,p}(X_L)\ra H^m_{\pst,p}(X')$ of Weil--Deligne representations of $L$.
In this case, it suffice to prove the assertion for the latter, and thus we may further assume that $X$ is projective by replacing $X$ by $X'$ and $K$ by $L$. By replacing $X$ by its hyperplane section, we may also assume $\dim X\leq 2$.

By applying \cite[Theorem 6.5]{deJong-alteration} again, we may assume that $X$ admits a projective and strictly semistable model $\calX$ over $\calO_K$.
 Denote the special fiber $\calX\otimes_{\calO_K} k$ by $Y$. Let $M_{\calX}$ be the fine log structure on $\calX$ associated to the simple normal crossing divisor $Y\subset \calX$ and let $M_Y$ be the inverse image of $M_{\calX}$ on $Y$.
Write $H^m_{\logcris}(Y/W)$ for the log-crystalline cohomology of $(Y,M_Y)/(k,\N\oplus k^\times)$ over $W:=W(k)$ and set $H^m_{\logcris}(\calX):=H^m_{\logcris}(Y/W)\otimes_WK_0$, where $K_0:=W[p^{-1}]$.
Then $H^m_{\logcris}(\calX)$ has a natural structure of $(\varphi,N)$-module relative to $K$\footnote{A choice of a uniformizer of $K$ equips $H^m_{\logcris}(\calX)$ with a filtration over $K$ by the comparison isomorphism to the log-de Rham cohomology, but we do not need this structure.}.
By \cite[Theorem 0.2]{Tsuji-Cst}, we have an isomorphism $D_{\st}(H^m_{\et}(X_{\overline{K}},\Q_p))\cong H^m_{\logcris}(\calX)$ of $(\varphi,N)$-modules relative to $K$\footnote{Depending on the convention of the monodromy operators, we may need to change the sign of one of the monodromy operators, but this does not affect our argument.}.
It follows that $\widehat{D}_{\pst}(H^m_{\et}(X_{\overline{K}},\Q_p))\cong H^m_{\logcris}(\calX)\otimes_{K_0}P_0$ as $p$-modules of Deligne.

Note that $\varphi^h$ acts $K_0$-linearly on $H^m_{\logcris}(\calX)$ and the monodromy operator defines the monodromy filtration $\Fil^N_\bullet$ on $H^m_{\logcris}(\calX)$.
By the definition of the Weil--Deligne representation $H^m_{\pst,p}(X)$ and the above isomorphism of $p$-modules of Deligne, it suffices to prove that 
 the eigenvalues of $\varphi^h$ on $\gr_s^NH^m_{\logcris}(\calX)$ are algebraic numbers and the complex absolute values of their conjugates are
$p^{(m+s)h/2}=q^{(m+s)/2}$.

For $r\geq 1$, let $Y^{(r)}$ denote the disjoint union of all $r$-fold intersections of the different irreducible components of $Y$.
Then we have the $p$-adic weight spectral sequence
\[
 E_1^{-l,m+l}=\mspace{-20mu}\bigoplus_{j\geq\max\{-l,0\}}\mspace{-20mu}H^{m-2j-l}_{\cris}(Y^{(2j+l+1)}/W)(-j-l)\otimes_WK_0\implies
H^m_{\logcris}(\calX).
\]
This spectral sequence is $\varphi$-equivariant
(see \cite[(9.11.1)]{Nakkajima-padicWSS} and \cite[\S 2 (B)]{Nakkajima-signs} for the details).
By the purity of weights of the $E_1$-term (\cite{Katz-messing}),
the spectral sequence degenerates at $E_2$, inducing the filtration $\Fil^W_\bullet$ on $H^m_{\logcris}(\calX)$ such that 
 the eigenvalues of $\varphi^h$ on $\gr_s^WH^m_{\logcris}(\calX)$ are algebraic numbers whose complex conjugates have absolute value $q^{(m+s)/2}$.
By \cite[Th\'eor\`eme 5.3, Corollaire 6.2.3]{Mokrane} and \cite[Theorem 8.3, Corollary 8.4]{Nakkajima-signs},
two filtrations $\Fil^N_\bullet$ and $\Fil^W_\bullet$ on $H^m_{\logcris}(\calX)$ agree.
This completes the proof.
\end{proof}

We now want to compare the results for $\ell$ and $p$.
For this, we need the following theorem on $\ell$-independence.

\begin{thm}[{\cite[Theorem B, Theorem 3.1]{Ochiai-l-indep}}]
 Let $X$ be a smooth proper scheme over $K$ of dimension $n$.
For every $g\in W_K^+$, the alternating sum
\[
 \sum_{m=0}^{2n}\tr(g^\ast;H_{\et}^m(X_{\overline{K}}, \Q_\ell))
\]
is a rational integer that is independent of $\ell$ with $\ell\neq p$.
Moreover, it is equal to 
\[
 \sum_{m=0}^{2n}\tr(g^\ast;H_{\pst,p}^m(X)).
\]
\end{thm}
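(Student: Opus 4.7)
The plan is to reduce to the strictly semistable case via de Jong's alteration and then exploit the weight spectral sequences in parallel in the $\ell$-adic and log-crystalline settings. By passing to a finite extension of $K$ (which only rescales the action of $g$, after adjusting for the reindexed inertia subgroup) and applying \cite[Theorem 6.5]{deJong-alteration}, I can produce a projective strictly semistable model $\calX/\calO_L$ together with a proper surjective generically finite morphism $f\colon X'\to X_L$. Using $f_\ast f^\ast=\deg f$ on rational cohomology and arguing by induction on $\dim X$ (so that the ``extra'' cohomology of $X'$ not coming from $X_L$ can be handled by alterations of smaller-dimensional auxiliary varieties arising from the singular locus of $f$), it suffices to prove the theorem when $X$ is already strictly semistable over $\calO_K$.

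In that case, let $Y=\bigcup Y_i$ be the special fiber and $Y^{(r)}$ the disjoint union of $r$-fold intersections, which are smooth proper over $k$. The Rapoport--Zink weight spectral sequence for $\ell\neq p$ and the $p$-adic weight spectral sequence (used in the proof of Theorem~\ref{thm:p-adic monodromy weight}) both yield
\[
 \gr^W_s H^m_\ast \cong \bigoplus_{j\geq\max\{-s,0\}} H^{m-2j-s}_\ast(Y^{(2j+s+1)})(-j-s),
\]
with $\ast$ denoting $\ell$-adic \'etale or log-crystalline cohomology (tensored with $P_0$), respectively; on the $p$-adic side I invoke Tsuji's comparison and the identification $\widehat{D}_{\pst}(H^m_{\et}(X_{\overline K},\Q_p))\cong H^m_{\logcris}(\calX)\otimes_{K_0}P_0$ from the proof of Theorem~\ref{thm:p-adic monodromy weight}. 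Since the alternating sum of traces of a $W_K$-equivariant endomorphism on a filtered representation equals that on its associated graded, both alternating sums appearing in the theorem are computed by
\[
 \sum_{m,s,j}(-1)^m\tr\bigl(g^\ast;H^{m-2j-s}_\ast(Y^{(2j+s+1)})(-j-s)\bigr),
\]
where the action of $g$ on each factor factors through the geometric Frobenius of $k$ (the inertia acts trivially on monodromy-graded pieces up to the tame unipotent contribution, which has trace equal to the rank and thus is $\ell$-independent). On cohomology of smooth proper varieties over a finite field, Frobenius traces on $\ell$-adic and crystalline cohomology are equal rational integers independent of $\ell\neq p$ by the Lefschetz trace formula and the Katz--Messing theorem. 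Summing these identities yields both $\ell$-independence and the identification with the $p$-adic alternating trace.

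The main obstacle is executing the reduction to strictly semistable reduction cleanly, because alterations generally enlarge cohomology, so one must check that the ``extra'' pieces in $H^\ast(X'_{\overline{K}},\Q_\ell)$ beyond the direct summand $H^\ast(X_{\overline{K}},\Q_\ell)$ also enjoy the same $\ell$-independence identity. The cleanest route, following Ochiai, is to arrange a correspondence calculation that expresses the signed alternating trace on $H^\ast(X)$ as a $\Q$-linear combination of alternating traces on alterations of auxiliary strictly semistable varieties of dimension $\leq \dim X$, so that the inductive hypothesis applies. A secondary subtlety is the compatibility of the $\ell$-adic and $p$-adic monodromy filtrations: one needs to know that the monodromy-weight filtrations on $\ell$-adic and log-crystalline cohomology match under Tsuji's comparison, which is guaranteed by the Mokrane--Nakkajima identification of weight and monodromy filtrations already invoked in the proof of Theorem~\ref{thm:p-adic monodromy weight}.
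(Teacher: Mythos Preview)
The paper does not prove this theorem; it is quoted from \cite{Ochiai-l-indep} and used as a black box to deduce Corollary~\ref{cor:l-independence of each cohomology}. So there is no ``paper's own proof'' against which to compare your proposal.

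That said, your outline is broadly the strategy Ochiai follows: reduce to the strictly semistable case by de Jong's alterations, then compare the $\ell$-adic and log-crystalline weight spectral sequences termwise using Katz--Messing. A few remarks on your sketch. First, your displayed isomorphism for $\gr^W_s H^m_\ast$ is the $E_1$-term of the weight spectral sequence, not the associated graded of the abutment; the $d_1$-differentials are nonzero in general, so the formula is false as written. This does not matter for your purposes, since alternating sums of traces can be read off at any page of a $W_K$-equivariant spectral sequence. Second, your parenthetical about a ``tame unipotent contribution'' is misplaced: on the $E_1$-terms $H^\ast(Y^{(r)},\Q_\ell)(-j-l)$ inertia acts trivially for $\ell\neq p$ (both the cohomology of $Y^{(r)}/k$ and the Tate twist are unramified), so $g$ acts purely through its image in $\Gal(\overline{k}/k)$; the unipotent monodromy is encoded in the spectral sequence differentials, which drop out of the alternating sum. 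Third, you correctly flag the alteration reduction as the crux, and your inductive scheme is only a gesture. Ochiai's argument here is careful and not entirely formal; if you want a self-contained proof, that step is where the real work lies.
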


\begin{cor}\label{cor:l-independence of each cohomology}
 Let $X$ be a smooth proper scheme over $K$ of dimension at most two.
For every $m\in\N$ and $g\in W_K^+$, the trace
$\tr(g^\ast;H_{\et}^m(X_{\overline{K}}, \Q_\ell))$
is a rational integer that is independent of $\ell$ with $\ell\neq p$.
Moreover, it is equal to $\tr(g^\ast;H_{\pst,p}^m(X))$.
\end{cor}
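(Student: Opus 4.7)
The strategy is to reduce to the semistable case via de Jong's alteration theorem and then to extract individual cohomology degrees from Ochiai's alternating-sum identity by combining the weight spectral sequence with the monodromy--weight conjecture. The latter, known by Theorem~\ref{thm:p-adic monodromy weight} and Remark~\ref{rem:known cases of monodromy-weight conjecture} for $m\leq 2$, extends by Poincar\'e duality to every $m$ under our hypothesis $\dim X \leq 2$.

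First I would apply de Jong's alteration theorem to obtain a proper surjective generically finite morphism $f\colon X' \to X$ with $X'$ smooth projective over a finite extension $L$ of $K$ admitting a strictly semistable regular model $\fkX'$ over $\calO_L$. Because $f^\ast$ has a trace map left-inverse up to multiplication by $\deg f$, the cohomology $H^m_{\et}(X_{\overline{K}}, \Q_\ell)$ is a canonical direct summand of $H^m_{\et}(X'_{\overline{K}}, \Q_\ell)$ cut out by an $\ell$-independent geometric idempotent, and the analogous fact holds on the $p$-adic Weil--Deligne side. It therefore suffices to prove the assertion for $X'$.

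For $\fkX'$ strictly semistable, both sides are computed by weight spectral sequences whose $E_1$ pages are direct sums of twisted cohomologies of the smooth proper strata of the special fiber: the $\ell$-adic Rapoport--Zink spectral sequence abutting to $H^m_{\et}(X'_{\overline{K}}, \Q_\ell)$, and the log-crystalline weight spectral sequence (as in the proof of Theorem~\ref{thm:p-adic monodromy weight}) abutting to $H^m_{\logcris}(\fkX')$, which recovers $\widehat{D}_{\pst}(H^m_{\et}(X'_{\overline{K}}, \Q_p))$ upon base change to $P_0$. For each smooth proper variety $Z$ over $\overline{\F_q}$ appearing as a stratum, the eigenvalues of geometric Frobenius on $H^\ast_{\et}(Z, \Q_\ell)$ are algebraic integers of prescribed absolute values by Weil~II that agree term-by-term with those on $H^\ast_{\cris}(Z/W(\overline{k}))[p^{-1}]$ via the crystalline--\'etale comparison over finite fields; therefore the $E_1$-page Weil-group traces are $\ell$-independent rational integers matching the $p$-adic analogue.

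Finally, the monodromy--weight conjecture forces both weight spectral sequences to degenerate at $E_2$, since any nonzero higher differential would connect pure pieces of different weights and thus violate purity. Hence each $\tr(g^\ast; H^m_{\et}(X'_{\overline{K}}, \Q_\ell))$ and $\tr(g^\ast; H^m_{\pst,p}(X'))$ is an alternating sum (read off from $E_2 = E_\infty$) of traces on $E_1$ terms, and therefore lies in $\Q$, is $\ell$-independent, and matches the $p$-adic side; the same conclusion for $X$ follows from the summand argument. The main obstacle will be confirming that each step remains $W_K$-equivariant rather than only $\Gal(\overline{k}/k)$-equivariant --- in particular that the monodromy operator and the action of tame and wild inertia respect both the $E_2$-degeneration and the trace decomposition --- and that the direct-summand structure lifts compatibly to the Weil--Deligne realizations on both the $\ell$-adic and $p$-adic sides.
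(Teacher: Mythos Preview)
Your approach has two genuine gaps, and the paper takes a much shorter route that avoids both. The paper's proof is a three-line bookkeeping argument: the degrees $m=0$ and $m=2\dim X$ are trivial; the degree $m=1$ (and hence $m=2\dim X-1$ by Poincar\'e duality) reduces to the Albanese variety, where the compatibility of $\ell$-adic and $p$-adic Weil--Deligne representations is classical (Fontaine, Noot); and when $\dim X=2$, the one remaining degree $m=2$ is recovered by subtracting the already-known terms from Ochiai's alternating-sum identity. No alteration, no spectral sequence.

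The first gap in your argument is the field extension. De Jong's alteration produces $X'$ semistable over a finite extension $L/K$, and the weight spectral sequence for $X'$---indeed $H^m_{\et}(X'_{\overline{K}},\Q_\ell)$ itself---carries only a $W_L$-action, not a $W_K$-action. For $g\in W_K^+\setminus W_L^+$ there is simply no action on $H^m(X')$ to compare, so the direct-summand trick cannot recover the trace identity for such $g$; this is a different and more serious issue than the $W_K$-versus-$\Gal(\overline{k}/k)$ concern you flag at the end. The second gap is in extracting individual degrees from the spectral sequence: $E_2$-degeneration gives $\tr(g;H^m)=\sum_p\tr(g;E_2^{p,m-p})$, but each $E_2^{p,q}$ is a \emph{subquotient} of $E_1^{p,q}$, not equal to it. Matching $E_2$-traces across $\ell$ and $p$ therefore requires compatibility of the $d_1$-differentials (the restriction and Gysin maps between strata) in all the cohomology theories involved, which is true but is a substantially stronger input than the Katz--Messing comparison of Frobenius eigenvalues on $E_1$-terms that you invoke.
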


\begin{proof}
The cases $m=0, \dim X$ are obvious.
The same assertion for an abelian variety with $m=1$ is also known (cf.~\cite[Remarque 2.4.6 (iii)]{Fontaine-exposeVIII} and \cite[Corollary 2.2]{Noot}).
From these results, it remains to deal with the case where $\dim X=2$ and $m=2$; this follows from the above theorem on $\ell$-independence of the alternating sum.
\end{proof}

\begin{rem}
For $\ell\neq p$, we can also associate to the $\ell$-adic representation $H^m_{\et}(X_{\overline{K}},\Q_\ell)$ a Weil--Deligne representation of $K$ over $\Q_\ell$, which we denote by $H_{\pst,\ell}^m(X)$.
 It is expected that for every smooth proper scheme $X$ over $K$, the Weil--Deligne representations $H_{\pst,\ell}^m(X)$ with $\ell$ ranging all the primes including $p$ are compatible in the sense of \cite[2.4.2]{Fontaine-exposeVIII}. See \cite[2.4]{Fontaine-exposeVIII} for the details.
\end{rem}

\begin{cor}\label{cor:potentially good reduction for ell and p}
 Let $X$ be a smooth proper scheme of dimension at most two over $K$ and let $m\in\N$.
For a rational prime $\ell\neq p$,
the $\ell$-adic representation $H^m_{\et}(X_{\overline{K}},\Q_\ell)$ is potentially unramified if and only if the $p$-adic representation $H^m_{\et}(X_{\overline{K}},\Q_p)$ is potentially crystalline.
\end{cor}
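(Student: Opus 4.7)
The plan is to reduce each side to the vanishing of the monodromy operator on the associated Weil--Deligne representation, and then match the two conditions by combining the monodromy--weight conjecture with the $\ell$-independence of Frobenius traces.

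First I would translate both conditions. By Grothendieck's monodromy theorem, for $\ell\neq p$ the representation $H^m_{\et}(X_{\overline K},\Q_\ell)$ is potentially unramified if and only if the monodromy operator $N_\ell$ on the associated $\ell$-adic Weil--Deligne representation $H^m_{\pst,\ell}(X)$ vanishes (if $N_\ell=0$ then the open subgroup $I\subset I_K$ on which inertia acts as $\exp(t_\ell(\cdot)N_\ell)$ acts trivially, and conversely). On the $p$-adic side, Berger's $p$-adic monodromy theorem combined with the de Rhamness of $H^m_{\et}(X_{\overline K},\Q_p)$ shows this representation is potentially semistable, and it is potentially crystalline if and only if the monodromy operator $N_p$ on $H^m_{\pst,p}(X)=\widehat{D}_{\pst}(H^m_{\et}(X_{\overline K},\Q_p))$ is zero.

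Next I would apply the monodromy--weight conjecture, which is known in all relevant bidegrees for $\dim X\leq 2$: the cases $m\leq 2$ come from Remark~\ref{rem:known cases of monodromy-weight conjecture} in the $\ell$-adic case and from Theorem~\ref{thm:p-adic monodromy weight} in the $p$-adic case, while the cases $m\in\{3,4\}$ reduce to $m\in\{1,0\}$ via the Poincar\'e duality isomorphism $H^{4-m}\cong (H^m)^*(-2)$ (which shifts weights from $m$ to $4-m$ and preserves the monodromy--weight property). Fix a lift $\phi\in W_K$ of the geometric Frobenius with $v(\phi)=h$, so $q=p^h$. The conjecture then says that the eigenvalues of $\phi$ on $\gr^N_sH^m_{\pst,\ast}(X)$ are algebraic numbers all of whose complex conjugates have absolute value $q^{(m+s)/2}$. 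Since distinct values of $s$ give distinct absolute values, the vanishing $N_\ast=0$ (equivalently, the monodromy filtration is concentrated in degree $0$) is equivalent to the condition that every eigenvalue of $\phi$ on $H^m_{\pst,\ast}(X)$ has complex absolute value $q^{m/2}$.

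Finally, I would invoke Corollary~\ref{cor:l-independence of each cohomology} applied to each $\phi^j\in W_K^+$ to obtain
\[
\tr(\phi^j;H^m_{\pst,\ell}(X))=\tr(\phi^j;H^m_{\pst,p}(X))\in\Z
\]
for every $j\geq 1$. By Newton's identities the characteristic polynomials of $\phi$ on the two Weil--Deligne representations agree in $\Q[T]$, so the multisets of eigenvalues (viewed in $\overline{\Q}\subset\C$) coincide. Hence the condition ``every eigenvalue of $\phi$ has complex absolute value $q^{m/2}$'' is simultaneously true or simultaneously false on the two sides, yielding the desired equivalence. There is no real mathematical obstacle beyond keeping the conventions straight: on the $p$-adic side the $W_K$-action on $H^m_{\pst,p}(X)$ is the Galois action composed with a power of the Witt vector Frobenius $\varphi$, and one must check that it is precisely the eigenvalues of this linearized Frobenius that enter Corollary~\ref{cor:l-independence of each cohomology}, which is the convention adopted there.
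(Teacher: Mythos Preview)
Your proof is correct and follows essentially the same route as the paper's: translate each side to the vanishing of the monodromy operator, use the monodromy--weight conjecture (known here) to recast $N=0$ as a purity condition on Frobenius eigenvalues, and then match eigenvalues via the $\ell$-independence result. You are in fact slightly more explicit than the paper in two places---invoking Poincar\'e duality to cover $m\in\{3,4\}$ (the cited results literally state only $m\le 2$) and spelling out the Newton's-identities step from trace equality to equality of characteristic polynomials---but the overall strategy is the same.
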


\begin{proof}
The former condition in the assertion is equivalent to the condition that $N=0$ on $H^m_{\et}(X_{\overline{K}},\Q_\ell)$.
Since $\ell$-adic monodromy-weight conjecture is known for $X$ (Remark~\ref{rem:known cases of monodromy-weight conjecture}), it is also equivalent to the condition that, for every $g\in W_K^+$, the eigenvalues of $g$ on $H^m_{\et}(X_{\overline{K}},\Q_\ell)$, which are algebraic numbers, satisfy the property that the complex absolute values of their conjugates are $p^{mv(g)/2}$.
Similarly, by Theorem~\ref{thm:p-adic monodromy weight}, the latter condition in the assertion is equivalent to the condition that, for every $g\in W_K^+$, the eigenvalues of $g$ on $H^m_{\pst,p}(X)$, which are algebraic numbers, satisfy the property that the complex absolute values of their conjugates are $p^{mv(g)/2}$.
Hence the assertion easily follows from Corollary~\ref{cor:l-independence of each cohomology}.
\end{proof}

\subsection{Cohomologically potentially good reduction locus}\label{subsection:potentially good reduction locus}

Let $K$ be a complete discrete valuation field of characteristic zero with \emph{finite} residue field of characteristic $p$.

\begin{defn}
 Let $f\colon Y\ra X$ be a smooth proper morphism between smooth algebraic varieties over $K$ and let $X^{\ad}$ denote the adic space associated to $X$. 
We say that an open subset $U$ of $X^{\ad}$ is \emph{a cohomologically potentially good reduction locus}
of $f$ of degree $m$ if $U$ satisfies the following property:
for every classical point $x$ of $X^{\ad}$ and every geometric point $\overline{x}$ above $x$,  
\begin{enumerate}
 \item if $x\!\in\!U$, then $(R^m\!f_\ast\Q_\ell)_{\overline{x}}$ is potentially unramified for $\ell\neq p$ and 
$(R^m\!f_\ast\Q_p)_{\overline{x}}$ is potentially crystalline as Galois representations of $k(x)$;
 \item if $x\!\not\in \!U$, then $(R^m\!f_\ast\Q_\ell)_{\overline{x}}$ is not potentially unramified for $\ell\neq p$ and $(R^m\!f_\ast\Q_p)_{\overline{x}}$ is not potentially crystalline as Galois representations of $k(x)$. 
\end{enumerate}
\end{defn}

\begin{rem}
Note that our conditions do not characterize cohomologically potentially good reduction loci.
On the other had, the union of two cohomologically potentially good reduction loci is also a cohomologically potentially good reduction locus and thus we can consider the largest cohomologically potentially good reduction locus if there is at least one.
\end{rem}

\begin{thm}\label{thm:potentially good reduction locus}
Let $f\colon X\ra S$ be a smooth proper morphism between smooth algebraic varieties over $K$. Assume that the relative dimension of $f$ is at most two.
Then there exists a cohomologically potentially good reduction locus of $f$ of any degree.
\end{thm}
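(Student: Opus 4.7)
The plan is to combine Theorem~\ref{thm:potentially crystalline loci} on potentially crystalline loci with the comparison Corollary~\ref{cor:potentially good reduction for ell and p} between potentially unramified $\ell$-adic cohomology and potentially crystalline $p$-adic cohomology, which is valid precisely under the hypothesis that the fiber dimension is at most two.

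First I would pass to the associated adic spaces and consider the $\Q_p$-local system $\bL := R^mf^{\ad}_\ast\Q_p$ on $S^{\ad}$. The first substantive step is to verify that $\bL$ is de Rham in the sense of Section~\ref{section:p-adic Hodge theory for rigid analytic varieties}. For this, one may invoke the relative $p$-adic Hodge theory of Scholze; alternatively, one observes that at every classical point $x\in S^{\ad}$ with geometric point $\overline{x}$ above it, the stalk $\bL_{\overline{x}}\cong H^m_{\et}((X_x)_{\overline{K}},\Q_p)$ is de Rham by the work of Tsuji together with de Jong's alteration theorem, and then invokes the theorem of Liu--Zhu stating that de Rhamness at a single classical point on each connected component propagates to the whole $\Q_p$-local system.

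Next, I would apply Theorem~\ref{thm:potentially crystalline loci} to $\bL$ on $S^{\ad}$ to obtain an analytic open subset $U\subset S^{\ad}$ that is a potentially crystalline locus of $\bL$. I claim that this same $U$ is a cohomologically potentially good reduction locus of $f$ of degree $m$. Indeed, for every classical point $x\in S^{\ad}$ with residue field $k(x)$ a finite extension of $K$ and every geometric point $\overline{x}$ above $x$, the stalks $(R^mf^{\ad}_\ast\Q_\ell)_{\overline{x}}$ are canonically identified with $H^m_{\et}((X_x)_{\overline{K}},\Q_\ell)$, where $X_x$ is the fiber of $f$ at $x$, a smooth proper $k(x)$-scheme of dimension at most two. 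Corollary~\ref{cor:potentially good reduction for ell and p} then applies and gives the equivalence that $(R^mf^{\ad}_\ast\Q_p)_{\overline{x}}$ is potentially crystalline if and only if $(R^mf^{\ad}_\ast\Q_\ell)_{\overline{x}}$ is potentially unramified for some (equivalently, any) $\ell\neq p$. Combining this equivalence with the defining property of the potentially crystalline locus $U$ immediately yields both conditions (i) and (ii) in the definition of a cohomologically potentially good reduction locus.

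The main subtle point is the first step: establishing that $R^mf^{\ad}_\ast\Q_p$ is a $\Q_p$-local system on $S^{\ad}$ and that it is de Rham. The local-system property follows from smooth and proper base change together with the smoothness and properness of $f$, ensuring that the higher direct images are locally constant of finite rank; the de Rham property reduces, via Liu--Zhu, to the (by now classical) de Rham conjecture for the individual fibers. Everything else in the argument is then a direct concatenation of Theorem~\ref{thm:potentially crystalline loci} and Corollary~\ref{cor:potentially good reduction for ell and p}, with the dimension hypothesis entering only through the latter.
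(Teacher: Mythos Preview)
Your proposal is correct and follows essentially the same route as the paper: combine Corollary~\ref{cor:potentially good reduction for ell and p} with Theorem~\ref{thm:potentially crystalline loci}, after noting that $R^mf_\ast\Q_p$ is a de Rham local system. The paper's one-line proof additionally points out that, once Corollary~\ref{cor:potentially good reduction for ell and p} is in hand, one could equally well work on the $\ell$-adic side and invoke Kisin's local constancy result \cite[Proposition~5.2~(2)]{Kisin-LocalConstancy} in place of Theorem~\ref{thm:potentially crystalline loci}; you give only the $p$-adic route, but that suffices.
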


\begin{proof}
By Corollary~\ref{cor:potentially good reduction for ell and p}, it follows from Theorem~\ref{thm:potentially crystalline loci} or \cite[Proposition 5.2 (2)]{Kisin-LocalConstancy}.
\end{proof}

\begin{rem}
 As mentioned in the introduction, Imai and Mieda introduce a notion of the potentially good reduction locus of a Shimura variety (cf.~\cite[Definition 5.19]{Imai-Mieda}). It is a quasi-compact constructible open subset of the associated adic space where every $\ell$-adic automorphic \'etale local system  is potentially unramified for $\ell\neq p$ and every $p$-adic automorphic \'etale local system is potentially crystalline at every classical point.
Due to the constructibility, such a locus is unique if it exists. 
They prove the existence of the potentially good reduction locus of Shimura varieties of preabelian type, and use them to study the \'etale cohomology of Shimura varieties (cf.~\cite[Theorem~6.1]{Imai-Mieda}).
\end{rem}


\providecommand{\bysame}{\leavevmode\hbox to3em{\hrulefill}\thinspace}
\providecommand{\MR}{\relax\ifhmode\unskip\space\fi MR }
\providecommand{\MRhref}[2]{%
  \href{http://www.ams.org/mathscinet-getitem?mr=#1}{#2}
}
\providecommand{\href}[2]{#2}

\end{document}